\newtheorem{theorem}{Theorem}[section]
\newtheorem{lemma}[theorem]{Lemma}
\newtheorem{proposition}[theorem]{Proposition}
\theoremstyle{definition}
\newtheorem{definition}[theorem]{Definition}
\theoremstyle{remark}
\newtheorem{remark}[theorem]{Remark}
\numberwithin{equation}{section}
\def\rnum#1{\expandafter{\romannumeral #1}} 
\def\Rnum#1{\uppercase\expandafter{\romannumeral #1}}
\def\~#1{\widetilde #1}
\def\<{\langle}
\def\>{\rangle}
\begin{document}

\title[NLS equation with a long range potential]{Scattering solutions to nonlinear Schr\"odinger equation with a long range potential}


\author{Masaru Hamano}
\address{Department of Mathematics, Graduate School of Science and Engineering Saitama University, Shimo-Okubo 255, Sakura-ku, Saitama-shi, Saitama 338-8570, Japan}
\email{ess70116@mail.saitama-u.ac.jp}

\author{Masahiro Ikeda}
\address{Department of Mathematics, Faculty of Science and Technology, Keio University, 3-14-1 Hiyoshi, Kohoku-ku, Yokohama, 223-8522, Japan/Center for Advanced Intelligence Project, Riken, Japan}
\email{masahiro.ikeda@riken.jp / masahiro.ikeda@riken.jp}







\begin{abstract}
In this paper, we consider a nonlinear Schr\"odinger equation with a repulsive inverse-power potential.
It is known that the corresponding stationary problem has a ``radial'' ground state.
Here, the ``radial'' ground state is a least energy solution among radial solutions to the stationary problem.
We prove that if radial initial data below the ``radial'' ground state has positive virial functional, then the corresponding solution to the nonlinear Schr\"odinger equation scatters.
In particular, we can treat not only short range potentials but also long range potentials.
\end{abstract}

\maketitle

\tableofcontents


\section{Introduction}

In this paper, we consider the following nonlinear Schr\"odinger equation with an inverse power potential:
\begin{equation}
	i\partial_tu + \Delta_\gamma u
		= - |u|^{p-1}u, \quad (t,x) \in \mathbb{R} \times \mathbb{R}^d, \tag{NLS$_{\gamma}$} \label{NLS}
\end{equation}
where $d \geq 1$, $1 + \frac{4}{d} < p < \infty$ if $d = 1, 2$, $1 + \frac{4}{d} < p < 1 + \frac{4}{d-2}$ if $d \geq 3$, $\Delta_\gamma = \Delta - \frac{\gamma}{|x|^\mu}$, $\gamma > 0$, $0 < \mu < \min\{d,2\}$, $u = u(t,x)$ is a complex-valued unknown function, and $u_0(x) = u(0,x)$ is a complex-valued given function.
In particular, we deal with the Cauchy problem \eqref{NLS} with initial condition
\begin{equation}
	u(0,x)
		= u_0(x) \in H^1(\mathbb{R}^d). \tag{IC} \label{IC}
\end{equation}
In this paper, we are interested in time behavior of solutions to \eqref{NLS}.

Since $\frac{\gamma}{|x|^\mu} > 0$ and $\frac{\gamma}{|x|^\mu} \in L_\text{loc}^1(\mathbb{R}^d)$, $- \Delta_\gamma$ is defined as a unique self-adjoint operator associated with the non-negative quadratic form $\<(-\Delta_\gamma)f,f\>_{L^2}$ on $C_c^\infty(\mathbb{R}^d)$, where $C_c^\infty(\mathbb{R}^d)$ is a space of smooth functions with a compact support.
From Stone's theorem, the Schr\"odinger group $\{e^{it\Delta_\gamma}\}_{t \in \mathbb{R}}$ is generated by $- \Delta_\gamma$ on $L^2(\mathbb{R}^d)$.
Moreover, $- \Delta_\gamma$ is purely absolutely continuous: $\sigma(-\Delta_\gamma) = \sigma_\text{ac}(-\Delta_\gamma) = [0,\infty)$ and has no eigenvalue.

First, we recall that existence of a solution to \eqref{NLS} is assured.
It is known that the Cauchy problem \eqref{NLS} with \eqref{IC} is locally well-posed in $H^1(\mathbb{R}^d)$ (see \cite{Caz03}).
More precisely, for any $u_0 \in H^1(\mathbb{R}^d)$, there exist $T_\text{max} \in (0,\infty]$ and $T_\text{min} \in [-\infty,0)$ such that \eqref{NLS} has a unique solution $u$ in $C_t((T_\text{min},T_\text{max});H_x^1(\mathbb{R}^d)) \cap C_t^1((T_\text{min},T_\text{max});H_x^{-1}(\mathbb{R}^d))$.
Furthermore, the solution has a blow-up alternative: If $T_\text{max} < \infty$ (resp. $T_\text{min} > - \infty$), then
\begin{align*}
	\lim_{t \nearrow T_\text{max}}\|u(t)\|_{H_x^1}
		= \infty,\ \ \ 
	\left( \text{resp. } \lim_{t \searrow T_\text{min}}\|u(t)\|_{H_x^1}
		= \infty \right)
\end{align*}
and preserves its mass and energy, which are defined respectively as
\begin{align*}
	\text{(Mass) }&\ \ M[u(t)]
		:= \int_{\mathbb{R}^d}|u(t,x)|^2dx, \\
	\text{(Energy) }&\ \ E_{\gamma}[u(t)]
		:= \frac{1}{2}\int_{\mathbb{R}^d}|\nabla u(t,x)|^2 + \frac{\gamma}{|x|^\mu}|u(t,x)|^2dx - \frac{1}{p+1}\int_{\mathbb{R}^d}|u(t,x)|^{p+1}dx.
\end{align*}
We turn to time behavior of the solutions to \eqref{NLS}.
We define time behaviors of solutions to \eqref{NLS}. 

\begin{definition}[Scattering, Blow-up, Grow-up, and Standing wave]
Let $(T_\text{min},T_\text{max})$ denote the maximal lifespan of $u$.
\begin{itemize}
\item (Scattering)
We say that the solution $u$ to \eqref{NLS} scatters in positive time (resp. negative time) if $T_\text{max} = +\infty$ (resp. $T_\text{min} = -\infty$) and there exists $\psi_+ \in H^1$ (resp. $\psi_-\in H^1$) such that
\begin{align*}
	\lim_{t \rightarrow +\infty}\|u(t) - e^{it\Delta_{\gamma}}\psi_+\|_{H_x^1}
		= 0\ \ \ 
	\left(\text{resp.}\ \lim_{t \rightarrow -\infty}\|u(t) - e^{it\Delta_{\gamma}}\psi_-\|_{H_x^1} = 0\right).
\end{align*}
\item (Blow-up)
We say that the solution $u$ to \eqref{NLS} blows up in positive time (resp. negative time) if $T_{\text{max}}<\infty$ (resp. $T_\text{min}>-\infty$).
Then, we note that we have
\begin{align*}
	\lim_{t \rightarrow +\infty}\|u(t)\|_{H_x^1}
		= \infty,\ \ \ 
	\left(\text{resp. }\lim_{t \rightarrow -\infty}\|u(t)\|_{H_x^1}
		= \infty\right)
\end{align*}
from the blow-up alternative.
\item (Grow-up)
We say that the solution $u$ to \eqref{NLS} grows up in positive time (resp. negative time) if $T_\text{max} = +\infty$ (resp. $T_\text{min} = -\infty$) and
\begin{align*}
	\limsup_{t \rightarrow +\infty}\|u(t)\|_{H_x^1}
		= \infty,\ \ \ 
	\left(\text{resp. }\limsup_{t \rightarrow -\infty}\|u(t)\|_{H_x^1}
		= \infty\right).
\end{align*}
\item (Standing wave)
We say that the solution $u$ to \eqref{NLS} is a standing wave if $u(t,x) = e^{i\omega t}Q_{\omega,\gamma}(x)$ for $\omega \in \mathbb{R}$, where $Q_{\omega,\gamma}$ satisfies
\begin{align}
	- \omega Q_{\omega,\gamma} + \Delta_\gamma Q_{\omega,\gamma}
		= - |Q_{\omega,\gamma}|^{p-1}Q_{\omega,\gamma} \tag{SP$_{\omega,\gamma}$} \label{SP}.
\end{align}
\end{itemize}
\end{definition}

We recall that the authors \cite{HamIkeProceeding, HamIkeISAAC} proved that \eqref{NLS} has a radial standing wave $u(t,x) = e^{i\omega t}Q_{\omega,\gamma}(x)$.
In particular, \eqref{SP} has a ``radial'' ground state $Q_{\omega,\gamma}$.
A set $\mathcal{G}_{\omega,\gamma,\text{rad}}$ of whole of the ``radial'' ground state is defined as
\begin{align*}
	\mathcal{G}_{\omega,\gamma,\text{rad}}
		:= \{\phi \in \mathcal{A}_{\omega,\gamma,\text{rad}} : S_{\omega,\gamma}(\phi) \leq S_{\omega,\gamma}(\psi)\text{ for any }\psi \in \mathcal{A}_{\omega,\gamma,\text{rad}}\},
\end{align*}
where
\begin{align*}
	\mathcal{A}_{\omega,\gamma,\text{rad}}
		& := \{\psi \in H_\text{rad}^1(\mathbb{R}^d) \setminus \{0\} : S_{\omega,\gamma}'(\psi) = 0\}, \\
	S_{\omega,\gamma}(f)
		& := \frac{\omega}{2}M[f] + E_{\gamma}[f].
\end{align*}
To get the ``radial'' ground state to \eqref{SP}, we considered the minimization problem
\begin{align*}
	r_{\omega,\gamma}^{\alpha,\beta}
		:= \inf\{S_{\omega,\gamma}(f) : f \in H_\text{rad}^1(\mathbb{R}^d)\setminus \{0\},\ K_{\omega,\gamma}^{\alpha,\beta}(f) = 0\},
\end{align*}
where a functional $K_{\omega,\gamma}^{\alpha,\beta}$ is defined as
\begin{align*}
	K_{\omega,\gamma}^{\alpha,\beta}(f)
		:= \left.\frac{\partial}{\partial\lambda}\right|_{\lambda = 0}S_{\omega,\gamma}(e^{\alpha\lambda}f(e^{\beta\lambda}\,\cdot\,))
\end{align*}
for $(\alpha,\beta)$ satisfying
\begin{align}
	\alpha
		> 0,\ \ 
	\beta
		\geq 0,\ \ 
	2\alpha - d\beta
		\geq 0. \label{104}
\end{align}
$r_{\omega,\gamma}^{\alpha,\beta}$ has a minimizer and it is the ``radial'' ground state to \eqref{SP}.

\begin{theorem}[Existence of a ``radial'' ground state to \eqref{SP}, \cite{HamIkeProceeding, HamIkeISAAC}]
Let $d \geq 2$, $1 < p < \infty$ if $d = 2$, $1 < p < 1 + \frac{4}{d-2}$ if $d \geq 3$, $\gamma > 0$, $0 < \mu < 2$, and $\omega > 0$.
Then, \eqref{SP} has a ``radial'' ground state $Q_{\omega,\gamma}$.
Furthermore, the ``radial'' ground state $Q_{\omega,\gamma}$ is characterized by a functional $K_{\omega,\gamma}^{\alpha,\beta}$ for $(\alpha,\beta)$ satisfying \eqref{104}.
More precisely, $\mathcal{G}_{\omega,\gamma,\text{rad}} = \mathcal{M}_{\omega,\gamma,\text{rad}}^{\alpha,\beta}$ holds, where
\begin{align*}
	\mathcal{M}_{\omega,\gamma,\text{rad}}^{\alpha,\beta}
		&:= \{\phi \in H_\text{rad}^1(\mathbb{R}^d) \setminus\{0\} : S_{\omega,\gamma}(\phi) = r_{\omega,\gamma}^{\alpha,\beta},\ K_{\omega,\gamma}^{\alpha,\beta}(\phi) = 0\}.
\end{align*}
\end{theorem}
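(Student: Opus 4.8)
The plan is to realize the ``radial'' ground state as a minimizer of the constrained problem $r_{\omega,\gamma}^{\alpha,\beta}$, to check that this minimizer solves \eqref{SP}, and then to identify the minimizer set with $\mathcal{G}_{\omega,\gamma,\mathrm{rad}}$ by a soft two-inclusion argument; the whole point of working in $H^1_{\mathrm{rad}}(\mathbb{R}^d)$ (with $d\ge2$) is that the compact radial Sobolev embeddings replace concentration--compactness. First I would compute $K_{\omega,\gamma}^{\alpha,\beta}$ explicitly: with $\tilde A(f)=\tfrac{\omega}{2}M[f]$, $\tilde B(f)=\tfrac12\|\nabla f\|_{L^2}^2$, $\tilde C(f)=\tfrac12\int_{\mathbb{R}^d}\tfrac{\gamma}{|x|^\mu}|f|^2\,dx$, $\tilde D(f)=\tfrac1{p+1}\|f\|_{L^{p+1}}^{p+1}$, differentiating the scaling gives
\[
	K_{\omega,\gamma}^{\alpha,\beta}(f)=\mu_1\tilde A(f)+\mu_2\tilde B(f)+\mu_3\tilde C(f)-\mu_4\tilde D(f),
\]
$\mu_1=2\alpha-d\beta$, $\mu_2=2\alpha-(d-2)\beta$, $\mu_3=2\alpha-(d-\mu)\beta$, $\mu_4=(p+1)\alpha-d\beta$. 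For the $p$-range used in this paper ($p>1+\tfrac4d$) one checks, using \eqref{104}, that $0\le\mu_1$, $\mu_2,\mu_3>0$ and $\mu_1,\mu_2,\mu_3<\mu_4$. Hence on the constraint $\{K_{\omega,\gamma}^{\alpha,\beta}=0\}$ the superlinear term drops out,
\[
	S_{\omega,\gamma}(f)=S_{\omega,\gamma}(f)-\tfrac1{\mu_4}K_{\omega,\gamma}^{\alpha,\beta}(f)=\tfrac{\mu_4-\mu_1}{\mu_4}\tilde A(f)+\tfrac{\mu_4-\mu_2}{\mu_4}\tilde B(f)+\tfrac{\mu_4-\mu_3}{\mu_4}\tilde C(f)=:P(f),
\]
a strictly positive combination of $\tilde A,\tilde B,\tilde C$ comparable to $\|f\|_{H^1}^2$. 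In particular $r_{\omega,\gamma}^{\alpha,\beta}\ge0$, and inserting the constraint into $\tilde D(f)\lesssim\|f\|_{H^1}^{p+1}$ (Sobolev) shows $\|f\|_{H^1}$ is bounded below on the constraint set, so $r_{\omega,\gamma}^{\alpha,\beta}>0$.

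For existence of a minimizer, take $(f_n)\subset H^1_{\mathrm{rad}}(\mathbb{R}^d)\setminus\{0\}$ with $K_{\omega,\gamma}^{\alpha,\beta}(f_n)=0$ and $S_{\omega,\gamma}(f_n)\to r_{\omega,\gamma}^{\alpha,\beta}$. Since $S_{\omega,\gamma}(f_n)=P(f_n)$, the sequence is bounded in $H^1$, so after a subsequence $f_n\rightharpoonup Q$ in $H^1_{\mathrm{rad}}$. The compact embeddings $H^1_{\mathrm{rad}}(\mathbb{R}^d)\hookrightarrow L^{p+1}(\mathbb{R}^d)$ (valid since $2<p+1<2^*$) and $H^1_{\mathrm{rad}}(\mathbb{R}^d)\hookrightarrow L^2(|x|^{-\mu}\,dx)$ (valid because $0<\mu<\min\{d,2\}$, which makes the singularity at the origin and the tail at infinity both uniformly negligible) give $\tilde D(f_n)\to\tilde D(Q)$ and $\tilde C(f_n)\to\tilde C(Q)$, while $\tilde A(Q)\le\liminf\tilde A(f_n)$ and $\tilde B(Q)\le\liminf\tilde B(f_n)$. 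The lower bound of the previous paragraph forces $\tilde D(f_n)\gtrsim1$, so $\tilde D(Q)>0$ and $Q\ne0$. Weak lower semicontinuity then gives $P(Q)\le\liminf P(f_n)=r_{\omega,\gamma}^{\alpha,\beta}$ and $K_{\omega,\gamma}^{\alpha,\beta}(Q)\le0$. Rescaling $Q$ along $\lambda\mapsto e^{\alpha\lambda}Q(e^{\beta\lambda}\,\cdot\,)$ brings it onto the constraint at some $\lambda_0\le0$, and along this flow $P$ is nondecreasing (all $\mu_i\ge0$), so $r_{\omega,\gamma}^{\alpha,\beta}\le S_{\omega,\gamma}(e^{\alpha\lambda_0}Q(e^{\beta\lambda_0}\,\cdot\,))=P(e^{\alpha\lambda_0}Q(e^{\beta\lambda_0}\,\cdot\,))\le P(Q)\le r_{\omega,\gamma}^{\alpha,\beta}$; equality throughout, together with $\tilde B(Q)>0$ and $\mu_2>0$, forces $\lambda_0=0$. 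Hence $K_{\omega,\gamma}^{\alpha,\beta}(Q)=0$ and $S_{\omega,\gamma}(Q)=r_{\omega,\gamma}^{\alpha,\beta}$, i.e.\ $Q\in\mathcal{M}_{\omega,\gamma,\mathrm{rad}}^{\alpha,\beta}$.

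To see that any $Q\in\mathcal{M}_{\omega,\gamma,\mathrm{rad}}^{\alpha,\beta}$ solves \eqref{SP}, use the Lagrange multiplier rule: $S_{\omega,\gamma}'(Q)=\nu\,(K_{\omega,\gamma}^{\alpha,\beta})'(Q)$ for some $\nu\in\mathbb{R}$. Pairing with the generator $\partial_\lambda|_{\lambda=0}(e^{\alpha\lambda}Q(e^{\beta\lambda}\,\cdot\,))$ gives $0=K_{\omega,\gamma}^{\alpha,\beta}(Q)=\nu\big(\mu_1^2\tilde A(Q)+\mu_2^2\tilde B(Q)+\mu_3^2\tilde C(Q)-\mu_4^2\tilde D(Q)\big)$, and using $K_{\omega,\gamma}^{\alpha,\beta}(Q)=0$ once more the bracket equals $-\mu_1(\mu_4-\mu_1)\tilde A(Q)-\mu_2(\mu_4-\mu_2)\tilde B(Q)-\mu_3(\mu_4-\mu_3)\tilde C(Q)<0$ (strictly, since $\mu_2>0$, $\mu_4>\mu_2$ and $\tilde B(Q)>0$); hence $\nu=0$, so $S_{\omega,\gamma}'(Q)=0$ and $Q\in\mathcal{A}_{\omega,\gamma,\mathrm{rad}}$ solves \eqref{SP}. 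For the identity $\mathcal{G}_{\omega,\gamma,\mathrm{rad}}=\mathcal{M}_{\omega,\gamma,\mathrm{rad}}^{\alpha,\beta}$: every $\psi\in\mathcal{A}_{\omega,\gamma,\mathrm{rad}}$ has $S_{\omega,\gamma}'(\psi)=0$, hence $K_{\omega,\gamma}^{\alpha,\beta}(\psi)=\langle S_{\omega,\gamma}'(\psi),\partial_\lambda|_0(e^{\alpha\lambda}\psi(e^{\beta\lambda}\,\cdot\,))\rangle=0$, so $\psi$ is admissible for $r_{\omega,\gamma}^{\alpha,\beta}$ and $S_{\omega,\gamma}(\psi)\ge r_{\omega,\gamma}^{\alpha,\beta}$; applying this to the minimizer $Q$ (a solution of \eqref{SP} with $S_{\omega,\gamma}(Q)=r_{\omega,\gamma}^{\alpha,\beta}$) shows $Q$ has least action in $\mathcal{A}_{\omega,\gamma,\mathrm{rad}}$, so $\mathcal{M}_{\omega,\gamma,\mathrm{rad}}^{\alpha,\beta}\subseteq\mathcal{G}_{\omega,\gamma,\mathrm{rad}}$. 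Conversely if $\phi\in\mathcal{G}_{\omega,\gamma,\mathrm{rad}}$ then $\phi$ solves \eqref{SP}, so $K_{\omega,\gamma}^{\alpha,\beta}(\phi)=0$ and $S_{\omega,\gamma}(\phi)\ge r_{\omega,\gamma}^{\alpha,\beta}$, while $S_{\omega,\gamma}(\phi)\le S_{\omega,\gamma}(Q)=r_{\omega,\gamma}^{\alpha,\beta}$ since $\phi$ has least action and $Q\in\mathcal{A}_{\omega,\gamma,\mathrm{rad}}$; thus $S_{\omega,\gamma}(\phi)=r_{\omega,\gamma}^{\alpha,\beta}$ and $\phi\in\mathcal{M}_{\omega,\gamma,\mathrm{rad}}^{\alpha,\beta}$. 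As $(\alpha,\beta)$ was arbitrary subject to \eqref{104}, this proves the characterization for all such pairs simultaneously.

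The real work is the existence step — the $H^1$-boundedness of the minimizing sequence and the non-vanishing of its weak limit — which hinges on the sign conditions $\mu_1\ge0$ and $\mu_2,\mu_3<\mu_4$ coming from \eqref{104}; this is precisely why the constraint $\{K_{\omega,\gamma}^{\alpha,\beta}=0\}$ carries the right coercivity. In the degenerate case $\mu_1=0$ (i.e.\ $2\alpha=d\beta$) one has to work harder and use $p>1+\tfrac4d$ to rule out a mass-carrying vanishing sequence (this is the part that genuinely needs the references \cite{HamIkeProceeding,HamIkeISAAC}); everything else above — the semicontinuity, the rescaling, the Lagrange computation, and the two inclusions — is routine once these points are settled.
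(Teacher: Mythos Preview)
The paper does not prove this theorem; it is quoted from the authors' earlier work \cite{HamIkeProceeding,HamIkeISAAC}, so there is no in-paper argument to compare against. Your outline is the standard constrained-minimization approach that those references carry out, and in the intercritical regime $p>1+\tfrac4d$ (the only one the present paper actually uses) everything you wrote goes through: the sign pattern $0\le\mu_1$, $\mu_2,\mu_3>0$, $\mu_1,\mu_2,\mu_3<\mu_4$ makes $P=S_{\omega,\gamma}-\tfrac1{\mu_4}K_{\omega,\gamma}^{\alpha,\beta}$ coercive on the constraint and strictly increasing along the scaling flow, radial Strauss compactness handles $\tilde D$, weak lower semicontinuity handles $\tilde A,\tilde B,\tilde C$, and the Lagrange-multiplier and two-inclusion steps are routine (with Palais' principle of symmetric criticality implicit in passing from ``radial critical point'' to ``solution of \eqref{SP}'').

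One genuine caveat: the theorem as stated covers the full subcritical range $1<p<2^*-1$, while your key inequality $\mu_2<\mu_4$, i.e.\ $(p-1)\alpha>2\beta$, can fail for some $(\alpha,\beta)$ satisfying \eqref{104} when $1<p\le 1+\tfrac4d$ (take the endpoint $2\alpha=d\beta$). In that range your $P$ is no longer coercive in $\tilde B$ and the argument as written breaks down. The fix is to prove existence first for a convenient pair --- say $(\alpha,\beta)=(1,0)$, where $\mu_1=\mu_2=\mu_3=2\alpha<\mu_4=(p+1)\alpha$ for every $p>1$ --- and then transfer the conclusion to all $(\alpha,\beta)$ via the $(\alpha,\beta)$-independence of $r_{\omega,\gamma}^{\alpha,\beta}$ (stated just after the theorem in the paper and proved in the cited references). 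Since everything downstream in this paper lives in the intercritical window, this gap does not affect any later result, but it is a gap relative to the theorem as stated.
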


Since $r_{\omega,\gamma}^{\alpha,\beta}$ is independent of $(\alpha,\beta)$, we express $r_{\omega,\gamma} := r_{\omega,\gamma}^{\alpha,\beta}$.
In this paper, we espacially use $(\alpha,\beta) = (1,0)$ and $(\alpha,\beta) = (d,2)$.
$K_{\omega,\gamma}^{1,0}$ is called the Nehari functional and $K_{\omega,\gamma}^{d,2}$ is called the virial functional.
For simplicity, we use notations $K_{\omega,\gamma}^{1,0} =: \mathcal{N}_{\omega,\gamma}$ and $K_{\omega,\gamma}^{d,2} =: K_\gamma$.
We note that $\mathcal{N}_{\omega,\gamma}$ and $K_\gamma$ are written as
\begin{align*}
	\mathcal{N}_{\omega,\gamma}(f)
		& := \omega\|f\|_{L^2}^2 + \|(-\Delta_\gamma)^\frac{1}{2}f\|_{L^2}^2 - \|f\|_{L^{p+1}}^{p+1}, \\
	K_\gamma(f)
		& := 2\|\nabla f\|_{L^2}^2 + \mu\int_{\mathbb{R}^d}\frac{\gamma}{|x|^\mu}|f(x)|^2dx - \frac{d(p-1)}{p+1}\|f\|_{L^{p+1}}^{p+1}.
\end{align*}

For $\gamma > 0$, the following minimization problem does not have a minimizer and is independent of $(\alpha,\beta)$ (see \cite[Theorem 1.5]{HamIkeISAAC}):
\begin{align*}
	n_{\omega,\gamma}^{\alpha,\beta}
		:= \inf\{S_{\omega,\gamma}(f) : f \in H^1(\mathbb{R}^d) \setminus \{0\},\ K_{\omega,\gamma}^{\alpha,\beta}(f) = 0\}.
\end{align*}
So, we do not know that \eqref{SP} has the ground state in the usual sense.
A set $\mathcal{G}_{\omega,\gamma}$ of whole of the ground state is defined as
\begin{align*}
	\mathcal{G}_{\omega,\gamma}
		:= \{\phi \in \mathcal{A}_{\omega,\gamma} : S_{\omega,\gamma}(\phi) \leq S_{\omega,\gamma}(\psi)\text{ for any }\psi \in \mathcal{A}_{\omega,\gamma}\},
\end{align*}
where
\begin{align*}
	\mathcal{A}_{\omega,\gamma}
		& := \{\psi \in H^1(\mathbb{R}^d) \setminus \{0\} : S_{\omega,\gamma}'(\psi) = 0\}.
\end{align*}
It is known that if $\gamma = 0$, then $n_{\omega,0}^{\alpha,\beta}$ is independent of $(\alpha,\beta)$ and $n_{\omega,0}^{\alpha,\beta}$ is attained by the ground state $Q_{\omega,0}$ to \eqref{SP} with $\gamma = 0$, that is, $n_{\omega,0}^{\alpha,\beta} = S_{\omega,0}(Q_{\omega,0})$.
For simplicity, we express $n_{\omega,\gamma}^{\alpha,\beta} = n_{\omega,\gamma}$ for $\gamma \geq 0$ from now on.
By using the ground state $Q_{1,0}$ to \eqref{SP} with $\omega = 1$ and $\gamma = 0$, Dinh \cite{Din21} showed a uniformly bounded result and a blow-up result (see also \cite{MiaZhaZhe18}).

\begin{theorem}[Boundedness versus unboundedness \Rnum{1}, \cite{Din21}]\label{Din21}
Let $d \geq 1$, $1 + \frac{4}{d} < p < \infty$ if $d = 1, 2$, $1 + \frac{4}{d} < p < 1 + \frac{4}{d-2}$ if $d \geq 3$, $\gamma > 0$, $0 < \mu < \min\{2,d\}$, and $u_0 \in H^1(\mathbb{R}^d)$.
Let $Q_{1,0}$ be the ground state to \eqref{SP} with $\omega = 1$ and $\gamma = 0$.
We assume that $u_0$ satisfies
\begin{align}
	M[u_0]^{1-s_c}E_\gamma[u_0]^{s_c}
		< M[Q_{1,0}]^{1-s_c}E_0[Q_{1,0}]^{s_c}, \label{131}
\end{align}
where $s_c := \frac{d}{2} - \frac{2}{p-1}$.
\begin{itemize}
\item (Boundedness)
If $u_0$ satisfies
\begin{align}
	\|u_0\|_{L^2}^{1-s_c}\|\nabla u_0\|_{L^2}^{s_c}
		< \|Q_{1,0}\|_{L^2}^{1-s_c}\|\nabla Q_{1,0}\|_{L^2}^{s_c} \label{130}
\end{align}
then, a solution $u$ to \eqref{NLS} with \eqref{IC} exists globally in time and satisfies
\begin{align*}
	\|u(t)\|_{L_x^2}^{1-s_c}\|\nabla u(t)\|_{L_x^2}^{s_c}
		< \|Q_{1,0}\|_{L^2}^{1-s_c}\|\nabla Q_{1,0}\|_{L^2}^{s_c}
\end{align*}
for any $t \in \mathbb{R}$.
\item (Unboundedness)
We assume that $u_0$ satisfies
\begin{align}
	\text{$|x|u_0\in L^2(\mathbb{R}^d)$ or ``$u_0 \in H_\text{rad}^1(\mathbb{R}^d)$ with $d \geq 2$ and $1 < p \leq 5$''}. \label{133}
\end{align}
If $u_0$ satisfies $E_\gamma(u_0) < 0$ or
\begin{align}
	\|u_0\|_{L^2}^{1-s_c}\|\nabla u_0\|_{L^2}^{s_c}
		> \|Q_{1,0}\|_{L^2}^{1-s_c}\|\nabla Q_{1,0}\|_{L^2}^{s_c} \label{132}
\end{align}
with $E_\gamma (u_0)\geq0$, then $u$ blows up.
\end{itemize}
\end{theorem}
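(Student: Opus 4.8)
The plan is to establish the dichotomy by the Holmer--Roudenko mass--energy method. Since $p>1+\tfrac4d$ the equation is $L^2$-supercritical, so the sharp Gagliardo--Nirenberg inequality carries a genuine threshold; the repulsive potential is handled through two observations. First, because $\tfrac{\gamma}{|x|^\mu}\ge 0$ we have $E_\gamma[f]\ge E_0[f]$ for every $f\in H^1(\mathbb{R}^d)$, so the energy threshold appearing in \eqref{131} is the free one. Second, since $x\cdot\nabla\!\bigl(\tfrac{\gamma}{|x|^\mu}\bigr)=-\mu\tfrac{\gamma}{|x|^\mu}$, the virial identity reads
\[
	\frac{d^2}{dt^2}\int_{\mathbb{R}^d}|x|^2|u(t,x)|^2\,dx=4K_\gamma[u(t)],
\]
and, writing $\theta:=\tfrac{d(p-1)}{2}$ (so $\theta>2$ because $p>1+\tfrac4d$), the potential enters the analysis of $K_\gamma$ with a harmless sign since $\mu<\min\{2,d\}\le 2<\theta$. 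Throughout, the local theory, the blow-up alternative, and conservation of $M$ and $E_\gamma$ recalled above are used freely.

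I would first carry out the variational trapping. Note $p+1-\theta>0$ by energy-subcriticality. The sharp inequality $\|f\|_{L^{p+1}}^{p+1}\le C_{GN}\|\nabla f\|_{L^2}^{\theta}\|f\|_{L^2}^{p+1-\theta}$ is attained at $Q_{1,0}$, and the Pohozaev identities for $-\Delta Q_{1,0}+Q_{1,0}=|Q_{1,0}|^{p-1}Q_{1,0}$ give $\|\nabla Q_{1,0}\|_{L^2}^2=\tfrac{d(p-1)}{2(p+1)}\|Q_{1,0}\|_{L^{p+1}}^{p+1}$ and $E_0[Q_{1,0}]=\tfrac{s_c}{d}\|\nabla Q_{1,0}\|_{L^2}^2>0$, which pins down $C_{GN}$. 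Combining conservation of mass and energy with $E_\gamma\ge E_0$ and the sharp inequality yields, for all $t$ in the maximal lifespan,
\[
	E_\gamma[u_0]=E_\gamma[u(t)]\ge E_0[u(t)]\ge h\bigl(\|\nabla u(t)\|_{L^2}\bigr),\qquad h(y):=\tfrac12 y^2-c\,y^{\theta},\quad c:=\tfrac{C_{GN}}{p+1}M[u_0]^{\frac{p+1-\theta}{2}}.
\]
The function $h$ vanishes at $0$, increases to a unique maximum at $y_0=(\theta c)^{-\frac{1}{\theta-2}}$ with $h(y_0)=\tfrac{\theta-2}{2\theta}y_0^2$, then decreases to $-\infty$; a scaling computation identifies $M[u_0]^{1-s_c}h(y_0)^{s_c}=M[Q_{1,0}]^{1-s_c}E_0[Q_{1,0}]^{s_c}$ and $\|u_0\|_{L^2}^{1-s_c}y_0^{s_c}=\|Q_{1,0}\|_{L^2}^{1-s_c}\|\nabla Q_{1,0}\|_{L^2}^{s_c}$. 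Hence \eqref{131} forces $E_\gamma[u_0]<h(y_0)$, so the continuous map $t\mapsto\|\nabla u(t)\|_{L^2}$ can never equal $y_0$, and it stays on the side of $y_0$ picked out by the initial datum. When \eqref{130} holds --- equivalently $\|\nabla u_0\|_{L^2}<y_0$ --- we get $\|\nabla u(t)\|_{L^2}<y_0$ for every $t$ in the maximal lifespan; with mass conservation this is a uniform $H^1$-bound, so the blow-up alternative gives $T_{\max}=+\infty$, $T_{\min}=-\infty$, and the asserted inequality persists for all $t\in\mathbb{R}$. This settles the boundedness part.

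For the unboundedness part, eliminating $\|u\|_{L^{p+1}}^{p+1}$ from $K_\gamma$ by means of the energy gives
\[
	K_\gamma[u(t)]=(2-\theta)\|\nabla u(t)\|_{L^2}^2+(\mu-\theta)\int_{\mathbb{R}^d}\tfrac{\gamma}{|x|^\mu}|u(t,x)|^2\,dx+2\theta E_\gamma[u_0],
\]
so, as $\mu-\theta<0$ and the potential integral is nonnegative, $K_\gamma[u(t)]\le(2-\theta)\|\nabla u(t)\|_{L^2}^2+2\theta E_\gamma[u_0]$. If $E_\gamma[u_0]<0$ this is $\le 2\theta E_\gamma[u_0]<0$ uniformly in $t$. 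If $E_\gamma[u_0]\ge 0$ and \eqref{132} holds, then $\|\nabla u(t)\|_{L^2}>y_0$ for all $t$ by the trapping, while \eqref{131} yields $2\theta E_\gamma[u_0]<2\theta h(y_0)=(\theta-2)y_0^2$; since $2-\theta<0$ this gives the uniform bound $K_\gamma[u(t)]\le(2-\theta)y_0^2+2\theta E_\gamma[u_0]<0$, and spending a small fraction of $\|\nabla u(t)\|_{L^2}^2$ upgrades it to the coercive form $K_\gamma[u(t)]\le-\eta\|\nabla u(t)\|_{L^2}^2$ for some fixed $\eta>0$. In all cases $K_\gamma[u(t)]\le-\delta$ for some $\delta>0$ throughout the lifespan. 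Now I would run the convexity argument: if $|x|u_0\in L^2$, then $V(t):=\int_{\mathbb{R}^d}|x|^2|u(t,x)|^2\,dx$ is nonnegative and $C^2$ with $V''(t)=4K_\gamma[u(t)]\le-4\delta$, so $V(t)\le V(0)+V'(0)t-2\delta t^2$ must become negative in finite time, which is impossible; hence $T_{\max}<\infty$ and $T_{\min}>-\infty$. If instead $u_0\in H_\text{rad}^1(\mathbb{R}^d)$ with $d\ge 2$ and $1<p\le 5$, I would replace $|x|^2$ by a radial weight $\varphi_R(x)=R^2\varphi(|x|/R)$ that equals $|x|^2$ near the origin and is bounded at infinity, so that $V_R''=4K_\gamma[u]+\mathcal{E}_R$ with $\mathcal{E}_R$ supported in $\{|x|\gtrsim R\}$; the radial Sobolev inequality bounds $\mathcal{E}_R$ by $o_R(1)\|\nabla u\|_{L^2}^2$ plus terms of order $R^{-(d-1)(p-1)/2}$, and the coercive bound $K_\gamma[u(t)]\le-\eta\|\nabla u(t)\|_{L^2}^2$ lets one absorb these for $R$ large to obtain $V_R''\le-2\delta'<0$, after which the same convexity conclusion forces blow-up (see \cite{Din21}).

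The hard part will be, on the one hand, verifying that the inverse-power potential is genuinely harmless: this reduces to the two sign facts above --- $E_\gamma\ge E_0$ in the trapping, and $\mu<\theta$ together with $x\cdot\nabla\!\bigl(\tfrac{\gamma}{|x|^\mu}\bigr)=-\mu\tfrac{\gamma}{|x|^\mu}$ in the virial identity --- so that the computation of the Gagliardo--Nirenberg/Pohozaev constants and all the scaling identities proceed exactly as in the free case. On the other hand, the only genuinely technical step is the radial, infinite-variance blow-up: the control of the localized-virial error $\mathcal{E}_R$, which is precisely where $d\ge2$ (radial Sobolev embedding) and $p\le5$ (so that the power of $\|\nabla u\|_{L^2}$ produced by $\mathcal{E}_R$ does not exceed the quadratic term supplied by the coercive bound on $K_\gamma$) are used. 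The rest --- the continuity and convexity arguments --- is standard and insensitive to the long-range nature of the potential.
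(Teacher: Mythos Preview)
The paper does not give its own proof of this theorem: it is stated as a cited result from \cite{Din21}, with no argument in the text. There is therefore nothing to compare against in the paper itself.

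Your proof sketch is correct and is essentially the argument one finds in \cite{Din21}, which in turn follows Holmer--Roudenko. The two structural points you isolate are exactly the ones that make the repulsive potential transparent: $E_\gamma\ge E_0$ puts the free Gagliardo--Nirenberg threshold in play for the trapping, and the identity $K_\gamma[u]=(2-\theta)\|\nabla u\|_{L^2}^2+(\mu-\theta)\int\tfrac{\gamma}{|x|^\mu}|u|^2+2\theta E_\gamma[u]$ with $\mu<2<\theta$ lets you drop the potential term with the right sign in the blow-up argument. The upgrade to the coercive form $K_\gamma[u(t)]\le-\eta\|\nabla u(t)\|_{L^2}^2$ goes through because the strict inequality in \eqref{131} leaves room to borrow an $\eta\|\nabla u(t)\|_{L^2}^2$ while keeping $(2-\theta+\eta)y_0^2+2\theta E_\gamma[u_0]<0$; you state this tersely but the mechanism is standard. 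Your identification of where $d\ge2$ and $p\le5$ enter in the radial localized-virial error control is accurate: the radial Sobolev estimate produces a power $\|\nabla u\|_{L^2}^{(p-1)/2}$ in the exterior $L^{p+1}$ term, and $(p-1)/2\le2$ is what allows absorption into the coercive quadratic bound.
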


By utilizing the ground state $Q_{1,0}$ to \eqref{SP} with $\omega = 1$ and $\gamma = 0$ and the ``radial'' ground state $Q_{\omega,\gamma}$ to \eqref{SP}, the authors \cite{HamIkeNonlinear} showed a uniformly bounded result, a blow-up or grow-up result, and a blow-up result.

\begin{theorem}[Boundedness versus unboundedness \Rnum{2}, \cite{HamIkeNonlinear}]\label{HamIkeNonlinear}
Let $d \geq 1$, $1 + \frac{4}{d} < p < \infty$ if $d = 1, 2$, $1 + \frac{4}{d} < p < 1 + \frac{4}{d-2}$ if $d \geq 3$, $\gamma > 0$, $0 < \mu < \min\{2,d\}$, and $u_0 \in H^1(\mathbb{R}^d)$.
Let $Q_{1,0}$ be the ground state to \eqref{SP} with $\omega = 1$ and $\gamma = 0$ and $Q_{\omega,\gamma}$ be the ``radial'' ground state to \eqref{SP}.
\begin{itemize}
\item (Boundedness)
Under \eqref{131}, the three conditions \eqref{130},
\begin{align*}
	\|u_0\|_{L^2}^{1-s_c}\|(-\Delta_\gamma)^\frac{1}{2}u_0\|_{L^2}^{s_c}
		< \|Q_{1,0}\|_{L^2}^{1-s_c}\|\nabla Q_{1,0}\|_{L^2}^{s_c},\ \ \text{ and }\ \ 
	K_\gamma(u_0)
		\geq 0
\end{align*}
are equivalent.
Moreover, if $d \geq 2$ and $u_0 \in H_\text{rad}^1(\mathbb{R}^d)$ satisfies
\begin{align}
	S_{\omega,\gamma}(u_0)
		< S_{\omega,\gamma}(Q_{\omega,\gamma})\ \text{ for some }\ \omega > 0 \label{134}
\end{align}
and $K_\gamma(u_0) \geq 0$, then a solution $u$ to \eqref{NLS} exists globally in time and is uniformly bounded in $H^1(\mathbb{R}^d)$ for time $t$.
\item (Unboundedness)
Under \eqref{131}, the three conditions \eqref{132},
\begin{align*}
	\|u_0\|_{L^2}^{1-s_c}\|(-\Delta_\gamma)^\frac{1}{2}u_0\|_{L^2}^{s_c}
		> \|Q_{1,0}\|_{L^2}^{1-s_c}\|\nabla Q_{1,0}\|_{L^2}^{s_c},\ \ \text{ and }\ \ 
	K_\gamma(u_0)
		< 0
\end{align*}
are equivalent.
This equivalence deduces that if $u_0$ satisfies \eqref{131}, \eqref{133}, and \eqref{132}, then a solution $u$ to \eqref{NLS} with \eqref{IC} blows up.
Moreover, it follows without \eqref{133} that $u$ blows up or grows up.
Furthermore, if $d \geq 2$, $1 < p \leq 5$, and $u_0 \in H_\text{rad}^1(\mathbb{R}^d)$ satisfies \eqref{134} and $K_\gamma(u_0) < 0$, then $u$ blows up.
\end{itemize}
\end{theorem}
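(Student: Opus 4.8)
\emph{Strategy.} I would prove the theorem in three steps: (i) the equivalences among the three pointwise conditions on $u_0$ under \eqref{131}; (ii) global existence with a uniform $H^1$‑bound in the boundedness regime; (iii) blow‑up, or blow‑up/grow‑up, in the unboundedness regime. Steps (ii)--(iii) are then run once more under the radial variational hypothesis \eqref{134}. The engine is the sharp Gagliardo--Nirenberg inequality $\|f\|_{L^{p+1}}^{p+1}\le C_{\mathrm{GN}}\|f\|_{L^2}^{(p+1)-\frac{d(p-1)}{2}}\|\nabla f\|_{L^2}^{\frac{d(p-1)}{2}}$, extremised by $Q_{1,0}$, together with the Pohozaev/Nehari identities $2\|\nabla Q_{1,0}\|_{L^2}^2=\frac{d(p-1)}{p+1}\|Q_{1,0}\|_{L^{p+1}}^{p+1}$ and $E_0[Q_{1,0}]=(\frac12-\frac{2}{d(p-1)})\|\nabla Q_{1,0}\|_{L^2}^2$; mass‑supercriticality $d(p-1)>4$ and $0<\mu<2$ keep positive the coefficients $\frac12-\frac{2}{d(p-1)}$ and $\frac12-\frac{\mu}{d(p-1)}$ that recur below. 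Write $\rho(f):=\|f\|_{L^2}^{1-s_c}\|\nabla f\|_{L^2}^{s_c}/(\|Q_{1,0}\|_{L^2}^{1-s_c}\|\nabla Q_{1,0}\|_{L^2}^{s_c})$, so \eqref{130} reads $\rho(u_0)<1$ and \eqref{132} reads $\rho(u_0)>1$, and note $E_\gamma[f]\ge E_0[f]$ since $\gamma|x|^{-\mu}\ge0$.

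\emph{Step (i).} Inserting the sharp inequality into the definition of $K_\gamma$ and normalising by the identities for $Q_{1,0}$ gives $K_\gamma(f)\ge 2\|\nabla f\|_{L^2}^2(1-\rho(f)^{p-1})$, whence $\rho(u_0)<1\Rightarrow K_\gamma(u_0)>0$. For the converse, the algebraic identity $E_\gamma[f]-\frac1{d(p-1)}K_\gamma(f)=(\frac12-\frac2{d(p-1)})\|\nabla f\|_{L^2}^2+(\frac12-\frac{\mu}{d(p-1)})\int\frac{\gamma}{|x|^\mu}|f|^2\,dx$ and the bound $K_\gamma(f)\le d(p-1)E_\gamma[f]-(\frac{d(p-1)}{2}-2)\|\nabla f\|_{L^2}^2$, combined with the estimate on $E_\gamma[u_0]$ furnished by \eqref{131}, force $\rho(u_0)<1$ when $K_\gamma(u_0)\ge0$ and $\rho(u_0)>1$ when $K_\gamma(u_0)<0$. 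Hence, under \eqref{131}, $K_\gamma(u_0)\ge0\Leftrightarrow\eqref{130}$ and $K_\gamma(u_0)<0\Leftrightarrow\eqref{132}$. The $(-\Delta_\gamma)^{1/2}$‑versions are squeezed between these: one direction is trivial since $\|(-\Delta_\gamma)^{1/2}f\|_{L^2}\ge\|\nabla f\|_{L^2}$, and the other follows from $\|(-\Delta_\gamma)^{1/2}f\|_{L^2}^2=2E_\gamma[f]+\frac{2}{p+1}\|f\|_{L^{p+1}}^{p+1}$ (immediate from the definition of $E_\gamma$), its $\gamma=0$ analogue for $Q_{1,0}$, and the Gagliardo--Nirenberg control of $\|u_0\|_{L^{p+1}}$ under \eqref{131}.

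\emph{Steps (ii)--(iii).} Since $M[u]$ and $E_\gamma[u]$ are conserved and the computation of Step (i) shows $\rho(f)\neq1$ on the sublevel set cut out by \eqref{131}, the sign of $\rho(u(t))-1$ — equivalently the sign of $K_\gamma(u(t))$ — is constant in $t$ under \eqref{131}, by continuity of $t\mapsto u(t)\in H^1$ and of $K_\gamma$ on $H^1$ (the potential term being $H^1$‑continuous as $\mu<\min\{2,d\}$). If $\rho(u_0)<1$ then $K_\gamma(u(t))>0$ for all $t$, and the identity above yields $\|\nabla u(t)\|_{L^2}^2\lesssim E_\gamma[u_0]$, hence a uniform $H^1$‑bound and global existence by the blow‑up alternative. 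If $\rho(u_0)>1$, the same trapping keeps $\rho(u(t))$ bounded away from $1$, which through the bound on $K_\gamma$ and \eqref{131} gives $K_\gamma(u(t))\le-\delta<0$ for all $t$; the virial identity $\frac{d^2}{dt^2}\int|x|^2|u(t)|^2\,dx=4K_\gamma(u(t))$ (using $x\cdot\nabla(\gamma|x|^{-\mu})=-\mu\gamma|x|^{-\mu}$) then forces finite‑time blow‑up by convexity when $|x|u_0\in L^2$, and likewise with $|x|^2$ replaced by a radial cutoff $\chi_R$ ($\chi_R\sim|x|^2$ on $\{|x|\le R\}$) when $u_0\in H_{\mathrm{rad}}^1$, $d\ge2$, $1<p\le5$; dropping \eqref{133}, the truncated virial only precludes $\|\nabla u(t)\|_{L^2}$ from staying bounded, i.e.\ blow‑up or grow‑up. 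Under \eqref{134}, fix an admissible $\omega$: then $S_{\omega,\gamma}[u(t)]\equiv S_{\omega,\gamma}[u_0]<S_{\omega,\gamma}(Q_{\omega,\gamma})=r_{\omega,\gamma}$, and if $K_\gamma(u(t))$ changed sign there would be $t_1$ with $K_\gamma(u(t_1))=0$, $u(t_1)\neq0$, so by the characterisation $r_{\omega,\gamma}=r_{\omega,\gamma}^{d,2}$ one gets $S_{\omega,\gamma}[u(t_1)]\ge r_{\omega,\gamma}$ — a contradiction; hence the sign of $K_\gamma(u(t))$ is again constant, and on $\{K_\gamma\ge0\}$ the coercivity $\|f\|_{H^1}^2\lesssim S_{\omega,\gamma}(f)-\frac1{d(p-1)}K_\gamma(f)$ gives the uniform bound, while on $\{K_\gamma<0\}$ a scaling argument along $\lambda\mapsto e^{d\lambda}f(e^{2\lambda}\,\cdot\,)$ gives $K_\gamma(u(t))\le S_{\omega,\gamma}[u_0]-r_{\omega,\gamma}<0$ uniformly, after which the (localised) virial closes the blow‑up.

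\emph{Main obstacle.} I expect the hard part to be the localised virial of Step (iii) for radial infinite‑variance data in the presence of the singular long‑range potential $\gamma|x|^{-\mu}$: commuting $-\Delta_\gamma$ with the truncated dilation generator built from $\chi_R$ leaves a nonlinear remainder — handled via the radial decay $\|u\|_{L^\infty(|x|\ge R)}\lesssim R^{-(d-1)/2}\|u\|_{L^2}^{1/2}\|\nabla u\|_{L^2}^{1/2}$, which is exactly where $p\le5$ enters, the exponent count closing only up to the quintic power — and a potential remainder $\int_{|x|\ge R}\gamma|x|^{-\mu}|u|^2\,dx$, and both must be made $o_R(1)$ relative to $|K_\gamma(u(t))|$ \emph{uniformly in $t$}. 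Since no a priori uniform bound on $\|\nabla u(t)\|_{L^2}$ is available in the grow‑up scenario, the estimate should be arranged as a continuity/contradiction argument on $t\mapsto\int\chi_R|u(t)|^2\,dx$ rather than a direct convexity bound; the potential term is comparatively mild ($\mu<2$ makes $|x|^{-\mu}$ form‑bounded), but its interaction with the cutoff still has to be tracked with care.
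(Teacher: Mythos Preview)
The present paper does not prove Theorem~\ref{HamIkeNonlinear}: it is quoted from the companion preprint \cite{HamIkeNonlinear}, and no argument for it appears here. So there is no ``paper's own proof'' against which to compare your proposal. What the paper does contain are some of the ingredients you would need in the radial/variational part --- Proposition~\ref{Equivalence of H1 and S}, Lemma~\ref{Uniform estimate for K}, Theorem~\ref{Global well-posedness}, Lemma~\ref{Estimate of K from below}, and Proposition~\ref{Initial data set} --- all stated for $d=p=3$ and mostly cited from \cite{HamIkeISAAC}; these match the machinery you invoke in Steps~(ii)--(iii) under \eqref{134}.

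Your outline is the standard one for results of this type and is sound in its architecture: sharp Gagliardo--Nirenberg plus the Pohozaev identities for $Q_{1,0}$ to get the pointwise equivalences under \eqref{131}; conservation and continuity to trap the sign of $K_\gamma(u(t))$; coercivity of $S_{\omega,\gamma}-\frac{1}{d(p-1)}K_\gamma$ (i.e.\ of $T_{\omega,\gamma}^{d,2}$) for the uniform $H^1$ bound; and (localised) virial with the Ogawa--Tsutsumi radial estimate for blow-up. Two small points worth tightening if you write this out: first, in the $(-\Delta_\gamma)^{1/2}$ equivalence, the ``squeeze'' you describe is correct but the nontrivial direction (that the $(-\Delta_\gamma)^{1/2}$ inequality implies $K_\gamma(u_0)\ge0$ under \eqref{131}) deserves an explicit line, since it is the content of the theorem rather than a triviality; second, in the uniform negative bound $K_\gamma(u(t))\le S_{\omega,\gamma}[u_0]-r_{\omega,\gamma}$ under \eqref{134}, you should make precise that this follows from Lemma~\ref{Rewriting of infimum} applied with $(\alpha,\beta)=(d,2)$ (not just ``a scaling argument''), which is exactly the mechanism the paper records. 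Your identification of the main obstacle --- controlling the localised-virial remainders uniformly in $t$ without an a priori $\dot H^1$ bound --- is accurate and is indeed where the restriction $p\le 5$ enters.
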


\begin{remark}
For the conditions \eqref{131} and \eqref{134} in Theorem \ref{HamIkeNonlinear}, we remark the following.
The condition \eqref{134} is weaker than the condition \eqref{131}, that is, if $u_0 \in H_\text{rad}^1(\mathbb{R}^d)$ satisfies \eqref{131}, then \eqref{134} holds.
We can see the fact from the following two results:
\begin{itemize}
\item
$S_{\omega,\gamma}(u_0) < n_{\omega,\gamma}$ and \eqref{131} are equivalent,
\item
$n_{\omega,\gamma}$ is not attained and $r_{\omega,\gamma}$ is attained
\end{itemize}
(see also \cite[Proposition 1.8 and Remark 1.16]{HamIkeNonlinear}).
\end{remark}

Guo--Wang--Yao \cite{GuoWanYao18} investigated asymptotic behavior of the global solutions by restricting the range of $\mu$ to $1 < \mu < 2$.
The proof is based on Kenig--Merle argument \cite{KenMer06} (see also \cite{AkaNaw13, DuyHolRou08, FanXieCaz11, HolRou08}).

\begin{theorem}[Scattering versus blow-up or grow-up, \cite{GuoWanYao18}]
Let $d = 3$, $p = 3$, $\gamma > 0$, $1 < \mu < 2$, and $u_0 \in H^1(\mathbb{R}^d)$.
We assume that $u_0$ satisfies \eqref{131}.
\begin{itemize}
\item (Scattering)
If $u_0$ satisfies $K_\gamma(u_0) \geq 0$, then the solution $u$ to \eqref{NLS} with \eqref{IC} scatters.
\item (Blow-up or grow-up)
If $u_0$ satisfies $K_\gamma(u_0) < 0$, then the solution $u$ to \eqref{NLS} with \eqref{IC} blows up or grows up.
\end{itemize}
\end{theorem}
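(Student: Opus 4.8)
The blow-up-or-grow-up alternative needs no new argument: under \eqref{131} the hypothesis $K_\gamma(u_0)<0$ is precisely condition \eqref{132}, and the unboundedness part of Theorem~\ref{HamIkeNonlinear} already asserts that the corresponding solution blows up or grows up. The content of the statement is therefore the scattering assertion, and the plan is to run the Kenig--Merle concentration-compactness/rigidity scheme \cite{KenMer06} for \eqref{NLS} with the operator $-\Delta_\gamma$, $d=3$, $p=3$ (the $\dot H^{s_c}$-critical cubic equation, $s_c=\tfrac12$) and $1<\mu<2$. First I would collect the variational and perturbative inputs: by the sharp Gagliardo--Nirenberg inequality together with \eqref{131}, the boundedness part of Theorem~\ref{HamIkeNonlinear} and Theorem~\ref{Din21}, every solution with $K_\gamma(u_0)\ge0$ is global, uniformly bounded in $H^1$, stays in $\{K_\gamma\ge0\}$, and obeys a quantitative coercivity estimate such as $K_\gamma(u(t))\gtrsim\min\{\|\nabla u(t)\|_{L^2}^2,\|u(t)\|_{L^{p+1}}^{p+1}\}$ uniformly in $t$. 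One also needs Strichartz estimates for $e^{it\Delta_\gamma}$ --- available for this scaling-subcritical potential because $-\Delta_\gamma$ is purely absolutely continuous with free-type dispersion --- hence local well-posedness in the critical Strichartz norm, small-data scattering in $\dot H^{1/2}$, and a long-time perturbation lemma.

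Suppose scattering fails somewhere below the threshold $M[Q_{1,0}]^{1-s_c}E_0[Q_{1,0}]^{s_c}$. A standard minimization over mass-energy levels then produces a critical element $u_c$: a global, non-scattering solution with $K_\gamma(u_c(t))\ge\delta>0$ whose orbit is precompact in $H^1$ modulo spatial translations $x(t)$. The feature that distinguishes this from the translation-invariant case is that the potential localizes the equation near the origin, so in the linear profile decomposition adapted to $e^{it\Delta_\gamma}$ one must split profiles according to whether their spatial centers $x_n$ stay bounded --- in which case the potential is retained --- or satisfy $|x_n|\to\infty$, in which case $\gamma|x|^{-\mu}$ disappears in the limit and the profile evolves by the free cubic flow $i\partial_t+\Delta$. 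Since $\gamma|x|^{-\mu}\ge0$ gives $M[u_0]^{1-s_c}E_0[u_0]^{s_c}\le M[u_0]^{1-s_c}E_\gamma[u_0]^{s_c}$, any escaping profile sits strictly below the free-NLS ground-state threshold and hence scatters by the Duyckaerts--Holmer--Roudenko theorem \cite{DuyHolRou08}. This both forces a single surviving profile in the decomposition and shows that $x(t)$ must remain bounded, so $\{u_c(t):t\in\mathbb{R}\}$ is genuinely precompact in $H^1$.

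It remains to kill $u_c$ by rigidity. I would use a truncated virial identity: with $\chi_R$ a smooth weight equal to $|x|^2$ on $\{|x|\le R\}$ and globally bounded with bounded derivatives, $\tfrac{d^2}{dt^2}\int\chi_R(x)|u_c(t,x)|^2\,dx = 4K_\gamma(u_c(t)) + \mathcal{E}_R(t)$, where $\mathcal{E}_R(t)$ gathers the kinetic, nonlinear, and potential contributions supported in $\{|x|>R\}$ (the potential one involving $\gamma\int_{|x|>R}|x|^{-\mu}|u_c|^2$ and its virial analogue, whose sign is favorable because $x\cdot\nabla(\gamma|x|^{-\mu})=-\mu\gamma|x|^{-\mu}\le0$, the repulsivity that makes $K_\gamma$ the natural functional). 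Precompactness of the orbit makes the kinetic and nonlinear tails uniformly small for $R$ large, and the decay rate $|x|^{-\mu}$ with $\mu>1$ makes the potential tails uniformly small as well; choosing $R$ so that $|\mathcal{E}_R(t)|\le2\delta$ (recall $K_\gamma(u_c(t))\ge\delta$), we get $\tfrac{d^2}{dt^2}\int\chi_R|u_c|^2\ge2\delta$ for all $t$, hence $2\delta T\lesssim\sup_t\big|\tfrac{d}{dt}\int\chi_R|u_c(t)|^2\,dx\big|\lesssim R\,\sup_t\|u_c(t)\|_{L^2}\|\nabla u_c(t)\|_{L^2}\lesssim R$, which is absurd as $T\to\infty$. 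Therefore $u_c\equiv0$, contradicting non-scattering, so every solution below the threshold with $K_\gamma(u_0)\ge0$ scatters.

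The two steps I expect to be genuinely delicate are the linear profile decomposition for $e^{it\Delta_\gamma}$, including the approximation estimates comparing $e^{it\Delta_\gamma}$ with $e^{it\Delta}$ after large translations that are needed to treat the escaping profiles, and the uniform-in-time control of the potential-dependent remainder $\mathcal{E}_R(t)$ in the truncated virial; it is precisely these potential-dependent estimates that force the restriction $1<\mu<2$ in the cited theorem.
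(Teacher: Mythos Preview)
This theorem is quoted from \cite{GuoWanYao18}; the present paper does not supply its own proof but only remarks that ``the proof is based on Kenig--Merle argument \cite{KenMer06}.'' Your outline is exactly that scheme and is correct in its essentials: global bounds and coercivity of $K_\gamma$ under \eqref{131}, Strichartz-based small-data theory and long-time perturbation, a linear profile decomposition for $e^{it\Delta_\gamma}$ carrying both time and space translations, the dichotomy between profiles with bounded centers (which keep the potential) and escaping centers (which see the free cubic NLS and scatter by \cite{DuyHolRou08}), and rigidity via a truncated virial. This is the right road map for \cite{GuoWanYao18}.

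One refinement: your closing sentence misplaces where the restriction $1<\mu<2$ actually bites. The rigidity step does \emph{not} need $\mu>1$; once the orbit is precompact in $H^1$ with no escaping translation, the potential tail $\int_{|x|>R}\gamma|x|^{-\mu}|u_c|^2\,dx$ is small simply because the mass is uniformly concentrated, for any $\mu>0$ (indeed the paper's own Theorem~\ref{Rigidity} covers the full range $0<\mu<2$). The obstruction sits entirely in the profile decomposition: to treat profiles with $|x_n|\to\infty$ you must compare $e^{it\Delta_\gamma}(\,\cdot\,-x_n)$ with $e^{it\Delta}$, and this relies on the existence of the wave operators $s\!-\!\lim_{t\to\pm\infty}e^{-it\Delta_\gamma}e^{it\Delta}$, which the paper recalls exist precisely when $1<\mu<2$ (short range) and fail for $0<\mu\le 1$ (long range). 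That is why the present paper, in proving its own Theorem~\ref{Scattering}, restricts to radial data: radiality kills the spatial-translation parameter in the profile decomposition (Theorem~\ref{Linear profile decomposition}), so the comparison with the free flow is never needed and the full range $0<\mu<2$ becomes accessible.
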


We note that in the case of $\mu = 2$, the following results are known.
Killip--Murphy--Visan--Zheng \cite{KilMurVisZhe17} proved a similar scattering result and a similar blow-up result.
Lu--Miao--Murphy \cite{LuMiaMur18} and Zheng \cite{Zhe18} proved a similar scattering result.
Dinh \cite{Din18} proved a global well-posedness result and a blow-up result.
Dinh \cite{Din181} proved strong instability of standing waves to \eqref{SP}.

We consider the following linear equation corresponding to \eqref{NLS}:
\begin{align}
\label{143}
\begin{cases}
&\hspace{-0.4cm}\displaystyle{
	i\partial_tu + \Delta_\gamma u
		= 0,
	} \\
&\hspace{-0.4cm}\displaystyle{
	u(0,x)
		= \psi(x)
	}.
\end{cases}
\end{align}
The time behavior of linear solutions to \eqref{143} changes as the borderline $\mu = 1$.
When $1 < \mu \leq \min\{d,2\}$, the potential $\frac{\gamma}{|x|^\mu}$ is called short range potential and satisfies that there exists $\psi_+ \in H^1(\mathbb{R}^d)$ (resp. $\psi_- \in H^1(\mathbb{R}^d)$) such that
\begin{align*}
	\lim_{t \rightarrow +\infty}\|e^{it\Delta_\gamma}\psi - e^{it\Delta}\psi_+\|_{H_x^1}
		= 0,\ \ \ 
	\left( \text{resp. }\lim_{t \rightarrow -\infty}\|e^{it\Delta_\gamma}\psi - e^{it\Delta}\psi_-\|_{H_x^1}
		= 0 \right)
\end{align*}
(see \cite{Miz201}).
That is, the nonlinear solution $u$ approaches not only a linear solution $e^{it\Delta_\gamma}u_\pm$ but also a free solution $e^{it\Delta}\psi_\pm$ as $t \rightarrow \pm \infty$ in \cite{GuoWanYao18, KilMurVisZhe17, LuMiaMur18, Zhe18}.
On the other hand, when $0 < \mu \leq 1$, the potential $\frac{\gamma}{|x|^\mu}$ is called long range potential and
\begin{align*}
	s-\lim_{t \rightarrow \pm\infty}e^{-it \Delta_\gamma}e^{it\Delta}\ \text{ in }\ L_x^2(\mathbb{R}^d)
\end{align*}
do not exist (see \cite{ReeSim78}).
That is, even if a nonlinear solution $u$ approaches a linear solution $e^{it\Delta_\gamma}u_\pm$, it does not approach a free solution $e^{it\Delta}\psi_\pm$.
In the case of $\mu = 1$, Miao--Zhang--Zheng \cite{MiaZhaZhe18} showed a scattering result for the nonlinear Schr\"odinger equation with a defocusing nonlinearity
\begin{align}
	i \partial_tu + \Delta u - \frac{\gamma}{|x|}u
		= + |u|^{p-1}u. \label{141}
\end{align}

\begin{theorem}[Miao--Zhang--Zheng, \cite{MiaZhaZhe18}]
Let $d= 3$, $\frac{7}{3} < p < 5$, $\gamma > 0$, and $u_0 \in H^1(\mathbb{R}^3)$.
Then, the solution $u$ to \eqref{141} with \eqref{IC} scatters.
\end{theorem}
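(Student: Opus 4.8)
The statement is the scattering theorem of Miao--Zhang--Zheng for \eqref{141}, and I would prove it by the classical three-step defocusing scheme, carrying the repulsive long-range potential $\gamma/|x|$ (the borderline case $\mu=1$) through each step. \emph{Step 1 (global a priori bound).} First, since the nonlinearity is defocusing and $\gamma/|x|\ge 0$, the conserved energy
\[
	E[u(t)]=\tfrac{1}{2}\|\nabla u(t)\|_{L^2}^2+\frac{\gamma}{2}\int_{\mathbb{R}^3}\frac{|u(t,x)|^2}{|x|}\,dx+\frac{1}{p+1}\|u(t)\|_{L^{p+1}}^{p+1}
\]
is a sum of nonnegative terms, so together with conservation of mass it gives $\sup_{t}\|u(t)\|_{H^1}\le C(M[u_0],E[u_0])$; by the blow-up alternative then $T_{\max}=+\infty$, $T_{\min}=-\infty$, and scattering is reduced to a space--time integrability statement. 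Here one uses, as recalled in the introduction, that $-\Delta_\gamma$ is nonnegative, purely absolutely continuous, and without eigenvalue, together with the equivalence $\|(-\Delta_\gamma)^{1/2}f\|_{L^2}\sim\|\nabla f\|_{L^2}$ on bounded subsets of $H^1(\mathbb{R}^3)$, which follows from Hardy's inequality in $d=3$ via $\int_{\mathbb{R}^3}|f|^2/|x|\,dx\le(\int_{\mathbb{R}^3}|f|^2/|x|^2\,dx)^{1/2}\|f\|_{L^2}\lesssim\|\nabla f\|_{L^2}\|f\|_{L^2}$.

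\emph{Step 2 (Morawetz estimate).} Next I would differentiate the interaction Morawetz action associated with the weight $|x-y|$ applied to $u(t,x)\overline{u(t,y)}$. Its time derivative is a sum of the usual positive Hessian/mass term, a positive contribution coming from the defocusing nonlinearity, and an extra term produced by the commutator of the Morawetz multiplier with the potential; for $V=\gamma/|x|$ this last term has the favorable sign by repulsivity --- concretely one uses $x\cdot\nabla V=-\gamma/|x|\le 0$, so that $\gamma>0$ is genuinely needed here, and checks that the corresponding quadratic form in the tensorized variables is nonnegative (e.g.\ via $-\Delta(1/|x|)\ge 0$). Discarding the good terms and bounding the Morawetz action at the time endpoints by $C\|u\|_{L_t^\infty L_x^2}^3\|u\|_{L_t^\infty\dot H_x^1}$ yields the global space--time bound
\[
	\int_{\mathbb{R}}\int_{\mathbb{R}^3}|u(t,x)|^4\,dx\,dt\lesssim\|u\|_{L_t^\infty L_x^2}^3\,\|u\|_{L_t^\infty\dot H_x^1}\le C(u_0)
\]
by Step 1. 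The only point requiring care is the behaviour at $x=0$, but $1/|x|\in L^1_{\mathrm{loc}}(\mathbb{R}^3)$ and the weight $|x-y|$ is Lipschitz, so the singularity produces no spurious contribution.

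\emph{Step 3 (Strichartz bootstrap and scattering).} With this global bound in hand, I would partition $\mathbb{R}$ into finitely many intervals $I_1,\dots,I_J$ --- with $J$ depending only on $C(u_0)$ --- on each of which $\|u\|_{L^4_{t,x}(I_j\times\mathbb{R}^3)}$ is below a fixed threshold. On each $I_j$ I would feed the Duhamel formula
\[
	u(t)=e^{i(t-t_j)\Delta_\gamma}u(t_j)-i\int_{t_j}^{t}e^{i(t-s)\Delta_\gamma}\big(|u|^{p-1}u\big)(s)\,ds
\]
into the Strichartz estimates for $\{e^{it\Delta_\gamma}\}_{t\in\mathbb{R}}$ (valid in $d=3$ for the repulsive Coulomb potential), estimating the nonlinearity by interpolating the small $L^4_{t,x}$ norm against the bounded $L_t^\infty H_x^1$ norm and the remaining Strichartz norms --- the constraint $\tfrac{7}{3}<p<5$, mass-supercritical and energy-subcritical, is precisely what makes this intercritical estimate close. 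This gives $\|u\|_{S^1(I_j)}\le C$ uniformly in $j$, hence $u\in S^1(\mathbb{R})$, i.e.\ all admissible Strichartz norms of $u$ at the $L^2$ and $\dot H^1$ levels are finite. Finiteness of these norms makes $\{e^{-it\Delta_\gamma}u(t)\}$ a Cauchy net in $H^1$ as $t\to\pm\infty$ by the standard Duhamel-tail estimate, producing $\psi_\pm\in H^1$ with $\|u(t)-e^{it\Delta_\gamma}\psi_\pm\|_{H^1}\to 0$, which is the asserted scattering. (A concentration--compactness/rigidity argument would also work, but is unnecessary in the defocusing setting.)

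\emph{The main obstacle.} Both hard points are tied to $\mu=1$ being long range, and the genuinely new input --- the reason this is a theorem rather than a routine adaptation --- is the linear dispersive theory: one must establish Strichartz and local-smoothing estimates for $e^{it\Delta_\gamma}$ at the $H^1$ level, together with the norm equivalence used in Step 1. The Coulomb potential is a borderline short-range perturbation in $d=3$, so these do hold, but obtaining them needs limiting-absorption and Kato-smoothing input rather than a naive Duhamel expansion around the free flow --- for which the relevant series diverges, in keeping with the nonexistence at $\mu=1$ of the wave operator from $e^{it\Delta_\gamma}$ to $e^{it\Delta}$ recalled in the introduction. Granting the linear theory, the remaining delicate bookkeeping is to organize the interaction Morawetz computation so that every potential-generated term is genuinely controlled, and to make the nonlinearity interpolation of Step 3 close uniformly as $p\downarrow\tfrac{7}{3}$, where the nonlinearity degenerates to the mass-critical power.
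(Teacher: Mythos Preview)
The paper does not prove this theorem. It is stated as a result of Miao--Zhang--Zheng \cite{MiaZhaZhe18} and cited as background in the introduction, with no argument given here; there is therefore no ``paper's own proof'' to compare your proposal against.

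That said, your outline is a faithful sketch of the standard defocusing route and is, to my knowledge, essentially the strategy of \cite{MiaZhaZhe18}: coercive energy gives a global $H^1$ bound; an interaction Morawetz estimate (the repulsivity $x\cdot\nabla V=-\gamma/|x|\le 0$ makes the potential contribution have the good sign) yields a global $L_{t,x}^4$ bound; and a Strichartz bootstrap on subintervals with small $L_{t,x}^4$ norm upgrades this to finite $S^1$ norms and hence scattering. Your identification of the real input --- the Strichartz/local-smoothing theory for $e^{it\Delta_\gamma}$ with the borderline long-range Coulomb potential --- is also correct; in the present paper this linear theory is quoted from \cite{Miz20} (Theorem~\ref{Strichartz estimate}) for the more general range $0<\mu<2$.

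One small correction: the lower threshold $p>\tfrac{7}{3}$ is not a place where ``the nonlinearity degenerates'' in Step~3 uniformly as $p\downarrow\tfrac{7}{3}$; rather $p=\tfrac{7}{3}$ is exactly the $L^2$-critical power in $d=3$, and the interaction-Morawetz output $u\in L_{t,x}^4$ is itself an $\dot H^{1/4}$-level quantity, so closing the bootstrap by interpolation between $L_{t,x}^4$ and the conserved $L_t^\infty H_x^1$ norm requires strict mass-supercriticality. The estimate does not become delicate near $p=\tfrac{7}{3}$ --- it simply fails there, which is why the open interval is sharp from below.
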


In this paper, we prove a scattering result for \eqref{NLS} with a long range potential and a focusing nonlinearity.
For simplicity, we assume $d = p = 3$.
Here, we state our main result.

\begin{theorem}[Scattering]\label{Scattering}
Let $d = p = 3$, $\gamma > 0$, and $0 < \mu < 2$.
Let $Q_{\omega,\gamma}$ be the ``radial'' ground state to \eqref{SP}.
If $u_0 \in H_\text{rad}^1(\mathbb{R}^d)$ satisfies \eqref{134} and $K_{\gamma}(u_0)	\geq 0$, then the solution $u$ to \eqref{NLS} with \eqref{IC} scatters.
\end{theorem}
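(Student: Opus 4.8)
The plan is to run the Kenig--Merle concentration-compactness/rigidity scheme adapted to the long-range potential, following the broad strategy of \cite{GuoWanYao18, MiaZhaZhe18}. First I would set up the variational structure: using Theorem~1.1 and the characterization $\mathcal{G}_{\omega,\gamma,\text{rad}} = \mathcal{M}_{\omega,\gamma,\text{rad}}^{d,2}$, I would show that the region $\{u_0 \in H^1_\text{rad} : S_{\omega,\gamma}(u_0) < S_{\omega,\gamma}(Q_{\omega,\gamma}),\ K_\gamma(u_0) \geq 0\}$ is invariant under the flow and that on this set one has a coercivity estimate of the form $K_\gamma(u(t)) \gtrsim \min\{\|\nabla u(t)\|_{L^2}^2, S_{\omega,\gamma}(Q_{\omega,\gamma}) - S_{\omega,\gamma}(u(t))\}$, together with an \emph{a priori} bound $\|u(t)\|_{H^1} \lesssim 1$ uniformly in $t$ (this uniform boundedness is exactly the boundedness part of Theorem~\ref{HamIkeNonlinear}, which I may invoke). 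This reduces the scattering statement to a uniform spacetime bound $\|u\|_{S([0,\infty))} < \infty$ in a suitable Strichartz-type norm adapted to $e^{it\Delta_\gamma}$.

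Next I would establish the small-data/stability theory: local well-posedness in Strichartz spaces, a small-data scattering criterion, and a long-time perturbation lemma. For this I need dispersive/Strichartz estimates for $e^{it\Delta_\gamma}$. Since $-\Delta_\gamma$ has purely absolutely continuous spectrum $[0,\infty)$ and (for $\gamma>0$, $\mu<2$) the potential is a positive perturbation, the standard Strichartz estimates hold (indeed one can often use equivalence of Sobolev norms $\|(-\Delta_\gamma)^{1/2}f\|_{L^2}\simeq\|\nabla f\|_{L^2}$ and a comparison of propagators via $\mu<2$); this part I would cite or sketch as routine, with care near $d=3$. Then I would define the critical threshold
\[
	\mathcal{E}_c := \sup\{\,E>0 : \|u\|_{S} < \infty \text{ whenever } S_{\omega,\gamma}(u_0) < E,\ K_\gamma(u_0)\geq0,\ u_0\in H^1_\text{rad}\,\}
\]
and argue by contradiction that $\mathcal{E}_c \geq S_{\omega,\gamma}(Q_{\omega,\gamma})$. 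If $\mathcal{E}_c < S_{\omega,\gamma}(Q_{\omega,\gamma})$, a profile decomposition for $e^{it\Delta_\gamma}$ (radial, so no spatial translations, and with time translations handled by the free-wave approximation away from the origin — this is where $\mu<2$ and the long-range nature interact) produces a critical element: a global radial solution $u_c$ with $S_{\omega,\gamma}(u_{c,0}) = \mathcal{E}_c$, $K_\gamma \geq 0$, infinite Strichartz norm, and precompact trajectory $\{u_c(t)\} \subset H^1_\text{rad}$ (up to the residual symmetries of the problem, which for radial data and a fixed potential centered at the origin is essentially none, so the orbit is genuinely precompact in $H^1$).

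Finally I would run the rigidity argument: a localized virial identity. Choosing a radial cutoff $\varphi_R$ and $V_R(t) = \int \varphi_R |u_c(t,x)|^2\,dx$, one computes $\frac{d^2}{dt^2}V_R(t) = 8K_\gamma(u_c(t)) + \text{(error terms from the cutoff and from }\tfrac{\gamma}{|x|^\mu}\text{)}$. By precompactness the mass of $u_c$ stays in a bounded region, so the error terms are $o_R(1)$ uniformly in $t$; combined with the coercivity $K_\gamma(u_c(t)) \gtrsim \delta > 0$ (strict, since $S_{\omega,\gamma}(u_{c,0}) < S_{\omega,\gamma}(Q_{\omega,\gamma})$ rules out $K_\gamma = 0$ with nonzero mass via the minimization characterization), one gets $\frac{d^2}{dt^2}V_R(t) \geq 4\delta$ for $R$ large, while $V_R(t) \lesssim R^2$ stays bounded — a contradiction. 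Hence $\mathcal{E}_c \geq S_{\omega,\gamma}(Q_{\omega,\gamma})$ and the theorem follows.

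The main obstacle I anticipate is the profile decomposition and the construction of the critical element in the long-range regime $0<\mu\leq1$: because $s\text{-}\lim_{t\to\pm\infty} e^{-it\Delta_\gamma}e^{it\Delta}$ does not exist, one cannot cleanly decompose bubbles escaping to spatial infinity using the free propagator, so the linear profile decomposition must be done directly for $e^{it\Delta_\gamma}$ (using its own dispersive estimates and, for time-translated profiles, a nonlinear-profile construction that solves \eqref{NLS} rather than the free equation). Handling the interaction between the potential's decay rate $\mu$ and the Strichartz exponents at $d=p=3$, and verifying that the orthogonality of profiles survives under $e^{it\Delta_\gamma}$, is the technical heart of the argument; the variational and virial steps are comparatively standard given Theorems~1.1 and \ref{HamIkeNonlinear}.
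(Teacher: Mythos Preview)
Your proposal is correct and follows essentially the same route as the paper: set up the variational coercivity (the paper's Lemma~\ref{Estimate of K from below}), develop small-data and stability theory for $e^{it\Delta_\gamma}$ (Section~\ref{Sec:Well-posedness}), prove a linear profile decomposition directly for $e^{it\Delta_\gamma}$ with no spatial translations thanks to the radial assumption (Theorem~\ref{Linear profile decomposition}), extract a critical element with precompact orbit (Proposition~\ref{Existence of a critical solution}), and kill it with the localized virial identity (Theorem~\ref{Rigidity}). You also correctly identify the key technical point, namely that in the long-range regime $0<\mu\le 1$ the profile decomposition must be carried out for $e^{it\Delta_\gamma}$ itself and the nonlinear profiles built via a wave-operator/final-state construction (the paper's Lemma~\ref{Existence of wave operators}) rather than by comparison with the free flow.
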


\begin{remark}
Combining Theorems \ref{Scattering} and \ref{HamIkeNonlinear}, we determine completely time behavior of the radial solutions to \eqref{NLS} with $d = p = 3$, $\gamma > 0$, and $0 < \mu < 2$ below the ``radial'' ground state by the sign of the virial functional $K_\gamma$ at initial data.
\end{remark}

We explain the difficulty and the idea of the proof.
We prove the Linear profile decomposition without using space transformation (see Theorem \ref{Linear profile decomposition}).
Since the potential $\frac{\gamma}{|x|^\mu}$ with $0 < \mu \leq 1$ has a slow decay, a difficulty arises in the argument for convergence of space transformation parameter.
To avoid using the space transformation, we utilize a radial assumption.
As the result, we can extend the range of $\mu$ to $0 < \mu < 2$ compared with \cite{GuoWanYao18}.\\

The organization of the rest of this paper is as follows:
In Section \ref{Sec:Preliminaries}, we collect notations and tools used throughout this paper.
In Section \ref{Sec:Parameter independence of the splitting below the ``radial'' ground state}, we separate a set $\{f \in H^1(\mathbb{R}^3) : S_{\omega,\gamma}(f) < S_{\omega,\gamma}(Q_{\omega,\gamma})\}$ below the ``radial'' ground state by the sign of $K_{\omega,\gamma}^{\alpha,\beta}(f)$.
We prove that these sets are independent of $(\alpha,\beta)$ (Proposition \ref{Initial data set}).
In Subsection \ref{Subsec:Local well-posedness}, we get local well-posedness to \eqref{NLS}.
In Subsection \ref{Subsec:Small data theory}, we prove that if initial data is sufficiently small, then the corresponding solution to \eqref{NLS} scatters.
In Subsection \ref{Subsec:Stability}, we prove stability result for scattering.
In Subsection \ref{Subsec:Final state problem}, we consider a final state problem and prove existence of wave operators.
In Section \ref{Sec:Linear profile decomposition}, we prove linear profile decomposition (Theorem \ref{Linear profile decomposition}).
This is a key tool for the proof of our main theorem.
In Section \ref{Sec:Scattering}, we assume for contradiction that a scattering threshold is less than $S_{\omega,\gamma}(Q_{\omega,\gamma})$ and construct a non-scattering solution on the threshold (critical solution).
From a rigidity theorem (Theorem \ref{Rigidity}), the critical solution does not exist and complete the proof of our main theorem.

\section{Preliminaries}\label{Sec:Preliminaries}

In this section, we define some notations and collect some known tools. 

\subsection{Notation and definition}

For nonnegative $X$ and $Y$, we write $X \lesssim Y$ to denote $X \leq CY$ for some $C > 0$.
If $X \lesssim Y \lesssim X$ holds, we write $X \sim Y$.
The dependence of implicit constants on parameters will be indicated by subscripts, e.g. $X \lesssim_u Y$ denotes $X \leq CY$ for some $C = C(u)$.
We write $a' \in [1,\infty]$ to denote the H\"older dual exponent to $a \in [1,\infty]$, that is, the solution $\frac{1}{a} + \frac{1}{a'} = 1$.

For $1 \leq p \leq \infty$, $L^p(\mathbb{R}^d)$ denotes the usual Lebesgue space.
For a Banach space $X$, we use $L^q(I;X)$ to denote the Banach space of functions $f : I \times \mathbb{R}^d \longrightarrow \mathbb{C}$ whose norm is $\|f\|_{L^q(I;X)} := \|\|f(t)\|_X\|_{L^q(I)} < \infty$.
We extend our notation as follows: If a time interval is not specified, then the $t$-norm is evaluated over $(-\infty,\infty)$.

We define respectively the Fourier transform of $f$ on $\mathbb{R}^d$ and the inverse Fourier transform of $f$ on $\mathbb{R}^d$ by
\begin{align*}
	\mathcal{F}f(\xi)
		= \widehat{f}(\xi)
		:= \int_{\mathbb{R}^d}e^{-2\pi ix\cdot\xi}f(x)dx\ \ \text{ and }\ \ 
	\mathcal{F}^{-1}f(x)
		= \check{f}(x)
		:= \int_{\mathbb{R}^d}e^{2\pi ix\cdot\xi}f(\xi)d\xi,
\end{align*}
where $x \cdot \xi := x_1\xi_1 + \cdots + x_d\xi_d$ denotes the usual inner product of $x$ and $\xi$ on $\mathbb{R}^d$.

$W^{s,p}(\mathbb{R}^d) := (1-\Delta)^{-\frac{s}{2}}L^p(\mathbb{R}^d)$ and $\dot{W}^{s,p}(\mathbb{R}^d) := (-\Delta)^{-\frac{s}{2}}L^p(\mathbb{R}^d)$ are the inhomogeneous Sobolev space and the homogeneous Sobolev space, respectively for $s \in \mathbb{R}$ and $p \in [1,\infty]$, where $(1-\Delta)^\frac{s}{2} = \<\nabla\>^s = \mathcal{F}^{-1}(1 + 4\pi^2|\xi|^2)^\frac{s}{2}\mathcal{F}$ and $(-\Delta)^\frac{s}{2} = |\nabla|^s = \mathcal{F}^{-1}(2\pi|\xi|)^s\mathcal{F}$, respectively.
When $p = 2$, we express $W^{s,2}(\mathbb{R}^d) = H^s(\mathbb{R}^d)$ and $\dot{W}^{s,2}(\mathbb{R}^d) = \dot{H}^s(\mathbb{R}^d)$.
We also define Sobolev spaces with a potential by $W_\gamma^{s,p}(\mathbb{R}^d) = (1-\Delta_\gamma)^{-\frac{s}{2}}L^p(\mathbb{R}^d)$ and $\dot{W}_\gamma^{s,p}(\mathbb{R}^d) = (-\Delta_\gamma)^{-\frac{s}{2}}L^p(\mathbb{R}^d)$ for $s \in \mathbb{R}$ and $p \in [1,\infty]$.
That is, $W_\gamma^{s,p}(\mathbb{R}^d)$ and $\dot{W}_\gamma^{s,p}(\mathbb{R}^d)$ are Hilbert spaces with a norm
\begin{align*}
	\|f\|_{W_\gamma^{s,p}}
		:= \|(1-\Delta_\gamma)^\frac{s}{2}f\|_{L^p}\ \text{ and }\ 
	\|f\|_{\dot{W}_\gamma^{s,p}}
		:= \|(-\Delta_\gamma)^\frac{s}{2}f\|_{L^p}
\end{align*}
respectively.
When $p = 2$, we express $W_\gamma^{s,2}(\mathbb{R}^d) = H_\gamma^s(\mathbb{R}^d)$ and $\dot{W}_\gamma^{s,2}(\mathbb{R}^d) = \dot{H}_\gamma^s(\mathbb{R}^d)$.

\subsection{Some tools}

From the following Generalized Hardy's inequality, the energy $E_\gamma$ is well-defined.

\begin{lemma}[Generalized Hardy's inequality, \cite{ZhaZhe14}]\label{Generalized Hardy inequality}
Let $d = 3$, $1 < q < \infty$, and $0 < \mu < 3$.
Then, the following inequality holds:
\begin{align*}
	\int_{\mathbb{R}^3}\frac{1}{|x|^\mu}|f(x)|^qdx
		\lesssim_{q,\mu}\||\nabla|^\frac{\mu}{q}f\|_{L^q}^q.
\end{align*}
\end{lemma}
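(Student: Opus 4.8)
The plan is to read the asserted bound as a fractional Hardy inequality in $L^q$. Set $s := \mu/q$; since $0 < \mu < 3 = d$ and $1 < q < \infty$ we have $0 < s < d/q < d$, and taking $q$-th roots the claim is equivalent to
\begin{align*}
	\big\| |x|^{-s}f \big\|_{L^q(\mathbb{R}^3)}
		& \lesssim_{q,\mu} \big\| |\nabla|^s f \big\|_{L^q(\mathbb{R}^3)} .
\end{align*}
Both sides are finite for $f \in \mathcal{S}(\mathbb{R}^3)$ (near the origin the left side converges because $sq = \mu < d$), so by a standard density argument it suffices to prove the inequality for Schwartz $f$.

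First I would put $g := |\nabla|^s f$. Since $0 < s < d$, the operator $|\nabla|^{-s}$ is the Riesz potential, i.e.\ convolution with $c_{d,s}|x|^{-(d-s)}$, so $f = c_{d,s}\,|\cdot|^{-(d-s)} \ast g$ and the estimate reduces to the $L^q(\mathbb{R}^3) \to L^q(\mathbb{R}^3)$ boundedness of $g \mapsto |x|^{-s}\int_{\mathbb{R}^3}|x-y|^{-(d-s)}g(y)\,dy$. Then I would invoke the Stein--Weiss inequality: the operator
\begin{align*}
	g & \longmapsto |x|^{-\alpha}\int_{\mathbb{R}^d}\frac{|y|^{-\beta}g(y)}{|x-y|^{\lambda}}\,dy
\end{align*}
is bounded $L^p(\mathbb{R}^d) \to L^r(\mathbb{R}^d)$ whenever $1 < p \le r < \infty$, $0 < \lambda < d$, $\alpha + \beta \ge 0$, $\alpha < d/r$, $\beta < d/p'$ and $\tfrac1r = \tfrac1p - \tfrac{d-\lambda-\alpha-\beta}{d}$. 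Applying this with $d = 3$, $p = r = q$, $\beta = 0$, $\lambda = d - s$, $\alpha = s$, the scaling identity holds automatically since $\lambda + \alpha = d$, the condition $\beta < d/p'$ is immediate from $q < \infty$, and the remaining requirements $\alpha + \beta \ge 0$, $0 < \lambda < d$, $\alpha < d/r$ reduce to exactly $0 < s < d/q$, i.e.\ to the hypotheses $0 < \mu < 3$ and $1 < q < \infty$. This gives the bound for Schwartz $f$, and the density step concludes.

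The reduction to Schwartz functions and the bookkeeping of exponents are routine; the one point that needs genuine care is checking that the strict inequalities $s = \mu/q < d/q$ (equivalently $\mu < 3$) and $1 < q < \infty$ are precisely the thresholds in Stein--Weiss, so that no endpoint phenomenon is concealed — at $q = 1$ or $\mu = d$ only a weak-type statement would survive. If one prefers not to cite Stein--Weiss, an alternative is a dyadic decomposition of the weight $|x|^{-\mu}$ into annuli, combined with a local Hardy--Littlewood--Sobolev estimate on the near-diagonal pieces and Schur's test together with $|x-y|^{-(d-s)} \sim \max(|x|,|y|)^{-(d-s)}$ on the off-diagonal ones, summing the resulting geometric series; I expect the Stein--Weiss route to be shortest, with the dyadic one as a self-contained fallback.
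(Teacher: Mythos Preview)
Your argument is correct. The paper does not prove this lemma at all; it records it as a known inequality, citing \cite{ZhaZhe14}, and moves on. So there is no ``paper's proof'' to compare against---you have supplied one where the authors only gave a reference.

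Your route through Stein--Weiss is the standard and cleanest way to obtain the fractional $L^q$ Hardy inequality $\||x|^{-s}f\|_{L^q} \lesssim \||\nabla|^s f\|_{L^q}$ for $0 < s < d/q$. The exponent bookkeeping you did is accurate: with $p = r = q$, $\beta = 0$, $\lambda = d - s$, $\alpha = s$, the scaling constraint collapses to $0 = 0$, and the side conditions $\alpha < d/r$ and $0 < \lambda < d$ reduce exactly to $\mu < 3$ and $s > 0$. The density reduction to Schwartz functions is fine because both sides define continuous seminorms on $\dot W^{s,q}$ and $C_c^\infty$ is dense there for the range of $s,q$ in question. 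Your remark that the endpoints $q = 1$ and $\mu = d$ fail (only weak-type survives) is also correct, and explains why the strict inequalities in the hypotheses are not cosmetic.
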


From Lemma \ref{Generalized Hardy inequality} and the interpolation, the following lemma holds.

\begin{lemma}[Norm equivalence, \cite{Din21}]\label{Norm equivalence}
Let $d = 3$, $\gamma > 0$, and $0 < \mu < 2$.
Then, the following equivalent holds:
\begin{align*}
	\|(-\Delta_\gamma)^\frac{s}{2}f\|_{L^r}
		\sim \|(-\Delta)^\frac{s}{2}f\|_{L^r},\ \ \ 
	\|(1-\Delta_\gamma)^\frac{s}{2}f\|_{L^r}
		\sim \|(1-\Delta)^\frac{s}{2}f\|_{L^r},
\end{align*}
where $0 \leq s \leq 2$ and $1 < r < \frac{6}{s\mu}$.
\end{lemma}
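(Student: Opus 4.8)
The plan is to reduce to the two endpoints $s=0$ and $s=2$ and then fill in $0<s<2$ by interpolation, which is the route hinted at before the statement. The case $s=0$ is trivial, both sides being $\|f\|_{L^r}$, so the content is the endpoint $s=2$. Writing $V:=\gamma|x|^{-\mu}\ge 0$ so that $1-\Delta_\gamma=(1-\Delta)+V$, the triangle inequality reduces the two-sided comparison to estimating $\|Vf\|_{L^r}$, and here I would apply Lemma~\ref{Generalized Hardy inequality} with the power $\mu r$; this is legitimate precisely when $\mu r<3$, i.e.\ $r<3/\mu$, which at $s=2$ is the asserted bound $6/(s\mu)$ and is the source of the restriction on $r$. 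It gives $\|Vf\|_{L^r}\lesssim\||\nabla|^{\mu}f\|_{L^r}$. Since $0<\mu\le 2$, the symbol $|\xi|^{\mu}(1+4\pi^{2}|\xi|^{2})^{-1}$ is bounded and satisfies Mikhlin's condition, so $\||\nabla|^{\mu}f\|_{L^r}\lesssim\|(1-\Delta)f\|_{L^r}$, which gives ``$\lesssim$''. For ``$\gtrsim$'' I would instead use the interpolation inequality $\||\nabla|^{\mu}f\|_{L^r}\le\varepsilon\|(1-\Delta)f\|_{L^r}+C_{\varepsilon}\|f\|_{L^r}$ (valid since $\mu<2$) together with $\|f\|_{L^r}\le\|(1-\Delta_\gamma)f\|_{L^r}$; the latter holds because $(1-\Delta_\gamma)^{-1}=\int_{0}^{\infty}e^{-t}e^{t\Delta_\gamma}\,dt$ is an $L^r$-contraction, a consequence of the pointwise heat-kernel domination $0\le e^{t\Delta_\gamma}(x,y)\le e^{t\Delta}(x,y)$ (valid since $V\ge 0$). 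A small-$\varepsilon$ absorption then closes the endpoint.

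For $0<s<2$ I would interpolate. Consider the analytic family $T_{z}:=(1-\Delta_\gamma)^{z}(1-\Delta)^{-z}$ on the strip $0\le\operatorname{Re}z\le 1$. On $\operatorname{Re}z=0$ it is bounded on every $L^{\rho}$, $1<\rho<\infty$, with operator norm growing at most polynomially in $|\operatorname{Im}z|$: this is the $L^{\rho}$-boundedness of the imaginary powers $(1-\Delta_\gamma)^{it}$ and $(1-\Delta)^{it}$ with polynomial-in-$t$ norms, a standard consequence of the Gaussian heat-kernel bound recalled above. On $\operatorname{Re}z=1$, factoring $T_{1+it}=(1-\Delta_\gamma)^{it}\,(1-\Delta_\gamma)(1-\Delta)^{-1}\,(1-\Delta)^{-it}$ and combining the $s=2$ endpoint with the imaginary-power bounds shows $T_{z}$ is bounded on $L^{\rho}$ for $1<\rho<3/\mu$, again with polynomial growth. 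Stein's analytic interpolation theorem, evaluated on the line $\operatorname{Re}z=s/2$, then gives the boundedness of $(1-\Delta_\gamma)^{s/2}(1-\Delta)^{-s/2}$ on $L^{r}$ for every $r\in(1,6/(s\mu))$; exchanging the roles of $-\Delta_\gamma$ and $-\Delta$ (the $s=2$ reverse bound makes $(1-\Delta)(1-\Delta_\gamma)^{-1}$ bounded on the same range) yields $\|(1-\Delta)^{s/2}f\|_{L^r}\lesssim\|(1-\Delta_\gamma)^{s/2}f\|_{L^r}$. Running the same scheme for the family $(-\Delta_\gamma)^{z}(-\Delta)^{-z}$ gives the homogeneous equivalence.

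The step I expect to be the main obstacle is the interpolation --- specifically, having the imaginary-power bounds for $1-\Delta_\gamma$ on the \emph{full} Lebesgue scale $1<\rho<\infty$, with quantitative (polynomial) growth in $t$, rather than merely near $\rho=2$ (which the $L^{2}$ spectral theorem already gives). This is exactly what the Gaussian upper bound for $e^{t\Delta_\gamma}$ provides; it is available because $\gamma>0$ makes the potential repulsive, so $e^{t\Delta_\gamma}$ is pointwise dominated by the free heat semigroup. Everything else --- the perturbative endpoint estimate, the Mikhlin multiplier bounds, the $\varepsilon$-absorption, and tracking that all implicit constants depend only on $(\gamma,\mu,r)$ --- is routine.
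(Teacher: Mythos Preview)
The paper offers no proof of its own here---it cites \cite{Din21} and says only ``From Lemma~\ref{Generalized Hardy inequality} and the interpolation, the following lemma holds''---and your proposal follows exactly that route. Your treatment of the inhomogeneous endpoint (Hardy with exponent $\mu r$, the Mikhlin bound $\||\nabla|^{\mu}f\|_{L^r}\lesssim\|(1-\Delta)f\|_{L^r}$, the $\varepsilon$-absorption using $L^r$-contractivity of $(1-\Delta_\gamma)^{-1}$ via heat-kernel domination) and the Stein-interpolation step through bounded imaginary powers is correct and is essentially what the cited reference does.

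There is, however, a real gap in your final sentence. The claim that ``running the same scheme'' handles the homogeneous family $(-\Delta_\gamma)^{z}(-\Delta)^{-z}$ fails at the endpoint $\operatorname{Re}z=1$: after Hardy you would need the multiplier $|\xi|^{\mu}|\xi|^{-2}=|\xi|^{\mu-2}$ to be bounded, and it is singular at the origin since $\mu<2$. In fact the homogeneous equivalence at $s=2$ is simply false: with $f_\lambda=f(\lambda\,\cdot)$ one computes $\|(-\Delta_\gamma)f_\lambda\|_{L^r}/\|(-\Delta)f_\lambda\|_{L^r}\sim\lambda^{\mu-2}\to\infty$ as $\lambda\to 0$. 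So the lemma as stated is slightly imprecise at that endpoint; the paper only ever uses $s\le 1$. For $0\le s<2$ the homogeneous equivalence does hold, but you need a different route: combine your inhomogeneous equivalence with the two comparisons
\[
\|(1-\Delta)^{s/2}g\|_{L^r}\sim\|g\|_{L^r}+\|(-\Delta)^{s/2}g\|_{L^r},\qquad
\|(1-\Delta_\gamma)^{s/2}g\|_{L^r}\sim\|g\|_{L^r}+\|(-\Delta_\gamma)^{s/2}g\|_{L^r},
\]
both of which follow from a H\"ormander-type spectral multiplier theorem for nonnegative self-adjoint operators with Gaussian heat-kernel bounds---the same input you already invoked for the imaginary powers.
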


Combining the usual Sobolev embedding and Lemma \ref{Norm equivalence}, we have the following lemma.

\begin{lemma}[Sobolev inequality]\label{Sobolev inequality}
Let $d = 3$, $\gamma > 0$, and $0 < \mu < 2$.
Then, we have
\begin{align*}
	\|f\|_{L^q}
		\lesssim \|(-\Delta_\gamma)^\frac{s}{2}f\|_{L^p},
\end{align*}
where $1 < p < q < \infty$, $1 < p < \frac{3}{s}$, $0 \leq s \leq 2$ and $\frac{1}{q} = \frac{1}{p} - \frac{s}{3}$.
\begin{align*}
	\|f\|_{L^q}
		\lesssim \|(1-\Delta_\gamma)^\frac{s}{2}f\|_{L^p}
\end{align*}
holds, where $1 < p < q < \infty$, $1 < p < \frac{3}{s}$, $0 \leq s \leq 2$ and $\frac{1}{q} \geq \frac{1}{p} - \frac{s}{3}$.
\end{lemma}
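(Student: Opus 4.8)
The plan is to transfer both inequalities to the potential-free operators $(-\Delta)^{s/2}$ and $(1-\Delta)^{s/2}$ via the norm equivalence in Lemma~\ref{Norm equivalence}, and then to quote the classical Sobolev (Bessel potential) embeddings on $\mathbb{R}^3$.

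First I would dispose of the trivial reductions: if $s=0$ then $\frac1q=\frac1p-\frac s3$ forces $q=p$ and $\frac1q\ge\frac1p-\frac s3$ forces $q\le p$, both incompatible with $p<q$, so in either statement we may assume $0<s\le 2$. Next I would check that the Sobolev admissibility range is contained in the range where Lemma~\ref{Norm equivalence} applies: since $0<\mu<2$ we have $\frac{3}{s}\le\frac{6}{s\mu}$, so the hypothesis $1<p<\frac3s$ implies $1<p<\frac{6}{s\mu}$, and hence Lemma~\ref{Norm equivalence} gives
\[
	\|(-\Delta)^{\frac s2}f\|_{L^p}\sim\|(-\Delta_\gamma)^{\frac s2}f\|_{L^p},\qquad \|(1-\Delta)^{\frac s2}f\|_{L^p}\sim\|(1-\Delta_\gamma)^{\frac s2}f\|_{L^p}.
\]

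For the homogeneous estimate I would then invoke the fractional-integration Sobolev embedding $\dot{W}^{s,p}(\mathbb{R}^3)\hookrightarrow L^q(\mathbb{R}^3)$, valid for $1<p<q<\infty$ with $0<sp<3$ and $\frac1q=\frac1p-\frac s3$, to get $\|f\|_{L^q}\lesssim\|(-\Delta)^{\frac s2}f\|_{L^p}$; combining with the first equivalence above gives $\|f\|_{L^q}\lesssim\|(-\Delta_\gamma)^{\frac s2}f\|_{L^p}$. For the inhomogeneous estimate the argument is the same, using instead the inhomogeneous embedding $W^{s,p}(\mathbb{R}^3)\hookrightarrow L^q(\mathbb{R}^3)$, valid for $1<p<q<\infty$ with $0<sp<3$ and $\frac1p-\frac s3\le\frac1q$, together with the second equivalence above.

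There is essentially no obstacle here: the whole content is the reduction to the potential-free case, and the only thing that genuinely needs checking is the inclusion of exponent ranges $\{p:1<p<\tfrac3s\}\subset\{p:1<p<\tfrac6{s\mu}\}$, which is exactly where the hypothesis $\mu<2$ is used. (If one wished to avoid appealing to the inhomogeneous Sobolev embedding as a black box, one could alternatively split $(1-\Delta)^{-s/2}=(1-\Delta)^{-s_1/2}(1-\Delta)^{-(s-s_1)/2}$ with $\frac1q=\frac1p-\frac{s_1}{3}$ and use the boundedness of $(1-\Delta)^{-(s-s_1)/2}$ on $L^q$, but this refinement is not needed.)
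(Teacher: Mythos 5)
Your proof is correct and is exactly the paper's argument: the paper states this lemma with the one-line justification ``combining the usual Sobolev embedding and Lemma~\ref{Norm equivalence}'', which is precisely your reduction. Your additional check that $1<p<\tfrac{3}{s}$ implies $1<p<\tfrac{6}{s\mu}$ (so that Lemma~\ref{Norm equivalence} applies) is the only nontrivial detail, and you have it right.
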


To state the Strichartz estimate (Theorem \ref{Strichartz estimate}), we define the following $\dot{H}^s$-admissible pairs.

\begin{definition}[$\dot{H}^s$-admissible]
Let $d = 3$ and $0 \leq s \leq 1$.
We say that a pair of exponents $(q,r)$ is called $\dot{H}^s$-admissible if $2 \leq q, r \leq \infty$ and
\begin{align*}
	\frac{2}{q} + \frac{3}{r}
		= \frac{3}{2} - s.
\end{align*}
We define a set $\Lambda_s := I_s \cap \{(q,r) \in \mathbb{R}^2 : (q,r)\text{ is }\dot{H}^s\text{-admissible}\}$, where
\begin{equation*}
I_s :=
\begin{cases}
	&\hspace{-0.4cm}\displaystyle{
		\left\{(q,r)\,;\ 2 \leq q \leq \infty,\ \frac{6}{3-2s} \leq r \leq\frac{6}{1-2s}\ \,\right\}\ \left(0 \leq s < \frac{1}{2}\right),
	} \\[0.4cm]
	&\hspace{-0.4cm}\displaystyle{
		\left\{(q,r)\,;\ \frac{4}{3-2s} < q \leq \infty,\ \frac{6}{3-2s} \leq r < \infty\right\}\ \left(\frac{1}{2} \leq s \leq 1\right).
	}
\end{cases}
\end{equation*}
\end{definition}

\begin{theorem}[Strichartz estimate, \cite{Miz20}]\label{Strichartz estimate}
Let $d = 3$, $0 < \mu < 2$, and $\gamma > 0$.
\begin{itemize}
\item (Homogeneous estimates)
If $(q,r) \in \Lambda_s$, then
\begin{align*}
	\|e^{it\Delta_\gamma}f\|_{L_t^qL_x^r}
		\lesssim \|f\|_{\dot{H}^s}.
\end{align*}
\item (Inhomogeneous estimates)
Let $t_0 \in \mathbb{R}$ and $I$ be a time interval including $t_0$.
If $(q_1,r_1) \in \Lambda_s$ and $(q_2,r_2) \in \Lambda_0$, then
\begin{align*}
	\left\|\int_{t_0}^te^{i(t-s)\Delta_\gamma}F(\cdot,s)ds\right\|_{L_t^{q_1}(I;L_x^{r_1})}
		\lesssim \||\nabla|^sF\|_{L_t^{q_2'}(I;L_x^{r_2'})},
\end{align*}
where implicit constants are independent of $f$ and $F$.
\end{itemize}
\end{theorem}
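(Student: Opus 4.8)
\noindent The plan is to deduce both parts from their $s=0$ case (the $L^2$-admissible, or ``energy'', Strichartz estimates for $e^{it\Delta_\gamma}$), and to obtain those from the abstract Keel--Tao machinery fed with the conservation law $\|e^{it\Delta_\gamma}f\|_{L_x^2}=\|f\|_{L_x^2}$ and the pointwise dispersive estimate $\|e^{it\Delta_\gamma}f\|_{L_x^\infty}\lesssim|t|^{-3/2}\|f\|_{L_x^1}$. I expect the dispersive estimate to be the only genuine obstacle; the rest is bookkeeping with the lemmas already recorded.

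\emph{Step 1 (reduction to $s=0$).} First I would use that $(-\Delta_\gamma)^{s/2}$ is a function of the self-adjoint operator $-\Delta_\gamma$ and hence commutes with $e^{it\Delta_\gamma}$. Given $(q,r)\in\Lambda_s$ with $0<s\le1$, set $\frac1{\tilde r}=\frac1r+\frac s3$. From $\frac2q+\frac3r=\frac32-s$ one gets $\frac2q+\frac3{\tilde r}=\frac32$, and the index ranges built into $I_s$ force $2\le\tilde r\le 6$ and $q\ge2$, so $(q,\tilde r)\in\Lambda_0$. Then, by Lemma \ref{Sobolev inequality} applied to $(-\Delta_\gamma)^{-s/2}$ followed by Lemma \ref{Norm equivalence},
\begin{align*}
	\|e^{it\Delta_\gamma}f\|_{L_t^qL_x^r}
		&=\|(-\Delta_\gamma)^{-\frac s2}e^{it\Delta_\gamma}(-\Delta_\gamma)^{\frac s2}f\|_{L_t^qL_x^r}
		\lesssim\|e^{it\Delta_\gamma}(-\Delta_\gamma)^{\frac s2}f\|_{L_t^qL_x^{\tilde r}}\\
		&\lesssim\|(-\Delta_\gamma)^{\frac s2}f\|_{L_x^2}
		\sim\|f\|_{\dot H^s},
\end{align*}
once the $s=0$ homogeneous estimate is available. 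The inhomogeneous estimate I would treat the same way: with $(q_1,r_1)\in\Lambda_s$ and $\frac1{\tilde r_1}=\frac1{r_1}+\frac s3$ so that $(q_1,\tilde r_1)\in\Lambda_0$, pull $(-\Delta_\gamma)^{-s/2}$ out of the $L_x^{r_1}$ norm by Lemma \ref{Sobolev inequality}, apply the ($L^2$-admissible to $L^2$-admissible) inhomogeneous estimate to the source $(-\Delta_\gamma)^{s/2}F$, and convert $(-\Delta_\gamma)^{s/2}F$ back to $|\nabla|^sF$ on the right by Lemma \ref{Norm equivalence}, which is legitimate since the relevant exponent satisfies $1<r_2'\le2<6/(s\mu)$. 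Thus everything reduces to the $s=0$ estimates.

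\emph{Step 2 (the case $s=0$).} Since $\gamma|x|^{-\mu}\ge0$, the operator $-\Delta_\gamma\ge0$ has no negative eigenvalue, and with $0<\mu<2$ in $d=3$ there is no zero eigenvalue or zero-energy resonance; hence (as recorded in the introduction) the spectrum is purely absolutely continuous and $e^{it\Delta_\gamma}$ is $L^2$-unitary, which is the energy endpoint. Granting the dispersive estimate, the Keel--Tao theorem then produces every $L^2$-admissible homogeneous estimate, including the double endpoint $(q,r)=(2,6)$; the retarded inhomogeneous estimate for $L^2$-admissible pairs follows from the Christ--Kiselev lemma away from the case $q_1=q_2'=2$, and at that case from the bilinear form estimate inside the Keel--Tao argument.

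\emph{Step 3 (the dispersive estimate --- the hard part).} This is where the real work lies, and the step I expect to be the main obstacle. Because $0<\mu<2$, the potential $\gamma|x|^{-\mu}$ is Kato-class in $d=3$, so $e^{t\Delta_\gamma}$ obeys Gaussian heat-kernel bounds, and repulsivity ($x\cdot\nabla(\gamma|x|^{-\mu})\le0$) yields, via the Agmon--Kato--Kuroda/limiting-absorption machinery, uniform weighted resolvent bounds of the type $\sup_{\lambda>0}\|\langle x\rangle^{-1}(-\Delta_\gamma-\lambda\mp i0)^{-1}\langle x\rangle^{-1}\|_{L_x^2\to L_x^2}<\infty$ with no threshold obstruction. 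From these one would extract the pointwise kernel bound for $e^{it\Delta_\gamma}$ by a Littlewood--Paley decomposition together with stationary-phase analysis of the spectral representation, as in \cite{Miz20}; the delicate points are the simultaneous uniformity of the bound as $t\to0$ and as $t\to\infty$, and the interaction of the oscillation with the inverse-power singularity at the origin, which is exactly why the naive Born-series perturbation off $e^{it\Delta}$ (using only Kenig--Ruiz--Sogge uniform resolvent bounds or $|x|^{-1}$-smoothing) fails to close over the whole range $0<\mu<2$. A semi-self-contained alternative is to iterate the Duhamel formula
\begin{align*}
	e^{it\Delta_\gamma}f
		=e^{it\Delta}f-i\int_0^te^{i(t-\sigma)\Delta}\Bigl(\tfrac{\gamma}{|x|^\mu}\Bigr)e^{i\sigma\Delta_\gamma}f\,d\sigma
\end{align*}
and control the error terms by combining the free Strichartz estimates with a local-smoothing estimate for $e^{it\Delta_\gamma}$ (again a consequence of repulsivity, obtained from a Morawetz/commutator computation with Agmon--H\"ormander-type weights) and H\"older against $\gamma|x|^{-\mu}\in L_x^{3/\mu,\infty}$, with frequency localization supplying summability of the iterated series. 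Either way, once the dispersive estimate is in hand, Steps 1--2 conclude the proof.
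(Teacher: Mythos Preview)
The paper does not prove this theorem at all; it is quoted verbatim as a result of \cite{Miz20} and used as a black box throughout. So there is no ``paper's own proof'' to compare against---your proposal is strictly more than what the paper provides.

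That said, your Step~1 reduction to $s=0$ via the Sobolev embedding (Lemma~\ref{Sobolev inequality}) and the norm equivalence (Lemma~\ref{Norm equivalence}) is correct and is the standard way to pass between $\dot H^s$-admissible and $L^2$-admissible pairs; the index check $\tilde r<3/s$ needed for Lemma~\ref{Sobolev inequality} is indeed satisfied on $\Lambda_s$, as you implicitly use. Step~2 is likewise standard once the dispersive input is in place. Where your proposal becomes a sketch rather than a proof is Step~3: you correctly flag the global $L^1\to L^\infty$ dispersive bound for $e^{it\Delta_\gamma}$ with $0<\mu<2$ as the real content, but neither the oscillatory-integral route nor the Duhamel/iterated-smoothing route you outline is actually carried out, and for slowly decaying (long-range) potentials these are genuinely delicate---this is precisely the contribution of \cite{Miz20}. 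In effect your argument, like the paper's, ultimately rests on citing Mizutani for the hard analytic input; the difference is only that you have spelled out the (routine) Sobolev reduction and the Keel--Tao packaging around it.
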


\begin{lemma}[Fractional calculus, \cite{ChrWei91}]\label{Fractional calculus}
Suppose $G \in C^1(\mathbb{C})$ and $s \in (0,1]$. Let $1 < r, r_2 < \infty$ and $1 < r_1 \leq \infty$ satisfying $\frac{1}{r} = \frac{1}{r_1} + \frac{1}{r_2}$.
Then, we have
\begin{align*}
	\||\nabla|^sG(u)\|_{L^r}
		\lesssim \|G'(u)\|_{L^{r_1}}\||\nabla|^su\|_{L^{r_2}}.
\end{align*}
\end{lemma}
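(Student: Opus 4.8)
The plan is to reduce the inequality to the range $0<s<1$ and then to replace the nonlocal operator $|\nabla|^s$ by a concrete pointwise quantity. For $s=1$ the bound is just the classical chain rule followed by H\"older's inequality and the $L^r$-boundedness of the Riesz transforms (valid because $1<r<\infty$):
\[
	\||\nabla|(G(u))\|_{L^r}
		\sim \|\nabla(G(u))\|_{L^r}
		= \|G'(u)\,\nabla u\|_{L^r}
		\le \|G'(u)\|_{L^{r_1}}\|\nabla u\|_{L^{r_2}}
		\sim \|G'(u)\|_{L^{r_1}}\||\nabla|u\|_{L^{r_2}}.
\]
Hence from now on I assume $0<s<1$; one should first establish the estimate for $G$ with bounded, compactly supported derivative and $u$ Schwartz, and remove these restrictions by a routine approximation afterwards, but I suppress this.

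For $0<s<1$ and $1<r<\infty$ I would invoke Strichartz's square-function characterization of the homogeneous Sobolev seminorm,
\[
	\||\nabla|^s f\|_{L^r}
		\sim \big\| \mathcal{D}_s f \big\|_{L^r},
	\qquad
	\mathcal{D}_s f(x)
		:= \Big( \int_{\mathbb{R}^d} \frac{|f(x)-f(x-y)|^2}{|y|^{d+2s}}\,dy \Big)^{1/2},
\]
the hypothesis $1<r<\infty$ being exactly what makes this equivalence valid. Writing out $G(u(x))-G(u(x-y))$ by the fundamental theorem of calculus along the segment joining $u(x-y)$ to $u(x)$ and pulling the resulting $\theta$-average outside the $L^2_y$-norm by Minkowski's integral inequality yields the pointwise bound
\[
	\mathcal{D}_s(G(u))(x)
		\le \int_0^1 \Big( \int_{\mathbb{R}^d} \frac{|u(x)-u(x-y)|^2\,|g_\theta(x,y)|^2}{|y|^{d+2s}}\,dy \Big)^{1/2}\! d\theta,
	\quad
	g_\theta(x,y) := G'\big( u(x-y)+\theta(u(x)-u(x-y)) \big).
\]

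The step I expect to be the main obstacle is to detach the factor $g_\theta(x,y)$, which records $G'$ along a segment in the target and so mixes the sizes of $G'(u)$ near the two points $x$ and $x-y$, and then to bound the $L^r_x$-norm of the right-hand side by $\|G'(u)\|_{L^{r_1}}\||\nabla|^s u\|_{L^{r_2}}$. The standard device is a Hardy--Littlewood maximal function: one controls $|g_\theta(x,y)|$ by $\big(M(|G'(u)|^q)(x)\big)^{1/q}+\big(M(|G'(u)|^q)(x-y)\big)^{1/q}$ for some $q\in(1,\min\{r_1,r_2\})$ (available since $r_1,r_2>1$; when $r_1=\infty$ one simply uses $|g_\theta|\le\|G'(u)\|_{L^\infty}$). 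The piece carrying $M(|G'(u)|^q)(x)^{1/q}$ comes out as $\big\| M(|G'(u)|^q)^{1/q}\,\mathcal{D}_s u \big\|_{L^r}$, handled by H\"older in $x$ together with the maximal inequality on $L^{r_1/q}$; the piece carrying $M(|G'(u)|^q)(x-y)^{1/q}$ is a genuinely weighted fractional-difference expression disposed of by the Fefferman--Stein vector-valued maximal inequality (or by duality). An alternative route, close to the original proof in \cite{ChrWei91}, replaces the difference characterization by a Littlewood--Paley telescoping $G(u)=\sum_j\big(G(S_{j+1}u)-G(S_j u)\big)$, expands each term again via the fundamental theorem of calculus, and estimates the Littlewood--Paley square function, once more pulling $G'$ evaluated at a low-frequency modification of $u$ out through a maximal function. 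In every version the substantial point is precisely this separation of the $G'$-factor from the fractional difference of $u$; the remainder is routine H\"older bookkeeping, and since the statement is quoted verbatim from \cite{ChrWei91} the honest exposition is to cite that paper, the above being the outline of its proof.
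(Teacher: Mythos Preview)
The paper does not prove this lemma at all: it is stated with the attribution \cite{ChrWei91} and used as a black box, so there is no ``paper's own proof'' to compare against. Your outline is a correct sketch of how the result is actually established (the $s=1$ case by the ordinary chain rule and Riesz transforms, the $0<s<1$ case either via the Strichartz difference square function or, as in the original reference, via a Littlewood--Paley telescoping together with the Hardy--Littlewood and Fefferman--Stein maximal inequalities), and you even note at the end that the honest exposition is simply to cite \cite{ChrWei91}. That is exactly what the paper does, so nothing further is required here.
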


\begin{lemma}[Radial Sobolev inequality, \cite{OgaTsu91}]\label{Radial Sobolev inequality}
Let $d = 3$ and $1 \leq p$.
For a radial function $f \in H^1(\mathbb{R}^3)$, it follows that
\begin{align*}
	\|f\|_{L^{p+1}(R \leq |x|)}^{p+1}
		\lesssim \frac{1}{R^\frac{2(p-1)}{2}}\|f\|_{L^2(R\leq|x|)}^\frac{p+3}{2}\|\nabla f\|_{L^2(R \leq |x|)}^\frac{p-1}{2}
\end{align*}
for any $R>0$, where the implicit constant is independent of $R$ and $f$.
\end{lemma}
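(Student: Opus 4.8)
The plan is to derive a pointwise Strauss-type bound on the exterior region $\{|x|\ge R\}$ and then integrate it against the radial measure. By a routine density argument — approximate a radial $f\in H^1(\mathbb{R}^3)$ by radial elements of $C_c^\infty(\mathbb{R}^3)$, use that the right-hand side is continuous in the $H^1$-topology, and apply Fatou's lemma to the left-hand side along an almost-everywhere convergent subsequence — it suffices to prove the inequality for radial $f\in C_c^\infty(\mathbb{R}^3)$, which I identify with its profile $f=f(r)$, $r=|x|$. Compact support makes the fundamental theorem of calculus directly available and sidesteps any issue about the decay of $f$ at infinity.

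\emph{Step 1 (pointwise bound).} I will show that
\[
	|f(r)|^2\ \lesssim\ \frac{1}{r^2}\,\|f\|_{L^2(|x|\ge R)}\,\|\nabla f\|_{L^2(|x|\ge R)}\qquad\text{for all }r\ge R.
\]
Starting from $f(r)^2=-\int_r^\infty \tfrac{d}{ds}f(s)^2\,ds=-2\int_r^\infty f(s)f'(s)\,ds$ one gets $|f(r)|^2\le 2\int_r^\infty|f(s)|\,|f'(s)|\,ds$. Writing $|f||f'|=(s|f|)(s^{-1}|f'|)$ and applying the Cauchy--Schwarz inequality in $ds$ produces the two factors $\big(\int_r^\infty s^2|f|^2\,ds\big)^{1/2}$ and $\big(\int_r^\infty s^{-2}|f'|^2\,ds\big)^{1/2}$. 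The first is $\lesssim\|f\|_{L^2(|x|\ge r)}\le\|f\|_{L^2(|x|\ge R)}$. For the second, the elementary bound $s^{-2}\le s^2r^{-4}$ (valid for $s\ge r$) gives $\big(\int_r^\infty s^{-2}|f'|^2\,ds\big)^{1/2}\le r^{-2}\big(\int_r^\infty s^2|f'|^2\,ds\big)^{1/2}\lesssim r^{-2}\|\nabla f\|_{L^2(|x|\ge r)}\le r^{-2}\|\nabla f\|_{L^2(|x|\ge R)}$. Multiplying the two factors yields the claimed pointwise estimate.

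\emph{Step 2 (integration).} Set $A:=\|f\|_{L^2(|x|\ge R)}$ and $B:=\|\nabla f\|_{L^2(|x|\ge R)}$. Since $p\ge 1$ the exponent $\tfrac{p-1}{2}$ is nonnegative, so Step 1 gives $|f(r)|^{p-1}\lesssim r^{-(p-1)}A^{(p-1)/2}B^{(p-1)/2}$, whence
\[
	\|f\|_{L^{p+1}(|x|\ge R)}^{p+1}=4\pi\int_R^\infty r^2|f(r)|^2\,|f(r)|^{p-1}\,dr\ \lesssim\ A^{\frac{p-1}{2}}B^{\frac{p-1}{2}}\int_R^\infty r^{\,2-(p-1)}|f(r)|^2\,dr .
\]
Using $r^{-(p-1)}\le R^{-(p-1)}$ for $r\ge R$ (again $p-1\ge 0$), the remaining integral is $\le R^{-(p-1)}\int_R^\infty r^2|f(r)|^2\,dr\lesssim R^{-(p-1)}A^2$, so
\[
	\|f\|_{L^{p+1}(|x|\ge R)}^{p+1}\ \lesssim\ \frac{1}{R^{\,p-1}}\,A^{\frac{p+3}{2}}B^{\frac{p-1}{2}},
\]
which is exactly the assertion, since $\tfrac{2(p-1)}{2}=p-1$ and $\tfrac{p+3}{2}+\tfrac{p-1}{2}=p+1$.

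The argument is short, and the one point that requires a touch of care rather than being purely mechanical is the choice of the weight $s$ in the Cauchy--Schwarz splitting of $\int_r^\infty|f||f'|\,ds$: inserting precisely this weight is what recasts the two integrals as the rotationally invariant $L^2$-norms with their natural $s^2\,ds$ measure and produces exactly the power $R^{-(p-1)}$ demanded by the statement, whereas splitting with the trivial weight would leave an extra lower-order term with the wrong $R$-dependence. Beyond this bookkeeping I do not anticipate any genuine obstacle.
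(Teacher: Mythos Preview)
Your proof is correct and follows the standard Strauss-type argument for the radial Sobolev inequality; the paper does not supply its own proof but simply cites \cite{OgaTsu91}, whose argument is essentially the one you have written. The key ingredients --- the fundamental theorem of calculus on the profile, the weighted Cauchy--Schwarz splitting $|f||f'|=(s|f|)(s^{-1}|f'|)$ that recovers the three-dimensional radial measure $s^2\,ds$, and the trivial bound $r^{-(p-1)}\le R^{-(p-1)}$ on the exterior --- are exactly the classical ones, and your bookkeeping of the exponents is accurate.
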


\begin{proposition}[Localized virial identity,  \cite{TaoVisZha07, Din21}]\label{Virial identity}
Let $d = p = 3$, $\gamma > 0$, and $0 < \mu < 2$.
Given a suitable real-valued radial function $w \in C^\infty(\mathbb{R}^3)$ and the solution $u(t)$ to \eqref{NLS}, we define
\begin{align*}
	I(t)
		:= \int_{\mathbb{R}^3}w(x)|u(t,x)|^2dx.
\end{align*}
Then, we have
\begin{align*}
	I'(t)
		& = 2\text{Im}\int_{\mathbb{R}^3}\frac{w'(r)}{r}\overline{u(t,x)}x\cdot\nabla u(t,x)dx, \\
	I''(t)
		& = \int_{\mathbb{R}^3}F_1(w,r)|x\cdot\nabla u(t,x)|^2dx + 4\int_{\mathbb{R}^3}\frac{w'(r)}{r}|\nabla u(t,x)|^2dx - \int_{\mathbb{R}^3}F_2(w,r)|u(t,x)|^2dx \notag \\
		& \hspace{4.0cm} - \int_{\mathbb{R}^3}F_3(w,r)|u(t,x)|^4dx + 2\mu\int_{\mathbb{R}^3}w'(r)\frac{\gamma}{r^{\mu+1}}|u(t,x)|^2dx,
\end{align*}
where $r = |x|$,
\begin{gather*}
	F_1(w,r)
		:= 4\left\{\frac{w''(r)}{r^2} - \frac{w'(r)}{r^3}\right\},\ \ 
	F_2(w,r)
		:= w^{(4)}(r) + \frac{4}{r}w^{(3)}(r),\ \ 
	F_3(w,r)
		:= w''(r) + \frac{2}{r}w'(r).
\end{gather*}
\end{proposition}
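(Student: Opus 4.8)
The plan is to establish both identities by differentiating $I(t)$ under the integral sign and integrating by parts, deriving along the way the local conservation laws for mass and momentum attached to \eqref{NLS}; here ``suitable'' means that $w$ and its derivatives up to order four are bounded (or, as in the applications, that these quantities are integrable against $|u|^2$, $|\nabla u|^2$ and $|u|^4$). All the manipulations below are first justified for data in $H^2(\mathbb{R}^3)$, where $u \in C_t(H^2) \cap C_t^1(L^2)$ and everything is classical; the general $H^1$ statement then follows by approximating $u_0$ in $H^1$ by $H^2$ data and using the local well-posedness theory together with Lemma \ref{Generalized Hardy inequality}, which guarantees that all the terms that appear are finite and depend continuously on the data.

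\emph{First derivative.} Writing $V := \gamma|x|^{-\mu}$, one multiplies \eqref{NLS} by $\overline{u}$, takes imaginary parts, and uses that $V|u|^2$ and $|u|^{p+1}$ are real to obtain the local mass-conservation law $\partial_t|u|^2 + 2\,\nabla\cdot\text{Im}(\overline{u}\nabla u) = 0$. Hence
\[
	I'(t) = \int_{\mathbb{R}^3}w\,\partial_t|u|^2\,dx = 2\int_{\mathbb{R}^3}\nabla w\cdot\text{Im}(\overline{u}\nabla u)\,dx ,
\]
and since $w$ is radial we have $\nabla w = \tfrac{w'(r)}{r}x$, which is exactly the claimed formula for $I'(t)$.

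\emph{Second derivative.} Differentiating $I'(t)$ again in time and substituting $\partial_t u = i(\Delta u - Vu + |u|^{p-1}u)$ yields, after several integrations by parts, a local momentum-balance law for the density $p_k := 2\,\text{Im}(\overline{u}\,\partial_k u)$:
\[
	\partial_t p_k = -4\,\partial_j\text{Re}(\partial_j\overline{u}\,\partial_k u) + \partial_k\Delta|u|^2 - 2\,(\partial_k V)\,|u|^2 + \frac{2(p-1)}{p+1}\,\partial_k\bigl(|u|^{p+1}\bigr)
\]
(with summation over $j$), the first two terms being the stress-tensor divergence of the free equation, the third the forcing due to the potential, and the last the nonlinear contribution. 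Therefore $I''(t) = \sum_k\int_{\mathbb{R}^3}\partial_k w\,\partial_t p_k\,dx$, and integrating by parts term by term gives
\[
	I''(t) = 4\int\partial_j\partial_k w\,\text{Re}(\partial_j\overline{u}\,\partial_k u)\,dx - \int\Delta^2 w\,|u|^2\,dx - 2\int(\nabla w\cdot\nabla V)\,|u|^2\,dx - \frac{2(p-1)}{p+1}\int\Delta w\,|u|^{p+1}\,dx .
\]
To conclude it remains to simplify using radiality and $d = p = 3$: for radial $w$ one has $\partial_j\partial_k w = \tfrac{w'(r)}{r}\delta_{jk} + \bigl(\tfrac{w''(r)}{r^2} - \tfrac{w'(r)}{r^3}\bigr)x_j x_k$, so the kinetic term becomes $\int F_1(w,r)|x\cdot\nabla u|^2\,dx + 4\int\tfrac{w'(r)}{r}|\nabla u|^2\,dx$; for radial functions on $\mathbb{R}^3$ one has $\Delta w = F_3(w,r)$ and $\Delta^2 w = F_2(w,r)$; moreover $\tfrac{2(p-1)}{p+1} = 1$; and $\nabla V = -\mu\gamma|x|^{-\mu-2}x$ gives $-2\,\nabla w\cdot\nabla V = 2\mu\gamma\,w'(r)\,r^{-\mu-1}$. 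Collecting these identities yields precisely the stated expression for $I''(t)$.

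\emph{Main obstacle.} The algebraic steps are routine, so the real work is the rigorous justification at the level of $H^1$ solutions: the momentum identity involves second derivatives of $u$ and the singular weight $V'(r)\sim r^{-\mu-1}$, so one must argue by regularization (mollifying $u_0$ and passing to the limit) and use Lemma \ref{Generalized Hardy inequality} to control the term $\int w'(r)\,\gamma r^{-\mu-1}|u|^2\,dx$ and to verify continuity in the data. Within the computation itself the one point that must be checked carefully is the potential term: $V = \gamma|x|^{-\mu}$ gives $V'(r) = -\mu\gamma r^{-\mu-1}$, so after the integration by parts the coefficient is $+2\mu$, consistent with the repulsive sign of the potential, and any slip there would propagate to the final formula.
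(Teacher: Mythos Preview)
Your derivation is correct and follows the standard route to the localized virial identity. Note, however, that the paper does not supply its own proof of this proposition: it is stated in the preliminaries with citations to \cite{TaoVisZha07, Din21} and used as a known tool. What you have written is essentially the computation one finds in those references --- the local mass law for $I'(t)$, the momentum-balance/stress-tensor identity for $I''(t)$, and then specialization to radial $w$ in dimension $3$ with $p=3$ --- and your check of the bi-Laplacian $\Delta^2 w = w^{(4)} + \tfrac{4}{r}w^{(3)}$ and of the sign in the potential term is accurate. The only remark is that your regularization paragraph is appropriate but could be slightly sharpened: the integrability of $w'(r)\,r^{-\mu-1}|u|^2$ near the origin is exactly where the hypothesis $0<\mu<2$ is used via Lemma~\ref{Generalized Hardy inequality}, and you might say so explicitly.
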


\section{Parameter independence of the splitting below the ``radial'' ground state}\label{Sec:Parameter independence of the splitting below the ``radial'' ground state}

In this section, we prove that $\mathcal{R}_{\omega,\gamma}^{\alpha,\beta,\pm}$ are independent of $(\alpha,\beta)$, where $\mathcal{R}_{\omega,\gamma}^{\alpha,\beta,\pm}$ are defined as
\begin{gather*}
	\mathcal{R}_{\omega,\gamma}^{\alpha,\beta,+}
		:= \{\phi \in H_\text{rad}^1(\mathbb{R}^3) : S_{\omega,\gamma}(\phi) < r_{\omega,\gamma},\ K_{\omega,\gamma}^{\alpha,\beta}(\phi)\geq0\}, \\
	\mathcal{R}_{\omega,\gamma}^{\alpha,\beta,-}
		:= \{\phi \in H_\text{rad}^1(\mathbb{R}^3) : S_{\omega,\gamma}(\phi) < r_{\omega,\gamma},\ K_{\omega,\gamma}^{\alpha,\beta}(\phi) < 0\}.
\end{gather*}
The proof is based on the argument in \cite{IbrMasNak11}.

\begin{proposition}[Equivalence of $H^1$-norm and $S_{\omega,\gamma}$, \cite{HamIkeISAAC}]\label{Equivalence of H1 and S}
Let $d = p = 3$, $\gamma > 0$, $0 < \mu < 2$, and $\omega > 0$.
Let $(\alpha,\beta)$ satisfy \eqref{104}.
If $f \in H^1(\mathbb{R}^3)$ satisfies $K_{\omega,\gamma}^{\alpha,\beta}(f)\geq0$, then
\begin{align*}
	2(\alpha - \beta)S_{\omega,\gamma}(f)
		\leq (\alpha - \beta)\|f\|_{H_{\omega,\gamma}^1}^2
		\leq (4\alpha - 3\beta)S_{\omega,\gamma}(f)
\end{align*}
holds, where $\|f\|_{H_{\omega,\gamma}^1}^2 := \omega\|f\|_{L^2}^2 + \|(-\Delta_\gamma)^\frac{1}{2}f\|_{L^2}^2$.
\end{proposition}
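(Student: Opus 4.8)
The plan is to reduce the statement to a short algebraic comparison among the four nonnegative quantities
\[
A:=\omega\|f\|_{L^2}^2,\qquad B:=\|\nabla f\|_{L^2}^2,\qquad C:=\int_{\mathbb{R}^3}\frac{\gamma}{|x|^\mu}|f(x)|^2\,dx,\qquad D:=\|f\|_{L^4}^4,
\]
all of which are finite for $f\in H^1(\mathbb{R}^3)$ (for $C$ via Lemma~\ref{Generalized Hardy inequality} with $q=2$ and $0<\mu<2$). First I would compute $K_{\omega,\gamma}^{\alpha,\beta}(f)$ explicitly: under the scaling $f\mapsto e^{\alpha\lambda}f(e^{\beta\lambda}\,\cdot\,)$ the mass, kinetic, potential and nonlinear parts of $S_{\omega,\gamma}$ are multiplied by $e^{(2\alpha-3\beta)\lambda}$, $e^{(2\alpha-\beta)\lambda}$, $e^{(2\alpha-(3-\mu)\beta)\lambda}$ and $e^{(4\alpha-3\beta)\lambda}$ respectively (using $d=p=3$), so differentiating at $\lambda=0$ yields
\[
K_{\omega,\gamma}^{\alpha,\beta}(f)=\frac{2\alpha-3\beta}{2}A+\frac{2\alpha-\beta}{2}B+\frac{2\alpha-(3-\mu)\beta}{2}C-\frac{4\alpha-3\beta}{4}D.
\]
Combined with the identities $S_{\omega,\gamma}(f)=\tfrac12(A+B+C)-\tfrac14 D$ and $\|f\|_{H_{\omega,\gamma}^1}^2=A+B+C$, everything now follows by comparing linear combinations of $A,B,C,D$.

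For the lower bound $2(\alpha-\beta)S_{\omega,\gamma}(f)\leq(\alpha-\beta)\|f\|_{H_{\omega,\gamma}^1}^2$ I would just note
\[
(\alpha-\beta)\|f\|_{H_{\omega,\gamma}^1}^2-2(\alpha-\beta)S_{\omega,\gamma}(f)=\frac{\alpha-\beta}{2}\,D\geq0,
\]
because $D\geq0$ and $\alpha\geq\beta$ (from \eqref{104}: if $\beta>0$ then $2\alpha\geq3\beta$ gives $\alpha\geq\tfrac32\beta>\beta$, and if $\beta=0$ then $\alpha>0$); this half does not use the sign of $K_{\omega,\gamma}^{\alpha,\beta}(f)$.

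For the upper bound $(\alpha-\beta)\|f\|_{H_{\omega,\gamma}^1}^2\leq(4\alpha-3\beta)S_{\omega,\gamma}(f)$ I would start from the identity
\[
(4\alpha-3\beta)S_{\omega,\gamma}(f)-(\alpha-\beta)\|f\|_{H_{\omega,\gamma}^1}^2=\frac{2\alpha-\beta}{2}(A+B+C)-\frac{4\alpha-3\beta}{4}D,
\]
and then insert the hypothesis $K_{\omega,\gamma}^{\alpha,\beta}(f)\geq0$ in the form $\frac{4\alpha-3\beta}{4}D\leq\frac{2\alpha-3\beta}{2}A+\frac{2\alpha-\beta}{2}B+\frac{2\alpha-(3-\mu)\beta}{2}C$. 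The $B$-coefficients cancel and one is left with
\[
(4\alpha-3\beta)S_{\omega,\gamma}(f)-(\alpha-\beta)\|f\|_{H_{\omega,\gamma}^1}^2\geq\beta A+\frac{(2-\mu)\beta}{2}C\geq0,
\]
using $\beta\geq0$, $0<\mu<2$, $A\geq0$, $C\geq0$, which finishes the proof.

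There is no genuine obstacle here — it is a bookkeeping computation — but the one point worth flagging is that the potential term $C$ is precisely where the hypothesis $0<\mu<2$ is consumed: the residual coefficient $(2-\mu)\beta/2$ is nonnegative exactly because $\mu<2$, it vanishes for the Nehari scaling $\beta=0$ and is sharpest for the virial scaling $(\alpha,\beta)=(d,2)=(3,2)$, and the argument would break down for $\mu\geq2$. Every other sign check comes straight from \eqref{104}; the only thing requiring care is getting the four scaling exponents right.
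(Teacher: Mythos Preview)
Your proof is correct. The paper itself does not reprove this proposition---it is quoted from \cite{HamIkeISAAC}---so there is no in-paper argument to compare against; your direct computation of $K_{\omega,\gamma}^{\alpha,\beta}$ in terms of $A,B,C,D$ and the two linear-combination checks are exactly the standard route, and every sign verification (in particular $\alpha\geq\beta$ from \eqref{104} and the residual $(2-\mu)\beta/2\geq0$) is handled properly.
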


\begin{lemma}[Positivity of $K_{\omega,\gamma}^{\alpha,\beta}$, \cite{HamIkeISAAC}]\label{Positivity of K}
Let $d = p = 3$, $\gamma > 0$, $0 < \mu < 2$, and $\omega > 0$.
Let $(\alpha,\beta)$ satisfy \eqref{104}.
Suppose that $\{f_n\}$ is a bounded sequence in $H^1(\mathbb{R}^3) \setminus \{0\}$ and satisfies $\|\nabla f_n\|_{L^2} \longrightarrow 0$ as $n \rightarrow \infty$.
Then, there exists $n_0 \in \mathbb{N}$ such that $K_{\omega,\gamma}^{\alpha,\beta}(f_n) > 0$ for any $n \geq n_0$.
\end{lemma}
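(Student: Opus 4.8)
The plan is to write $K_{\omega,\gamma}^{\alpha,\beta}$ explicitly and then reduce the claim to an elementary Gagliardo--Nirenberg/Sobolev estimate, exploiting that under \eqref{104} only the nonlinear term of $K_{\omega,\gamma}^{\alpha,\beta}$ has a bad sign and that it is controlled by a higher power of $\|\nabla\cdot\|_{L^2}$.

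First I would compute $K_{\omega,\gamma}^{\alpha,\beta}(f)=\partial_\lambda|_{\lambda=0}S_{\omega,\gamma}(e^{\alpha\lambda}f(e^{\beta\lambda}\,\cdot\,))$ using the scaling of each term of $S_{\omega,\gamma}$; with $d=p=3$ this gives
\[
K_{\omega,\gamma}^{\alpha,\beta}(f)=\frac{\omega(2\alpha-3\beta)}{2}\|f\|_{L^2}^2+\frac{2\alpha-\beta}{2}\|\nabla f\|_{L^2}^2+\frac{2\alpha-(3-\mu)\beta}{2}\int_{\mathbb{R}^3}\frac{\gamma}{|x|^\mu}|f(x)|^2\,dx-\frac{4\alpha-3\beta}{4}\|f\|_{L^4}^4 .
\]
Under \eqref{104} one checks that $2\alpha-3\beta\ge0$ and, since $0<\mu<2$ and $\beta\ge0$, that $2\alpha-(3-\mu)\beta\ge 2\alpha-3\beta\ge0$; moreover $2\alpha-\beta=(2\alpha-3\beta)+2\beta$ is \emph{strictly} positive (if it were $0$ then, together with $\beta\ge0$ and $2\alpha-3\beta\ge0$, this would force $\alpha=0$, contradicting $\alpha>0$). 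Thus the mass, kinetic and potential coefficients are nonnegative and the kinetic one is positive.

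Next I would bound the nonlinear term. By H\"older and the Sobolev embedding $\dot H^1(\mathbb{R}^3)\hookrightarrow L^6(\mathbb{R}^3)$,
\[
\|f_n\|_{L^4}^4\le\|f_n\|_{L^2}\|f_n\|_{L^6}^3\lesssim\|f_n\|_{L^2}\|\nabla f_n\|_{L^2}^3\le C\|\nabla f_n\|_{L^2}^3 ,
\]
where $C$ depends only on $\sup_n\|f_n\|_{L^2}<\infty$. Discarding the nonnegative mass and potential terms in the formula above, this yields, for some $C'>0$ independent of $n$,
\[
K_{\omega,\gamma}^{\alpha,\beta}(f_n)\ \ge\ \|\nabla f_n\|_{L^2}^2\left(\frac{2\alpha-\beta}{2}-C'\|\nabla f_n\|_{L^2}\right).
\]

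Finally, since $f_n\in H^1(\mathbb{R}^3)\setminus\{0\}$ cannot be a nonzero constant function, $\|\nabla f_n\|_{L^2}>0$ for every $n$; and because $\|\nabla f_n\|_{L^2}\to0$, there is $n_0\in\mathbb{N}$ with $C'\|\nabla f_n\|_{L^2}<\tfrac{2\alpha-\beta}{2}$ for all $n\ge n_0$, so the right-hand side above is strictly positive and $K_{\omega,\gamma}^{\alpha,\beta}(f_n)>0$ for $n\ge n_0$. I do not expect a real obstacle: the only points needing care are the bookkeeping in the explicit computation of $K_{\omega,\gamma}^{\alpha,\beta}$ and checking that \eqref{104} forces the kinetic coefficient $\tfrac{2\alpha-\beta}{2}$ to be strictly positive (otherwise the displayed lower bound would be vacuous), both of which are routine.
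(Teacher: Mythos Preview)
Your argument is correct. The explicit formula for $K_{\omega,\gamma}^{\alpha,\beta}$ is right, your sign analysis of the coefficients under \eqref{104} is clean (including the small but necessary check that $2\alpha-\beta>0$), and the Gagliardo--Nirenberg bound $\|f\|_{L^4}^4\lesssim\|f\|_{L^2}\|\nabla f\|_{L^2}^3$ together with the uniform $L^2$ bound does exactly what is needed. The observation that $f_n\in H^1(\mathbb{R}^3)\setminus\{0\}$ forces $\|\nabla f_n\|_{L^2}>0$ closes the argument.

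As for comparison: the present paper does not actually prove this lemma; it is quoted from \cite{HamIkeISAAC} and stated without proof here. Your proof is the natural one and is essentially what one expects the cited argument to be: isolate the one negatively-signed term in $K_{\omega,\gamma}^{\alpha,\beta}$, dominate it by a higher power of $\|\nabla f_n\|_{L^2}$ via Gagliardo--Nirenberg, and conclude. There is nothing to add.
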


\begin{lemma}[Rewriting of $r_{\omega,\gamma}^{\alpha,\beta}$, \cite{HamIkeISAAC}]\label{Rewriting of infimum}
Let $d = p = 3$, $\gamma > 0$, $0 < \mu < 2$, and $\omega > 0$.
Let $(\alpha,\beta)$ satisfy \eqref{104}.
If $f\in H_\text{rad}^1(\mathbb{R}^3) \setminus \{0\}$ satisfies $K_{\omega,\gamma}^{\alpha,\beta}(f) < 0$, then there exists $0 < \lambda < 1$ such that
\begin{align*}
	K_{\omega,\gamma}^{\alpha,\beta}(\lambda f)
		= 0\ \ \text{ and }\ \ 
	r_{\omega,\gamma}^{\alpha,\beta}
		\leq S_{\omega,\gamma}(\lambda f)
		= T_{\omega,\gamma}^{\alpha,\beta}(\lambda f)
		< T_{\omega,\gamma}^{\alpha,\beta}(f),
\end{align*}
where the functional $T_{\omega,\gamma}^{\alpha,\beta}$ is defined as
\begin{align*}
	T_{\omega,\gamma}^{\alpha,\beta}(f)
		:= S_{\omega,\gamma}(f) - \frac{1}{2\alpha - \beta}K_{\omega,\gamma}^{\alpha,\beta}(f).
\end{align*}
In particular,
\begin{align*}
	r_{\omega,\gamma}^{\alpha,\beta}
		= \inf\{T_{\omega,\gamma}^{\alpha,\beta}(f) : f \in H_\text{rad}^1(\mathbb{R}^3) \setminus \{0\},\ K_{\omega,\gamma}^{\alpha,\beta}(f) \leq 0\}
\end{align*}
holds.
\end{lemma}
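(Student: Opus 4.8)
The plan is to run the classical amplitude-rescaling (fibering) argument, in which the only real content is that the admissibility condition \eqref{104} together with $0<\mu<2$ forces every coefficient below to have the sign we need. First I would unfold the definition of $K_{\omega,\gamma}^{\alpha,\beta}$: with $d=3$ it is a linear combination of $\|f\|_{L^2}^2$, $\|\nabla f\|_{L^2}^2$, $\int_{\mathbb{R}^3}\gamma|x|^{-\mu}|f|^2\,dx$ and $\|f\|_{L^4}^4$ with fixed coefficients, so substituting $\lambda f$ for $\lambda>0$ gives
\[
	K_{\omega,\gamma}^{\alpha,\beta}(\lambda f)
		= A\lambda^2 - B\lambda^4,
\]
where $A$ gathers the quadratic terms — $\|f\|_{L^2}^2$ with coefficient $\tfrac{\omega}{2}(2\alpha-3\beta)$, $\|\nabla f\|_{L^2}^2$ with coefficient $\tfrac12(2\alpha-\beta)$, and $\int_{\mathbb{R}^3}\gamma|x|^{-\mu}|f|^2\,dx$ with coefficient $\tfrac12(2\alpha-(3-\mu)\beta)$ — and $B=\tfrac14(4\alpha-3\beta)\|f\|_{L^4}^4$. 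By \eqref{104} and $0<\mu<2$ every coefficient in $A$ is $\geq0$ with the gradient one strictly positive, and $4\alpha-3\beta>0$; since $f\not\equiv0$ in $H^1(\mathbb{R}^3)$ we have $\|\nabla f\|_{L^2}>0$ and $\|f\|_{L^4}>0$, so $A>0$ and $B>0$. Hence $\lambda\mapsto K_{\omega,\gamma}^{\alpha,\beta}(\lambda f)=\lambda^2(A-B\lambda^2)$ has the unique positive zero $\lambda_0:=\sqrt{A/B}$, and the hypothesis $K_{\omega,\gamma}^{\alpha,\beta}(f)=A-B<0$ is equivalent to $A<B$, i.e.\ to $\lambda_0\in(0,1)$. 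This produces the asserted $\lambda$.

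Next, since $\lambda_0 f\in H_{\mathrm{rad}}^1(\mathbb{R}^3)\setminus\{0\}$ and $K_{\omega,\gamma}^{\alpha,\beta}(\lambda_0 f)=0$, the definition of $r_{\omega,\gamma}^{\alpha,\beta}$ gives $r_{\omega,\gamma}^{\alpha,\beta}\leq S_{\omega,\gamma}(\lambda_0 f)$, and $S_{\omega,\gamma}(\lambda_0 f)=T_{\omega,\gamma}^{\alpha,\beta}(\lambda_0 f)$ because $T_{\omega,\gamma}^{\alpha,\beta}=S_{\omega,\gamma}-\tfrac1{2\alpha-\beta}K_{\omega,\gamma}^{\alpha,\beta}$. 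For the final strict inequality, the key observation is that the $\|\nabla f\|_{L^2}^2$-term cancels in $T_{\omega,\gamma}^{\alpha,\beta}$, so along the amplitude scaling $T_{\omega,\gamma}^{\alpha,\beta}(\lambda f)=P\lambda^2+Q\lambda^4$ with $P:=\tfrac{\omega\beta}{2\alpha-\beta}\|f\|_{L^2}^2+\tfrac{(2-\mu)\beta}{2(2\alpha-\beta)}\int_{\mathbb{R}^3}\tfrac{\gamma}{|x|^\mu}|f|^2\,dx\geq0$ and $Q:=\tfrac{\alpha-\beta}{2(2\alpha-\beta)}\|f\|_{L^4}^4>0$, where again \eqref{104} and $\mu<2$ supply $\beta\geq0$, $2\alpha-\beta>0$ and $\alpha>\beta$. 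Since $0<\lambda_0<1$ this forces $T_{\omega,\gamma}^{\alpha,\beta}(\lambda_0 f)=P\lambda_0^2+Q\lambda_0^4<P+Q=T_{\omega,\gamma}^{\alpha,\beta}(f)$, completing the first assertion.

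For the ``in particular'' part, denote by $\widetilde r$ the infimum on the right-hand side. Given $f\in H_{\mathrm{rad}}^1(\mathbb{R}^3)\setminus\{0\}$ with $K_{\omega,\gamma}^{\alpha,\beta}(f)\leq0$: if $K_{\omega,\gamma}^{\alpha,\beta}(f)=0$ then $r_{\omega,\gamma}^{\alpha,\beta}\leq S_{\omega,\gamma}(f)=T_{\omega,\gamma}^{\alpha,\beta}(f)$, while if $K_{\omega,\gamma}^{\alpha,\beta}(f)<0$ the previous two paragraphs give $r_{\omega,\gamma}^{\alpha,\beta}\leq T_{\omega,\gamma}^{\alpha,\beta}(\lambda_0 f)<T_{\omega,\gamma}^{\alpha,\beta}(f)$; taking the infimum over all such $f$ yields $r_{\omega,\gamma}^{\alpha,\beta}\leq\widetilde r$. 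Conversely, on $\{K_{\omega,\gamma}^{\alpha,\beta}=0\}$ one has $S_{\omega,\gamma}=T_{\omega,\gamma}^{\alpha,\beta}\geq\widetilde r$, so $r_{\omega,\gamma}^{\alpha,\beta}=\inf\{S_{\omega,\gamma}(f):f\in H_{\mathrm{rad}}^1(\mathbb{R}^3)\setminus\{0\},\ K_{\omega,\gamma}^{\alpha,\beta}(f)=0\}\geq\widetilde r$, and equality follows.

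I do not expect a genuine obstacle here: everything reduces to the two explicit $\lambda$-polynomials above. The only care required is the sign bookkeeping — extracting from \eqref{104} and $0<\mu<2$ that $A>0$, $B>0$, $P\geq0$, $Q>0$, $2\alpha-\beta>0$ and $\alpha>\beta$ — and the remark that a nonzero $H^1(\mathbb{R}^3)$ function has $\|\nabla f\|_{L^2}>0$, which is precisely what makes $A>0$ and hence $\lambda_0$ strictly positive.
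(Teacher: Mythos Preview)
Your argument is correct and complete. The paper does not actually prove this lemma---it is quoted from \cite{HamIkeISAAC} without proof---so there is no in-paper argument to compare against; your amplitude-rescaling (fibering) computation is the standard route for such statements, and the sign bookkeeping you carry out (in particular deducing $2\alpha-\beta>0$ and $\alpha>\beta$ from \eqref{104}) is exactly what is needed.
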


\begin{proposition}\label{Initial data set}
Let $d = p = 3$, $\gamma > 0$, $0 < \mu < 2$, and $\omega > 0$.
Let $(\alpha,\beta)$ satisfy \eqref{104}.
Then, $\mathcal{R}_{\omega,\gamma}^{\alpha,\beta,\pm}$ are independent of $(\alpha,\beta)$.
\end{proposition}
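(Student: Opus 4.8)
The plan is to show the two families of sets coincide by proving the chain of inclusions $\mathcal{R}_{\omega,\gamma}^{\alpha,\beta,+} = \mathcal{R}_{\omega,\gamma}^{1,0,+}$ (and similarly for the $-$ sign), where $(1,0)$ plays the role of a fixed reference pair. Since the complement of $\mathcal{R}_{\omega,\gamma}^{\alpha,\beta,+}$ inside $\{f \in H^1_{\text{rad}} : S_{\omega,\gamma}(f) < r_{\omega,\gamma}\}$ is exactly $\mathcal{R}_{\omega,\gamma}^{\alpha,\beta,-}$, it suffices to treat the $+$ case, and the whole problem reduces to: if $S_{\omega,\gamma}(f) < r_{\omega,\gamma}$, then the sign of $K_{\omega,\gamma}^{\alpha,\beta}(f)$ is the same for all admissible $(\alpha,\beta)$ (with the convention that the zero function lies in the $+$ set, treated separately via $K_{\omega,\gamma}^{\alpha,\beta}(0)=0$).

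First I would handle the easy direction. Suppose $f \neq 0$, $S_{\omega,\gamma}(f) < r_{\omega,\gamma}$, and $K_{\omega,\gamma}^{\alpha,\beta}(f) < 0$; I want to conclude $K_{\omega,\gamma}^{\alpha',\beta'}(f) < 0$ for every other admissible pair. Argue by contradiction: if $K_{\omega,\gamma}^{\alpha',\beta'}(f) \geq 0$, then by Lemma \ref{Rewriting of infimum} applied to $(\alpha,\beta)$ there is $\lambda \in (0,1)$ with $K_{\omega,\gamma}^{\alpha,\beta}(\lambda f) = 0$, hence $S_{\omega,\gamma}(\lambda f) \geq r_{\omega,\gamma}$. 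On the other hand I would like monotonicity of $\lambda \mapsto S_{\omega,\gamma}(\lambda f)$ on $(0,1]$ or, better, the relation $S_{\omega,\gamma}(\lambda f) \leq S_{\omega,\gamma}(f)$ which follows because $K_{\omega,\gamma}^{\alpha',\beta'}(f) \geq 0$ forces $\|f\|_{H^1_{\omega,\gamma}}^2 \geq \|f\|_{L^{p+1}}^{p+1}$-type control and then $T^{\alpha',\beta'}_{\omega,\gamma}$ is increasing along the scaling — this gives $r_{\omega,\gamma} \leq S_{\omega,\gamma}(\lambda f) < S_{\omega,\gamma}(f) < r_{\omega,\gamma}$, a contradiction. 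Symmetrically, if $K_{\omega,\gamma}^{\alpha,\beta}(f) \geq 0$ I must rule out $K_{\omega,\gamma}^{\alpha',\beta'}(f) < 0$, which is the same argument with the roles of the two pairs exchanged. So the core is one lemma: for $f$ with $S_{\omega,\gamma}(f) < r_{\omega,\gamma}$, $K_{\omega,\gamma}^{\alpha,\beta}(f) \geq 0$ cannot coexist with $K_{\omega,\gamma}^{\alpha',\beta'}(f) < 0$.

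To make the contradiction airtight I would follow \cite{IbrMasNak11}: parametrize by $g_\lambda := e^{\alpha'\lambda} f(e^{\beta'\lambda}\cdot)$ and track $j(\lambda) := S_{\omega,\gamma}(g_\lambda)$, noting $j'(\lambda) = K_{\omega,\gamma}^{\alpha',\beta'}(g_\lambda)$ by the very definition of $K$. The assumption $K_{\omega,\gamma}^{\alpha',\beta'}(f) < 0$ means $j'(0) < 0$, so $j$ decreases initially; combined with a concavity/sign-change analysis of $\lambda \mapsto K_{\omega,\gamma}^{\alpha',\beta'}(g_\lambda)$ (each term scales as a fixed power of $e^\lambda$, so this is an explicit function of $\lambda$ of the form $A e^{a\lambda} + B e^{b\lambda} - C e^{c\lambda}$ with $C > 0$), one locates a unique $\lambda_\ast > 0$ with $K_{\omega,\gamma}^{\alpha',\beta'}(g_{\lambda_\ast}) = 0$ and $S_{\omega,\gamma}(g_{\lambda_\ast}) < S_{\omega,\gamma}(f) < r_{\omega,\gamma}$, contradicting the definition of $r_{\omega,\gamma}$ as the infimum of $S_{\omega,\gamma}$ over the zero set of $K_{\omega,\gamma}^{\alpha',\beta'}$ (using that $r_{\omega,\gamma}$ is the same for all admissible pairs, Theorem on existence of the ``radial'' ground state). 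Lemma \ref{Positivity of K} is what guarantees the relevant scaling limit stays nonzero and the sign analysis is nondegenerate near $\lambda \to -\infty$, and Proposition \ref{Equivalence of H1 and S} supplies the coercivity needed to keep $g_\lambda$ in $H^1 \setminus \{0\}$ with controlled norm.

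The main obstacle I anticipate is the sign/monotonicity bookkeeping for the one-parameter family $\lambda \mapsto K_{\omega,\gamma}^{\alpha',\beta'}(g_\lambda)$ across the full admissible range $\alpha' > 0$, $\beta' \geq 0$, $2\alpha' - d\beta' \geq 0$: the two boundary cases $\beta' = 0$ (Nehari, where the scaling is pure $L^2$-amplitude and there is no $\beta$-dilation) and $2\alpha' = d\beta'$ (where the $L^2$ mass term is scaling-invariant) behave differently from the interior, so the exponents $a, b, c$ and their ordering must be checked case by case to ensure $K_{\omega,\gamma}^{\alpha',\beta'}(g_\lambda)$ changes sign exactly once for $\lambda > 0$ whenever it starts negative. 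Once that elementary-but-fiddly convexity lemma is in place, everything else is a short logical assembly of the inclusions above; I would state it as an auxiliary lemma and then deduce Proposition \ref{Initial data set} in a few lines.
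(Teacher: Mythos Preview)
Your overall reduction is correct: since $\mathcal{R}_{\omega,\gamma}^{\alpha,\beta,+}\cup\mathcal{R}_{\omega,\gamma}^{\alpha,\beta,-}=\{S_{\omega,\gamma}<r_{\omega,\gamma}\}$ is independent of $(\alpha,\beta)$, it suffices to show that for each nonzero $f$ below the level $r_{\omega,\gamma}$ the sign of $K_{\omega,\gamma}^{\alpha,\beta}(f)$ is the same for all admissible pairs. The problem is in your execution of the contradiction in the third paragraph, where the scaling runs in the wrong direction.

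You assume $K_{\omega,\gamma}^{\alpha',\beta'}(f)<0$, set $g_\lambda=e^{\alpha'\lambda}f(e^{\beta'\lambda}\cdot)$, and claim a zero $\lambda_\ast>0$ with $S_{\omega,\gamma}(g_{\lambda_\ast})<S_{\omega,\gamma}(f)$. But write out the exponents (e.g.\ for $(\alpha',\beta')=(3,2)$): the nonlinear term carries the largest exponent, so once $K_{\omega,\gamma}^{\alpha',\beta'}(g_0)<0$ it stays negative for all $\lambda>0$ and there is no $\lambda_\ast>0$. The unique zero sits at some $\lambda_\ast<0$, and since $j'=K_{\omega,\gamma}^{\alpha',\beta'}(g_\cdot)<0$ on $(\lambda_\ast,0)$ you get $S_{\omega,\gamma}(g_{\lambda_\ast})>S_{\omega,\gamma}(f)$, not less---no contradiction with $r_{\omega,\gamma}$. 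Your second-paragraph variant has the analogous failure in one of the two directions: when $K_{\omega,\gamma}^{1,0}(f)<0$, the $\lambda\in(0,1)$ produced by Lemma~\ref{Rewriting of infimum} is precisely the maximizer of $\lambda\mapsto S_{\omega,\gamma}(\lambda f)$, so again $S_{\omega,\gamma}(\lambda f)>S_{\omega,\gamma}(f)$.

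The fix is to scale along the pair for which $K$ is \emph{positive}, and this is exactly what the paper does, packaged topologically. If $K_{\omega,\gamma}^{\alpha,\beta}(\varphi)>0$, run $g_\lambda=e^{\alpha\lambda}\varphi(e^{\beta\lambda}\cdot)$ for $\lambda\le 0$. A continuity (bootstrap) argument gives $K_{\omega,\gamma}^{\alpha,\beta}(g_\lambda)>0$ for all $\lambda\le 0$: if it first vanished at some $\lambda_-<0$, then $j'>0$ on $(\lambda_-,0]$ forces $S_{\omega,\gamma}(g_{\lambda_-})<S_{\omega,\gamma}(\varphi)<r_{\omega,\gamma}$, contradicting $K_{\omega,\gamma}^{\alpha,\beta}(g_{\lambda_-})=0$. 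Hence the whole path stays in the sub-level set, and (for $2\alpha-3\beta\neq0$) $g_\lambda\to0$ in $H^1$ as $\lambda\to-\infty$. Now if $K_{\omega,\gamma}^{\alpha',\beta'}(\varphi)<0$, Lemma~\ref{Positivity of K} makes $K_{\omega,\gamma}^{\alpha',\beta'}(g_\lambda)>0$ for $\lambda\ll0$, so it vanishes at some intermediate $\lambda_\ast$ with $S_{\omega,\gamma}(g_{\lambda_\ast})<r_{\omega,\gamma}$, which is impossible. The paper phrases this as ``$\mathcal{R}_{\omega,\gamma}^{\alpha,\beta,+}$ is connected and both $\mathcal{R}_{\omega,\gamma}^{\alpha',\beta',\pm}$ are open, and both $+$ sets contain $0$''; the borderline $2\alpha-3\beta=0$ is then handled by approximation. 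This connectedness framing also dispenses with the case-by-case exponent analysis you anticipate in your last paragraph.
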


\begin{proof}
First, we check that $\mathcal{R}_{\omega,\gamma}^{\alpha,\beta,\pm}$ are open subsets in $H^1(\mathbb{R}^3)$.
$\mathcal{R}_{\omega,\gamma}^{\alpha,\beta,-}$ is clearly open subset in $H^1(\mathbb{R}^3)$ and $\phi$ satisfying $K_{\omega,\gamma}^{\alpha,\beta}(\phi) > 0$ is clearly an interior point in $\mathcal{R}_{\omega,\gamma}^{\alpha,\beta,+}$ by the continuity of $S_{\omega,\gamma}$ and $K_{\omega,\gamma}^{\alpha,\beta}$.
From the definition of $r_{\omega,\gamma}$, a function $\phi\in \mathcal{R}_{\omega,\gamma}^{\alpha,\beta,+}$ satisfying $K_{\omega,\gamma}^{\alpha,\beta}(\phi) = 0$ is only $\phi = 0$, so it suffices to prove that $0$ is an interior point of $\mathcal{R}_{\omega,\gamma}^{\alpha,\beta,+}$.
It follows from Lemma \ref{Positivity of K}.\\
Next, we prove that $\mathcal{R}_{\omega,\gamma}^{\alpha,\beta,+}$ and $\mathcal{R}_{\omega,\gamma}^{\alpha,\beta,-}$ are independent of $(\alpha,\beta)$.
Let $(\alpha,\beta)$ satisfy \eqref{104} and $2\alpha - 3\beta \neq 0$.
We note $\mathcal{R}_{\omega,\gamma}^{\alpha,\beta,+} \cup \mathcal{R}_{\omega,\gamma}^{\alpha,\beta,-} = \{\phi\in H_\text{rad}^1(\mathbb{R}^3) : S_{\omega,\gamma}(\phi) < r_{\omega,\gamma}\}$ is independent of $(\alpha,\beta)$.
In addition, we have
\begin{itemize}
\item[(1)]
$K_{\omega,\gamma}^{\alpha,\beta}(e^{\alpha\lambda}\varphi(e^{\beta\lambda}\,\cdot\,)) > 0$ for any $\varphi \in \mathcal{R}_{\omega,\gamma}^{\alpha,\beta,+} \setminus \{0\}$ and $\lambda \leq 0$,
\item[(2)]
$S_{\omega,\gamma}(e^{\alpha\lambda}\varphi(e^{\beta\lambda}\,\cdot\,))$ does not increase as $0 \geq \lambda \rightarrow -\infty$ for any $\varphi \in \mathcal{R}_{\omega,\gamma}^{\alpha,\beta,+}$,
\item[(3)]
$e^{\alpha\lambda}\varphi(e^{\beta\lambda}\,\cdot\,) \longrightarrow 0$ in $H^1(\mathbb{R}^3)$ as $\lambda \rightarrow -\infty$,
\end{itemize}
which imply that $\mathcal{R}_{\omega,\gamma}^{\alpha,\beta,+}$ is connected.
Thus, each $\mathcal{R}_{\omega,\gamma}^{\alpha,\beta,+}$ can not be separated by $\mathcal{R}_{\omega,\gamma}^{\alpha',\beta',+}$ and $\mathcal{R}_{\omega,\gamma}^{\alpha',\beta',-}$ for any $(\alpha',\beta')\neq (\alpha,\beta)$ with \eqref{104} and $2\alpha'- 3\beta' \neq 0$.
$\mathcal{R}_{\omega,\gamma}^{\alpha,\beta,+}$ and $\mathcal{R}_{\omega,\gamma}^{\alpha',\beta',+}$ contain $0$, so $\mathcal{R}_{\omega,\gamma}^{\alpha,\beta,+}=\mathcal{R}_{\omega,\gamma}^{\alpha',\beta',+}$ holds.
Let $(\alpha,\beta)$ satisfy \eqref{104} and $2\alpha - 3\beta = 0$.
We take a sequence $\{(\alpha_n,\beta_n)\}$ satisfying \eqref{104}, $2\alpha_n - 3\beta_n > 0$, and $(\alpha_n,\beta_n) \longrightarrow (\alpha,\beta)$ as $n \rightarrow \infty$.
Then, $K_{\omega,\gamma}^{\alpha_n,\beta_n} \longrightarrow K_{\omega,\gamma}^{\alpha,\beta}$ implies
\begin{align*}
	\mathcal{R}_{\omega,\gamma}^{\alpha,\beta,\pm}
		\subset \bigcup_{n\in \mathbb{N}}\mathcal{R}_{\omega,\gamma}^{\alpha_n,\beta_n,\pm},
\end{align*}
where the double signs correspond.
Since $\mathcal{R}_{\omega,\gamma}^{\alpha_n,\beta_n,+}$ and $\mathcal{R}_{\omega,\gamma}^{\alpha_n,\beta_n,-}$ are independent of $(\alpha_n,\beta_n)$, so the left hand side also is independent of $(\alpha,\beta)$.
\end{proof}

\section{Well-posedness}\label{Sec:Well-posedness}

\subsection{Local well-posedness}\label{Subsec:Local well-posedness}

We define the following function spaces:
\begin{align*}
	S_\gamma^s(I)
		:= L_t^2(I;W_\gamma^{s,6}(\mathbb{R}^3)) \cap L_t^\infty(I;H_\gamma^s(\mathbb{R}^3)),\ \ \ 
	\dot{S}_\gamma^s(I)
		:= L_t^2(I;\dot{W}_\gamma^{s,6}(\mathbb{R}^3)) \cap L_t^\infty(I;\dot{H}_\gamma^s(\mathbb{R}^3)).
\end{align*}

We also define a solution to \eqref{NLS} clearly.

\begin{definition}[Solution]
Let $t_0 \in \mathbb{R}$ and $u_0 \in H_\gamma^1(\mathbb{R}^3)$.
Let a time interval $I$ contain $t_0$.
We call $u : I \times \mathbb{R}^3 \longrightarrow \mathbb{C}$ a solution to \eqref{NLS} with initial data $u(t_0) = u_0$ if $u \in C_t(K;H_\gamma^1(\mathbb{R}^3)) \cap S_\gamma^1(K)$ for any compact set $K \subset I$ and satisfies the Duhamel formula
\begin{align*}
	u(t)
		= e^{i(t-t_0)\Delta_\gamma}u_0 + i\int_{t_0}^te^{i(t-s)\Delta_\gamma}(|u|^2u)(s)ds
\end{align*}
for any $t \in I$.
We say that $I$ is the lifespan of $u$.
\end{definition}

\begin{theorem}[Local well-posedness]\label{Local well-posedness}
Let $d = p = 3$, $\gamma > 0$, and $0 < \mu < 2$.
For any $u_0 \in H^1(\mathbb{R}^3)$, there exists $T = T(\|u_0\|_{H^1}) > 0$ and a unique solution $u : (t_0 - T,t_0 + T) \times \mathbb{R}^3 \longrightarrow \mathbb{C}$ to \eqref{NLS} with initial data $u(t_0) = u_0$.
\end{theorem}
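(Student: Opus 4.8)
The plan is to run the standard contraction-mapping (fixed-point) argument for the Duhamel map, using the Strichartz estimates of Theorem \ref{Strichartz estimate} together with the norm-equivalence Lemma \ref{Norm equivalence} to handle the potential term transparently. Fix $t_0 \in \mathbb{R}$, $u_0 \in H^1(\mathbb{R}^3)$, and for $T>0$ and $I = (t_0 - T, t_0 + T)$ define the map
\begin{align*}
	\Phi(u)(t)
		:= e^{i(t-t_0)\Delta_\gamma}u_0 + i\int_{t_0}^t e^{i(t-s)\Delta_\gamma}(|u|^2 u)(s)\, ds.
\end{align*}
I would work in the space $S_\gamma^1(I)$ with the closed ball $B_{R,T} := \{ u \in S_\gamma^1(I) : \|u\|_{S_\gamma^1(I)} \leq R\}$ for a suitable $R = R(\|u_0\|_{H^1})$ (say $R = 2C\|u_0\|_{H^1}$ with $C$ the Strichartz constant), and I would also track a weaker auxiliary norm (an $L_t^q L_x^r$ norm at some subcritical exponent) on which the nonlinear estimate gains a power of $T$; the contraction is then closed in that weaker metric while the ball is preserved in the stronger one, by the usual Kato-type argument. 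The completeness of $B_{R,T}$ under the weaker metric is routine.

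The key steps, in order: (i) apply the homogeneous and inhomogeneous Strichartz estimates of Theorem \ref{Strichartz estimate} with $s = 1$ and admissible pairs in $\Lambda_1$ and $\Lambda_0$ to get
\begin{align*}
	\|\Phi(u)\|_{S_\gamma^1(I)}
		\lesssim \|u_0\|_{H^1} + \| |\nabla|(|u|^2 u)\|_{L_t^{q'} L_x^{r'}(I)};
\end{align*}
(ii) estimate the nonlinear term: write $|\nabla|(|u|^2 u)$ via the fractional Leibniz rule (Lemma \ref{Fractional calculus} with $G(z) = |z|^2 z$, $s = 1$, so $G'(u) \sim |u|^2$), bounding $\|\, |u|^2 |\nabla| u\,\|$ by H\"older in space as $\| |u|^2 \|_{L_x^{3}} \|\,|\nabla|u\,\|_{L_x^{6}} \sim \|u\|_{L_x^6}^2 \|u\|_{\dot W^{1,6}_x}$, then in time by H\"older on $I$ to extract a factor $T^\theta$ for some $\theta > 0$ coming from the mismatch between the integrability exponents $q'$ and the ones produced; (iii) use Lemma \ref{Sobolev inequality} (and Lemma \ref{Norm equivalence}) to dominate the spatial $L^6$ norm of $u$ by $\|(-\Delta_\gamma)^{1/2} u\|_{L_x^6} \lesssim \|u\|_{\dot W^{1,6}_{\gamma,x}}$, so everything is controlled by $\|u\|_{S_\gamma^1(I)}$; this yields
\begin{align*}
	\|\Phi(u)\|_{S_\gamma^1(I)}
		\leq C\|u_0\|_{H^1} + C T^\theta \|u\|_{S_\gamma^1(I)}^3,
\end{align*}
and a parallel difference estimate $\|\Phi(u) - \Phi(v)\|_{(\text{weak})} \leq C T^\theta (R^2)\|u - v\|_{(\text{weak})}$; (iv) choose $R = 2C\|u_0\|_{H^1}$ and then $T = T(\|u_0\|_{H^1})$ small so that $C T^\theta R^3 \leq R/2$ and $C T^\theta R^2 \leq 1/2$, making $\Phi$ a contraction on $B_{R,T}$; (v) Banach's fixed-point theorem gives the unique fixed point $u$, and persistence of regularity plus the Duhamel formula place $u \in C_t(K; H_\gamma^1) \cap S_\gamma^1(K)$ for every compact $K \subset (t_0 - T, t_0 + T)$, while uniqueness in the full space follows by the same difference estimate on small subintervals.

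The main obstacle, and the only place real care is needed, is step (ii)–(iii): selecting exponents so that $(q,r)$ is genuinely $\dot H^1$-admissible (lies in $\Lambda_1$, which for $s=1$ forces $q > 4$ and $r$ finite) while the conjugate pair driving the nonlinearity lies in $\Lambda_0$, \emph{and} simultaneously the H\"older pairing in time produces a strictly positive power $T^\theta$ rather than $T^0$ — i.e. one must avoid the scaling-critical (double-endpoint) choice. Concretely one checks that the triple-power nonlinearity $|u|^2 u$ at $d = p = 3$ is $\dot H^{1/2}$-critical, so there is room: pairing $\|u\|_{L_t^{q_1} L_x^6}^2 \|\,|\nabla|u\,\|_{L_t^2 L_x^6}$ in time against an $L_t^{q'}$ target with $\tfrac{2}{q_1} + \tfrac12 < \tfrac{1}{q'}$ leaves a Hölder factor $T^\theta$, $\theta = \tfrac{1}{q'} - \tfrac{2}{q_1} - \tfrac12 > 0$, provided $q_1$ is taken large enough (equivalently $u$ is measured slightly below the critical Strichartz norm). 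Everything else — the fixed-point mechanics in (iv)–(v), completeness of the ball, and propagation of the $H^1$ regularity — is standard and I would not belabor it. I should also remark that Lemma \ref{Norm equivalence} requires $1 < r < 6/\mu$ for $s = 1$, so for $\mu$ close to $2$ the admissible $r = 6$ is \emph{not} directly covered; this is circumvented either by noting that $S_\gamma^1$ is already defined via the potential, so the $L^6$-based nonlinear estimates can be carried out intrinsically (using Lemma \ref{Sobolev inequality} with $(-\Delta_\gamma)^{1/2}$, valid for all $0 < \mu < 2$), or by interpolating with a slightly smaller exponent — a point worth flagging in the write-up but not a genuine difficulty.
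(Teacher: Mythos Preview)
Your approach is essentially the paper's: a Kato-type contraction for the Duhamel map on a ball in $S_\gamma^1(I)$ with the difference estimated in the weaker $S_\gamma^0(I)$-metric, using Strichartz (Theorem~\ref{Strichartz estimate}), the fractional Leibniz rule (Lemma~\ref{Fractional calculus}), Sobolev, and the norm equivalence (Lemma~\ref{Norm equivalence}) to produce a positive power of $T$.

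Two small points of comparison. First, your claimed bound $\|u\|_{L_x^6}\lesssim\|(-\Delta_\gamma)^{1/2}u\|_{L_x^6}$ is not a Sobolev inequality (wrong scaling); presumably you mean $\|u\|_{L_x^6}\lesssim\|(-\Delta_\gamma)^{1/2}u\|_{L_x^2}$ or an embedding into an $S_\gamma^1$-controlled norm. Second, the paper's exponent choice is cleaner than yours and sidesteps the $r=6$ norm-equivalence worry you flag: it places the derivative factor in $L_t^\infty H_\gamma^1$ and estimates the remaining two factors in $L_t^5 L_x^6$, then uses Sobolev to pass from $L_x^6$ back to $W_\gamma^{1,30/11}$, so that $(5,30/11)\in\Lambda_0$ is a genuine Strichartz norm and the equivalence of Lemma~\ref{Norm equivalence} is only invoked at $r=30/11<3$ and $r=2$, both safely below $6/\mu$ for all $0<\mu<2$. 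This yields the explicit factor $T^{1/10}$ and makes your final caveat unnecessary.
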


\begin{proof}
For simplicity, we set $t_0 = 0$ and $I = (-T,T)$.
We define a function space $E$ as
\begin{align*}
	E
		:= \{u \in C_t(I;H_\gamma^1(\mathbb{R}^3)) \cap S_\gamma^1(I) : \|u\|_{S_\gamma^1(I)} \leq 2c\,\|u_0\|_{H^1}\}
\end{align*}
and a metric $d$ on $E$ as
\begin{align*}
	d(u,v)
		:= \|u - v\|_{S_\gamma^0(I)}.
\end{align*}
We prove that
\begin{align*}
	\Phi_{u_0}(u)
		= e^{it\Delta_\gamma}u_0 + i\int_0^te^{i(t-s)\Delta_\gamma}(|u|^2u)(s)ds
\end{align*}
is a contraction map on $(E,d)$.
Using Theorem \ref{Strichartz estimate}, Lemmas \ref{Norm equivalence}, \ref{Fractional calculus}, and Sobolev embedding, we have
\begin{align*}
	\|\Phi_{u_0}(u)\|_{S_\gamma^1(I)}
		& \leq c\,\|(1-\Delta_\gamma)^\frac{1}{2} u_0\|_{L_x^2} + c\,\|(1-\Delta_\gamma)^\frac{1}{2}(|u|^2u)\|_{L_t^2(I;L_x^\frac{6}{5})} \\
		& \leq c\,\|u_0\|_{H^1} + c\,T^\frac{1}{10}\|u\|_{L_t^5(I;L_x^6)}^2 \|(1-\Delta_\gamma)^\frac{1}{2}u\|_{L_t^\infty(I; L_x^2)} \\
		& \leq c\,\|u_0\|_{H^1} + c\,T^\frac{1}{10} \|(1-\Delta_\gamma)^\frac{1}{2}u\|_{L_t^5(I;L_x^\frac{30}{11})}^2 \|(1-\Delta_\gamma)^\frac{1}{2}u\|_{L_t^\infty(I;L_x^2)} \\
		& \leq c\,\|u_0\|_{H^1} + c\,T^\frac{1}{10}\|u\|_{S_\gamma^1(I)}^3 \\
		& \leq \left\{1+(2c)^3T^\frac{1}{10}\|u_0\|_{H^1}^2\right\}c\,\|u_0\|_{H^1}
\end{align*}
and
\begin{align*}
	\|\Phi_{u_0}(u)-\Phi_{u_0}(v)\|_{S_\gamma^0(I)}
		\leq (2c)^3T^\frac{1}{10}\|u_0\|_{H^1}^2\|u-v\|_{S_\gamma^0(I)}.
\end{align*}
Here, we take $T > 0$ sufficiently small such as $(2c)^3T^\frac{1}{10}\|u_0\|_{H^1}^2 < 1$.
Then, $\Phi_{u_0}$ is a contraction map on $(E,d)$ and there exists a unique solution to \eqref{NLS} in $[-T,T]$.
\end{proof}

\begin{lemma}[\cite{HamIkeISAAC}]\label{Uniform estimate for K}
Let $d = p = 3$, $\gamma > 0$, and $0 < \mu < 2$, $u_0 \in H_\text{rad}^1(\mathbb{R}^3)$ and let $(\alpha,\beta)$ satisfy \eqref{104}.
We assume that
\begin{align*}
	S_{\omega,\gamma}(u_0)
		< r_{\omega,\gamma}\ \ \text{ and }\ \ 
	K_{\omega,\gamma}^{\alpha,\beta}(u_0)
		> 0\ \text{ for some }\ \omega > 0.
\end{align*}
Then, a solution $u$ to \eqref{NLS} with \eqref{IC} satisfies $K_{\omega,\gamma}^{\alpha,\beta}(u(t)) > 0$ for any $t \in (T_\text{min},T_\text{max})$.
\end{lemma}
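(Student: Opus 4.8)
The plan is a standard invariant-set/continuity argument, using only conservation laws, the local theory, and the splitting from Proposition \ref{Initial data set}. First I would observe that the solution $u$ conserves mass and energy on its maximal lifespan, so the action
$S_{\omega,\gamma}(u(t)) = \tfrac{\omega}{2}M[u(t)] + E_\gamma[u(t)]$
is constant in $t$; hence $S_{\omega,\gamma}(u(t)) = S_{\omega,\gamma}(u_0) < r_{\omega,\gamma}$ for every $t \in (T_\text{min},T_\text{max})$. Moreover $u(t) \in H_\text{rad}^1(\mathbb{R}^3)$ for all $t$ (radial symmetry is preserved, since $\frac{\gamma}{|x|^\mu}$ and the nonlinearity are radially symmetric and the solution is unique), and $t \mapsto u(t)$ is continuous from $(T_\text{min},T_\text{max})$ into $H^1(\mathbb{R}^3)$ by Theorem \ref{Local well-posedness} (recall $H_\gamma^1 = H^1$ with equivalent norms by Lemma \ref{Norm equivalence}). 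Therefore the orbit $\{u(t) : t \in (T_\text{min},T_\text{max})\}$ is a connected subset of $\{\phi \in H_\text{rad}^1(\mathbb{R}^3) : S_{\omega,\gamma}(\phi) < r_{\omega,\gamma}\}$.

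Next I would invoke Proposition \ref{Initial data set} together with its proof: the sublevel set $\{\phi \in H_\text{rad}^1(\mathbb{R}^3) : S_{\omega,\gamma}(\phi) < r_{\omega,\gamma}\}$ is the disjoint union $\mathcal{R}_{\omega,\gamma}^{\alpha,\beta,+} \sqcup \mathcal{R}_{\omega,\gamma}^{\alpha,\beta,-}$, and both pieces are open in $H^1(\mathbb{R}^3)$. Since $S_{\omega,\gamma}(u_0) < r_{\omega,\gamma}$ and $K_{\omega,\gamma}^{\alpha,\beta}(u_0) > 0$, we have $u_0 \in \mathcal{R}_{\omega,\gamma}^{\alpha,\beta,+}$. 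A connected set contained in a disjoint union of two relatively open sets and meeting one of them lies entirely in that one; hence $u(t) \in \mathcal{R}_{\omega,\gamma}^{\alpha,\beta,+}$ for all $t \in (T_\text{min},T_\text{max})$, which gives $K_{\omega,\gamma}^{\alpha,\beta}(u(t)) \geq 0$.

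Finally I would upgrade this to a strict inequality. Since $K_{\omega,\gamma}^{\alpha,\beta}(u_0) > 0$ forces $u_0 \neq 0$, mass conservation yields $u(t) \neq 0$ for every $t$. If $K_{\omega,\gamma}^{\alpha,\beta}(u(t_*)) = 0$ for some $t_*$, then $u(t_*)$ is a nonzero admissible competitor in the minimization problem defining $r_{\omega,\gamma}^{\alpha,\beta} = r_{\omega,\gamma}$, so $S_{\omega,\gamma}(u(t_*)) \geq r_{\omega,\gamma}$, contradicting $S_{\omega,\gamma}(u(t_*)) < r_{\omega,\gamma}$. Therefore $K_{\omega,\gamma}^{\alpha,\beta}(u(t)) > 0$ for all $t \in (T_\text{min},T_\text{max})$. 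I do not expect a genuine obstacle here; the only point requiring care is the bookkeeping that the orbit indeed remains in the open sublevel set, i.e., that $S_{\omega,\gamma}$ is truly conserved along the flow (which follows from conservation of mass and energy in the local theory) and that the solution stays radial and $H^1$-continuous in time.
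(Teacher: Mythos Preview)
Your argument is correct and is the standard invariant-set proof for this type of lemma. Note that the paper does not supply its own proof of this statement; it is imported from \cite{HamIkeISAAC}, so there is nothing to compare against here beyond observing that your continuity-plus-open-splitting argument is exactly the expected one.
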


From Lemma \ref{Uniform estimate for K}, the following global well-posedness holds immediately.

\begin{theorem}[Global well-posedness, \cite{HamIkeProceeding, HamIkeISAAC}]\label{Global well-posedness}
Let $d = p = 3$, $\gamma > 0$, and $0 < \mu < 2$.
Let $u_0 \in H_\text{rad}^1(\mathbb{R}^3)$ and let $(\alpha,\beta)$ satisfy \eqref{104}.
We assume that
\begin{align*}
	S_{\omega,\gamma}(u_0)
		< r_{\omega,\gamma}\ \ \text{ and }\ \ 
	K_{\omega,\gamma}^{\alpha,\beta}(u_0)
		> 0\ \text{ for some }\ \omega > 0.
\end{align*}
Then, a solution $u$ to \eqref{NLS} with \eqref{IC} exists globally in time.
\end{theorem}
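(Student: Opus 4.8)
The plan is to combine the sign-preservation of $K_{\omega,\gamma}^{\alpha,\beta}$ along the flow with the coercivity estimate of Proposition~\ref{Equivalence of H1 and S} to obtain a uniform $H^1$ bound, and then invoke the blow-up alternative. Let $u$ denote the maximal solution to \eqref{NLS} with $u(0) = u_0$, defined on its lifespan $(T_\text{min},T_\text{max})$. Since $S_{\omega,\gamma}(f) = \tfrac{\omega}{2}M[f] + E_\gamma[f]$ and both the mass and the energy are conserved by the flow, $S_{\omega,\gamma}(u(t)) = S_{\omega,\gamma}(u_0) < r_{\omega,\gamma}$ for every $t \in (T_\text{min},T_\text{max})$.

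By Lemma~\ref{Uniform estimate for K}, the assumptions $S_{\omega,\gamma}(u_0) < r_{\omega,\gamma}$ and $K_{\omega,\gamma}^{\alpha,\beta}(u_0) > 0$ propagate in time, so $K_{\omega,\gamma}^{\alpha,\beta}(u(t)) > 0$ for all $t$ in the lifespan. In particular $K_{\omega,\gamma}^{\alpha,\beta}(u(t)) \geq 0$, and Proposition~\ref{Equivalence of H1 and S} applied to $f = u(t)$ gives
\begin{align*}
	(\alpha - \beta)\|u(t)\|_{H_{\omega,\gamma}^1}^2
		\leq (4\alpha - 3\beta)S_{\omega,\gamma}(u(t))
		< (4\alpha - 3\beta)\,r_{\omega,\gamma}.
\end{align*}
Since $(\alpha,\beta)$ satisfies \eqref{104} with $d = 3$, we have $\alpha - \beta > 0$ (if $\beta = 0$ this is immediate; if $\beta > 0$ then $\alpha \geq \tfrac{3}{2}\beta > \beta$), and also $4\alpha - 3\beta = 2\alpha + (2\alpha - 3\beta) > 0$. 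Hence
\begin{align*}
	\omega\|u(t)\|_{L^2}^2 + \|(-\Delta_\gamma)^\frac{1}{2}u(t)\|_{L^2}^2
		= \|u(t)\|_{H_{\omega,\gamma}^1}^2
		\leq \frac{4\alpha - 3\beta}{\alpha - \beta}\,r_{\omega,\gamma}
\end{align*}
uniformly in $t \in (T_\text{min},T_\text{max})$. Combining this with the norm equivalence $\|(-\Delta_\gamma)^\frac{1}{2}u(t)\|_{L^2} \sim \|\nabla u(t)\|_{L^2}$ of Lemma~\ref{Norm equivalence} (valid for $s = 1$, $r = 2 < 6/\mu$), we deduce $\sup_{t \in (T_\text{min},T_\text{max})}\|u(t)\|_{H^1} < \infty$.

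Finally, the blow-up alternative from the local theory asserts that $T_\text{max} < \infty$ forces $\|u(t)\|_{H^1} \to \infty$ as $t \nearrow T_\text{max}$, and likewise at $T_\text{min}$; the uniform bound just established excludes both possibilities, so $T_\text{max} = +\infty$ and $T_\text{min} = -\infty$, i.e.\ $u$ is global. I do not expect a genuine obstacle in this argument: the only substantive ingredient is Lemma~\ref{Uniform estimate for K}, which confines the solution to the region where Proposition~\ref{Equivalence of H1 and S} yields coercivity; once that is in hand, the a priori $H^1$ bound and hence global existence are immediate.
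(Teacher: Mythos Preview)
Your argument is correct and is exactly the approach the paper has in mind: the text simply states that the theorem ``holds immediately'' from Lemma~\ref{Uniform estimate for K}, and you have filled in the routine details by combining that sign-preservation with the coercivity of Proposition~\ref{Equivalence of H1 and S} and the blow-up alternative.
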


Now, we get a lower bound of $K_\gamma$, which is strictly stronger than Lemma \ref{Uniform estimate for K}.

\begin{lemma}\label{Estimate of K from below}
Let $d = p = 3$, $\gamma > 0$, $0 < \mu < 2$, and $u_0 \in H_\text{rad}^1(\mathbb{R}^3)$.
We assume that
\begin{align*}
	S_{\omega,\gamma}(u_0)
		< r_{\omega,\gamma}\ \ \text{ and }\ \ 
	K_\gamma(u_0)
		> 0\ \text{ for some }\ \omega > 0.
\end{align*}
Then, it follows that
\begin{align*}
	K_\gamma(u(t))
		\geq \min\left\{r_{\omega,\gamma}-S_{\omega,\gamma}(u_0), \frac{2\mu}{7}\|(-\Delta_\gamma)^\frac{1}{2}u(t)\|_{L^2}^2\right\}
\end{align*}
for any $t \in (T_\text{min},T_\text{max})$.
\end{lemma}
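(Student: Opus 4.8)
The plan is to combine the variational characterization of $r_{\omega,\gamma}$ via the virial functional $K_\gamma = K_{\omega,\gamma}^{d,2}$ (here $(\alpha,\beta)=(3,2)$) with a standard dichotomy argument. First I would record that by Proposition \ref{Initial data set} the hypothesis $S_{\omega,\gamma}(u_0) < r_{\omega,\gamma}$ together with $K_\gamma(u_0)>0$ places $u_0$ in $\mathcal{R}_{\omega,\gamma}^{3,2,+}$, and by Lemma \ref{Uniform estimate for K} this is propagated: $K_\gamma(u(t))>0$ and $S_{\omega,\gamma}(u(t)) = S_{\omega,\gamma}(u_0) < r_{\omega,\gamma}$ for all $t$ in the (global, by Theorem \ref{Global well-posedness}) lifespan. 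So throughout the argument $u(t)$ stays in the region where $K_\gamma>0$.

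The core computation: with $(\alpha,\beta)=(d,2)=(3,2)$ we have $2\alpha - \beta = 4$, so the functional $T_{\omega,\gamma}^{3,2}(f) = S_{\omega,\gamma}(f) - \tfrac14 K_\gamma(f)$. Writing out $S_{\omega,\gamma}$ and $K_\gamma$ explicitly for $d=p=3$, the $L^{p+1}=L^4$ terms cancel in $T_{\omega,\gamma}^{3,2}$ (since in $S_{\omega,\gamma}$ the quartic coefficient is $-\tfrac14$ and in $\tfrac14 K_\gamma$ it is $\tfrac14\cdot\tfrac{d(p-1)}{p+1}=\tfrac14\cdot 2 = \tfrac12$... let me instead just keep it schematic): one finds that $T_{\omega,\gamma}^{3,2}(f)$ is a positive-definite quadratic-type quantity in $\|f\|_{L^2}$, $\|\nabla f\|_{L^2}$, and the potential term $\int \gamma|x|^{-\mu}|f|^2$, with the gradient term appearing with a strictly positive coefficient (namely $\tfrac12 - \tfrac14\cdot 2 = 0$ on the Laplacian... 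I would need to check the exact constant, but morally the point is that $T_{\omega,\gamma}^{3,2}(f) \gtrsim \|(-\Delta_\gamma)^{1/2} f\|_{L^2}^2$ plus nonnegative mass contributions). The precise identity I would extract is the algebraic relation
\[
	K_\gamma(u(t)) = \big(S_{\omega,\gamma}(u(t)) - T_{\omega,\gamma}^{3,2}(u(t))\big)\cdot 4,
\]
equivalently $T_{\omega,\gamma}^{3,2}(u(t)) = S_{\omega,\gamma}(u(t)) - \tfrac14 K_\gamma(u(t))$, and I would bound $T_{\omega,\gamma}^{3,2}(u(t))$ below by a fixed positive multiple of $\|(-\Delta_\gamma)^{1/2}u(t)\|_{L^2}^2$ (using that the $L^2$ and potential terms are nonnegative and the gradient term has a favorable sign for $(\alpha,\beta)=(d,2)$ when $p > 1+4/d$).

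Now the dichotomy. By Lemma \ref{Rewriting of infimum} applied in the $K_\gamma<0$ regime, $r_{\omega,\gamma} = \inf\{T_{\omega,\gamma}^{3,2}(f) : K_{\omega,\gamma}^{3,2}(f)\le 0\}$; I want the companion statement that for $f$ with $K_\gamma(f)\ge 0$ one has $S_{\omega,\gamma}(f) \ge \min\{r_{\omega,\gamma}, \text{(something)}\}$. Concretely, fix $t$ and set $\eta := r_{\omega,\gamma} - S_{\omega,\gamma}(u_0) > 0$. Consider two cases. \emph{Case 1: $K_\gamma(u(t)) \ge \eta$.} Then there is nothing to prove, the first term in the min already works. \emph{Case 2: $0 < K_\gamma(u(t)) < \eta$.} Then I claim $K_\gamma(u(t)) \ge \tfrac{2\mu}{7}\|(-\Delta_\gamma)^{1/2}u(t)\|_{L^2}^2$. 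To see this, suppose not; then from the explicit formula for $K_\gamma$ one checks $S_{\omega,\gamma}(u(t)) - \tfrac14 K_\gamma(u(t)) = T_{\omega,\gamma}^{3,2}(u(t))$, and the failure of the claimed bound forces $T_{\omega,\gamma}^{3,2}(u(t))$ to be controlled below $r_{\omega,\gamma}$ while $K_\gamma(u(t))$ remains $\le 0$ after a suitable rescaling $\lambda u(t)$ with $\lambda$ slightly less than $1$ (exactly the scaling from Lemma \ref{Rewriting of infimum}) — but then $T_{\omega,\gamma}^{3,2}(\lambda u(t)) \ge r_{\omega,\gamma}$ by the infimum characterization, and combined with $S_{\omega,\gamma}(u(t)) < r_{\omega,\gamma}$ and $K_\gamma(u(t)) < \eta$ this yields a contradiction. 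The constant $\tfrac{2\mu}{7}$ should emerge from the coefficient $\mu$ on the potential term in $K_\gamma$ together with $2\alpha - 3\beta = 6-6 = 0$ and $4\alpha - 3\beta = 12-6=6$... the denominator $7$ presumably comes from $4\alpha-\beta = 12-2 = 10$ or $2\alpha-\beta=4$ arithmetic combined with $p+3=6$, so I would track these constants carefully when I expand everything for $d=p=3$.

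The main obstacle is getting the bookkeeping of constants exactly right so that the specific constant $\tfrac{2\mu}{7}$ comes out, and more substantively, cleanly converting the "it can't be below $r_{\omega,\gamma}$" contradiction in Case 2 into the stated lower bound — i.e., establishing the intermediate lemma "$K_\gamma(f)\ge 0$ and $S_{\omega,\gamma}(f)<r_{\omega,\gamma}$ imply $K_\gamma(f)\ge\min\{r_{\omega,\gamma}-S_{\omega,\gamma}(f),\ c_\mu\|(-\Delta_\gamma)^{1/2}f\|_{L^2}^2\}$" by the scaling/infimum argument. Everything else — propagation of the sign of $K_\gamma$, global existence, the algebraic identity relating $K_\gamma$, $S_{\omega,\gamma}$, $T_{\omega,\gamma}^{3,2}$ — is already available from the cited lemmas and a direct expansion.
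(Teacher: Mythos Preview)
There is a genuine gap in your Case~2. Your key intermediate claim ``$T_{\omega,\gamma}^{3,2}(f) \gtrsim \|(-\Delta_\gamma)^{1/2}f\|_{L^2}^2$'' is simply false. With $d=p=3$ and $(\alpha,\beta)=(3,2)$ one has $2\alpha-\beta=4$, and expanding
\[
T_{\omega,\gamma}^{3,2}(f)=S_{\omega,\gamma}(f)-\tfrac14 K_\gamma(f)
=\tfrac{\omega}{2}\|f\|_{L^2}^2+\tfrac{2-\mu}{4}\int\tfrac{\gamma}{|x|^\mu}|f|^2\,dx+\tfrac18\|f\|_{L^4}^4,
\]
so the gradient term has coefficient exactly $0$. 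There is no way to get $\|\nabla f\|_{L^2}^2$ (hence $\|(-\Delta_\gamma)^{1/2}f\|_{L^2}^2$) as a lower bound from $T^{3,2}$ alone. Your suggested contradiction via the scaling $\lambda u(t)$ with $\lambda<1$ also does not work: for $K_\gamma(f)>0$ one has $K_\gamma(\lambda f)=\lambda^2(2\|\nabla f\|^2+\mu V)-\tfrac32\lambda^4\|f\|_{L^4}^4>0$ for all $\lambda\in(0,1]$, so Lemma~\ref{Rewriting of infimum} is not applicable in the direction you want.

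The paper's argument supplies precisely the missing idea and is where the constant $\tfrac{2\mu}{7}$ comes from. One sets $J(\lambda)=S_{\omega,\gamma}(e^{3\lambda}u(e^{2\lambda}\cdot))$, so $J'(0)=K_\gamma(u)$, and computes
\[
J''(\lambda)+J'(\lambda)=10e^{4\lambda}\|\nabla u\|_{L^2}^2+\mu(2\mu+1)e^{2\mu\lambda}\!\int\!\tfrac{\gamma}{|x|^\mu}|u|^2-\tfrac{21}{2}e^{6\lambda}\|u\|_{L^4}^4.
\]
This has a \emph{unique} zero $\lambda_1$, and the dichotomy is on the sign of $\lambda_1$, not on the size of $K_\gamma$. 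If $\lambda_1<0$ then $J''+J'<0$ on $[0,\lambda_0]$ (where $\lambda_0>0$ is the unique zero of $J'$); integrating gives $0>-K_\gamma(u)+J(\lambda_0)-S_{\omega,\gamma}(u)$, and since the rescaled function at $\lambda_0$ satisfies $K_\gamma=0$ one has $J(\lambda_0)\ge r_{\omega,\gamma}$, yielding $K_\gamma(u)>r_{\omega,\gamma}-S_{\omega,\gamma}(u)$. If $\lambda_1\ge 0$ then $J''(0)+J'(0)\ge 0$, i.e.\ $\tfrac32\|u\|_{L^4}^4\le \tfrac{10}{7}\|\nabla u\|^2+\tfrac{\mu(2\mu+1)}{7}V\le \tfrac{10}{7}\|\nabla u\|^2+\tfrac{5\mu}{7}V$, which plugged into $K_\gamma(u)$ gives $K_\gamma(u)\ge \tfrac{4}{7}\|\nabla u\|^2+\tfrac{2\mu}{7}V\ge \tfrac{2\mu}{7}\|(-\Delta_\gamma)^{1/2}u\|_{L^2}^2$. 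The $7$ is the $\tfrac{21}{2}\div\tfrac32$ from the second derivative, and cannot be recovered from the $T^{3,2}$ identity you were trying to use.
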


\begin{proof}
We define a function
\begin{align*}
	J_{\omega,\gamma}(\lambda)
		= S_{\omega,\gamma}(e^{\alpha\lambda}u(e^{\beta\lambda}\,\cdot\,))
\end{align*}
We note that
\begin{align*}
	J_{\omega,\gamma}(0)
		= S_{\omega,\gamma}(u)\ \ \text{ and }\ \ 
	\frac{d}{d\lambda}J_{\omega,\gamma}(0)
		= K_\gamma(u),
\end{align*}
We solve $\frac{d}{d\lambda}J_{\omega,\gamma}(\lambda) = 0$.
This equation has only one solution by $K_\gamma(u) > 0$ and the solution is positive.
We set that the solution $\lambda = \lambda_0 > 0$.
We also solve $\frac{d^2}{d\lambda^2}J_{\omega,\gamma}(\lambda) + \frac{d}{d\lambda}J_{\omega,\gamma}(\lambda) = 0$.
This equation also has only one solution and we set that the solution $\lambda = \lambda_1$.
When $\lambda_1 < 0$, integrating $\frac{d^2}{d\lambda^2}J_{\omega,\gamma}(\lambda) + \frac{d}{d\lambda}J_{\omega,\gamma}(\lambda)$ over $[0,\lambda_0]$, we have
\begin{align*}
	0
		& > - K_\gamma(u) + J_{\omega,\gamma}(\lambda_0) - S_{\omega,\gamma}(u),
\end{align*}
where $\frac{d^2}{d\lambda^2}J_{\omega,\gamma}(\lambda) + \frac{d}{d\lambda}J_{\omega,\gamma}(\lambda) < 0$ on $[0,\lambda_0]$.
Since
\begin{align*}
	K_\gamma(e^{3\lambda_0}u(e^{2\lambda_0}\,\cdot\,))
		= \frac{d}{d\lambda}J_{\omega,\gamma}(\lambda_0)
		= 0,
\end{align*}
it follows that
\begin{align*}
	K_\gamma(u)
		> r_{\omega,\gamma} - S_{\omega,\gamma}(u).
\end{align*}
When $\lambda_1 \geq 0$, it follows from $\frac{d^2}{d\lambda^2}J_{\omega,\gamma}(\lambda_1) + \frac{d}{d\lambda}J_{\omega,\gamma}(\lambda_1) = 0$ that
\begin{align*}
	\frac{3}{2}\|u\|_{L^4}^4
		& \leq \frac{10}{7}\|\nabla u\|_{L^2}^2 + \frac{\mu(2\mu+1)}{7}\int_{\mathbb{R}^3}\frac{\gamma}{|x|^\mu}|u(x)|^2dx
		\leq \frac{10}{7}\|\nabla u\|_{L^2}^2 + \frac{5\mu}{7}\int_{\mathbb{R}^3}\frac{\gamma}{|x|^\mu}|u(x)|^2dx.
\end{align*}
Thus, we have
\begin{align*}
	K_\gamma(u)
		& \geq \frac{2}{7}\left\{2\|\nabla u\|_{L^2}^2 + \mu\int_{\mathbb{R}^3}\frac{\gamma}{|x|^\mu}|u(x)|^2dx\right\}
		\geq \frac{2\mu}{7}\|(-\Delta_\gamma)^\frac{1}{2} u\|_{L^2}^2.
\end{align*}
Therefore, using Proposition \ref{Equivalence of H1 and S}, we obtain
\begin{align*}
	K_\gamma(u(t))
		\geq \min\left\{r_{\omega,\gamma}-S_{\omega,\gamma}(u_0), \frac{2\mu}{7}\|(-\Delta_\gamma)^\frac{1}{2}u(t)\|_{L^2}^2\right\}
\end{align*}
for any $t\in (T_\text{min},T_\text{max})$.
\end{proof}

\subsection{Small data theory}\label{Subsec:Small data theory}

We define the following exponents:
\begin{align*}
	q_0
		= 5,\ \ \ 
	r_0
		= \frac{30}{11},\ \ \ 
	\rho
		= \frac{5}{3},\ \ \ 
	\kappa
		= \frac{30}{23}.
\end{align*}
$(q_0,r_0)$ and $(\rho',\kappa')$ are $L^2$-admissible pairs, $(q_0,q_0)$ is a $\dot{H}^\frac{1}{2}$-admissible pair, and these exponents satisfy
\begin{align*}
	\frac{1}{\rho}
		= \frac{2}{q_0} + \frac{1}{q_0},\ \ \ 
	\frac{1}{\kappa}
		= \frac{2}{q_0} + \frac{1}{r_0}.
\end{align*}
We also take exponents $q_1$, $r_1$, $q_2$, $r_2$, and $r_3$ as follows.
We choose
\begin{align*}
	q_1
		:= 4^+,\ \ \ 
	r_1
		:= 3^-,\ \ \ 
	q_2
		:= \infty^-,\ \ \ 
	r_2
		:= 2^+,\ \ \ 
	r_3
		:= 6^-
\end{align*}
satisfying that $(q_1,r_1)$ and $(q_2,r_2)$ are $L^2$-admissible pairs, the embedding $\dot{W}^{\frac{1}{2},r_1}(\mathbb{R}^3) \hookrightarrow L^{r_3}(\mathbb{R}^3)$ holds, $\|\,\cdot\,\|_{\dot{W}_\gamma^{\frac{1}{2},r_1}}$ and $\|\,\cdot\,\|_{\dot{W}^{\frac{1}{2},r_1}}$ are equivalent, $\|\,\cdot\,\|_{\dot{W}_\gamma^{1,r_2}}$ and $\|\,\cdot\,\|_{\dot{W}^{1,r_2}}$ are equivalent, and a nonlinear estimate
\begin{align*}
	\||u|^2u\|_{L_t^2W_x^{1,\frac{6}{5}}}
		\lesssim \|u\|_{L_t^{q_1}L_x^{r_3}}^2\|u\|_{L_t^{q_2}W_x^{1,r_2}}
\end{align*}
holds.
Here, $4^+$ is an arbitrarily preselected and fixed number larger than $4$.
$3^-$, $\infty^-$, $2^+$, and $6^-$ are defined similarly.

\begin{proposition}[Small data global existence]\label{Small data global existence}
Let $d = p = 3$, $\gamma > 0$, and $0 < \mu < 2$.
Let $t_0 \in \mathbb{R}$ and $u_0 \in H^\frac{1}{2}(\mathbb{R}^3)$.
There exists $\varepsilon_0 > 0$ such that for any $0 < \varepsilon < \varepsilon_0$, if
\begin{align*}
	\|e^{i(t-t_0)\Delta_\gamma}u_0\|_{L_t^{q_0}(t_0,\infty;L_x^{q_0})}
		< \varepsilon,
\end{align*}
then a solution to \eqref{NLS} with initial data $u(t_0) = u_0$ exists globally in time and satisfies
\begin{align*}
	\|u\|_{L_t^{q_0}(t_0,\infty;L_x^{q_0})}
		\lesssim \varepsilon.
\end{align*}
\end{proposition}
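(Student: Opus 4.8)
The plan is to run the standard contraction/bootstrap argument for small-data scattering, but working at the scaling-critical regularity $\dot H^{1/2}$ (the natural threshold for $d=p=3$) and using the Strichartz estimates of Theorem~\ref{Strichartz estimate} together with the norm equivalences of Lemmas~\ref{Norm equivalence} and \ref{Sobolev inequality}, which let us pass freely between $\Delta_\gamma$- and $\Delta$-based Sobolev norms. First I would fix the critical Strichartz pair $(q_0,q_0)=(5,5)$, which is $\dot H^{1/2}$-admissible, and set up the solution map
\[
\Phi_{u_0}(u)(t) = e^{i(t-t_0)\Delta_\gamma}u_0 + i\int_{t_0}^t e^{i(t-s)\Delta_\gamma}(|u|^2u)(s)\,ds
\]
on the complete metric space
\[
X := \left\{ u : \|u\|_{L_t^{q_0}((t_0,\infty);L_x^{q_0})} \leq 2\varepsilon \right\},
\]
with distance $d(u,v) := \|u-v\|_{L_t^{q_0}((t_0,\infty);L_x^{q_0})}$.

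The key estimate is the nonlinear bound: by the inhomogeneous Strichartz estimate (dualizing against the $L^2$-admissible pair $(\rho',\kappa')$, using the relation $\frac1\rho = \frac{3}{q_0}$ built into the choice of exponents) one gets
\[
\left\| \int_{t_0}^t e^{i(t-s)\Delta_\gamma}(|u|^2u)\,ds \right\|_{L_t^{q_0}L_x^{q_0}} \lesssim \| |u|^2 u \|_{L_t^{\rho}L_x^{\kappa}} \lesssim \|u\|_{L_t^{q_0}L_x^{q_0}}^3,
\]
where the last step is just H\"older in space and time using $\frac1\kappa = \frac3{q_0}\cdot\frac{?}{}$... more precisely $|u|^2u$ has $L_x^{q_0/3}$ and $L_t^{q_0/3}$ norms controlled by cubes of the $L_t^{q_0}L_x^{q_0}$ norm, and $(q_0/3, q_0/3)$ sits at the dual exponents appearing in the theorem. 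Combined with the homogeneous estimate $\|e^{i(t-t_0)\Delta_\gamma}u_0\|_{L_t^{q_0}L_x^{q_0}} < \varepsilon$ from the hypothesis, this yields $\|\Phi_{u_0}(u)\|_{L_t^{q_0}L_x^{q_0}} \leq \varepsilon + C(2\varepsilon)^3 \leq 2\varepsilon$ and a contraction estimate $d(\Phi_{u_0}(u),\Phi_{u_0}(v)) \leq C\big(\|u\|_{L_t^{q_0}L_x^{q_0}}^2 + \|v\|_{L_t^{q_0}L_x^{q_0}}^2\big)d(u,v) \leq 8C\varepsilon^2 \, d(u,v)$, both valid once $\varepsilon < \varepsilon_0$ with $\varepsilon_0$ chosen so that $8C\varepsilon_0^2 < 1$ and $8C\varepsilon_0^2 < 1$ (absorbing the cubic term). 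This produces a unique fixed point $u \in X$, i.e. a solution on $(t_0,\infty)$ with $\|u\|_{L_t^{q_0}(t_0,\infty;L_x^{q_0})} \lesssim \varepsilon$.

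It then remains to upgrade this to a genuine $H^1$ (or at least $\dot H^{1/2}$) solution that exists globally and to record the persistence of regularity. Here I would feed the $L_t^{q_0}L_x^{q_0}$ bound back into the other Strichartz pairs: using the inhomogeneous estimate with the $L^2$-admissible pairs $(q_1,r_1)$, $(q_2,r_2)$ and the auxiliary nonlinear estimate $\||u|^2u\|_{L_t^2 W_x^{1,6/5}} \lesssim \|u\|_{L_t^{q_1}L_x^{r_3}}^2 \|u\|_{L_t^{q_2}W_x^{1,r_2}}$ (with $\dot W^{1/2,r_1}\hookrightarrow L^{r_3}$ and the $\gamma$-vs-free norm equivalences), one closes a bound on $\|u\|_{S_\gamma^1((t_0,\infty))}$ whenever $u_0\in H^1$, so the solution lies in the solution class of the earlier definition and extends to all of $(t_0,\infty)$; in particular $T_{\max}=+\infty$. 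The main obstacle is bookkeeping rather than conceptual: one must check that all the exponent relations are compatible — that $(q_0,q_0)$ really is $\dot H^{1/2}$-admissible and lies in $\Lambda_{1/2}$ (so the homogeneous estimate applies), that $(\rho',\kappa')$ and $(q_1,r_1),(q_2,r_2)$ lie in $\Lambda_0$, and that the fractional Leibniz/Sobolev steps stay inside the range $1<r<6/(s\mu)$ required by Lemma~\ref{Norm equivalence} (this is where $0<\mu<2$ is used) — and then verify the smallness threshold is uniform. A mild subtlety is that the hypothesis is stated only for $u_0\in H^{1/2}$, so the basic existence/scattering statement should be proved purely in $\dot H^{1/2}\cap$ (the critical Strichartz space), with the $H^1$-persistence invoked separately when needed; scattering in $\dot H^{1/2}$ follows as usual because $e^{-i(t-t_0)\Delta_\gamma}u(t)$ is Cauchy as $t\to\infty$, the increments being controlled by the tail of $\||u|^2u\|_{L_t^{\rho}L_x^{\kappa}}$.
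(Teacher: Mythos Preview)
Your contraction in the single space $X=\{u:\|u\|_{L_t^{q_0}L_x^{q_0}}\le 2\varepsilon\}$ does not close with the Strichartz estimate available in the paper. Theorem~\ref{Strichartz estimate} with output pair $(q_0,q_0)\in\Lambda_{1/2}$ reads
\[
\Bigl\|\int_{t_0}^t e^{i(t-s)\Delta_\gamma}F\,ds\Bigr\|_{L_t^{q_0}L_x^{q_0}}
\lesssim \||\nabla|^{1/2}F\|_{L_t^{\rho}L_x^{\kappa}},
\]
so a half-derivative necessarily lands on the nonlinearity. Fractional Leibniz (Lemma~\ref{Fractional calculus}) together with $\frac1\kappa=\frac{2}{q_0}+\frac{1}{r_0}$ then gives
\[
\||\nabla|^{1/2}(|u|^2u)\|_{L_t^\rho L_x^\kappa}
\lesssim \|u\|_{L_t^{q_0}L_x^{q_0}}^2\,\||\nabla|^{1/2}u\|_{L_t^{q_0}L_x^{r_0}},
\]
and the last factor is \emph{not} controlled by $\|u\|_{L_t^{q_0}L_x^{q_0}}$. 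Your derivative-free H\"older step $\||u|^2u\|_{L_t^\rho L_x^\kappa}\lesssim\|u\|_{L_t^{q_0}L_x^{q_0}}^3$ is numerically wrong in space: $\frac{3}{q_0}=\frac{18}{30}\neq\frac{23}{30}=\frac{1}{\kappa}$. The pair you are implicitly aiming for, $(q_0/3,q_0/3)=(5/3,5/3)$, is the dual of $(5/2,5/2)$, which is \emph{not} $L^2$-admissible, so no inhomogeneous estimate of the form stated in the paper produces the bound you wrote.

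The repair---and this is precisely what the paper does---is to enlarge the fixed-point space to also carry $\|u\|_{L_t^{q_0}(I;W_x^{1/2,r_0})}\le 2cA$ with $A=\|u_0\|_{H^{1/2}}$, and to measure the distance in $L_t^{q_0}L_x^{r_0}$. The Duhamel contribution to $\|u\|_{L_t^{q_0}L_x^{q_0}}$ then closes as $\varepsilon + C\varepsilon^2 A\le 2\varepsilon$ once $\varepsilon_0$ is chosen small \emph{depending on $A$}; this dependence is essential and is hidden by your single-norm setup. Your later ``feed the $L_t^{q_0}L_x^{q_0}$ bound back into the other Strichartz pairs'' is the right instinct, but it has to happen \emph{inside} the contraction, not after the fixed point is already found.
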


\begin{proof}
We set $A = \|u_0\|_{H^\frac{1}{2}}$ and $I = [t_0,\infty)$.
We define a space $E$ as
\begin{align*}
E := \left\{
\begin{array}{l}
\hspace{-0.2cm}u \in C_t(I;H_x^\frac{1}{2}(\mathbb{R}^3)) \cap L_t^{q_0}(I;W_x^{\frac{1}{2},r_0}(\mathbb{R}^3))
\end{array}
\hspace{-0.2cm}\left|
\begin{array}{l}
\|u\|_{L_t^\infty(I;H_x^\frac{1}{2})} \leq 2cA,\ \|u\|_{L_t^{q_0}(I;W_x^{\frac{1}{2},r_0})}\leq 2cA,\\[0.2cm]
\|u\|_{L_t^{q_0}(I;L_x^{q_0})} \leq 2\varepsilon
\end{array}
\right.\right\},
\end{align*}
and a distance $d$ on $E$ as
\begin{align*}
	d(u_1,u_2)
		= \|u_1-u_2\|_{L_t^{q_0}(I;L_x^{r_0})}.
\end{align*}
We prove that
\begin{align*}
	\Phi_{u_0}(u)
		= e^{i(t-t_0)\Delta_\gamma}u_0 + i\int_{t_0}^te^{i(t-s)\Delta_\gamma}(|u|^2u)(s)ds
\end{align*}
is a contraction map on $(E,d)$.
From Theorem \ref{Strichartz estimate} and Lemma \ref{Fractional calculus}, we get
\begin{align*}
	\|\Phi_{u_0}(u)\|_{L_t^{q_0}(I;L_x^{q_0})}
		& \leq \|e^{i(t-t_0)\Delta_\gamma}u_0\|_{L_t^{q_0}(I;L_x^{q_0})} + \left\|\int_{t_0}^te^{i(t-s)\Delta_\gamma}(|u|^{p-1}u)(s)ds\right\|_{L_t^{q_0}(I;L_x^{q_0})} \\
		& \leq \varepsilon + c\,\||u|^2u\|_{L_t^\rho(I;W_x^{\frac{1}{2},\kappa})} \\
		& \leq \varepsilon + c\,\|u\|_{L_t^{q_0}(I;L_x^{q_0})}^2\|u\|_{L_t^{q_0}(I;W_x^{\frac{1}{2},r_0})} \\
		& \leq \varepsilon + 8c^2A\varepsilon^2
		= (1 + 8c^2A\varepsilon)\varepsilon,
\end{align*}
\begin{align*}
	\|\Phi_{u_0}(u)\|_{L_t^\infty(I;H_x^\frac{1}{2}) \cap L_t^{q_0}(I;W_x^{\frac{1}{2},r_0})}
		\leq c\,\|u_0\|_{H^\frac{1}{2}} + c\,\||u|^2u\|_{L_t^\rho(I;W_x^{\frac{1}{2},\kappa})}
		\leq (1 + 8c\varepsilon^2)cA,
\end{align*}
and
\begin{align*}
	\|\Phi_{u_0}(u) - \Phi_{u_0}(v)\|_{L_t^{q_0}(I;L_x^{r_0})}
		& \leq \left\|\int_{t_0}^t e^{i(t-s)\Delta_\gamma}(|u|^2u - |v|^2v)(s)ds\right\|_{L_t^{q_0}(I;L_x^{r_0})} \\
		& \leq c\,\||u|^2u - |v|^2v\|_{L_t^\rho(I;L_x^\kappa)} \\
		& \leq c\,\left\{\|u\|_{L_t^{q_0}(I;L_x^{q_0})}^2 + \|v\|_{L_t^{q_0}(I;L_x^{q_0})}^2\right\}\|u - v\|_{L_t^{q_0}(I;L_x^{r_0})} \\
		& \leq 8c\,\varepsilon^2\|u - v\|_{L_t^{q_0}(I;L_x^{r_0})}.
\end{align*}
If we take a positive constant $\varepsilon$ satisfying $8c^2A\varepsilon \leq 1$ and $8c\varepsilon^2 \leq 1$, then $\Phi_{u_0}$ is a contraction map on $(E,d)$.
\end{proof}

\begin{proposition}[Persistence of regularity]\label{Persistence of regularity}
Let $d = p = 3$, $\gamma > 0$, and $0 < \mu < 2$.
Let $E > 0$, $L > 0$, and $I$ be a time interval.
Assume that $u : I \times \mathbb{R}^3 \longrightarrow \mathbb{C}$ is a solution to \eqref{NLS} satisfying
\begin{align*}
	\|u(t_0)\|_{H^1}
		\leq E\ \text{ for some }\ t_0 \in I\ \ \text{ and }\ \ 
	\|u\|_{L_t^{q_0}(I;L_x^{q_0})}
		\leq L.
\end{align*}
Then, we have
\begin{align*}
	\|u\|_{S_\gamma^1(I)}
		\lesssim_{E,L} 1.
\end{align*}
\end{proposition}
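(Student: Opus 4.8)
The statement to prove (Proposition \ref{Persistence of regularity}) asserts that a solution to \eqref{NLS} with $H^1$-bounded data at one time $t_0$ and a bounded $L_t^{q_0}L_x^{q_0}$ norm on $I$ automatically has a bounded full $S_\gamma^1(I)$ norm, quantitatively in terms of $E$ and $L$. The natural approach is a \emph{bootstrap via subdivision}: first show that on any subinterval where the scattering-type norm $\|u\|_{L_t^{q_0}L_x^{q_0}}$ is small, the Strichartz/Duhamel estimate closes to control $\|u\|_{S_\gamma^1}$ on that subinterval in terms of the $H^1$-size of $u$ at one of its endpoints; then chain finitely many such subintervals, using mass/energy-type bounds (or just the $H^1$ bound propagated through each piece) to keep the endpoint sizes under control. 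The number of subintervals is bounded by a function of $L$ (and the smallness threshold), so the final bound depends only on $E$ and $L$.

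\medskip

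First I would partition $I$ into $J = J(L) \sim (L/\eta)^{q_0}$ consecutive subintervals $I_j = [t_j, t_{j+1}]$ (with $t_0$ an endpoint, splitting into past and future parts separately) such that $\|u\|_{L_t^{q_0}(I_j;L_x^{q_0})} \le \eta$, where $\eta$ is a small absolute constant to be chosen. On each $I_j$, apply the inhomogeneous Strichartz estimate of Theorem \ref{Strichartz estimate} with $(q_1,r_1)$, $(q_2,r_2)$ $L^2$-admissible to the Duhamel formula, together with the fractional Leibniz rule (Lemma \ref{Fractional calculus}), the norm equivalences $\|\cdot\|_{\dot W_\gamma^{1,r_2}} \sim \|\cdot\|_{\dot W^{1,r_2}}$ and $\|\cdot\|_{\dot W_\gamma^{1/2,r_1}} \sim \|\cdot\|_{\dot W^{1/2,r_1}}$ (Lemma \ref{Norm equivalence}), and the Sobolev embedding $\dot W^{1/2,r_1} \hookrightarrow L^{r_3}$, to obtain the nonlinear estimate
\begin{align*}
	\|u\|_{S_\gamma^1(I_j)}
		\lesssim \|u(t_j)\|_{H^1} + \|u\|_{L_t^{q_1}(I_j;L_x^{r_3})}^2 \|u\|_{S_\gamma^1(I_j)}.
\end{align*}
The point is to bound the first factor $\|u\|_{L_t^{q_1}(I_j;L_x^{r_3})}$ by interpolating between the small quantity $\|u\|_{L_t^{q_0}(I_j;L_x^{q_0})} \le \eta$ and a quantity controlled by $\|u\|_{S_\gamma^1(I_j)}$ itself (or by its $L_t^\infty H^1$ component, which is already bounded): one writes $L_x^{r_3}$ between $L_x^{q_0}$ and the $L_x^6$ endpoint reached by $W_\gamma^{1,2}$-type Strichartz, and interpolates the time exponents correspondingly. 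This makes the coefficient of $\|u\|_{S_\gamma^1(I_j)}$ on the right-hand side a power of $\eta$ times an $(E,L)$-controlled quantity; choosing $\eta$ small (depending on the running $H^1$ bound) lets a continuity/bootstrap argument on $I_j$ close to give $\|u\|_{S_\gamma^1(I_j)} \lesssim \|u(t_j)\|_{H^1}$.

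\medskip

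Next I would propagate the endpoint bound: from $\|u\|_{S_\gamma^1(I_j)} \lesssim \|u(t_j)\|_{H^1}$ and $u(t_{j+1}) = e^{i(t_{j+1}-t_j)\Delta_\gamma}u(t_j) + i\int_{t_j}^{t_{j+1}} e^{i(t_{j+1}-s)\Delta_\gamma}(|u|^2u)\,ds$, a Strichartz estimate gives $\|u(t_{j+1})\|_{H^1} \le C\|u(t_j)\|_{H^1}(1 + \cdots)$; more robustly, since \eqref{NLS} conserves mass and the relevant energy, the $L^2$ part is constant and one only needs to control $\|\nabla u(t_j)\|_{L^2}$ — but here we do not have the sign/size hypotheses of Lemma \ref{Estimate of K from below} available in general, so I would instead simply iterate the Strichartz bound, accepting a multiplicative constant $C = C(\eta)$ per subinterval. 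Then $\|u(t_j)\|_{H^1} \le C^j E$, and summing $\|u\|_{S_\gamma^1(I_j)}^{\,}$ over $j = 0,\dots,J-1$ gives $\|u\|_{S_\gamma^1(I)} \lesssim \sum_j C^j E \lesssim C^{J(L)} E =: C(E,L)$, as claimed (the $S_\gamma^1$ norm of a union of intervals being controlled by the sum, up to the $L_t^\infty$ piece, which is the max).

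\medskip

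The main obstacle I expect is the bookkeeping in the interpolation step: one must verify that the pairs $(q_1,r_1)$, $(q_2,r_2)$, $r_3$ fixed just before the proposition genuinely allow writing $\|u\|_{L_t^{q_1}L_x^{r_3}}$ as $\|u\|_{L_t^{q_0}L_x^{q_0}}^\theta$ times an $S_\gamma^1$-controlled factor with $\theta > 0$, so that the small norm actually enters with a positive power; this is the only place the specific choice $q_1 = 4^+$, $r_3 = 6^-$, etc., matters, and it is exactly why those exponents were introduced with the open-ended $\pm$ notation. A secondary subtlety is that the number of subintervals, hence the final constant, depends on $\eta$, and $\eta$ in turn may be forced to shrink as the running $H^1$ norm grows along the iteration; one resolves this by first using the (bounded) $L_t^\infty H^1$ component to get an a priori bound on the running $H^1$ size in terms of $E$ and $L$ via a separate, coarser continuity argument, then fixing $\eta$ once and for all. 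Everything else — the Strichartz application, the fractional Leibniz rule, the norm equivalences — is routine given the tools assembled in Section \ref{Sec:Preliminaries}.
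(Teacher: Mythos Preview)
Your overall strategy --- subdivide $I$ into pieces where $\|u\|_{L_t^{q_0}L_x^{q_0}}$ is small, bootstrap on each piece, and chain --- is correct and matches the paper. The gap is in the interpolation you propose to make $\|u\|_{L_t^{q_1}(I_j;L_x^{r_3})}^2$ small. Observe that $(q_0,q_0)=(5,5)$ and $(q_1,r_3)\approx(4,6)$ are \emph{both} $\dot H^{1/2}$-admissible, i.e., both satisfy $\frac{2}{q}+\frac{3}{r}=1$. Every pure Lebesgue norm controlled by $S_\gamma^1$ lies on or between the $L^2$-admissible line $\frac{2}{q}+\frac{3}{r}=\frac{3}{2}$ and the $\dot H^1$-admissible line $\frac{2}{q}+\frac{3}{r}=\frac{1}{2}$, and the $\dot H^{1/2}$ line is parallel to both. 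Hence no convex combination of $(1/5,1/5)$ with a point off that line can land at $(1/q_1,1/r_3)$; the best you can do is $\|u\|_{L_t^{q_1}L_x^{r_3}}^2\lesssim\eta^{2\theta}\|u\|_{S_\gamma^1}^{2(1-\theta)}$ with $\theta<1$, which leads to a bootstrap inequality of the form $X\lesssim E_j+\eta^{2\theta}X^{3-2\theta}$ that closes only for $\eta$ small depending on $E_j$. Since $E_j$ may grow geometrically along the chain while the number of pieces is $\sim(L/\eta)^{q_0}$, the ``separate, coarser continuity argument'' you invoke to pin down $\sup_j E_j$ in advance is circular as stated: there is no a priori $L_t^\infty H_x^1$ bound available under the bare hypotheses of the proposition.

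The paper breaks this circularity by a \emph{two-step} bootstrap. First it runs the subdivision argument at the $H^{1/2}$ level, using the estimate $\||u|^2u\|_{L_t^\rho W_x^{1/2,\kappa}}\lesssim\|u\|_{L_t^{q_0}L_x^{q_0}}^2\|u\|_{L_t^{q_0}W_x^{1/2,r_0}}$; here the small factor $\eta^2$ sits directly in front of the \emph{linear} term, so the bootstrap closes with $\eta$ absolute, yielding a bound on $\|u\|_{L_t^{q_1}(I;\dot W_x^{1/2,r_1})}$ depending only on $E,L$. A \emph{second} subdivision, now based on $\|u\|_{L_t^{q_1}\dot W_x^{1/2,r_1}}$ being small, then closes the $H^1$ bootstrap directly via the nonlinear estimate you quoted. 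A shorter fix, bypassing the two-step structure entirely, is to replace your nonlinear estimate by the H\"older splitting $\||u|^2u\|_{L_t^2W_x^{1,6/5}}\lesssim\|u\|_{L_t^{q_0}L_x^{q_0}}^2\|u\|_{L_t^{10}W_x^{1,30/13}}$: since $(10,\frac{30}{13})$ is $L^2$-admissible and $\frac{30}{13}<\frac{6}{\mu}$ for all $\mu<2$, the last factor is controlled by $\|u\|_{S_\gamma^1(I_j)}$ via Lemma~\ref{Norm equivalence} and interpolation of the two endpoints defining $S_\gamma^1$, and the smallness is again $\eta^2$ in front of a linear term.
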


\begin{proof}
We assume $t_0 := \inf I$ without loss of generality.
We divide $I = \bigcup_{j=1}^J[t_{j-1}^\circ,t_j^\circ)$ to satisfy $t_0^\circ = t_0$ and
\begin{align*}
	c\,\|u\|_{L_t^{q_0}(t_{j-1}^\circ,t_j^\circ;L_x^{q_0})}^2
		\leq \frac{1}{2}.
\end{align*}
This inequality deduces
\begin{align*}
	\|u\|_{L_t^{q_0}(t_{j-1}^\circ,t_j^\circ;W_x^{\frac{1}{2},r_0})}
		& \leq c\,\|u(t_{j-1}^\circ)\|_{H^\frac{1}{2}} + c\,\|u\|_{L_t^{q_0}(t_{j-1}^\circ,t_j^\circ;L_x^{q_0})}^2\|u\|_{L_t^{q_0}(t_{j-1}^\circ,t_j^\circ;W_x^{\frac{1}{2},r_0})} \\
		& \leq c\,\|u(t_{j-1}^\circ)\|_{H^\frac{1}{2}} + \frac{1}{2}\|u\|_{L_t^{q_0}(t_{j-1}^\circ,t_j^\circ;W_x^{\frac{1}{2},r_0})} \\
		& \leq 2c\,\|u(t_{j-1}^\circ)\|_{H^\frac{1}{2}}
\end{align*}
and hence, we have
\begin{align*}
	\|u\|_{L_t^{q_0}(I;W_x^{\frac{1}{2},r_0})}
		\leq \sum_{j=1}^J\|u\|_{L_t^{q_0}(t_{j-1}^\circ,t_j^\circ;W_x^{\frac{1}{2},r_0})}
		\leq 2c\sum_{j=1}^J\|u(t_{j-1}^\circ)\|_{H^\frac{1}{2}}
		< \infty.
\end{align*}
Therefore, we obtain
\begin{gather*}
	\|u\|_{L_t^\infty(I;H_x^\frac{1}{2}) \cap L_t^{q_1}(I;W_x^{\frac{1}{2},r_1})}
		\leq c\,\|u(t_0)\|_{H^\frac{1}{2}} + c\,\|u\|_{L_t^{q_0}(I;L_x^{q_0})}^2\|u\|_{L_t^{q_0}(I;W_x^{\frac{1}{2},r_0})}
		< \infty.
\end{gather*}
Thus, we have
\begin{align*}
	u
		\in L_t^\infty(I;H_x^\frac{1}{2}(\mathbb{R}^3)) \cap L_t^{q_0}(I;W_x^{\frac{1}{2},r_0}(\mathbb{R}^3)) \cap L_t^{q_1}(I;W_x^{\frac{1}{2},r_1}(\mathbb{R}^3)).
\end{align*}
We divide $I = \bigcup_{j=1}^J[t_{j-1},t_j)$ to satisfy
\begin{align*}
	c\,\|u\|_{L_t^{q_1}(t_{j-1},t_j;\dot{W}_x^{\frac{1}{2},r_1})}^2
		\leq \frac{1}{2}.
\end{align*}
Then, it follows that
\begin{align*}
	\|u\|_{L_t^{q_2}(t_{j-1},t_j;W_x^{1,r_2})}
		& \leq c\,\|u(t_{j-1})\|_{H^1} + c\,\||u|^2u\|_{L_t^2(t_{j-1},t_j;W_x^{1,\frac{6}{5}})} \\
		& \leq c\,\|u(t_{j-1})\|_{H^1} + c\,\|u\|_{L_t^{q_1}(t_{j-1},t_j;L_x^{r_3})}^2\|u\|_{L_t^{q_2}(t_{j-1},t_j;W_x^{1,r_2})} \\
		& \leq c\,\|u(t_{j-1})\|_{H^1} + c\,\|u\|_{L_t^{q_1}(t_{j-1},t_j;\dot{W}_x^{\frac{1}{2},r_1})}^2\|u\|_{L_t^{q_2}(t_{j-1},t_j;W_x^{1,r_2})} \\
		& \leq c\,\|u(t_{j-1})\|_{H^1} + \frac{1}{2}\|u\|_{L_t^{q_2}(t_{j-1},t_j;W_x^{1,r_2})} \\
		& \leq 2c\,\|u(t_{j-1})\|_{H^1}
\end{align*}
and hence, we have
\begin{align*}
	\|u\|_{L_t^\infty(t_{j-1},t_j;H_x^1)}
		\leq c\,\|u(t_{j-1})\|_{H^1} + \frac{1}{2}\|u\|_{L_t^{q_2}(t_{j-1},t_j;W_x^{1,r_2})}
		\leq 2c\,\|u(t_{j-1})\|_{H^1}.
\end{align*}
Since $\|u(t)\|_{H_x^1}$ is continuous at $t_j$,
\begin{align*}
	\|u(t_j)\|_{H^1}
		\leq 2c\,\|u(t_{j-1})\|_{H^1}
		\leq \ldots
		\leq (2c)^j\|u(t_0)\|_{H^1}
\end{align*}
holds.
Therefore, we obtain
\begin{align*}
	\|u\|_{L_t^{q_2}(I;W_x^{1,r_2})}
		\leq \sum_{j=1}^J\|u\|_{L_t^{q_2}(t_{j-1},t_j;W_x^{1,r_2})}
		\leq 2c\sum_{j=1}^J\|u(t_{j-1})\|_{H^1}
		\leq \sum_{j=1}^J(2c)^j\|u(t_0)\|_{H^1}.
\end{align*}
That is, we get $u \in L_t^{q_2}(I;W_x^{1,r_2}(\mathbb{R}^3))$.
Therefore, we obtain
\begin{align*}
	\|u\|_{S_\gamma^1(I)}
		\leq c\,\|u(t_0)\|_{H^1} + c\,\|u\|_{L_t^{q_1}(I;\dot{W}_x^{\frac{1}{2},r_1})}^2\|u\|_{L_t^{q_2}(I;W_x^{1,r_2})}
		< \infty.
\end{align*}
\end{proof}

We can get the following proposition immediately from Proposition \ref{Persistence of regularity}.

\begin{proposition}[Small data scattering]\label{Small data scattering}
Let $d = p = 3$, $\gamma > 0$, and $0 < \mu < 2$.
We assume that a solution $u \in L_t^\infty([0,\infty);H_x^1(\mathbb{R}^3))$ to \eqref{NLS} satisfies
\begin{align*}
	\|u\|_{L_t^\infty(t_0,\infty;H_x^1)}
		\leq E
\end{align*}
for some $t_0 > 0$.
Then, there exists $\varepsilon > 0$ such that if
\begin{align*}
	\|e^{i(t-t_0)\Delta_\gamma}u(t_0)\|_{L_t^{q_0}(t_0,\infty;L_x^{q_0})}
		< \varepsilon,
\end{align*}
then $u$ scatters in positive time.
\end{proposition}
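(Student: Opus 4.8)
The plan is to derive \textbf{Small data scattering} directly from \textbf{Small data global existence} (Proposition \ref{Small data global existence}) together with \textbf{Persistence of regularity} (Proposition \ref{Persistence of regularity}). The hypothesis gives a solution $u$ on $[t_0,\infty)$ with $\|u\|_{L_t^\infty(t_0,\infty;H_x^1)}\le E$, hence in particular $u(t_0)\in H^1(\mathbb{R}^3)\subset H^\frac12(\mathbb{R}^3)$. First I would choose $\varepsilon$ to be the threshold $\varepsilon_0$ furnished by Proposition \ref{Small data global existence} (shrinking it further if needed below). Under the smallness assumption $\|e^{i(t-t_0)\Delta_\gamma}u(t_0)\|_{L_t^{q_0}(t_0,\infty;L_x^{q_0})}<\varepsilon$, Proposition \ref{Small data global existence} produces a global-in-positive-time solution with $\|u\|_{L_t^{q_0}(t_0,\infty;L_x^{q_0})}\lesssim\varepsilon$; by uniqueness in the local theory this coincides with the given $u$ on $[t_0,\infty)$. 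Thus we may take $L\sim\varepsilon$ in Proposition \ref{Persistence of regularity}, with $I=[t_0,\infty)$ and $E$ the given $H^1$-bound, to conclude the global $S_\gamma^1$-bound
\begin{align*}
	\|u\|_{S_\gamma^1(t_0,\infty)}
		\lesssim_{E,L} 1.
\end{align*}

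Next I would upgrade this global spacetime bound to scattering in $H^1$. Define $\psi_+ := e^{-it_0\Delta_\gamma}u(t_0) + i\int_{t_0}^\infty e^{-is\Delta_\gamma}(|u|^2u)(s)\,ds$, so that formally $u(t)-e^{i(t-t_0)\Delta_\gamma}\psi_+\cdot e^{it_0\Delta_\gamma}$ — more precisely, writing $\widetilde\psi_+ := e^{i t_0\Delta_\gamma}\psi_+$, one has $u(t)-e^{it\Delta_\gamma}\widetilde\psi_+ = -i\int_t^\infty e^{i(t-s)\Delta_\gamma}(|u|^2u)(s)\,ds$ for $t\ge t_0$. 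The convergence of the defining integral for $\psi_+$ in $H^1$, and the decay
\begin{align*}
	\left\|\int_t^\infty e^{i(t-s)\Delta_\gamma}(|u|^2u)(s)\,ds\right\|_{H^1}
		\longrightarrow 0\quad(t\to\infty),
\end{align*}
both follow from the inhomogeneous Strichartz estimate (Theorem \ref{Strichartz estimate}), the norm equivalence (Lemma \ref{Norm equivalence}), the fractional Leibniz rule (Lemma \ref{Fractional calculus}), and the nonlinear estimate $\||u|^2u\|_{L_t^2(J;W_x^{1,6/5})}\lesssim\|u\|_{L_t^{q_1}(J;L_x^{r_3})}^2\|u\|_{L_t^{q_2}(J;W_x^{1,r_2})}$, applied over tails $J=[t,\infty)$: since $u\in S_\gamma^1(t_0,\infty)$ and the relevant Strichartz norms on $[t_0,\infty)$ are finite, the tail norms tend to $0$ as $t\to\infty$, which gives both the Cauchy criterion defining $\psi_+\in H^1$ and the stated limit. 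This yields $\|u(t)-e^{it\Delta_\gamma}\widetilde\psi_+\|_{H^1}\to 0$, i.e.\ $u$ scatters in positive time in the sense of the Definition.

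The main obstacle is bookkeeping rather than conceptual: one must make sure the exponents $q_1,r_1,q_2,r_2,r_3$ and the auxiliary admissible pairs introduced before Proposition \ref{Small data global existence} really do close the nonlinear estimate with the global $S_\gamma^1$-bound rather than only on short intervals, and that the passage from the finite-time fixed point to a genuinely global solution matching the given $u$ uses uniqueness correctly (so that the $H^1$-boundedness hypothesis is consistent with, and not in addition to, the conclusion of Proposition \ref{Small data global existence}). A minor point to check is that $H^1\hookrightarrow H^{1/2}$ makes $u(t_0)$ an admissible datum for Proposition \ref{Small data global existence}, and that all the norm-equivalence invocations respect the constraint $1<r<6/(s\mu)$ from Lemma \ref{Norm equivalence} for the exponents in play. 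Once these are in order, the argument is a direct chaining of the two preceding propositions with a standard Duhamel-tail computation.
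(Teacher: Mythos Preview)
Your proposal is correct and follows essentially the same route as the paper: invoke Proposition \ref{Small data global existence} (with $A=E$, since $\|u(t_0)\|_{H^{1/2}}\le E$) to obtain $\|u\|_{L_t^{q_0}(t_0,\infty;L_x^{q_0})}\lesssim\varepsilon$, then Proposition \ref{Persistence of regularity} to get $u\in S_\gamma^1(t_0,\infty)$, and finally use the nonlinear estimate $\||u|^2u\|_{L_t^2(J;W_x^{1,6/5})}\lesssim\|u\|_{L_t^{q_1}(J;\dot W_x^{1/2,r_1})}^2\|u\|_{L_t^{q_2}(J;W_x^{1,r_2})}$ on tails to conclude scattering. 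The only cosmetic difference is that the paper shows $\{e^{-it\Delta_\gamma}u(t)\}$ is Cauchy in $H^1$ rather than writing down the scattering state $\psi_+$ explicitly; these are equivalent, and your bookkeeping concerns about the exponents are already handled by the choices made just before Proposition \ref{Small data global existence}.
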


\begin{proof}
We take $\varepsilon > 0$ as in Proposition \ref{Small data global existence} with $A = E$.
From Proposition \ref{Small data global existence}, the unique solution $u$ to \eqref{NLS} satisfies
\begin{align*}
	\|u\|_{L_t^{q_0}(t_0,\infty;W_x^{\frac{1}{2},r_0})}
		\leq 2CE\ \ \text{ and }\ \ 
	\|u\|_{L_t^{q_0}(t_0,\infty;L_x^{q_0})}
		\leq 2\varepsilon.
\end{align*}
In addition, we have
\begin{align*}
	u
		\in L_t^{q_1}(t_0,\infty;W_x^{\frac{1}{2},r_1}(\mathbb{R}^3)) \cap L_t^{q_2}(t_0,\infty;W_x^{1,r_2}(\mathbb{R}^3))
\end{align*}
by using Proposition \ref{Persistence of regularity}.
Therefore, we obtain
\begin{align*}
	\|e^{-it\Delta_\gamma}u(t) - e^{-i\tau\Delta_\gamma}u(\tau)\|_{H_x^1}
		& = \left\|\int_\tau^te^{-is\Delta_\gamma}(|u|^2u)(s)ds\right\|_{H_x^1}
		\leq c\,\||u|^2u\|_{L_t^2(\tau,t;W_x^{1,\frac{6}{5}})} \\
		& \leq c\,\|u\|_{L_t^{q_1}(\tau,t;W_x^{\frac{1}{2},r_1})}^2\|u\|_{L_t^{q_2}(\tau,t;W_x^{1,r_2})}
		\longrightarrow 0\ \ \text{ as }\ \ t > \tau \rightarrow \infty,
\end{align*}
which implies that $\{e^{-it\Delta_\gamma}u(t)\}$ is a Cauchy sequence in $H^1(\mathbb{R}^3)$.
\end{proof}

\subsection{Stability}\label{Subsec:Stability}

In this subsection, we prove a stability result for scattering.

\begin{theorem}[Short time perturbation]\label{Short time perturbation}
Let $d = p = 3$, $\gamma > 0$, and $0 < \mu < 2$.
Let $u_0 \in H^1(\mathbb{R}^3)$, $t_0 \in \mathbb{R}$, $E > 0$, and $I(\ni t_0)$ be a time interval.
We assume that an approximate solution $\~{u}$ to \eqref{NLS} on $I \times \mathbb{R}^3$ satisfies
\begin{align}
	i\partial_t\~{u} + \Delta_\gamma\~{u}
		= - |\~{u}|^2\~{u} + e \label{116}
\end{align}
for some function $e$ and
\begin{align*}
	\|u_0\|_{H^1} + \|\~{u}(t_0)\|_{H^1}
		\leq E.
\end{align*}
There exists $\varepsilon_0 > 0$ such that, for any $0 < \varepsilon < \varepsilon_0$ if
\begin{align*}
	\|\~{u}\|_{L_t^{q_0}(I;L_x^{q_0})} + \|\~{u}\|_{L_t^{q_0}(I;\dot{W}_x^{\frac{1}{2},r_0})}
		\leq \varepsilon_0\ \ \text{ and }\ \ 
	\|u_0 - \~{u}(t_0)\|_{\dot{H}^\frac{1}{2}} + \||\nabla|^\frac{1}{2}e\|_{L_t^\rho(I;L_x^\kappa)}
		\leq \varepsilon,
\end{align*}
then \eqref{NLS} with initial data $u(t_0) = u_0$ has a unique solution $u$ on $I \times \mathbb{R}^3$ satisfying
\begin{align*}
	\||u|^2u - |\~{u}|^2\~{u}\|_{L_t^\rho(I;\dot{W}_x^{\frac{1}{2},\kappa})} + \|u - \~{u}\|_{\dot{S}_\gamma^\frac{1}{2}(I)}
		\lesssim_{E}\varepsilon.
\end{align*}
\end{theorem}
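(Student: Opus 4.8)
The plan is to run a standard bootstrap/continuity argument on the difference $w := u - \~{u}$, treating the perturbed equation \eqref{116} as a source term. First I would write down the Duhamel formula for $w$:
\begin{align*}
	w(t)
		= e^{i(t-t_0)\Delta_\gamma}(u_0 - \~{u}(t_0)) + i\int_{t_0}^t e^{i(t-s)\Delta_\gamma}\big(|u|^2u - |\~{u}|^2\~{u}\big)(s)ds - i\int_{t_0}^t e^{i(t-s)\Delta_\gamma}e(s)ds.
\end{align*}
Applying the homogeneous and inhomogeneous Strichartz estimates from Theorem \ref{Strichartz estimate} at regularity $s = \tfrac12$ (using the pairs $(q_0,q_0)\in\Lambda_{1/2}$, $(q_0,r_0)\in\Lambda_0$, and the dual pair $(\rho,\kappa)$ introduced in the small data subsection), together with the norm equivalences of Lemma \ref{Norm equivalence} to pass between $\dot W_\gamma^{1/2,r}$ and $\dot W^{1/2,r}$, I would bound $\|w\|_{\dot S_\gamma^{1/2}(I)}$ by $\varepsilon$ plus the nonlinear difference term $\||u|^2u - |\~{u}|^2\~{u}\|_{L_t^\rho(I;\dot W_x^{1/2,\kappa})}$.

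The key algebraic input is the pointwise cubic difference estimate $\big||u|^2u - |\~{u}|^2\~{u}\big| \lesssim (|u|^2 + |\~{u}|^2)|w|$, and its fractional-derivative analogue via Lemma \ref{Fractional calculus} (together with the fractional Leibniz rule), which yields
\begin{align*}
	\||u|^2u - |\~{u}|^2\~{u}\|_{L_t^\rho(I;\dot W_x^{1/2,\kappa})}
		\lesssim \big(\|u\|_{L_t^{q_0}L_x^{q_0}}^2 + \|\~{u}\|_{L_t^{q_0}L_x^{q_0}}^2\big)\|w\|_{L_t^{q_0}(I;\dot W_x^{1/2,r_0})} + \text{(lower order cross terms)}.
\end{align*}
Writing $u = \~{u} + w$ and expanding, every term on the right carries either a factor $\|\~{u}\|_{L_t^{q_0}L_x^{q_0}} + \|\~{u}\|_{L_t^{q_0}\dot W_x^{1/2,r_0}} \leq \varepsilon_0$ or a factor of $\|w\|_{\dot S_\gamma^{1/2}(I)}$, which we are trying to show is small. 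Setting $X(I') := \|w\|_{\dot S_\gamma^{1/2}(I')}$ on subintervals $I'\subset I$ containing $t_0$, the combined estimates give a closed inequality of the form $X \leq C\varepsilon + C(\varepsilon_0 + X)^2 X + C(\varepsilon_0+X)\varepsilon_0^2$ (schematically); a standard continuity argument then shows $X(I) \lesssim_E \varepsilon$ once $\varepsilon_0 = \varepsilon_0(E)$ is chosen small enough, and feeding this back into the nonlinear difference bound gives the claimed control on $\||u|^2u - |\~{u}|^2\~{u}\|_{L_t^\rho(I;\dot W_x^{1/2,\kappa})}$ as well. Existence and uniqueness of $u$ on all of $I$ follow by the usual fixed-point/continuation argument, since the a priori bound prevents blow-up of the relevant Strichartz norms.

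The main obstacle I anticipate is bookkeeping rather than anything deep: one must verify that all the exponent pairs actually lie in the admissible sets $\Lambda_{1/2}$ and $\Lambda_0$ so that Theorem \ref{Strichartz estimate} applies, that the Hölder relations $\tfrac1\rho = \tfrac2{q_0} + \tfrac1{q_0}$ and $\tfrac1\kappa = \tfrac2{q_0} + \tfrac1{r_0}$ (already recorded in the small data subsection) are exactly what is needed to close the cubic nonlinearity, and that the range $0 < \mu < 2$ stays within the regime where Lemma \ref{Norm equivalence} is valid at the relevant Lebesgue exponents (here one needs $r < 6/(s\mu)$ with $s = \tfrac12$ or $s=1$, i.e. $r < 12/\mu$ or $r < 6/\mu$, which is satisfied by $r_0 = 30/11$ and by $r = 6$ for $\mu$ close to $2$ only with the $\dot W^{1/2,\cdot}$ spaces — so care is needed to phrase the $H^{1/2}$-critical perturbation purely at regularity $\tfrac12$, avoiding any endpoint $L^6$ issue). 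Since the statement is only at the $\dot H^{1/2}$ level and the quantities $\|u_0\|_{H^1}$, $\|\~u(t_0)\|_{H^1}$ enter merely through the constant $E$ controlling the fixed-point radius, no $H^1$-subcriticality difficulty arises, and the argument is a routine perturbation lemma once the exponents are pinned down.
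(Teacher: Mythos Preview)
Your proposal is correct and follows essentially the same route as the paper: write the Duhamel formula for $w=u-\~u$, apply the $\dot H^{1/2}$-level Strichartz estimates with the $(q_0,r_0)$/$(\rho,\kappa)$ pairs, estimate the cubic difference via the fractional product rule, substitute $u=\~u+w$, and close a bootstrap inequality of the schematic form $S\lesssim(\varepsilon+S)^3+(\varepsilon+S)\varepsilon_0^2$ by taking $\varepsilon_0$ small. The only cosmetic difference is that the paper bootstraps on the nonlinear difference norm $S(T):=\||u|^2u-|\~u|^2\~u\|_{L_t^\rho(t_0,T;\dot W_x^{1/2,\kappa})}$ rather than directly on $\|w\|_{\dot S_\gamma^{1/2}}$, but the two are linked by the Strichartz estimate $\|w\|\lesssim\varepsilon+S(T)$ and the arguments are interchangeable.
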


\begin{proof}
We define a function
\begin{align*}
	w(t)
		:= u(t) - \widetilde{u}(t)
		= e^{i(t-t_0)\Delta_\gamma}w(t_0) + i\int_{t_0}^te^{i(t-s)\Delta_\gamma}(|u|^2u - |\widetilde{u}|^2\~{u} + e)(s)ds.
\end{align*}
We may assume $t_0 = \inf I$ by the time symmetry.
We define
\begin{align*}
	S(T)
		:= \||u|^2u - |\~{u}|^2\~{u}\|_{L_t^\rho(t_0,T;\dot{W}_x^{\frac{1}{2},\kappa})}.
\end{align*}
It follows that
\begin{align*}
	\|w\|_{L_t^{q_0}(t_0,T;L_x^{q_0}) \cap L_t^{q_0}(t_0,T;\dot{W}_x^{\frac{1}{2},r_0})} 
		& \lesssim \|w(t_0)\|_{\dot{H}^\frac{1}{2}} + \||u|^2u - |\~{u}|^2\~{u}\|_{L_t^\rho(t_0,T;\dot{W}_x^{\frac{1}{2},\kappa})} + \||\nabla|^\frac{1}{2}e\|_{L_t^\rho(t_0,T;L_x^\kappa)} \\
		& \leq \varepsilon + S(T).
\end{align*}
Then, we have
\begin{align*}
	S(T)
		& \lesssim \left\{\|u\|_{L_t^{q_0}(t_0,T;L_x^{q_0})} + \|\~{u}\|_{L_t^{q_0}(t_0,T;L_x^{q_0})}\right\}\left\{\|u\|_{L_t^{q_0}(t_0,T;\dot{W}_x^{\frac{1}{2},r_0})} + \|\~{u}\|_{L_t^{q_0}(t_0,T;\dot{W}_x^{\frac{1}{2},r_0})}\right\}\|w\|_{L_t^{q_0}(t_0,T;L_x^{q_0})} \\
		& \hspace{1.0cm} + \left\{\|u\|_{L_t^{q_0}(t_0,T;L_x^{q_0})}^2 + \|\~{u}\|_{L_t^{q_0}(t_0,T;L_x^{q_0})}^2\right\}\|w\|_{L_t^{q_0}(t_0,T;\dot{W}_x^{\frac{1}{2},r_0})} \\
		& \lesssim \left\{\|w\|_{L_t^{q_0}(t_0,T;L_x^{q_0})} + \|\~{u}\|_{L_t^{q_0}(t_0,T;L_x^{q_0})}\right\}\left\{\|w\|_{L_t^{q_0}(t_0,T;\dot{W}_x^{\frac{1}{2},r_0})} + \|\~{u}\|_{L_t^{q_0}(t_0,T;\dot{W}_x^{\frac{1}{2},r_0})}\right\}\|w\|_{L_t^{q_0}(t_0,T;L_x^{q_0})} \\
		& \hspace{1.0cm} + \left\{\|w\|_{L_t^{q_0}(t_0,T;L_x^{q_0})}^2 + \|\~{u}\|_{L_t^{q_0}(t_0,T;L_x^{q_0})}^2\right\}\|w\|_{L_t^{q_0}(t_0,T;\dot{W}_x^{\frac{1}{2},r_0})} \\
		& \lesssim \left[\{\varepsilon + S(T)\} + \varepsilon_0\right]^2\{\varepsilon + S(T)\} + \left[\{\varepsilon + S(T)\}^2 + \varepsilon_0^2\right]\{\varepsilon + S(T)\} \\
		& \lesssim \{\varepsilon + S(T)\}^3 + 2\{\varepsilon + S(T)\}^2\varepsilon_0 + \{\varepsilon + S(T)\}\varepsilon_0^2 \\
		& \lesssim \{\varepsilon + S(T)\}^3 + \{\varepsilon + S(T)\}\varepsilon_0^2.
\end{align*}
If we take $\varepsilon_0$ sufficiently small,
\begin{align*}
	\varepsilon + S(T)
		\lesssim \{\varepsilon + S(T)\}^3 + \varepsilon.
\end{align*}
Also, if we set $\varepsilon_0$ sufficiently small, then $S(T) \lesssim \varepsilon$.
Since the right hand side is independent of $T$, we have
\begin{gather*}
	\||u|^2u - |\~{u}|^2\~{u}\|_{L_t^\rho(I;\dot{W}_x^{\frac{1}{2},\kappa})}
		\lesssim \varepsilon, \\
	\|w\|_{\dot{S}_\gamma^\frac{1}{2}(I)}
		\lesssim \|w(t_0)\|_{\dot{H}^\frac{1}{2}} + \||\nabla|^\frac{1}{2}(|u|^2u - |\~{u}|^2\~{u})\|_{L_t^\rho(I;L_x^\kappa)} + \||\nabla|^\frac{1}{2}e\|_{L_t^\rho(I;L_x^\kappa)}
		\lesssim \varepsilon.
\end{gather*}
\end{proof}

\begin{theorem}[Long time perturbation]\label{Long time perturbation}
Let $d = p = 3$, $\gamma > 0$, and $0 < \mu < 2$.
Let $u_0 \in H^1(\mathbb{R}^3)$, $t_0 \in \mathbb{R}$, and $I(\ni t_0)$ be a time interval.
We assume that an approximate solution $\~{u}$ to \eqref{NLS} on $I \times \mathbb{R}^3$ satisfies \eqref{116} for some function $e$ and $\~{u}$ and $u_0$ satisfy
\begin{align*}
	\|u_0\|_{H^1} + \|\~{u}(t_0)\|_{H^1}
		\leq E\ \ \text{ and }\ \ 
	\|\~{u}\|_{L_t^{q_0}(I;L_x^{q_0})}
		\leq L.
\end{align*}
for some $E, L > 0$.
There exists $\varepsilon_1 = \varepsilon_1(E,L) > 0$ such that, for any $0 < \varepsilon < \varepsilon_1$ if
\begin{align*}
	\|u_0 - \~{u}(t_0)\|_{\dot{H}^\frac{1}{2}} + \||\nabla|^\frac{1}{2}e\|_{L_t^\rho(I;L_x^\kappa)}
		\leq \varepsilon,
\end{align*}
then \eqref{NLS} with initial data $u(t_0) = u_0$ has a unique solution $u : I \times \mathbb{R}^3 \longrightarrow \mathbb{C}$ satisfying
\begin{align*}
	\|u - \~{u}\|_{\dot{S}_\gamma^\frac{1}{2}(I)}
		\lesssim_{E,L}\varepsilon.
\end{align*}
\end{theorem}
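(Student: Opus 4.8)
The plan is to upgrade the short-time perturbation result (Theorem \ref{Short time perturbation}) to an arbitrarily long interval by a standard iteration-on-subintervals argument. First I would use the hypothesis $\|\~{u}\|_{L_t^{q_0}(I;L_x^{q_0})} \leq L$ to partition $I$ into $J = J(L,\eta)$ consecutive subintervals $I_j = [t_{j-1},t_j)$ on each of which $\|\~{u}\|_{L_t^{q_0}(I_j;L_x^{q_0})} \leq \eta$, where $\eta > 0$ is a small threshold to be fixed, comparable to the $\varepsilon_0$ from Theorem \ref{Short time perturbation}. On the first subinterval, I would also need the smallness of $\|\~{u}\|_{L_t^{q_0}(I_1;\dot{W}_x^{\frac{1}{2},r_0})}$; this does not come for free from the $L^{q_0}_tL^{q_0}_x$ bound, so I would first invoke Proposition \ref{Persistence of regularity} (or rather its $\dot H^{1/2}$-analogue, which follows from the same Strichartz/fractional-calculus estimates used in its proof together with \eqref{116} controlling the error) to deduce $\|\~{u}\|_{L_t^{q_0}(I;\dot{W}_x^{\frac{1}{2},r_0})} \lesssim_{E,L} 1$, and then refine the partition so that $\|\~{u}\|_{L_t^{q_0}(I_j;L_x^{q_0})} + \|\~{u}\|_{L_t^{q_0}(I_j;\dot{W}_x^{\frac{1}{2},r_0})} \leq \eta$ on every $I_j$.

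Next I would run an induction on $j$. Set $w = u - \~{u}$. On $I_1$, apply Theorem \ref{Short time perturbation} with the given $\varepsilon$ to get $\|w\|_{\dot{S}_\gamma^{1/2}(I_1)} \lesssim_E \varepsilon$, hence in particular $\|w(t_1)\|_{\dot H^{1/2}} \leq C_E \varepsilon$. Inductively, suppose $\|w(t_{j-1})\|_{\dot H^{1/2}} \leq C_E^{\,j-1}\varepsilon$ (or more carefully, $\leq A_j \varepsilon$ where $A_j$ obeys a linear recursion $A_j \leq C_E(A_{j-1} + 1)$, so $A_j \leq (2C_E)^j$). Applying Theorem \ref{Short time perturbation} on $I_j$ with error still $\||\nabla|^{1/2}e\|_{L_t^\rho(I_j;L_x^\kappa)} \leq \varepsilon$ and initial difference $\|w(t_{j-1})\|_{\dot H^{1/2}} \leq A_j\varepsilon$ — which is $\leq \varepsilon_0$ provided $\varepsilon < \varepsilon_1 := \varepsilon_0/(2C_E)^J$ — yields $\|w\|_{\dot S_\gamma^{1/2}(I_j)} \lesssim_E A_j\varepsilon$ and $\|w(t_j)\|_{\dot H^{1/2}} \leq A_{j+1}\varepsilon$, closing the induction. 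Summing over the finitely many $j = 1,\dots,J$ gives $\|w\|_{\dot S_\gamma^{1/2}(I)} \lesssim_{E,L} \varepsilon$, as claimed; along the way the existence and uniqueness of $u$ on all of $I$ follows subinterval by subinterval from Theorem \ref{Short time perturbation}.

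The main obstacle I anticipate is not the iteration itself, which is routine, but ensuring that all the quantities needed to feed into Theorem \ref{Short time perturbation} are genuinely controlled on each subinterval. In particular one must check that the $H^1$-bound $\|\~{u}(t_j)\|_{H^1} \leq E$ (or a bound of this type) propagates, or more precisely that Theorem \ref{Short time perturbation} is applied with a possibly larger but still finite energy constant $E' = E'(E,L)$ uniform over the subintervals; this uses the persistence-of-regularity-type control on $\~{u}$ together with the error bound on $e$, and the fact that $\|w\|_{\dot S_\gamma^{1/2}}$ (only a fractional-regularity difference) combined with $\|\~{u}(t_j)\|_{H^1}$ bounds $\|u(t_j)\|_{\dot H^{1/2}}$ but not a priori $\|u(t_j)\|_{H^1}$ — however Theorem \ref{Short time perturbation} only requires the $\dot H^{1/2}$ difference to be small and the $H^1$ norms merely bounded, so this is manageable. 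The number $J$ of subintervals depends only on $E$ and $L$ (through the $L^{q_0}_t$-norm bound on $\~{u}$), which is exactly why $\varepsilon_1$ may be taken to depend only on $E$ and $L$.
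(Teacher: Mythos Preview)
Your proposal is correct and follows essentially the same route as the paper: first use the persistence-of-regularity argument to obtain $\|\~{u}\|_{L_t^{q_0}(I;\dot{W}_x^{1/2,r_0})} \lesssim_{E,L} 1$, partition $I$ into finitely many subintervals on which both the $L_t^{q_0}L_x^{q_0}$ and $L_t^{q_0}\dot{W}_x^{1/2,r_0}$ norms of $\~{u}$ are below the short-time threshold $\varepsilon_0$, and iterate Theorem~\ref{Short time perturbation}. The only cosmetic difference is in the bookkeeping: the paper controls $\|w(t_j)\|_{\dot H^{1/2}}$ by going back to $t_0$ via Duhamel and accumulating the nonlinear-difference norms $\kappa_\ell = \||u|^2u - |\~{u}|^2\~{u}\|_{L_t^\rho(I_\ell;\dot W_x^{1/2,\kappa})}$, whereas you read off $\|w(t_j)\|_{\dot H^{1/2}}$ directly from $\|w\|_{\dot S_\gamma^{1/2}(I_j)}$; both lead to the same geometric growth in $j$ and the same dependence $\varepsilon_1 = \varepsilon_1(E,L)$.
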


\begin{proof}
Let $w$ be the same function as in Theorem \ref{Short time perturbation}.
We may assume $t_0 = \inf I$ by the time symmetry.
From the same argument with Proposition \ref{Persistence of regularity}, we have
\begin{align*}
	\|\~{u}\|_{L_t^{q_0}(I_j;\dot{W}_x^{\frac{1}{2},r_0})}
		\lesssim_{E,L}1.
\end{align*}
Take the positive constant $\varepsilon_0$ given in Theorem \ref{Short time perturbation}.
We divide the interval $I$ satisfying $I = \cup_{j=1}^JI_j = \cup_{j=1}^J[t_{j-1},t_j)$ and $\|\~{u}\|_{L_t^{q_0}(I_j;L_x^{q_0})} + \|\~{u}\|_{L_t^{q_0}(I_j;\dot{W}_x^{\frac{1}{2},r_0})} \leq \varepsilon_0$ for each $1 \leq j \leq J$.
Put
\begin{align*}
	\kappa_j
		:= \||u|^2u - |\~{u}|^2\~{u}\|_{L_t^\rho(I_j;\dot{W}_x^{\frac{1}{2},\kappa})}.
\end{align*}
Then, it follows from Theorem \ref{Short time perturbation} that if
\begin{align}
	\|w(t_j)\|_{\dot{H}^\frac{1}{2}} + \||\nabla|^\frac{1}{2}e\|_{L_t^\rho(I_j;L_x^\kappa)}
		\leq \eta_j \label{142}
\end{align}
for $0 < \eta_j < \varepsilon_0$, then
\begin{align*}
	\kappa_j + \|w\|_{\dot{S}_\gamma^\frac{1}{2}(I_j)}
		\leq C_0(E,L)\eta_j.
\end{align*}
Using the inequality
\begin{align*}
	\|w(t_j)\|_{\dot{H}^\frac{1}{2}}
		& \leq \|w(t_0)\|_{\dot{H}^\frac{1}{2}} + \left\|\int_{t_0}^{t_j}e^{i(t_j - s)\Delta_\gamma}(|u|^2u - |\~{u}|^2\~{u} + e)(s)ds\right\|_{\dot{H}^\frac{1}{2}} \\
		& \leq \|w(t_0)\|_{\dot{H}^\frac{1}{2}} + \left\|\int_{t_0}^te^{i(t - s)\Delta_\gamma}(|u|^2u - |\~{u}|^2\~{u} + e)(s)ds\right\|_{L_t^\infty(t_0,t_j;\dot{H}_x^\frac{1}{2})} \\
		& \leq \|w(t_0)\|_{\dot{H}^\frac{1}{2}} + c\,\||u|^2u - |\~{u}|^2\~{u}\|_{L_t^\rho(t_0,t_j;\dot{W}_x^{\frac{1}{2},r_0})} + c\,\||\nabla|^\frac{1}{2}e\|_{L_t^\rho(t_0,t_j;L_x^\kappa)},
\end{align*}
the left hand side of \eqref{142} is estimated as follows:
\begin{align*}
	\|w(t_j)\|_{\dot{H}^\frac{1}{2}} + \||\nabla|^\frac{1}{2}e\|_{L_t^\rho(t_0,t_j;L_x^\kappa)}
		\leq C_1\varepsilon + C_1\sum_{\ell=1}^{j-1}\kappa_\ell
\end{align*}
for some $C_1 > 1$.
We take a constant $\alpha \geq \max\{2,2C_0C_1\}$.
For $\varepsilon' \leq \varepsilon_0$, we set
\begin{align*}
	\eta_j
		= \eta_j(\varepsilon')
		:= \alpha^{j-J-1}\varepsilon'
\end{align*}
for $j \in [1,J+1]$ and $\varepsilon \leq \frac{1}{C_1}\eta_1$.
Then, we have
\begin{align*}
	\eta_1
		< \eta_2
		< \ldots
		< \eta_{J+1}
		= \varepsilon'
		\leq \varepsilon_0.
\end{align*}
For such $\eta_j$, we prove \eqref{142} by using induction.
We can see \eqref{142} with $j = 0$ by the assumption of this theorem.
Let $1 \leq k \leq J-1$.
To prove \eqref{142} with $j = k+1$, we assume that \eqref{142} holds for each $0 \leq j \leq k$.
We see
\begin{align*}
	C_1\varepsilon
		\leq \eta_1
		= \alpha^{-k}\eta_{k+1}
\end{align*}
and
\begin{align*}
	C_1\kappa_\ell
		\leq C_1C_0\eta_\ell
		\leq \frac{1}{2}\alpha\eta_\ell
		= \frac{1}{2}\alpha^{\ell-k}\eta_{k+1}
\end{align*}
for $\ell \in [1,k]$.
These inequalities deduce
\begin{align*}
	C_1\varepsilon + C_1\sum_{\ell=1}^k\kappa_\ell
		\leq \alpha^{-k}\left(1 + \frac{1}{2}\sum_{\ell=1}^k\alpha^\ell\right)\eta_{k+1}
		\leq \eta_{k+1}.
\end{align*}
Set $\varepsilon_1 := \frac{1}{C_1}\eta_1(\varepsilon_0)$ and assume $\varepsilon \leq \varepsilon_1$.
We define $\varepsilon' > 0$ as $\varepsilon = \frac{1}{C_1}\eta_1(\varepsilon')$.
Then, we notice $\varepsilon' \leq \varepsilon_0$ and $\varepsilon' = C_1\alpha^J\varepsilon$.
Therefore, we obtain
\begin{align*}
	\|w\|_{\dot{S}_\gamma^\frac{1}{2}(I)}
		& \leq \sum_{j=1}^J\|w\|_{\dot{S}_\gamma^\frac{1}{2}(I_j)}
		\leq \sum_{j=1}^JC_0\eta_j(\varepsilon')
		= \frac{1}{C_1}\sum_{j=1}^JC_0C_1\eta_j(\varepsilon') \\
		& \leq \frac{1}{C_1}\sum_{j=1}^J\frac{1}{2}\alpha \cdot \alpha^{j-J-1}\varepsilon'
		= \frac{\varepsilon'}{2C_1} \cdot \frac{\alpha^{1-J}(\alpha^J - 1)}{\alpha - 1}
		\leq \frac{1}{C_1}\varepsilon'
		= \alpha^J\varepsilon.
\end{align*}
\end{proof}

\subsection{Final state problem}\label{Subsec:Final state problem}

In this subsection, we consider a final state problem and prove existence of wave operators.

\begin{lemma}\label{Time decay}
Let $d = 3$, $1 < p < 5$, $\gamma > 0$, and $0 < \mu < 2$.
If $\{t_n\} \subset \mathbb{R}$ satisfies $|t_n| \longrightarrow \infty$ as $n \rightarrow \infty$, then we have
\begin{align*}
	e^{it_n\Delta_\gamma}f
		\longrightarrow 0\ \text{ in }\ L^{p+1}(\mathbb{R}^3)\ \ \text{ and }\ \ 
	e^{it_n\Delta_\gamma}f
		\xrightharpoonup[]{\hspace{0.4cm}}0\ \text{ in }\ H_\gamma^1(\mathbb{R}^3)
\end{align*}
as $n \rightarrow \infty$ for any $f \in H^1(\mathbb{R}^3)$.
\end{lemma}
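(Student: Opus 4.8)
The plan is to prove the two assertions separately but by the same mechanism: a dispersive decay estimate for $e^{it\Delta_\gamma}$ combined with a density argument. For the first claim, I would first establish $\|e^{it\Delta_\gamma}f\|_{L^{p+1}} \to 0$ for $f$ in a dense subclass of $H^1(\mathbb{R}^3)$, say $f \in H^1 \cap L^{(p+1)'}$, using the $L^{p'} \to L^p$ dispersive bound for the Schr\"odinger group with the inverse-power potential. Here one can invoke the Strichartz theory and/or the known dispersive estimates for $e^{it\Delta_\gamma}$ from \cite{Miz20, Miz201} (or derive $L^{p'}\to L^p$ decay by interpolating the $L^2$ conservation with an $L^1\to L^\infty$-type bound); the point is that $\|e^{it_n\Delta_\gamma}f\|_{L^{p+1}} \lesssim |t_n|^{-\sigma}\|f\|_{L^{(p+1)'}} \to 0$ with $\sigma = 3(\frac12 - \frac{1}{p+1}) > 0$ since $1 < p < 5$. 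Then for general $f \in H^1$, approximate by $f_k$ in the dense subclass: $\|e^{it_n\Delta_\gamma}f\|_{L^{p+1}} \le \|e^{it_n\Delta_\gamma}(f-f_k)\|_{L^{p+1}} + \|e^{it_n\Delta_\gamma}f_k\|_{L^{p+1}}$, bound the first term by Sobolev embedding $H_\gamma^1 \hookrightarrow L^{p+1}$ (Lemma~\ref{Sobolev inequality}) together with the unitarity of $e^{it_n\Delta_\gamma}$ on $H_\gamma^1$ so that it is $\lesssim \|f - f_k\|_{H^1}$, uniformly in $n$, and send $n \to \infty$ then $k \to \infty$.

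For the weak convergence $e^{it_n\Delta_\gamma}f \rightharpoonup 0$ in $H_\gamma^1(\mathbb{R}^3)$, I would argue by testing against a dense family. Since $\{e^{it_n\Delta_\gamma}f\}$ is bounded in $H_\gamma^1$ (again by unitarity), it suffices to show $\langle e^{it_n\Delta_\gamma}f, g\rangle_{H_\gamma^1} \to 0$ for $g$ in a dense subset, e.g. $g \in H_\gamma^3$ or $g$ with $(1-\Delta_\gamma)g \in L^{(p+1)'}$-type class. Writing the $H_\gamma^1$ inner product via the self-adjoint operator, $\langle e^{it_n\Delta_\gamma}f, g\rangle_{H_\gamma^1} = \langle e^{it_n\Delta_\gamma}f, (1-\Delta_\gamma)g\rangle_{L^2} = \langle f, e^{-it_n\Delta_\gamma}(1-\Delta_\gamma)g\rangle_{L^2}$, and one reduces to showing $e^{-it_n\Delta_\gamma}h \rightharpoonup 0$ in $L^2$ for every $h \in L^2$. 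This last fact follows from the absolute continuity of the spectrum of $-\Delta_\gamma$ (no eigenvalues, purely a.c. spectrum, as recalled in the introduction): by the RAGE theorem / Riemann--Lebesgue-type argument for a.c. states, $e^{-it_n\Delta_\gamma}h \rightharpoonup 0$ weakly in $L^2$ along any sequence $|t_n| \to \infty$. Equivalently, since the spectral measure of $h$ is a.c., $\langle e^{-it_n\Delta_\gamma}h, \phi\rangle \to 0$ for all $\phi$ in $L^2$ by the Riemann--Lebesgue lemma applied to $\int e^{-it_n\lambda}\,d\mu_{h,\phi}(\lambda)$.

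The main obstacle I anticipate is the dispersive/$L^{p'}\to L^p$ decay estimate for $e^{it\Delta_\gamma}$ in the long-range regime $0 < \mu \le 1$: for short-range potentials this is standard, but for slowly decaying potentials one should be careful to cite or adapt the correct result (Mizutani's work \cite{Miz20, Miz201} covers $0 < \mu < 2$ and provides the needed Strichartz and dispersive bounds, which is why the hypothesis $0<\mu<2$ appears). An alternative route that avoids pointwise dispersive estimates entirely is to prove the $L^{p+1}$ decay directly from Strichartz estimates: $e^{it\Delta_\gamma}f \in L_t^q L_x^{p+1}$ for an admissible $q < \infty$ by Theorem~\ref{Strichartz estimate}, hence $\|e^{it\Delta_\gamma}f\|_{L_x^{p+1}} \to 0$ along a subsequence of times, and then upgrade to convergence along the full sequence $t_n$ using the density argument above together with a uniform-in-$t$ bound; however, extracting genuine decay (not just integrability) still ultimately rests on the a.c.\ spectrum via the weak $L^2$ convergence. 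So the cleanest logical skeleton is: (i) weak $L^2$ convergence from a.c.\ spectrum; (ii) this gives weak $H_\gamma^1$ convergence by the duality computation above; (iii) weak $H^1$ convergence plus local compactness (Rellich) plus radial decay or the tightness coming from $L^2$-norm preservation yields strong local $L^{p+1}$ convergence to $0$; (iv) control the tail $\|e^{it_n\Delta_\gamma}f\|_{L^{p+1}(|x| \ge R)}$ uniformly in $n$ by the radial Sobolev inequality (Lemma~\ref{Radial Sobolev inequality}) together with the uniform $H^1$ bound, so that the tail is small for $R$ large, completing the proof of $L^{p+1}$ convergence.
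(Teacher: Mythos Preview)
Your proposal contains a genuine gap. In your ``cleanest logical skeleton'' for the $L^{p+1}$ convergence, step (iv) invokes the radial Sobolev inequality (Lemma~\ref{Radial Sobolev inequality}) to control the tail $\|e^{it_n\Delta_\gamma}f\|_{L^{p+1}(|x|\ge R)}$, but the lemma is stated for arbitrary $f\in H^1(\mathbb{R}^3)$, not $f\in H^1_{\mathrm{rad}}$. Without radiality you only have the uniform Sobolev bound $\lesssim \|f\|_{H^1}$ on the exterior, which does not vanish as $R\to\infty$; weak $H^1$ convergence plus Rellich gives local $L^{p+1}$ decay but no uniform-in-$n$ tail smallness. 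Your first route via an $L^{(p+1)'}\to L^{p+1}$ dispersive bound would close the argument, but as you yourself flag, a pointwise dispersive estimate for $e^{it\Delta_\gamma}$ with $0<\mu\le 1$ is not supplied by \cite{Miz20,Miz201} (those references give Strichartz estimates and wave-operator results, not an $L^1\to L^\infty$ bound), so you cannot simply cite it.

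The paper sidesteps both issues with a short trick that uses only the Strichartz estimate you already have. For $f\in C_c^\infty$, set $F(t):=\|e^{it\Delta_\gamma}f\|_{L^6_x}$. By the endpoint Strichartz estimate (Theorem~\ref{Strichartz estimate} with the $L^2$-admissible pair $(2,6)$) one has $F\in L^2_t(\mathbb{R})$; on the other hand $|\partial_t F(t)|\le \|\Delta_\gamma e^{it\Delta_\gamma}f\|_{L^6_x}\lesssim \|\Delta_\gamma f\|_{\dot H^1}<\infty$, so $F$ is Lipschitz. A uniformly continuous function in $L^2_t$ must tend to $0$ at infinity, hence $\|e^{it_n\Delta_\gamma}f\|_{L^6}\to 0$, and interpolation with the conserved $L^2$ norm gives the $L^{p+1}$ decay for $2<p+1<6$. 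The density step then proceeds exactly as you wrote. For the weak $H_\gamma^1$ convergence the paper does not use the RAGE theorem; instead it writes $\langle e^{it_n\Delta_\gamma}f,g\rangle_{H_\gamma^1}\le \|e^{it_n\Delta_\gamma}(1-\Delta_\gamma)^{1/2}f\|_{L^3}\|(1-\Delta_\gamma)^{1/2}g\|_{L^{3/2}}$ for $f,g\in C_c^\infty$ and applies the first part (with $p+1=3$) to the function $(1-\Delta_\gamma)^{1/2}f\in H^1$. Your RAGE/absolutely-continuous-spectrum argument for weak $L^2$ convergence is correct and gives an alternative proof of this second assertion, but it does not by itself repair the $L^{p+1}$ part.
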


\begin{proof}
By the density argument, we assume $f \in C_c^\infty(\mathbb{R}^3)$.\\
We prove $e^{it_n\Delta_\gamma}f \longrightarrow 0$ in $L^{p+1}(\mathbb{R}^3)$ as $n \rightarrow \infty$.
Let $F(t) := \|e^{it\Delta_\gamma}f\|_{L_x^6}$.
By Theorem \ref{Strichartz estimate},  we have $F \in L_t^2(\mathbb{R})$.
Since it follows from Sobolev embedding that
\begin{align*}
	|\partial_tF|
		\leq \|\partial_t[e^{it\Delta_\gamma}f]\|_{L_x^6}
		\lesssim \|\Delta_\gamma f\|_{\dot{H}_x^1}
		\lesssim 1,
\end{align*}
$F$ is Lipschitz.
Therefore, $F(t_n) \longrightarrow 0$ as $n \rightarrow \infty$.
Using the interpolation,
\begin{align*}
	\|e^{it_n\Delta_\gamma}f\|_{L^{p+1}}
		\leq \|e^{it_n\Delta_\gamma}f\|_{L^2}^{1-\theta}\|e^{it_n\Delta_\gamma}f\|_{L^6}^\theta
		= \|f\|_{L^2}^{1-\theta}\|e^{it_n\Delta_\gamma}f\|_{L^6}^\theta
\end{align*}
for $\theta \in (0,1)$ with $\frac{1}{p+1} = \frac{1-\theta}{2} + \frac{\theta}{6}$, which implies the desired result.\\
Next, we prove $e^{it_n\Delta_\gamma}f \xrightharpoonup[]{\hspace{0.4cm}}0$ in $H_\gamma^1(\mathbb{R}^3)$ as $n \rightarrow \infty$.
We take any $g \in C_c^\infty(\mathbb{R}^3)$.
Then, we have
\begin{align*}
	\<e^{it_n\Delta_\gamma}f,g\>_{H_\gamma^1}
		\leq \|e^{it_n\Delta_\gamma}(1-\Delta_\gamma)^\frac{1}{2}f\|_{L^3}\|(1-\Delta_\gamma)^\frac{1}{2}g\|_{L^\frac{3}{2}}
		\longrightarrow 0
\end{align*}
as $n \rightarrow \infty$.
\end{proof}

\begin{lemma}[Existence of wave operators]\label{Existence of wave operators}
Let $d = p = 3$, $\gamma > 0$, and $0 < \mu < 2$.
Let $Q_{\omega,\gamma}$ be the ``radial'' ground state to \eqref{SP}.
Suppose that $u_+ \in H_\text{rad}^1(\mathbb{R}^3)$ satisfies
\begin{align*}
	\frac{\omega}{2}M[u_+] + K[u_+]
		< S_{\omega,\gamma}(Q_{\omega,\gamma})\ \text{ for some }\ \omega > 0,
\end{align*}
where the functional $K$ is defined as $K[f] := \frac{1}{2}\|(-\Delta_\gamma)^\frac{1}{2}f\|_{L^2}^2$.
Then, there exists $u_0 \in H_\text{rad}^1(\mathbb{R}^3)$ such that the solution $u$ to \eqref{NLS} with \eqref{IC} exists globally in time and satisfies
\begin{gather*}
	S_{\omega,\gamma}(u_0)
		< S_{\omega,\gamma}(Q_{\omega,\gamma}),\ \ \ 
	K_\gamma(u_0)
		\geq 0, \\
	\|u\|_{L_t^{q_0}(T,\infty;L_x^{q_0})}
		\leq 2\|e^{it\Delta_\gamma}u_+\|_{L_t^{q_0}(T,\infty;L_x^{q_0})}\ \text{ for some }T > 0,
\end{gather*}
and
\begin{align*}
	\lim_{t \rightarrow \infty}\|u(t) - e^{it\Delta_\gamma}u_+\|_{H^1}
		= 0.
\end{align*}
\end{lemma}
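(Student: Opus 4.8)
\emph{Step 1: constructing the wave operator.} The plan is to solve the final-state integral equation
\begin{align*}
	u(t) = e^{it\Delta_\gamma}u_+ - i\int_t^\infty e^{i(t-s)\Delta_\gamma}(|u|^2u)(s)\,ds
\end{align*}
on a half-line $[T,\infty)$ with $T$ large, by a contraction argument in Strichartz spaces at the critical regularity $\dot H^{1/2}$ (note $s_c=1/2$ when $d=p=3$), in complete analogy with Proposition~\ref{Small data global existence}, the Duhamel term now being integrated from $t$ to $+\infty$. Because $(q_0,q_0)$ is $\dot H^{1/2}$-admissible and $(q_0,r_0)$ is $L^2$-admissible, Theorem~\ref{Strichartz estimate} and Lemma~\ref{Norm equivalence} give
\begin{align*}
	\|e^{it\Delta_\gamma}u_+\|_{L_t^{q_0}(T,\infty;L_x^{q_0})} + \|e^{it\Delta_\gamma}u_+\|_{L_t^{q_0}(T,\infty;\dot W_x^{\frac12,r_0})}
		\lesssim \|u_+\|_{\dot H^{\frac12}} < \infty,
\end{align*}
so the left-hand side tends to $0$ as $T\to\infty$; for $T$ large the contraction runs and produces $u$ with $\|u\|_{L_t^{q_0}(T,\infty;L_x^{q_0})}\le 2\|e^{it\Delta_\gamma}u_+\|_{L_t^{q_0}(T,\infty;L_x^{q_0})}$. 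Since $u_+$ is radial and the potential $\gamma|x|^{-\mu}$ is radial, every iterate is radial, hence so is $u$. Running the argument of Proposition~\ref{Persistence of regularity} upgrades $u$ to $S_\gamma^1([T,\infty))$, and then estimating $u(t)-e^{it\Delta_\gamma}u_+ = -i\int_t^\infty e^{i(t-s)\Delta_\gamma}(|u|^2u)(s)\,ds$ by the inhomogeneous Strichartz estimate and the nonlinear estimate of Subsection~\ref{Subsec:Small data theory} yields $\|u(t)-e^{it\Delta_\gamma}u_+\|_{H^1}\to 0$ as $t\to\infty$.

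\emph{Step 2: limits of the functionals along $u(t)$.} We may assume $u_+\neq 0$, since $u_+=0$ forces $u\equiv 0$ and the conclusion is trivial. From $\|u(t)-e^{it\Delta_\gamma}u_+\|_{H^1}\to 0$, unitarity of $e^{it\Delta_\gamma}$ on $L^2$ and on $\dot H_\gamma^1$, and Lemma~\ref{Norm equivalence}, one gets $\|u(t)\|_{L^2}^2\to M[u_+]$ and $\|(-\Delta_\gamma)^{\frac12}u(t)\|_{L^2}^2\to 2K[u_+]$. Lemma~\ref{Time decay}, together with the interpolation bound $\|e^{it\Delta_\gamma}u_+\|_{L^r}\to 0$ for $2<r\le 6$ (interpolating the conserved $L^2$-norm against the $L^6$-decay furnished by the proof of Lemma~\ref{Time decay}) and $\|u(t)-e^{it\Delta_\gamma}u_+\|_{L^r}\lesssim \|u(t)-e^{it\Delta_\gamma}u_+\|_{H^1}$, gives $\|u(t)\|_{L^4}^4\to 0$ and, splitting into $\{|x|\le R\}$ (where $|x|^{-\mu}\in L^q(B_R)$ for some $q\in(3/2,3/\mu)$, using the decay of $\|u(t)\|_{L^{2q'}}$) and $\{|x|>R\}$ (where $\int\frac{\gamma}{|x|^\mu}|u(t)|^2\le \gamma R^{-\mu}\|u(t)\|_{L^2}^2$), also $\int\frac{\gamma}{|x|^\mu}|u(t)|^2\,dx\to 0$. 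Hence $\|\nabla u(t)\|_{L^2}^2\to 2K[u_+]$, and therefore
\begin{align*}
	S_{\omega,\gamma}(u(t)) \longrightarrow \frac{\omega}{2}M[u_+]+K[u_+] < S_{\omega,\gamma}(Q_{\omega,\gamma}),
	\qquad
	K_\gamma(u(t)) \longrightarrow 4K[u_+] > 0.
\end{align*}

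\emph{Step 3: global extension and conclusion.} Fix $T$ large enough that $S_{\omega,\gamma}(u(T))<S_{\omega,\gamma}(Q_{\omega,\gamma})=r_{\omega,\gamma}$ and $K_\gamma(u(T))>0$; recall $K_\gamma=K_{\omega,\gamma}^{d,2}$ and $(\alpha,\beta)=(d,2)$ satisfies \eqref{104}. By Theorem~\ref{Global well-posedness}, the solution of \eqref{NLS} with datum $u(T)$ prescribed at time $T$ is global, and by uniqueness it agrees on $[T,\infty)$ with the $u$ of Step~1; this gives a global radial solution, and we set $u_0:=u(0)\in H_\text{rad}^1(\mathbb{R}^3)$. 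Conservation of mass and energy gives $S_{\omega,\gamma}(u_0)=S_{\omega,\gamma}(u(T))<S_{\omega,\gamma}(Q_{\omega,\gamma})$, and Lemma~\ref{Uniform estimate for K} gives $K_\gamma(u(t))>0$ for every $t$, in particular $K_\gamma(u_0)\ge 0$; the Strichartz bound and the $H^1$-convergence to $e^{it\Delta_\gamma}u_+$ are inherited from Step~1. The main obstacle is Step~1, specifically the persistence-of-regularity upgrade from $\dot H^{1/2}$ to $H^1$ so that the asymptotics hold in $H^1$ rather than merely in $\dot H^{1/2}$; the long-range nature of the potential also demands some care in the limit $\int\frac{\gamma}{|x|^\mu}|u(t)|^2\to 0$ of Step~2, which is where the slow decay of $\gamma|x|^{-\mu}$ must be absorbed by the $L^2$-conservation of mass.
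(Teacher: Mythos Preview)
Your proof is correct and follows the same strategy as the paper: solve the final-state integral equation on $[T,\infty)$ by contraction in $\dot H^{1/2}$-Strichartz spaces (as in Proposition~\ref{Small data global existence}), upgrade to $S_\gamma^1$ via Proposition~\ref{Persistence of regularity}, compute the limits of $S_{\omega,\gamma}(u(t))$ and $K_\gamma(u(t))$ as $t\to\infty$, and then extend globally using Theorem~\ref{Global well-posedness} and Lemma~\ref{Uniform estimate for K}. Your Step~2 is in fact slightly more careful than the paper's: you explicitly show $\int\frac{\gamma}{|x|^\mu}|u(t)|^2\,dx\to 0$ (needed because $e^{it\Delta_\gamma}$ does not conserve $\|\nabla\cdot\|_{L^2}$ and the potential energy separately), obtaining $K_\gamma(u(t))\to 4K[u_+]$, whereas the paper only records the $H_\gamma^1$-limit and states a value for $\lim K_\gamma(u(t))$ without this extra step---but in either case the limit is strictly positive and the argument closes.
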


\begin{proof}
When $u_+ \equiv 0$, we can see this lemma by taking $u_0 \equiv 0$.
So, we assume $u_+ \not\equiv 0$.
By $u_+ \in H^1(\mathbb{R}^3)$, there exists sufficiently large $T > 0$ such that
\begin{align*}
	\|e^{it\Delta_\gamma}u_+\|_{L_t^{q_0}(T,\infty;L_x^{q_0})}
		\leq \varepsilon_0,
\end{align*}
where $\varepsilon_0$ is given in Proposition \ref{Small data global existence}.
For simplicity, we set $I := [T,\infty)$.
We consider the following integral equation:
\begin{align*}
	u(t)
		= e^{it\Delta_\gamma}u_+ + i\int_t^\infty e^{i(t-s)\Delta_\gamma}(|u|^2u)(s)ds.
\end{align*}
We define a set
\begin{align*}
E := \left\{u \in C_t(I;H_x^\frac{1}{2}(\mathbb{R}^3)) \cap L_t^{q_0}(I;W_x^{\frac{1}{2},r_0}(\mathbb{R}^3)) \left|
\begin{array}{l}
\|u\|_{L_t^\infty(I;H_x^\frac{1}{2})} \leq 2C\varepsilon_0,\ \|u\|_{L_t^{q_0}(I;W_x^{\frac{1}{2},r_0})} \leq 2C\varepsilon_0,\\[0.2cm]
\|u\|_{L_t^{q_0}(I;L_x^{q_0})} \leq 2\|e^{it\Delta_\gamma}u_+\|_{L_t^{q_0}(I;L_x^{q_0})}
\end{array}
\right.\right\},
\end{align*}
and a distance on $E$
\begin{align*}
	d(u,v)
		:= \|u-v\|_{L_t^{q_0}(I;L_x^{r_0})}.
\end{align*}
By the same argument with Proposition \ref{Small data global existence}, there exists a unique solution $u$ to \eqref{NLS} on $(E,d)$.
From Proposition \ref{Persistence of regularity}, we have $\|u\|_{S_\gamma^1(I)} < \infty$.
Therefore, it follows that
\begin{align*}
	\|u(t) - e^{it\Delta_\gamma}u_+\|_{H_x^1}
		& = \left\|\int_t^\infty e^{i(t-s)\Delta_\gamma}(|u|^2u)(s)ds\right\|_{H_x^1} \\
		& \sim \left\|\int_t^\infty e^{i(t-s)\Delta_\gamma}(|u|^2u)(s)ds\right\|_{H_\gamma^1}
		\lesssim \|u\|_{L_t^{q_1}(t,\infty;\dot{W}_x^{\frac{1}{2},r_1})}^2\|u\|_{L_t^{q_2}(t,\infty;W_x^{1,r_2})}
\end{align*}
for any $T \leq t < \infty$, which implies
\begin{align*}
	\lim_{t \rightarrow \infty}\|u(t) - e^{it\Delta_\gamma}u_+\|_{H_x^1}
		= 0.
\end{align*}
In addition, this limit deduces that
\begin{align}
	\lim_{t \rightarrow \infty}\|u(t)\|_{H_\gamma^1}
		= \|u_+\|_{H_\gamma^1}. \label{137}
\end{align}
On the other hand, it follows from Lemma \ref{Time decay} that
\begin{align}
	\|u(t)\|_{L_x^4}
		& \leq \|u(t) - e^{it\Delta_\gamma}u_+\|_{L_x^4} + \|e^{it\Delta_\gamma}u_+\|_{L_x^4} \notag \\
		& \lesssim \|u(t) - e^{it\Delta_\gamma}u_+\|_{H_x^1} + \|e^{it\Delta_\gamma}u_+\|_{L_x^4}
		\longrightarrow 0\ \ \text{ as }\ \ t \rightarrow \infty. \label{138}
\end{align}
Using \eqref{137} and \eqref{138}, we have
\begin{align*}
	\lim_{t \rightarrow \infty}S_{\omega,\gamma}(u(t))
		= \frac{\omega}{2}M[u_+] + K[u_+]
		< S_{\omega,\gamma}(Q_{\omega,\gamma})
\end{align*}
and
\begin{align*}
	\lim_{t \rightarrow \infty}K_{\gamma}(u(t))
		= 2\|\nabla u_+\|_{L^2}^2 + \mu\int_{\mathbb{R}^3}\frac{\gamma}{|x|^\mu}|u_+(x)|^2dx
		> 0.
\end{align*}
Therefore, there exists $T_0 \geq T$ such that
\begin{align*}
	S_{\omega,\gamma}(u(T_0))
		< S_{\omega,\gamma}(Q_{\omega,\gamma})\ \ \text{ and }\ \ 
	K_{\gamma}(u(T_0))
		> 0.
\end{align*}
Applying Theorem \ref{Global well-posedness} and Lemma \ref{Uniform estimate for K}, the solution $u$ to \eqref{NLS} exists globally in time and satisfies
\begin{align*}
	S_{\omega,\gamma}(u(0))
		< S_{\omega,\gamma}(Q_{\omega,\gamma})\ \ \text{ and }\ \ 
	K_{\gamma}(u(0))
		> 0.
\end{align*}
\end{proof}

\section{Linear profile decomposition}\label{Sec:Linear profile decomposition}

In this section, we prove a linear profile decomposition.

\begin{theorem}[Linear profile decomposition]\label{Linear profile decomposition}
Let $d = p = 3$, $\gamma > 0$, and $0 < \mu < 2$.
Let $\{f_n\}$ be a bounded sequence in $H_\text{rad}^1(\mathbb{R}^3)$.
Passing to a subsequence, there exist $J^\ast \in \{0,1,\ldots,\infty\}$, profiles $\{f^j\} \subset H_\text{rad}^1(\mathbb{R}^3)$ satisfying $f^0 \equiv 0$, $f^j \equiv 0$ for any $j \geq 1$ if $J^\ast = 0$, ``$f^j \not\equiv 0$ for any $1 \leq j \leq J^\ast$ and $f^j \equiv 0$ for any $j \geq J^\ast + 1$ if $1 \leq J^\ast < \infty$", and $f^j \not\equiv 0$ for any $1 \leq j < \infty$ if $J^\ast = \infty$, time parameters $\{t_n^j\} \subset \mathbb{R}$, and reminders $\{R_n^J\} \subset H_\text{rad}^1(\mathbb{R}^3)$ such that
\begin{align*}
	f_n
		= \sum_{j=0}^Je^{-it_n^j\Delta_\gamma}f^j + R_n^J
\end{align*}
for any $0 \leq J < \infty$ and any $n \geq 1$.
The time sequence $\{t_n\}$ satisfies
\begin{align}
	\lim_{n \rightarrow \infty}|t_n^j - t_n^k|
		= \infty \label{125}
\end{align}
for any $j \neq k$ and
\begin{align*}
	\text{either }t_n^j \longrightarrow \pm\infty\ \text{ as }\ n \rightarrow \infty
	\ \ \text{ or }\ \ 
	t_n^j \equiv 0\ \text{ for each }\ n \in \mathbb{N}
\end{align*}
for any $j \geq 0$.
Moreover, $\{R_n^J\}$ satisfies
\begin{align*}
	e^{it_n^j\Delta_\gamma}R_n^J
		\xrightharpoonup[]{\hspace{0.4cm}}
		\begin{cases}
		&\hspace{-0.4cm}\displaystyle{
			f^j, \quad (J < j),
		}\\
		&\hspace{-0.4cm}\displaystyle{
			\hspace{0.1cm}0,\hspace{0.1cm} \quad (J \geq j)
		}
		\end{cases}
\end{align*}
in $H_\gamma^1(\mathbb{R}^3)$ as $n \rightarrow \infty$ for any $j \geq 0$ and
\begin{align}
	\lim_{J \rightarrow \infty}\limsup_{n \rightarrow \infty}\|e^{it\Delta_\gamma}R_n^J\|_{L_{t,x}^{q_0}}
		= 0. \label{122}
\end{align}
Furthermore, Pythagorean decomposition
\begin{gather}
	\|(-\Delta_\gamma)^\frac{s}{2}f_n\|_{L^2}^2
		= \sum_{j = 0}^J\|(-\Delta_\gamma)^\frac{s}{2}f^j\|_{L^2}^2 + \|(-\Delta_\gamma)^\frac{s}{2}R_n^J\|_{L^2}^2 + o_n(1)\ \text{ for }\ 0 \leq s \leq 1, \label{123} \\
	\|f_n\|_{L^4}^4
		= \sum_{j = 0}^J\|e^{-it_n^j\Delta_\gamma}f^j\|_{L^4}^4 + \|R_n^J\|_{L^4}^4 + o_n(1) \label{121}
\end{gather}
hold.
\end{theorem}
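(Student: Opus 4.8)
The plan is to run the Keraani-type concentration-compactness argument, but \emph{keeping only a time-translation parameter}: the scaling and spatial-translation parameters that appear in the potential-free case are replaced by (i) boundedness in $H^1$ rather than in $\dot H^{1/2}$, (ii) the compact embedding $H^1_{\mathrm{rad}}(\mathbb R^3)\hookrightarrow L^q(\mathbb R^3)$ for $2<q<6$ (a consequence of Lemma~\ref{Radial Sobolev inequality} and the Rellich--Kondrachov theorem), and (iii) the dispersive decay of the free propagator, Lemma~\ref{Time decay}. The heart of the matter is the following \emph{inverse Strichartz estimate}: if $\{g_n\}\subset H^1_{\mathrm{rad}}$ with $\limsup_n\|g_n\|_{H^1}\le A$ and $\delta:=\limsup_n\|e^{it\Delta_\gamma}g_n\|_{L^{q_0}_{t,x}}>0$, then, after passing to a subsequence, there exist $t_n\in\mathbb R$ and a radial $\phi\not\equiv0$ with good decoupling properties. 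To produce $t_n$: since $(\tfrac{20}{9},5)\in\Lambda_0$, Theorem~\ref{Strichartz estimate} gives $\|e^{it\Delta_\gamma}g_n\|_{L^{20/9}_tL^5_x}\lesssim\|g_n\|_{L^2}\lesssim A$; combining this with $\sup_t\|e^{it\Delta_\gamma}g_n\|_{L^5_x}\lesssim\|g_n\|_{H^1}\lesssim A$ and interpolating the $L^5_t$-norm of $t\mapsto\|e^{it\Delta_\gamma}g_n\|_{L^5_x}$ between $L^{20/9}_t$ and $L^\infty_t$ yields $t_n$ with $\|e^{it_n\Delta_\gamma}g_n\|_{L^5_x}\gtrsim_A\delta^{9/5}$. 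Set $h_n:=e^{it_n\Delta_\gamma}g_n$; passing to a subsequence we may assume $t_n\to\pm\infty$ or $t_n$ converges, and in the latter case we absorb $\lim t_n$ into the profile and take $t_n\equiv0$. As $\{h_n\}$ is bounded in $H^1_{\mathrm{rad}}$, along a further subsequence $h_n\rightharpoonup\phi$ in $H^1_\gamma$ and, by the compact embedding, $h_n\to\phi$ strongly in $L^4$ and $L^5$; hence $\|\phi\|_{L^5}\gtrsim_A\delta^{9/5}>0$, so $\phi\not\equiv0$, and the Sobolev embeddings $\dot H^{1/2}\hookrightarrow L^3$, $\dot H^1\hookrightarrow L^6$ together with $\|\phi\|_{\dot H^1}\lesssim A$ give a quantitative lower bound $\|\phi\|_{\dot H^{1/2}}\gtrsim_A\delta^{\theta_0}$ for an absolute $\theta_0>0$. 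Writing $r_n:=g_n-e^{-it_n\Delta_\gamma}\phi$, the identity $\|g_n\|_{\dot H^s_\gamma}^2=\|e^{-it_n\Delta_\gamma}\phi\|_{\dot H^s_\gamma}^2+\|r_n\|_{\dot H^s_\gamma}^2+o_n(1)$ ($0\le s\le1$) follows since $e^{it\Delta_\gamma}$ is unitary on $\dot H^s_\gamma$ and $h_n\rightharpoonup\phi$ there; the $L^4$ decoupling $\|g_n\|_{L^4}^4=\|e^{-it_n\Delta_\gamma}\phi\|_{L^4}^4+\|r_n\|_{L^4}^4+o_n(1)$ splits into the case $t_n\equiv0$, where $g_n\to\phi$ in $L^4$ (compact embedding), and the case $|t_n|\to\infty$, where $\|e^{-it_n\Delta_\gamma}\phi\|_{L^4}\to0$ by Lemma~\ref{Time decay} and $\big|\|g_n-e^{-it_n\Delta_\gamma}\phi\|_{L^4}^4-\|g_n\|_{L^4}^4\big|\lesssim\|e^{-it_n\Delta_\gamma}\phi\|_{L^4}\big(\|g_n\|_{L^4}^3+\|e^{-it_n\Delta_\gamma}\phi\|_{L^4}^3\big)\to0$.

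Next I would iterate. Set $f^0\equiv0$, $t^0_n\equiv0$, $R^0_n:=f_n$. Given $R^J_n$ (which is radial and, by the $\dot H^1_\gamma$- and $L^2$-Pythagorean identities already obtained at stages $\le J$, bounded in $H^1_{\mathrm{rad}}$): if $\limsup_n\|e^{it\Delta_\gamma}R^J_n\|_{L^{q_0}_{t,x}}=0$, stop with $J^\ast=J$; otherwise apply the inverse Strichartz estimate to $\{R^J_n\}$ to produce $f^{J+1}\not\equiv0$, times $t^{J+1}_n$, and $R^{J+1}_n:=R^J_n-e^{-it^{J+1}_n\Delta_\gamma}f^{J+1}$. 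A diagonal subsequence makes all the conclusions above hold simultaneously for every $J$. Telescoping the one-step Pythagorean identities yields \eqref{123} and \eqref{121}. Letting $n\to\infty$ in \eqref{123} with $s=\tfrac12$ (and discarding the nonnegative remainder term) gives $\sum_j\|f^j\|_{\dot H^{1/2}_\gamma}^2\lesssim A^2<\infty$, so $\|f^j\|_{\dot H^{1/2}}\to0$; the quantitative bound $\|f^{J+1}\|_{\dot H^{1/2}}\gtrsim_A(\limsup_n\|e^{it\Delta_\gamma}R^J_n\|_{L^{q_0}_{t,x}})^{\theta_0}$ then forces \eqref{122}.

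It remains to check the bookkeeping. The dichotomy ``$t^j_n\to\pm\infty$ or $t^j_n\equiv0$'' is built into the extraction step. I would prove \eqref{125} by strong induction on $k$: if $t^j_n-t^k_n$ stayed bounded for some $j<k$, then, writing $R^{k-1}_n=R^j_n-\sum_{i=j+1}^{k-1}e^{-it^i_n\Delta_\gamma}f^i$, applying $e^{it^k_n\Delta_\gamma}$, using $e^{it^j_n\Delta_\gamma}R^j_n\rightharpoonup0$, and killing the terms $e^{i(t^k_n-t^i_n)\Delta_\gamma}f^i$ with $j<i<k$ via the inductive orthogonality together with Lemma~\ref{Time decay}, one obtains $e^{it^k_n\Delta_\gamma}R^{k-1}_n\rightharpoonup0$, contradicting $e^{it^k_n\Delta_\gamma}R^{k-1}_n\rightharpoonup f^k\not\equiv0$; in particular at most one profile can have $t^j_n\equiv0$. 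The asserted weak limits of $e^{it^j_n\Delta_\gamma}R^J_n$ follow in the same manner by expanding $R^J_n$ in terms of $R^{j-1}_n$ (or $R^j_n$) and the intervening profiles and invoking \eqref{125} and Lemma~\ref{Time decay}. That $f^j\not\equiv0$ for $1\le j\le J^\ast$ is immediate from the construction, and $f^0\equiv0$ is just our indexing convention.

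The main obstacle is entirely in the extraction step. Removing the spatial-translation and scaling parameters is exactly what the radial hypothesis buys: keeping $\{g_n\}$ bounded in $H^1$ (not merely $\dot H^{1/2}$) together with the compact radial embedding of Lemma~\ref{Radial Sobolev inequality} lets us extract $\phi$ directly as a weak $H^1$-limit after a single time translation, with no concentration at fine scales and no bubbling escaping to spatial infinity. The genuinely delicate point is the $L^4$ decoupling \eqref{121}; the time dichotomy reduces it to either the ``compact'' regime (radial compactness) or the ``escaping'' regime (dispersive decay, Lemma~\ref{Time decay}), and it is precisely there that the pure absolute continuity of $-\Delta_\gamma$, hence the long-range nature of the potential, is accommodated.
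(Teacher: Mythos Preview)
Your proposal is correct and follows essentially the same strategy as the paper: extract one profile at a time by choosing $t_n$ to (nearly) maximize a spatial Lebesgue norm of $e^{it\Delta_\gamma}R_n^J$, use the compact radial embedding $H^1_{\mathrm{rad}}\hookrightarrow L^q$ ($2<q<6$) to upgrade weak $H^1_\gamma$-convergence to strong $L^q$-convergence, then iterate and telescope. The only technical differences are cosmetic: the paper interpolates $L^{q_0}_{t,x}$ against $L^\infty_t L^3_x$ and picks $t_n$ from the $L^3_x$ supremum, whereas you interpolate $L^5_tL^5_x$ between $L^{20/9}_tL^5_x$ and $L^\infty_tL^5_x$; and the paper proves the $L^4$ decoupling \eqref{121} globally by splitting into the two cases ``some $t_n^{j_0}\equiv0$'' versus ``all $|t_n^j|\to\infty$'', while you obtain it by telescoping one-step decouplings---both arguments are valid. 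One small wording fix: your ``strong induction on $k$'' for \eqref{125} should really be induction on the gap $k-j$ (as the paper does), since the intermediate terms $e^{i(t_n^k-t_n^i)\Delta_\gamma}f^i$ with $j<i<k$ involve the same $k$, not a smaller one.
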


\begin{proof}
By the interpolation, we have
\begin{align*}
	\|e^{it\Delta_\gamma}f\|_{L_{t,x}^{q_0}}
		\leq \|e^{it\Delta_\gamma}f\|_{L_t^\infty L_x^3}^{1-\theta}\|e^{it\Delta_\gamma}f\|_{L_t^qL_x^r}^\theta
		\lesssim \|e^{it\Delta_\gamma}f\|_{L_t^\infty L_x^3}^{1-\theta}\|f\|_{\dot{H}_x^\frac{1}{2}}^\theta,
\end{align*}
where $(q,r) \in \Lambda_\frac{1}{2}$ and $\theta \in (0,1)$ hold.
Thus, it suffices to show that
\begin{align*}
	\lim_{J \rightarrow \infty}\limsup_{n \rightarrow \infty}\|e^{it\Delta_\gamma}R_n^J\|_{L_t^\infty L_x^3}
		= 0.
\end{align*}
We construct $\{f^j\}$, $\{t_n^j\}$, and $\{R_n^J\}$ inductively.\\
We set $A_0 := \limsup_{n \rightarrow \infty}\|e^{it\Delta_\gamma}f_n\|_{L_t^\infty L_x^3}$.
If $A_0 = 0$, then we get this theorem by taking $f^j \equiv 0$ for each $j \geq 1$.
So, we assume $A_0 > 0$.
Thanks to this assumption, we can take a subsequence of $\{f_n\}$ (, which is denoted by the same symbol) satisfying
\begin{align*}
	\|e^{it\Delta_\gamma}f_n\|_{L_t^\infty L_x^3}
		\geq \frac{3}{4}A_0\ \text{ for each }\ n \in \mathbb{N}.
\end{align*}
Also, we take a sequence $\{t_n^1\} \subset \mathbb{R}$ with
\begin{align}
	\|e^{it_n^1\Delta_\gamma}f_n\|_{L_x^3}
		\geq \frac{1}{2}A_0\ \text{ for each }\ n \in \mathbb{N}. \label{118}
\end{align}
Since $\|e^{it_n^1\Delta_\gamma}f_n\|_{H_\gamma^1}$ is bounded, there exists $f^1 \in H_\gamma^1(\mathbb{R}^3)$ such that $e^{it_n^1\Delta_\gamma}f_n \xrightharpoonup[]{\hspace{0.4cm}}f^1$ in $H_\gamma^1(\mathbb{R}^3)$ as $n \rightarrow \infty$.
From the compactness of $H_{\gamma,\text{rad}}^1(\mathbb{R}^3) \subset L^3(\mathbb{R}^3)$, we get $e^{it_n^1\Delta_\gamma}f_n \longrightarrow f^1$ in $L^3(\mathbb{R}^3)$.
We note that $f^1 \neq 0$ satisfy
\begin{align*}
	\frac{1}{2}A_0
		\leq \|f^1\|_{L^3}
		\lesssim \|f^1\|_{\dot{H}_\gamma^\frac{1}{2}}.
\end{align*}
from \eqref{118}.
We set $R_n^1 := f_n - e^{-it_n^1\Delta_\gamma}f^1$.
Then, we have $R_n^1 \xrightharpoonup[]{\hspace{0.4cm}} 0$ in $H_\gamma^1(\mathbb{R}^3)$ as $n \rightarrow \infty$.
\begin{align*}
	\|R_n^1\|_{\dot{H}_\gamma^s}^2 - \|f_n\|_{\dot{H}_\gamma^s}^2
		= - 2\text{Re}\<f_n,e^{-it_n^1\Delta_\gamma}f^1\>_{\dot{H}_\gamma^s} + \|f^1\|_{\dot{H}_\gamma^s}^2
		\longrightarrow - \|f^1\|_{\dot{H}_\gamma^s}^2\ \text{ as }\ n \rightarrow \infty.
\end{align*}
We set $A_1 := \limsup_{n \rightarrow \infty}\|e^{it\Delta_\gamma}R_n^1\|_{L_t^\infty L_x^3}$.
If $A_1 = 0$, then we get this theorem by taking $f^j \equiv 0$ for each $j \geq 2$.
So, we assume $A_1 > 0$.
Thanks to this assumption, we can take a subsequence of $\{R_n^1\}$ (which is denoted by the same symbol) and a time sequence $\{t_n^2\} \subset \mathbb{R}$ satisfying $\|e^{it_n^2\Delta_\gamma}R_n^1\|_{L_x^3} \geq \frac{1}{2}A_1$ for each $n \geq \mathbb{N}$.
Since $\|e^{it_n^2\Delta_\gamma}R_n^1\|_{H_\gamma^1}$ is bounded, there exists $f^2 \in H_\gamma^1(\mathbb{R}^3)$ such that $e^{it_n^2\Delta_\gamma}R_n^1 \xrightharpoonup[]{\hspace{0.4cm}}f^2$ in $H_\gamma^1(\mathbb{R}^3)$ as $n \rightarrow \infty$.
From the compactness of $H_{\gamma,\text{rad}}^1(\mathbb{R}^3) \subset L^3(\mathbb{R}^3)$, we get $e^{it_n^2\Delta_\gamma}R_n^1 \longrightarrow f^2$ in $L^3(\mathbb{R}^3)$.
We set $R_n^2 := R_n^1 - e^{-it_n^2\Delta_\gamma}f^2$.
Then, we have $R_n^2 \xrightharpoonup[]{\hspace{0.4cm}} 0$ in $H_\gamma^1(\mathbb{R}^3)$ as $n \rightarrow \infty$ and $f_n = \sum_{j = 1}^2e^{-it_n^j\Delta_\gamma}f^j + R_n^2$.
We assume for contradiction that $|t_n^2 - t_n^1|$ is bounded.
Then, there exists a subsequence of $\{t_n^2 - t_n^1\}$ and $t^{2,1} \in \mathbb{R}$ such that $t_n^2 - t_n^1 \longrightarrow t^{2,1}$ as $n \rightarrow \infty$.
Thus, we have
\begin{align*}
	e^{i(t_n^2 - t_n^1)\Delta_\gamma}(e^{it_n^1\Delta_\gamma}f_n - f^1)
		\xrightharpoonup[]{\hspace{0.4cm}}e^{it^{2,1}\Delta_\gamma}0
		= 0\ \text{ as }\ n \rightarrow \infty.
\end{align*}
On the other hand, it follows that
\begin{align*}
	e^{i(t_n^2 - t_n^1)\Delta_\gamma}(e^{it_n^1\Delta_\gamma}f_n - f^1)
		= e^{it_n^2\Delta_\gamma}(f_n - e^{-it_n^1\Delta_\gamma}f^1)
		= e^{it_n^2\Delta_\gamma}R_n^1
		\xrightharpoonup[]{\hspace{0.4cm}}f^2
		\neq 0\ \text{ as }\ n \rightarrow \infty.
\end{align*}
This is contradiction.
Moreover, we have
\begin{align*}
	\|R_n^2\|_{\dot{H}_\gamma^s}^2 - \|f_n\|_{\dot{H}_\gamma^s}^2
		& = \|R_n^1\|_{\dot{H}_\gamma^s}^2 - 2\text{Re}\<R_n^1,e^{-it_n^2\Delta_\gamma}f^2\>_{\dot{H}_\gamma^s} + \|f^2\|_{\dot{H}_\gamma^s}^2 - \|f_n\|_{\dot{H}_\gamma^s}^2 \\
		& \longrightarrow - \|f^1\|_{\dot{H}_\gamma^s}^2 - \|f^2\|_{\dot{H}_\gamma^s}^2\ \text{ as }\ n \rightarrow \infty.
\end{align*}
We construct $\{f^j\}$, $\{t_n^j\}$, and $\{R_n^J\}$ inductively.
We set $A_j := \limsup_{n \rightarrow \infty}\|e^{it\Delta_\gamma}R_n^j\|_{L_t^\infty L_x^3}$.
When there exists $J \geq 0$ such that $A_J = 0$, we get this theorem by taking $f^j \equiv 0$ for each $j \geq J + 1$.
So, we assume $A_j > 0$ for any $j \geq 0$.
We note that
\begin{gather}
	\|e^{it_n^{j+1}\Delta_\gamma}R_n^j\|_{L_x^3}
		\geq \frac{1}{2}A_j\ \text{ for each }\ n \in \mathbb{N}, \notag \\
	e^{it_n^{j+1}\Delta_\gamma}R_n^j \xrightharpoonup[]{\hspace{0.4cm}}f^{j+1}\ \text{ in }\ H_\gamma^1(\mathbb{R}^3)\ \text{ as }\ n \rightarrow \infty, \notag \\
	e^{it_n^{j+1}\Delta_\gamma}R_n^j \longrightarrow f^{j+1}\ \text{ in }\ L^3(\mathbb{R}^3)\ \text{ as }\ n \rightarrow \infty, \notag \\
	\frac{1}{2}A_j
		\leq \|f^{j+1}\|_{L^3}
		\lesssim \|f^{j+1}\|_{\dot{H}_\gamma^\frac{1}{2}}, \label{120} \\
	R_n^{j+1}
		= R_n^j - e^{-it_n^{j+1}}f^{j+1}. \notag
\end{gather}
When $t_n^j$ is bound for some $j \geq 0$, we take a new profile $\~{f}^j := e^{-it^j\Delta_\gamma}f^j$ and a new reminder $\~{R}_n^j := R_n^{j-1} - \~{f}^j$, where $t^j$ satisfies $t_n^j \longrightarrow t^j$ as $n \rightarrow \infty$.
Replacing $e^{-it_n^j\Delta_\gamma}f^j$ with $\~{f}^j$ and $R_n^j$ with $\~{R}_n^j$, we regard as $t_n^j = 0$ for each $n \in \mathbb{N}$.
We assume
\begin{align*}
	\|f_n\|_{\dot{H}_\gamma^s}^2
		= \sum_{j = 0}^J\|f^j\|_{\dot{H}_\gamma^s}^2 + \|R_n^J\|_{\dot{H}_\gamma^s}^2 + o_n(1)
\end{align*}
for induction.
Under this assumption, we have
\begin{align*}
	\|f_n\|_{\dot{H}_\gamma^s}^2
		= \sum_{j = 0}^{J+1}\|f^j\|_{\dot{H}_\gamma^s}^2 + \|R_n^{J+1}\|_{\dot{H}_\gamma^s}^2 + o_n(1).
\end{align*}
Indeed, it follows that
\begin{align*}
	\|R_n^{J+1}\|_{\dot{H}_\gamma^s}^2 - \|f_n\|_{\dot{H}_\gamma^s}^2
		& = \|R_n^J\|_{\dot{H}_\gamma^s}^2 - 2\text{Re}\<R_n^J,e^{-it_n^{J+1}}f^{J+1}\>_{\dot{H}_\gamma^s} + \|f^{J+1}\|_{\dot{H}_\gamma^s}^2 - \|f_n\|_{\dot{H}_\gamma^s}^2 \\
		& \longrightarrow - \sum_{j = 0}^{J+1}\|f^j\|_{\dot{H}_\gamma^s}^2\ \text{ as }\ n \rightarrow \infty.
\end{align*}
If $|t_n^{i+1} - t_n^i|$ is bounded for some $i \geq 1$, then there exists $t^{i+1,i} \in \mathbb{R}$ such that
\begin{align*}
	e^{i(t_n^{i+1} - t_n^i)\Delta_\gamma}(e^{it_n^i\Delta_\gamma}R_n^{i-1} - f^i)
		\xrightharpoonup[]{\hspace{0.4cm}}e^{it^{i+1,i}\Delta_\gamma}0
		= 0\ \text{ as }\ n \rightarrow \infty.
\end{align*}
On the other hand, we have
\begin{align*}
	e^{i(t_n^{i+1} - t_n^i)\Delta_\gamma}(e^{it_n^i\Delta_\gamma}R_n^{i-1} - f^i)
		& = e^{it_n^{i+1}\Delta_\gamma}(R_n^{i-1} - e^{-it_n^i\Delta_\gamma}f^i) \\
		& = e^{it_n^{i+1}\Delta_\gamma}R_n^i
		\xrightharpoonup[]{\hspace{0.4cm}} f^{i+1}
		\neq 0\ \text{ as }\ n \rightarrow \infty.
\end{align*}
Thus, we have $|t_n^{i+1} - t_n^i| \longrightarrow \infty$ as $n \rightarrow \infty$ for any $i \geq 1$.
We assume for induction that $|t_n^k - t_n^i| \longrightarrow \infty$ as $n \rightarrow \infty$ for any $1 \leq i < k$ with $1 \leq |k - i| \leq J$.
Under this assumption, we prove $|t_n^k - t_n^i| \longrightarrow \infty$ as $n \rightarrow \infty$ for any $1 \leq i < k$ with $|k - i| = J + 1$.
If $|t_n^k - t_n^i|$ is bounded for some $1 \leq i < k$ with $|k - i| = J + 1$, then there exists $t^{k,i} \in \mathbb{R}$ such that
\begin{align*}
	e^{i(t_n^k - t_n^i)\Delta_\gamma}(e^{it_n^i\Delta_\gamma}R_n^{i-1} - f^i)
		\xrightharpoonup[]{\hspace{0.4cm}}e^{it^{k,i}\Delta_\gamma}0
		= 0\ \text{ as }\ n \rightarrow \infty.
\end{align*}
On the other hand, we have
\begin{align*}
	e^{i(t_n^k - t_n^i)\Delta_\gamma}(e^{it_n^i\Delta_\gamma}R_n^{i-1} - f^i)
		& = e^{it_n^k\Delta_\gamma}(R_n^{i-1} - e^{-it_n^i\Delta_\gamma}f^i) \\
		& = e^{it_n^k\Delta_\gamma}R_n^i \\
		& = e^{it_n^k\Delta_\gamma}\left(\sum_{j = i+1}^{k-1}e^{-it_n^j\Delta_\gamma}f^j + R_n^{k-1}\right) \\
		& \xrightharpoonup[]{\hspace{0.4cm}}0 + f^k
		= f^k
		\neq 0
\end{align*}
by using Lemma \ref{Time decay}.
This is contradiction.
It follows from \eqref{120} that
\begin{align*}
	\frac{1}{4}\sum_{j=0}^\infty A_j^2
		\lesssim \sum_{j=0}^\infty\|f^j\|_{\dot{H}_\gamma^\frac{1}{2}}^2
		\leq \limsup_{n \rightarrow \infty}\|f_n\|_{\dot{H}_\gamma^\frac{1}{2}}^2,
\end{align*}
so we have $\lim_{j \rightarrow \infty}A_j = 0$.
Finally, we prove \eqref{121}.\\
Case 1. There exists $j_0$ such that $t_n^{j_0} = 0$.
In this case, we show that
\begin{gather*}
	\lim_{n \rightarrow \infty}\|R_n^J\|_{L^4}
		= 0\ \ \text{ for any }\ \ J \geq j_0, \\
	\lim_{n \rightarrow \infty}\|R_n^J\|_{L^4}
		= \|f^{j_0}\|_{L^4}\ \ \text{ for any }\ \ 0 \leq J \leq j_0-1, \\
	\lim_{n \rightarrow \infty}\|e^{-it_n^j\Delta_\gamma}f^j\|_{L^4}
		= 0\ \ \text{ for any }\ \ j \neq j_0, \\
	\lim_{n \rightarrow \infty}\|f_n\|_{L^4}
		= \|f^{j_0}\|_{L^4}.
\end{gather*}
From Lemma \ref{Time decay}, we have the second formula.
We recall $R_n^{j_0-1} \xrightharpoonup[]{\hspace{0.4cm}} f^{j_0}$ in $H_\gamma^1(\mathbb{R}^3)$ as $n \rightarrow \infty$.
By the compactness of $L^4(\mathbb{R}^3) \subset H_\gamma^1(\mathbb{R}^3)$, there exists a subsequence of $\{R_n^{j_0-1}\}$ (, which is denoted by the same symbol) such that $R_n^{j_0-1} \longrightarrow f^{j_0}$ in $L^4(\mathbb{R}^3)$ as $n \rightarrow \infty$.
For $J \geq j_0$, we have
\begin{align*}
	\|R_n^J\|_{L^4}
		& = \Bigl\|R_n^{j_0-1} - \sum_{j=j_0}^Je^{-it_n^j\Delta_\gamma}f^j\Bigr\|_{L^4} \\
		& \leq \|R_n^{j_0-1} - f^{j_0}\|_{L^4} + \sum_{j=j_0+1}^J\|e^{-it_n^j\Delta_\gamma}f^j\|_{L^4}
		\longrightarrow 0\ \ \text{ as }\ \ n \rightarrow \infty.
\end{align*}
For $0 \leq J \leq j_0-1$, we have
\begin{align*}
	\|R_n^J - f^{j_0}\|_{L^4}
		& = \Bigl\|R_n^{j_0} + \sum_{j=J+1}^{j_0-1}e^{-it_n^j\Delta_\gamma}f^j\Bigr\|_{L^4} \\
		& \leq \|R_n^{j_0}\|_{L^4} + \sum_{j=J+1}^{j_0-1}\|e^{-it_n^j\Delta_\gamma}f^j\|_{L^4}
		\longrightarrow 0\ \ \text{ as }\ \ n \rightarrow \infty.
\end{align*}
In addition, we can get
\begin{align*}
	\|f_n - f^{j_0}\|_{L^4}
		& = \Bigl\|\sum_{j=1}^{j_0-1}e^{-it_n^j\Delta_\gamma}f^j - R_n^{j_0}\Bigr\|_{L^4} \\
		& \leq \sum_{j=1}^{j_0-1}\|e^{-it_n^j\Delta_\gamma}f^j\|_{L^4} + \|R_n^{j_0}\|_{L^4}
		\longrightarrow 0\ \ \text{ as }\ \ n \rightarrow \infty.
\end{align*}
Case 2. $|t_n^j| \longrightarrow \infty$ as $n \rightarrow \infty$ holds for any $j$.
In this case, we show that
\begin{gather*}
	\lim_{n \rightarrow \infty}\|e^{-it_n^j\Delta_\gamma}f^j\|_{L^4}
		= 0\ \ \text{ for any }\ \ j, \\
	\lim_{n \rightarrow \infty}\|f_n\|_{L^4}
		= \lim_{n \rightarrow \infty}\|R_n^J\|_{L^4}.
\end{gather*}
The first formula follows from Lemma \ref{Time decay}.
The second formula holds by
\begin{align*}
	\|f_n - R_n^J\|_{L^4}
		= \Bigl\|\sum_{j=1}^Je^{-it_n^j\Delta_\gamma}f^j\Bigr\|_{L^4}
		\leq \sum_{j=1}^J\|e^{-it_n^j\Delta_\gamma}f^j\|_{L^4}
		\longrightarrow 0\ \ \text{ as }\ \ n \rightarrow \infty.
\end{align*}
\end{proof}

\begin{lemma}\label{Estimate for profile decomposition}
Let $d = p = 3$, $\gamma > 0$, $0 < \mu < 2$, $J \in \mathbb{N}$ and let $Q_{\omega,\gamma}$ be the ``radial'' ground state to \eqref{SP}.
If $\{f^j\}_{j=1}^J \subset H_\text{rad}^1(\mathbb{R}^3) \setminus \{0\}$ satisfies
\begin{gather*}
	\sum_{j=1}^J S_{\omega,\gamma}(f^j) - \varepsilon
		\leq S_{\omega,\gamma}\Bigl(\sum_{j=1}^Jf^j\Bigr)
		\leq S_{\omega,\gamma}(Q_{\omega,\gamma}) - \delta, \\
	-\varepsilon
		\leq \mathcal{N}_{\omega,\gamma} \Bigl(\sum_{j=1}^J f^j\Bigr)
		\leq \sum_{j=1}^J \mathcal{N}_{\omega,\gamma}(f^j) + \varepsilon
\end{gather*}
for some $\delta, \varepsilon > 0$ with $2\varepsilon < \delta$, then it follows that
\begin{align*}
	0
		< S_{\omega,\gamma}(f^j)
		< S_{\omega,\gamma}(Q_{\omega,\gamma}),\ \ \ 
	\mathcal{N}_{\omega,\gamma}(f^j)
		> 0
\end{align*}
for each $1 \leq j \leq J$.
\end{lemma}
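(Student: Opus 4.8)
The plan is to run the whole argument with the Nehari scaling $(\alpha,\beta)=(1,0)$, so that $K_{\omega,\gamma}^{1,0}=\mathcal{N}_{\omega,\gamma}$ (note that $(1,0)$ trivially satisfies \eqref{104}), and to exploit the elementary identity, valid because $d=p=3$,
\[
	T_{\omega,\gamma}^{1,0}(f)=S_{\omega,\gamma}(f)-\tfrac12\mathcal{N}_{\omega,\gamma}(f)=\tfrac14\|f\|_{L^4}^4\ge 0 .
\]
I shall also use that $Q_{\omega,\gamma}$ attains $r_{\omega,\gamma}$, i.e. $r_{\omega,\gamma}=S_{\omega,\gamma}(Q_{\omega,\gamma})$. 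The heart of the proof is to show
\[
	\sum_{j=1}^J T_{\omega,\gamma}^{1,0}(f^j)<r_{\omega,\gamma}.
\]

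To obtain this I would rewrite the hypotheses as $\sum_j S_{\omega,\gamma}(f^j)\le S_{\omega,\gamma}\bigl(\sum_j f^j\bigr)+\varepsilon$ and $\sum_j\mathcal{N}_{\omega,\gamma}(f^j)\ge\mathcal{N}_{\omega,\gamma}\bigl(\sum_j f^j\bigr)-\varepsilon$, so that
\[
	\sum_{j=1}^J T_{\omega,\gamma}^{1,0}(f^j)=\sum_j S_{\omega,\gamma}(f^j)-\tfrac12\sum_j\mathcal{N}_{\omega,\gamma}(f^j)\le T_{\omega,\gamma}^{1,0}\Bigl(\sum_j f^j\Bigr)+\tfrac32\varepsilon .
\]
Using then $S_{\omega,\gamma}(\sum_j f^j)\le S_{\omega,\gamma}(Q_{\omega,\gamma})-\delta$ together with $\mathcal{N}_{\omega,\gamma}(\sum_j f^j)\ge-\varepsilon$ yields $T_{\omega,\gamma}^{1,0}(\sum_j f^j)\le S_{\omega,\gamma}(Q_{\omega,\gamma})-\delta+\tfrac12\varepsilon$, hence $\sum_j T_{\omega,\gamma}^{1,0}(f^j)\le S_{\omega,\gamma}(Q_{\omega,\gamma})-\delta+2\varepsilon<r_{\omega,\gamma}$ by the assumption $2\varepsilon<\delta$. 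Since each $T_{\omega,\gamma}^{1,0}(f^j)\ge 0$, it follows that $T_{\omega,\gamma}^{1,0}(f^j)<r_{\omega,\gamma}$ for every $j$.

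Now I would conclude the three claimed facts. First, if $\mathcal{N}_{\omega,\gamma}(f^j)\le 0$ for some $j$, the characterization $r_{\omega,\gamma}=\inf\{T_{\omega,\gamma}^{1,0}(f):f\in H_\text{rad}^1(\mathbb{R}^3)\setminus\{0\},\ \mathcal{N}_{\omega,\gamma}(f)\le 0\}$ from Lemma \ref{Rewriting of infimum} forces $T_{\omega,\gamma}^{1,0}(f^j)\ge r_{\omega,\gamma}$, contradicting the previous paragraph; hence $\mathcal{N}_{\omega,\gamma}(f^j)>0$ for each $j$. Then Proposition \ref{Equivalence of H1 and S} applied with $(\alpha,\beta)=(1,0)$ gives $\|f^j\|_{H_{\omega,\gamma}^1}^2\le 4S_{\omega,\gamma}(f^j)$, so $S_{\omega,\gamma}(f^j)\ge\tfrac14\|f^j\|_{H_{\omega,\gamma}^1}^2>0$ because $f^j\neq 0$ and $\omega>0$. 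Finally, $\sum_j S_{\omega,\gamma}(f^j)\le S_{\omega,\gamma}(\sum_j f^j)+\varepsilon\le S_{\omega,\gamma}(Q_{\omega,\gamma})-\delta+\varepsilon<S_{\omega,\gamma}(Q_{\omega,\gamma})$, and since every summand is positive, each one obeys $S_{\omega,\gamma}(f^j)<S_{\omega,\gamma}(Q_{\omega,\gamma})$. This gives all the assertions of the lemma.

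I do not anticipate a genuine difficulty: the argument is purely algebraic once one notices the identity $T_{\omega,\gamma}^{1,0}(f)=\tfrac14\|f\|_{L^4}^4\ge 0$, which is exactly what turns the ``near-superadditivity'' of $S_{\omega,\gamma}-\tfrac12\mathcal{N}_{\omega,\gamma}$ coming from the hypotheses into a quantitative bound on the individual profiles. The only care needed is in tracking where the constants $\varepsilon,\delta$ enter so that the assumption $2\varepsilon<\delta$ is used precisely at the step $\,-\delta+2\varepsilon<0$, and in recalling that both $r_{\omega,\gamma}$ and the characterization of $Q_{\omega,\gamma}$ are independent of $(\alpha,\beta)$, which is why working with $(1,0)$ is legitimate.
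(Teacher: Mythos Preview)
Your proof is correct and follows essentially the same approach as the paper: both arguments hinge on the identity $T_{\omega,\gamma}^{1,0}(f)=\tfrac14\|f\|_{L^4}^4\ge 0$, use the hypotheses to bound $\sum_j T_{\omega,\gamma}^{1,0}(f^j)\le S_{\omega,\gamma}(Q_{\omega,\gamma})-\delta+2\varepsilon<r_{\omega,\gamma}$, and then invoke Lemma~\ref{Rewriting of infimum} and Proposition~\ref{Equivalence of H1 and S} to conclude. The only difference is cosmetic ordering: the paper starts with the contradiction hypothesis $\mathcal{N}_{\omega,\gamma}(f^j)\le 0$ and derives the bound inside that argument, whereas you establish the bound first and then apply the contradiction.
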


\begin{proof}
We assume for contradiction that there exists $j \in \{1,\ldots,J\}$ such that $\mathcal{N}_{\omega,\gamma}(f^j)\leq 0$.
From Lemma \ref{Rewriting of infimum}, we have
\begin{align*}
	S_{\omega,\gamma}(Q_{\omega,\gamma})
		& \leq T_{\omega,\gamma}^{1,0}(f^j)
		\leq \sum_{j=1}^J T_{\omega,\gamma}^{1,0}(f^j)
		= \sum_{j=1}^J S_{\omega,\gamma}(f^j) - \frac{1}{2} \sum_{j=1}^J \mathcal{N}_{\omega,\gamma}(f^j)\\
		& \leq S_{\omega,\gamma}(Q_{\omega,\gamma}) - \delta + \varepsilon + \varepsilon
		< S_{\omega,\gamma}(Q_{\omega,\gamma}).
\end{align*}
This is contradiction, so $\mathcal{N}_{\omega,\gamma}(f^j) > 0$ for each $j \in \{1,\ldots,J\}$.
Applying this fact and Lemma \ref{Equivalence of H1 and S}
\begin{align*}
	S_{\omega,\gamma}(f^j)
		> \frac{1}{4}\|f^j\|_{H_{\omega,\gamma}^1}^2
		\geq 0
\end{align*}
and
\begin{align*}
	S_{\omega,\gamma}(f^j)
		\leq \sum_{j=1}^J S_{\omega,\gamma}(f^j)
		\leq S_{\omega,\gamma}(Q_{\omega,\gamma}) - \delta + \varepsilon
		< S_{\omega,\gamma}(Q_{\omega,\gamma}).
\end{align*}
\end{proof}

\section{Scattering}\label{Sec:Scattering}

We define the following sharp scattering threshold level:
\begin{align*}
S_{\omega,\gamma,\text{rad}}^c := \sup\left\{\delta > 0\left|
\begin{array}{l}
\text{If }u_0\in H_\text{rad}^1(\mathbb{R}^3)\text{ satisfies }K_{\gamma}(u_0)\geq 0\text{ and }S_{\omega,\gamma}(u_0)<\delta,\\[0.2cm]
\text{then a solution }u\text{ to \eqref{NLS} scatters.}
\end{array}
\right.\right\}.
\end{align*}
Proposition \ref{Small data scattering} and the fact that the radial solution $e^{i\omega t}Q_{\omega,\gamma}$ to \eqref{NLS} with initial data $Q_{\omega,\gamma}$ does not scatter, imply $S_{\omega,\gamma,\text{rad}}^c\in (0,S_{\omega,\gamma}(Q_{\omega,\gamma})]$.
If we can prove $S_{\omega,\gamma,\text{rad}}^c = S_{\omega,\gamma}(Q_{\omega,\gamma})$, then Theorem \ref{Scattering} holds.
From now on, we assume for contradiction that
\begin{align}
	S_{\omega,\gamma,\text{rad}}^c
		< S_{\omega,\gamma}(Q_{\omega,\gamma}). \label{159}
\end{align}

\begin{lemma}[Palais-Smale condition]\label{Palais-Smale condition}
Let $d = p = 3$, $\gamma > 0$, and $0 < \mu < 2$.
Assume that \eqref{159} holds.
Let $\{u_n : \mathbb{R} \times \mathbb{R}^3 \longrightarrow \mathbb{C}\}$ be a sequence of solution to \eqref{NLS}.
If there exists $\{t_n\} \subset \mathbb{R}$ such that $u_n$ and $t_n$ satisfy
\begin{gather*}
	S_{\omega,\gamma}(Q_{\omega,\gamma})
		> S_{\omega,\gamma}(u_n)
		\searrow S_{\omega,\gamma,\text{rad}}^c,\ \ \ 
	K_\gamma(u_n(t_n))
		> 0, \\
	\lim_{n\rightarrow\infty}\|u_n\|_{L_t^{q_0}(-\infty,t_n;L_x^{q_0})}
		= \lim_{n\rightarrow\infty}\|u_n\|_{L_t^{q_0}(t_n,\infty;L_x^{q_0})}
		= \infty,
\end{gather*}
then passing to a subsequence of $\{u_n(t_n)\}$, $u_n(t_n)$ converges in $H^1(\mathbb{R}^3)$.
\end{lemma}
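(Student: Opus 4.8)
The plan is to run a Kenig--Merle type argument built on the linear profile decomposition of Theorem~\ref{Linear profile decomposition} together with the well-posedness and stability theory of Section~\ref{Sec:Well-posedness}. First I would use the time-translation invariance of \eqref{NLS} to reduce to $t_n\equiv0$ and put $\psi_n:=u_n(0)$; since $S_{\omega,\gamma}$ and the positivity of $K_\gamma$ persist, $S_{\omega,\gamma}(\psi_n)<S_{\omega,\gamma}(Q_{\omega,\gamma})=r_{\omega,\gamma}$ and $K_\gamma(\psi_n)>0$, whence $\mathcal{N}_{\omega,\gamma}(\psi_n)>0$ by Proposition~\ref{Initial data set} and the definition of $r_{\omega,\gamma}$, and $\{\psi_n\}$ is bounded in $H_\text{rad}^1(\mathbb{R}^3)$ by Proposition~\ref{Equivalence of H1 and S}. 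Applying Theorem~\ref{Linear profile decomposition} to $\{\psi_n\}$ produces profiles $\{f^j\}$, time parameters $\{t_n^j\}$ and remainders $\{R_n^J\}$; by \eqref{125} at most one index $j_0$ has $t_n^{j_0}\equiv0$, while $|t_n^j|\to\infty$ for every other $j$.

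Next I would feed the Pythagorean identities \eqref{123} (for $s=0$ and $s=1$) and \eqref{121} into $S_{\omega,\gamma}$ and $\mathcal{N}_{\omega,\gamma}$, using Lemma~\ref{Time decay} to annihilate the $L^4$ contribution of every escaping profile. This yields
\[
	S_{\omega,\gamma}(\psi_n) = S_{\omega,\gamma}(f^{j_0}) + \sum_{j\neq j_0}\Big(\tfrac{\omega}{2}M[f^j] + \tfrac12\|(-\Delta_\gamma)^\frac12 f^j\|_{L^2}^2\Big) + S_{\omega,\gamma}(R_n^J) + o_n(1)
\]
(the $f^{j_0}$ term absent if there is no such index) together with the analogous identity for $\mathcal{N}_{\omega,\gamma}$, all displayed summands being nonnegative. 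Combining these with $S_{\omega,\gamma}(\psi_n)\searrow S_{\omega,\gamma,\text{rad}}^c<S_{\omega,\gamma}(Q_{\omega,\gamma})$, with $\mathcal{N}_{\omega,\gamma}(\psi_n)>0$, and with an argument in the spirit of Lemma~\ref{Estimate for profile decomposition}, I would deduce that for each $j$ the Nehari part of $f^j$ is positive and the action part of $f^j$ is $\leq S_{\omega,\gamma,\text{rad}}^c$, with strict inequality as soon as two or more profiles appear or the remainder retains positive action in the limit.

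To $f^j$ I then attach a nonlinear profile $v^j$: for $j=j_0$ the solution of \eqref{NLS} with datum $f^{j_0}$, which is global by Theorem~\ref{Global well-posedness}; for $j\neq j_0$ the global solution scattering to $e^{it\Delta_\gamma}f^j$ as $t\to\pm\infty$ given by Lemma~\ref{Existence of wave operators}, whose hypothesis $\tfrac{\omega}{2}M[f^j]+\tfrac12\|(-\Delta_\gamma)^\frac12 f^j\|_{L^2}^2<S_{\omega,\gamma}(Q_{\omega,\gamma})$ holds by the previous step; and whenever the action of $v^j$ is strictly below $S_{\omega,\gamma,\text{rad}}^c$, the solution $v^j$ scatters in both time directions by the definition of $S_{\omega,\gamma,\text{rad}}^c$ and time-reversal symmetry. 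I distinguish three exhaustive cases. \emph{Case~1: two or more profiles occur, or the remainder keeps positive action.} Then every $v^j$ scatters (vacuously if there are none), and the superposition $\sum_j v^j(\,\cdot - t_n^j\,) + e^{it\Delta_\gamma}R_n^J$ is an approximate solution of \eqref{NLS} whose error vanishes in $L_t^\rho L_x^\kappa$ after $|\nabla|^\frac12$ by the time-separation \eqref{125} and \eqref{122}, and whose $L_{t,x}^{q_0}$-norm is bounded uniformly in $n$ and $J$ (via Proposition~\ref{Small data scattering} for the high profiles and $\sum_j\|f^j\|_{\dot{H}_\gamma^\frac12}^2\leq\limsup_n\|\psi_n\|_{\dot{H}_\gamma^\frac12}^2$); Theorem~\ref{Long time perturbation} then forces $\|u_n\|_{L_{t,x}^{q_0}}\lesssim1$, against the hypothesis. \emph{Case~2: exactly one profile, which escapes, and the remainder has vanishing action.} Comparing $u_n$ with $v^1(\,\cdot - t_n^1\,)$ through Theorem~\ref{Long time perturbation} and using that $v^1$ scatters toward the side to which $t_n^1$ escapes, one gets $\|u_n\|_{L_t^{q_0}(t_n,\infty;L_x^{q_0})}\to0$ or $\|u_n\|_{L_t^{q_0}(-\infty,t_n;L_x^{q_0})}\to0$, again a contradiction. \emph{Case~3: exactly one profile with $t_n^1\equiv0$ and the remainder has vanishing action.} Then $S_{\omega,\gamma}(R_n^1)\to0$ with $\mathcal{N}_{\omega,\gamma}(R_n^1)\geq0$ eventually, so Proposition~\ref{Equivalence of H1 and S} gives $R_n^1\to0$ in $H^1(\mathbb{R}^3)$, hence $\psi_n\to f^1$ in $H^1(\mathbb{R}^3)$, which is the assertion.

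The main obstacle is Case~1: checking that the superposition of nonlinear profiles really is an approximate solution of \eqref{NLS}, that is, that all cross terms in the error genuinely disappear through the asymptotic orthogonality \eqref{125} and that the Strichartz bounds are uniform in the number of profiles, together with the careful bookkeeping of the non-quadratic $L^4$ pieces of $S_{\omega,\gamma}$ and $\mathcal{N}_{\omega,\gamma}$ in the Pythagorean expansion so that the Lemma~\ref{Estimate for profile decomposition}-type reasoning applies to the individual profiles.
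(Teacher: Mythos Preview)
Your proposal is correct and follows essentially the same Kenig--Merle scheme as the paper: time-translate to $t_n=0$, apply Theorem~\ref{Linear profile decomposition}, use the Pythagorean identities with Lemma~\ref{Estimate for profile decomposition} to control each profile, build nonlinear profiles via Lemma~\ref{Existence of wave operators}, and invoke Theorem~\ref{Long time perturbation} to derive a contradiction unless there is a single non-escaping profile with vanishing remainder. The only substantive difference is your Case~2: the paper disposes of the single escaping profile more cheaply, without the perturbation lemma. When $J^\ast=1$ one has $R_n^J=R_n^1$ for all $J\ge1$, so \eqref{122} gives $\|e^{it\Delta_\gamma}R_n^1\|_{L_{t,x}^{q_0}}\to0$ directly; if $t_n^1\to+\infty$ then $\|e^{it\Delta_\gamma}\psi_n\|_{L_t^{q_0}(-\infty,0;L_x^{q_0})}\le\|e^{it\Delta_\gamma}f^1\|_{L_t^{q_0}(-\infty,-t_n^1;L_x^{q_0})}+\|e^{it\Delta_\gamma}R_n^1\|_{L_{t,x}^{q_0}}\to0$, and Proposition~\ref{Small data global existence} (time-reversed) forces $\|u_n\|_{L_t^{q_0}(-\infty,0;L_x^{q_0})}$ to be bounded, a contradiction. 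Your route via Theorem~\ref{Long time perturbation} on the half-line $(-\infty,0)$ also works, but you must be careful that $v^1$ is only known to scatter on one side (the side furnished by Lemma~\ref{Existence of wave operators}), so the perturbation must be run on that half-line rather than on all of $\mathbb{R}$; the paper's linear-smallness argument sidesteps this point entirely.
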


We note that $u_n$ exists globally in time by Theorem \ref{Global well-posedness}.

\begin{proof}
By the translation in time, we may assume that
\begin{gather*}
	S_{\omega,\gamma}(Q_{\omega,\gamma})
		> S_{\omega,\gamma}(u_{0,n})
		\searrow S_{\omega,\gamma,\text{rad}}^c,\ \ \ 
	K_\gamma(u_{0,n})
		> 0, \\
	\lim_{n\rightarrow \infty}\|u_n\|_{L_t^{q_0}(-\infty,0;L_x^{q_0})}
		= \infty\ \ \text{ and }\ \ 
	\lim_{n\rightarrow \infty}\|u_n\|_{L_t^{q_0}(0,\infty;L_x^{q_0})}
		= \infty,
\end{gather*}
where $u_{0,n}(x) := u_n(0,x)$.
Proposition \ref{Equivalence of H1 and S} implies that $\{u_n\}$ is bounded in $H^1(\mathbb{R}^3)$, so we may apply Theorem \ref{Linear profile decomposition} to $\{u_n\}$, that is, there exist $J^\ast \in \{0, 1, \ldots , \infty\}$, profiles $\{u^j\} \subset H_\text{rad}^1(\mathbb{R}^3)$ satisfying $u^0 \equiv 0$, $u^j \equiv 0$ for any $j \geq 1$ if $J^\ast = 0$, ``$u^j \not\equiv 0$ for any $1 \leq j \leq J^\ast$ and $u^j \equiv 0$ for any $J^\ast+1 \leq j < \infty$ if $1 \leq J^\ast < \infty$'', and $u^j \not\equiv 0$ for any $j \geq 1$ if $J^\ast = \infty$, remainders $\{R_n^J\}$ and parameters $\{t_n^j\}$ such that
\begin{align*}
	u_{0,n}(x)
		= \sum_{j=0}^J e^{-it_n^j\Delta_\gamma}u^j(x) + R_n^J(x)
\end{align*}
for each $0 \leq J \leq \infty$.
If $J^\ast = 0$, then
\begin{align*}
	\lim_{n \rightarrow \infty}\|e^{it\Delta_\gamma}R_n^J\|_{L_{t,x}^{q_0}}
		= \lim_{n \rightarrow \infty}\|e^{it\Delta_\gamma}u_{0,n}\|_{L_{t,x}^{q_0}}
		= 0
\end{align*}
by \eqref{122}.
There exists $n_0 \in \mathbb{N}$ such that
\begin{align*}
	\|e^{it\Delta_\gamma}u_{0,n}\|_{L_{t,x}^{q_0}}
		\leq \varepsilon_0
\end{align*}
for any $n \geq n_0$ and hence,
\begin{align*}
	\|u_n\|_{L_{t,x}^{q_0}}
		\leq 2\varepsilon_0
		< \infty
\end{align*}
by Theorem \ref{Small data global existence}.
However, this is contradiction.
From \eqref{123} and \eqref{121}, we have
\begin{gather}
	S_{\omega,\gamma}(u_{0,n})
		= \sum_{j=1}^JS_{\omega,\gamma}(e^{-it_n^j\Delta_\gamma}u^j) + S_{\omega,\gamma}(R_n^J) + o_n(1), \label{164} \\
	\mathcal{N}_{\omega,\gamma}(u_{0,n})
		= \sum_{j=1}^J\mathcal{N}_{\omega,\gamma}(e^{-it_n^j\Delta_\gamma}u^j) + \mathcal{N}_{\omega,\gamma}(R_n^J) + o_n(1). \notag
\end{gather}
For sufficiently large $n$, there exist $\delta > 0$ and $\varepsilon > 0$ with $2\varepsilon < \delta$ such that
\begin{gather*}
	S_{\omega,\gamma}(u_{0,n})
		\leq S_{\omega,\gamma}(Q_{\omega,\gamma}) - \delta, \\
	S_{\omega,\gamma}(u_{0,n})
		\geq \sum_{j=1}^J S_{\omega,\gamma}(e^{-it_n^j\Delta_\gamma}u^j) + S_{\omega,\gamma}(R_n^J) - \varepsilon, \\
	\mathcal{N}_{\omega,\gamma}(u_{0,n})
		\geq - \varepsilon, \\
	\mathcal{N}_{\omega,\gamma}(u_{0,n})
		\leq \sum_{j=1}^J \mathcal{N}_{\omega,\gamma}(e^{-it_n^j\Delta_\gamma}u^j) + \mathcal{N}_{\omega,\gamma}(R_n^J) + \varepsilon,
\end{gather*}
where we use Proposition \ref{Initial data set} to get the third inequality.
Using Lemma \ref{Estimate for profile decomposition}, we have
\begin{gather*}
	0
		\leq S_{\omega,\gamma}(e^{-it_n^j\Delta_\gamma}u^j)
		< S_{\omega,\gamma,\text{rad}}^c,\ \ \ 
	0
		\leq S_{\omega,\gamma}(R_n^J)
		< S_{\omega,\gamma,\text{rad}}^c, \\
	\mathcal{N}_{\omega,\gamma}(e^{-it_n^j\Delta_\gamma}u^j)
		\geq 0,\ \ \ 
	\mathcal{N}_{\omega,\gamma}(R_n^J)
		\geq 0
\end{gather*}
for sufficiently large $n$.
We note that $S_{\omega,\gamma}(e^{-it_n^j\Delta_\gamma}u^j) = 0$ if and only if $u^j \equiv 0$ by Proposition \ref{Equivalence of H1 and S}.
For $S_{\omega,\gamma}(R_n^J)$, $\mathcal{N}_{\omega,\gamma}(e^{-it_n^j\Delta_\gamma}u^j)$, and $\mathcal{N}_{\omega,\gamma}(R_n^J)$, we can see the same things.
Thus,
\begin{align}
	0
		\leq \lim_{n\rightarrow\infty}S_{\omega,\gamma}(e^{-it_n^j\Delta_\gamma}u^j)
		\leq \lim_{n\rightarrow\infty}S_{\omega,\gamma}(u_{0,n})
		= S_{\omega,\gamma,\text{rad}}
		< S_{\omega,\gamma}(Q_{\omega,\gamma}). \label{124}
\end{align}
When $|t_n^j| \longrightarrow \infty$, we have
\begin{align*}
	\frac{\omega}{2}M[u^j] + \frac{1}{2}K[u^j]
		= S_{\omega,\gamma}(e^{-it_n^j\Delta_\gamma}u^j) + \frac{1}{4}\|e^{-it_n^j\Delta_\gamma}u^j\|_{L^4}
		< S_{\omega,\gamma}(Q_{\omega,\gamma})
\end{align*}
for sufficiently large $n$ by \eqref{124} and Lemma \ref{Time decay}.
From Lemma \ref{Existence of wave operators}, there exists initial data $\~{u}_0^j$ such that
\begin{align*}
	0
		\leq S_{\omega,\gamma}(\~{u}_0^j)
		< S_{\omega,\gamma}(Q_{\omega,\gamma}),\ \ \ 
	K_\gamma(\~{u}_0^j)
		\geq 0,\ \ \ 
	\lim_{n \rightarrow \infty}\|\~{u}^j(- t_n^j) - e^{-it_n^j\Delta_\gamma}u^j\|_{H^1}
		= 0,
\end{align*}
where $\~{u}^j$ is a solution to \eqref{NLS} with initial data $\~{u}_0^j$.
When $t_n^j = 0$, we set $\~{u}_0^j = u^j$.
Then,
\begin{align*}
	0
		\leq S_{\omega,\gamma}(\~{u}_0^j)
		< S_{\omega,\gamma}(Q_{\omega,\gamma}),\ \ \ 
	K_\gamma(\~{u}_0^j)
		\geq 0,\ \ \ 
	\|\~{u}^j(- t_n^j) - e^{-it_n^j\Delta_\gamma}u^j\|_{H^1}
		= 0\ \text{ for any }\ n \in \mathbb{N},
\end{align*}
where $\~{u}^j$ is a solution to \eqref{NLS} with initial data $\~{u}_0^j$.
Then, $\~{u}^j(- t_n^j)$ satisfies
\begin{gather*}
	0
		\leq S_{\omega,\gamma}(\~{u}^j(-t_n^j))
		< S_{\omega,\gamma,\text{rad}}^c, \\
	\mathcal{N}_{\omega,\gamma}(\~{u}^j(-t_n^j))
		\geq 0,\ \ \text{ and }\ \ 
	K_\gamma(\~{u}^j(-t_n^j))
		\geq 0.
\end{gather*}
It follows from the definition of $S_{\omega,\gamma,\text{rad}}^c$ that $\|\~{u}^j\|_{L_{t,x}^{q_0}} < \infty$ for each $j$ and hence, we have $\|\~{u}^j\|_{S_\gamma^1(\mathbb{R})} < \infty$ for each $j$ by Proposition \ref{Persistence of regularity}.
We define a function $\widetilde{u}_n^{\leq J}$ as follows:
\begin{align*}
	\~{u}_n^{\leq J}(t,x)
		:= \sum_{j=1}^J\widetilde{u}_n^j(t,x) + e^{it\Delta_\gamma}R_n^J(x),
\end{align*}
where $\~{u}_n^j(t,x) := \~{u}^j(t-t_n^j,x)$.
We prove that $\~{u}_n^{\leq J}$ is an approximation solution of $u_n$ for sufficiently large $n \in \mathbb{N}$ in the sense of Lemma \ref{Long time perturbation}.
To apply Lemma \ref{Long time perturbation} to $u_n$ and $\~{u}_n^{\leq J}$, we show that the function $\~{u}_n^{\leq J}$ satisfies
\begin{align*}
	\limsup_{n \rightarrow \infty}\{\|\~{u}_n^{\leq J}(0)\|_{H_x^1} + \|\~{u}_n^{\leq J}\|_{L_{t,x}^{q_0}}\}
		\lesssim 1
\end{align*}
uniformly in $J$ and
\begin{align*}
	\lim_{J \rightarrow \infty}\limsup_{n \rightarrow \infty}\||\nabla|^\frac{1}{2}e\|_{L_t^\rho L_x^\kappa}
		= 0,
\end{align*}
where a function $e$ is defined as $e := i\partial_t\~{u}_n^{\leq J} + \Delta_\gamma \~{u}_n^{\leq J} + |\~{u}_n^{\leq J}|^2\~{u}_n^{\leq J}$.
Estimating difference of initial data,
\begin{align*}
	\|\~{u}_n^{\leq J}(0) - u_{0,n}\|_{H^1}
		\leq \sum_{j=1}^J\|\~{u}^j(-t_n^j) - e^{-it_n^j\Delta_\gamma}u^j\|_{H^1}
		\longrightarrow 0
\end{align*}
as $n \rightarrow \infty$ for each $J$.
Since $u_{0,n}$ is bounded in $H^1(\mathbb{R}^3)$, we have
\begin{align*}
	\limsup_{n \rightarrow \infty}\|\~{u}_n^{\leq J}(0)\|_{H^1}
		\lesssim 1
\end{align*}
uniformly in $J$.
The Pythagorean decomposition \eqref{123} deduces
\begin{align*}
	\sum_{j=1}^{\infty}\|u^j\|_{H^1}^2
		\leq \sum_{j=1}^{\infty}\|u^j\|_{H_\gamma^1}^2
		\leq \limsup_{n \rightarrow \infty}\|u_{0,n}\|_{H_\gamma^1}^2
		\lesssim \limsup_{n \rightarrow \infty}\|u_{0,n}\|_{H^1}^2
		\lesssim 1
\end{align*}
and hence, $\lim_{j \rightarrow \infty}\|u^j\|_{H^1} = 0$ holds.
Take $J_0 = J_0(\varepsilon_0)$ such that
\begin{align*}
	\sum_{j=J_0}^{\infty} \|u^j\|_{H^1}^2
		\leq \sum_{j=J_0}^{\infty} \|u^j\|_{H_\gamma^1}^2
		< \varepsilon_0^2,
\end{align*}
where $\varepsilon_0 > 0$ is given in Proposition \ref{Small data global existence}.
Using $\lim_{n \rightarrow \infty}\|\~{u}^j(-t_n^j)\|_{H_\gamma^1} = \|u^j\|_{H_\gamma^1} < \varepsilon_0$ and Proposition \ref{Persistence of regularity}, we have
\begin{align*}
	\limsup_{n \rightarrow \infty}\sum_{j=J_0}^{\infty} \|\~{u}_n^j\|_{S_\gamma^1(\mathbb{R})}^2
		\lesssim \sum_{j=J_0}^{\infty} \|u^j\|_{H^1}^2
		< \varepsilon_0^2.
\end{align*}
Thus, we get
\begin{align}
	\limsup_{n \rightarrow \infty}\sum_{j=1}^{\infty}\|\~{u}_n^j\|_{S_\gamma^1(\mathbb{R})}^2
		\lesssim 1. \label{140}
\end{align}
It follows from the density argument and \eqref{125} that
\begin{align}
	\|\~{u}_n^j\~{u}_n^k\|_{L_{t,x}^\frac{q_0}{2} \cap L_{t,x}^\frac{5}{3}} + \|(-\Delta_\gamma)^\frac{1}{4}\~{u}_n^j(-\Delta_\gamma)^\frac{1}{4}\~{u}_n^k\|_{L_t^\frac{q_1}{2}L_x^\frac{r_1}{2}} + \|(-\Delta_\gamma)^\frac{s}{2}\~{u}_n^j(-\Delta_\gamma)^\frac{s}{2}\~{u}_n^k\|_{L_t^\frac{q_2}{2}L_x^\frac{r_2}{2}}
		\longrightarrow 0 \label{135}
\end{align}
as $n \rightarrow \infty$ for any $j \neq k$ and $0 \leq s \leq 1$.
Using the formula (1.10) in \cite{Ger98} and \eqref{135}, 
\begin{align}
	\Biggl|\Bigl\|\sum_{j=1}^J\~{u}_n^j\Bigr\|_{L_{t,x}^{q_0}}^{q_0} - \sum_{j=1}^J\|\~{u}_n^j\|_{L_{t,x}^{q_0}}^{q_0}\Biggr|
		& = \Biggl|\int_{\mathbb{R}}\int_{\mathbb{R}^3}\Bigl|\sum_{j=1}^J\~{u}_n^j\Bigr|^{q_0} - \sum_{j=1}^J|\~{u}_n^j|^{q_0}dxdt\Biggr| \notag \\
		& \leq \int_{\mathbb{R}}\int_{\mathbb{R}^3}\Biggl|\Bigl|\sum_{j=1}^J\~{u}_n^j\Bigr|^{q_0} - \sum_{j=1}^J|\~{u}_n^j|^{q_0}\Biggr|dxdt \notag \\
		& \lesssim \sum_{1 \leq j \neq k \leq J}\int_{\mathbb{R}}\int_{\mathbb{R}^3}|\~{u}_n^j|^{q_0-1}|\~{u}_n^k|dxdt \notag \\
		& \leq \sum_{1 \leq j \neq k \leq J}\|\~{u}^j\|_{L_{t,x}^{q_0}}^{q_0-2}\|\~{u}_n^j\~{u}_n^k\|_{L_{t,x}^\frac{q_0}{2}} \notag \\
		& \longrightarrow 0\ \text{ as }\ n \rightarrow \infty. \label{139}
\end{align}
From \eqref{139}, Proposition \ref{Small data global existence}, and $\limsup_{n \rightarrow \infty}\|e^{it_n^j\Delta_\gamma}R_n^J\|_{L_{t,x}^{q_0}} \lesssim 1$ uniformly in $J$, we obtain
\begin{align*}
	\limsup_{n \rightarrow \infty}\|\~{u}_n^{\leq J}\|_{L_{t,x}^{q_0}}
		\lesssim 1
\end{align*}
uniformly in $J$.
We prove
\begin{align*}
	\limsup_{n \rightarrow \infty}\|\~{u}_n^{\leq J}\|_{L_{t,x}^\frac{10}{3} \cap L_t^{q_1}\dot{W}_\gamma^{\frac{1}{2},r_1} \cap L_t^{q_2}\dot{W}_\gamma^{s,r_2}}
		\lesssim 1
\end{align*}
uniformly in $J$.
We have
\begin{align*}
	\Bigl\|\sum_{j=1}^J\~{u}_n^j\Bigr\|_{L_t^{q_1}\dot{W}_\gamma^{\frac{1}{2},r_1}}^2
		& = \Bigl\|\sum_{j=1}^J(-\Delta_\gamma)^\frac{1}{4}\~{u}_n^j\Bigr\|_{L_t^{q_1}L_x^{r_1}}^2 \\
		& = \Bigl\|\Bigl(\sum_{j=1}^J(-\Delta_\gamma)^\frac{1}{4}\~{u}_n^j\Bigr)^2\Bigr\|_{L_t^\frac{q_1}{2}L_x^\frac{r_1}{2}} \\
		& \lesssim \sum_{j=1}^J\|\~{u}_n^j\|_{L_t^{q_1}\dot{W}_\gamma^{\frac{1}{2},r_1}}^2 + C_J\sum_{1 \leq j \neq k \leq J}\|(-\Delta_\gamma)^\frac{1}{4}\~{u}_n^j(-\Delta_\gamma)^\frac{1}{4}\~{u}_n^k\|_{L_t^\frac{q_1}{2}L_x^\frac{r_1}{2}}.
\end{align*}
From \eqref{140} and \eqref{135}, $\limsup_{n \rightarrow \infty}\|\~{u}_n^{\leq J}\|_{L_t^{q_1}\dot{W}_\gamma^{\frac{1}{2},r_1}}$ is bounded uniformly in $J$.
The same argument deduces
\begin{align*}
	\limsup_{n \rightarrow \infty}\|\~{u}_n^{\leq J}\|_{L_{t,x}^\frac{10}{3} \cap L_t^{q_2}\dot{W}_\gamma^{s,r_2}}
		\lesssim 1
\end{align*}
uniformly in $J$.
Next, we estimate a error term:
\begin{align*}
	e
		& := i\partial_t\~{u}_n^{\leq J} + \Delta_\gamma \~{u}_n^{\leq J} + |\~{u}_n^{\leq J}|^2\~{u}_n^{\leq J}\\
		& = \Bigl\{F\Bigl(\sum_{j=1}^J\~{u}_n^j + e^{it\Delta_\gamma}R_n^J\Bigr) - F\Bigl(\sum_{j=1}^J\~{u}_n^j\Bigr)\Bigr\} + \Bigl\{F\Bigl(\sum_{j=1}^J\~{u}_n^j\Bigr) - \sum_{j=1}^JF(\~{u}_n^j)\Bigr\}
		=: I_1 + I_2,
\end{align*}
where $F(z) := |z|^2z$.
Since
\begin{align*}
	|\nabla I_2|
		\lesssim \sum_{1 \leq j \neq k \leq J}|\~{u}_n^j|^2|\nabla \~{u}_n^k|,
\end{align*}
we have
\begin{align*}
	\limsup_{n \rightarrow \infty}\|I_2\|_{L_t^2\dot{W}_\gamma^{1,\frac{6}{5}}}
		& \lesssim \limsup_{n \rightarrow \infty}\Bigl\|\sum_{1 \leq j \neq k \leq J}|\~{u}_n^j|^2|\nabla \~{u}_n^k|\Bigr\|_{L_t^2L_x^\frac{6}{5}} \\
		& \lesssim \limsup_{n \rightarrow \infty}\sum_{1 \leq j \neq k \leq J}\|\~{u}_n^j\|_{L_t^{q_1}\dot{W}_x^{\frac{1}{2},r_1}}^2\|\~{u}_n^k\|_{L_t^{q_2}\dot{W}_x^{1,r_2}}
		\lesssim_J 1.
\end{align*}
On the other hand, we have
\begin{align*}
	\limsup_{n \rightarrow \infty}\|I_2\|_{L_{t,x}^\frac{10}{7}}
		& \lesssim \limsup_{n \rightarrow \infty}\Bigl\|\sum_{1 \leq j \neq k \leq J}|\~{u}_n^j|^2|\~{u}_n^k|\Bigr\|_{L_{t,x}^\frac{10}{7}} \\
		& \leq \limsup_{n \rightarrow \infty}\sum_{1 \leq j \neq k \leq J}\|\~{u}_n^j\~{u}_n^k\|_{L_{t,x}^\frac{q_0}{2}}\|\~{u}_n^j\|_{L_{t,x}^\frac{10}{3}}
		= 0
\end{align*}
for each $J$, where we use \eqref{135} to the last equality.
Using the interpolation,
\begin{align*}
	\limsup_{J \rightarrow \infty}\limsup_{n \rightarrow \infty}\|I_2\|_{L_t^\rho \dot{W}_\gamma^{\frac{1}{2},\kappa}}
		\lesssim \limsup_{J \rightarrow \infty}\limsup_{n \rightarrow \infty}\|I_2\|_{L_{t,x}^\frac{10}{7}}^\frac{1}{2}\|I_2\|_{L_t^2\dot{W}_x^{1,\frac{6}{5}}}^\frac{1}{2}
		= 0.
\end{align*}
Next, we estimate $\|I_1\|_{L_t^\rho\dot{W}_\gamma^{\frac{1}{2},\kappa}}$.
\begin{align*}
	\limsup_{n \rightarrow \infty}\|F(\~{u}_n^{\leq J})\|_{L_t^2\dot{W}_\gamma^{1,\frac{6}{5}}}
		\lesssim \limsup_{n \rightarrow \infty}\|\~{u}_n^{\leq J}\|_{L_t^{q_1}\dot{W}_\gamma^{\frac{1}{2},r_1}}^2\|\~{u}_n^{\leq J}\|_{L_t^{q_2}\dot{W}_\gamma^{1,r_2}}
		\lesssim 1
\end{align*}
uniformly in $J$.
Since it follows from \eqref{123} that $\limsup_{n \rightarrow \infty}\|R_n^J\|_{\dot{H}_\gamma^1} \lesssim 1$ uniformly in $J$, we have
\begin{align*}
	& \limsup_{n \rightarrow \infty}\|F(\~{u}_n^{\leq J} - e^{it\Delta_\gamma}R_n^J)\|_{L_t^2\dot{W}_\gamma^{1,\frac{6}{5}}} \\
		& \hspace{2.5cm} \lesssim \limsup_{n \rightarrow \infty}(\|\~{u}_n^{\leq J}\|_{L_t^{q_1}\dot{W}_\gamma^{\frac{1}{2},r_1}} + \|R_n^J\|_{\dot{H}_\gamma^\frac{1}{2}})^2(\|\~{u}_n^{\leq J}\|_{L_t^{q_2}\dot{W}_\gamma^{1,r_2}} + \|R_n^J\|_{\dot{H}_\gamma^1})
		\lesssim 1
\end{align*}
uniformly in $J$.
Combining these inequalities,
\begin{align*}
	\limsup_{n \rightarrow \infty}\|I_1\|_{L_t^2\dot{W}_\gamma^{1,\frac{6}{5}}}
		\lesssim 1
\end{align*}
uniformly in $J$.
On the other hand,
\begin{align*}
	& \limsup_{J \rightarrow \infty}\limsup_{n \rightarrow \infty}\|I_1\|_{L_{t,x}^\frac{10}{7}}
		= \limsup_{J \rightarrow \infty}\limsup_{n \rightarrow \infty}\|F(\~{u}_n^{\leq J}) - F(\~{u}_n^{\leq J} - e^{it\Delta_\gamma}R_n^J)\|_{L_{t,x}^\frac{10}{7}} \\
		& \hspace{1.0cm} \lesssim \limsup_{J \rightarrow \infty}\limsup_{n \rightarrow \infty}\|(|\~{u}_n^{\leq J}|^2 + |e^{it\Delta_\gamma}R_n^J|^2)e^{i\Delta_\gamma}R_n^J\|_{L_{t,x}^\frac{10}{7}} \\
		& \hspace{1.0cm} \leq \limsup_{J \rightarrow \infty}\limsup_{n \rightarrow \infty}\Bigl(\|\~{u}_n^{\leq J}\|_{L_{t,x}^{q_0}}\|e^{it\Delta_\gamma}R_n^J\|_{L_{t,x}^{q_0}}\|\~{u}_n^{\leq J}\|_{L_{t,x}^\frac{10}{3}} + \|e^{it\Delta_\gamma}R_n^J\|_{L_{t,x}^{q_0}}^2\|e^{it\Delta_\gamma}R_n^J\|_{L_{t,x}^\frac{10}{3}}\Bigr) \\
		& \hspace{1.0cm} \leq \limsup_{J \rightarrow \infty}\limsup_{n \rightarrow \infty}\|e^{it\Delta_\gamma}R_n^J\|_{L_{t,x}^{q_0}}\Bigl(\|\~{u}_n^{\leq J}\|_{L_{t,x}^{q_0}}\|\~{u}_n^{\leq J}\|_{L_{t,x}^\frac{10}{3}} + \|e^{it\Delta_\gamma}R_n^J\|_{L_{t,x}^{q_0}}\|e^{it\Delta_\gamma}R_n^J\|_{L_{t,x}^\frac{10}{3}}\Bigr), \\
		& \hspace{1.0cm} \leq \limsup_{J \rightarrow \infty}\limsup_{n \rightarrow \infty}\|e^{it\Delta_\gamma}R_n^J\|_{L_{t,x}^{q_0}}\Bigl(\|\~{u}_n^{\leq J}\|_{L_{t,x}^{q_0}}\|\~{u}_n^{\leq J}\|_{L_{t,x}^\frac{10}{3}} + \|R_n^J\|_{H_\gamma^\frac{1}{2}}\|R_n^J\|_{L^2}\Bigr) \\
		& \hspace{1.0cm} = 0.
\end{align*}
By the interpolation, we have
\begin{align*}
	\limsup_{J\rightarrow \infty}\limsup_{n\rightarrow\infty}\|I_1\|_{L_t^\rho\dot{W}_\gamma^{\frac{1}{2},\kappa}}
		= 0.
\end{align*}
Therefore, we obtain
\begin{align*}
	\lim_{J\rightarrow \infty}\limsup_{n\rightarrow\infty}\||\nabla|^\frac{1}{2}e\|_{L_t^\rho L_x^\kappa}
		= 0.
\end{align*}
Applying Lemma \ref{Long time perturbation} to $u_n$ and $\widetilde{u}_n^{\leq J}$, we get contradiction.
From $J^\ast = 1$,
\begin{align*}
	u_{0,n}(x)
		= e^{-it_n^1\Delta_\gamma}u^1(x) + R_n^1(x).
\end{align*}
We assume $t_n^1 \longrightarrow \infty$ for contradiction.
By \eqref{122} and the monotone convergence, we have
\begin{align*}
	\|e^{it\Delta_\gamma}u_{0,n}\|_{L_t^{q_0}(-\infty,0;L_x^{q_0})}
		\leq \|e^{it\Delta_\gamma}u^1\|_{L_t^{q_0}(-\infty,-t_n^1;L_x^{q_0})} + \|e^{it\Delta_\gamma}R_n^1\|_{L_t^{q_0}(-\infty,0;L_x^{q_0})}
		\longrightarrow 0
\end{align*}
as $n \rightarrow \infty$.
From Proposition \ref{Small data scattering}, we get $\|u_n\|_{L_t^{q_0}(0,\infty;L_x^{q_0})} < \infty$.
This is contradiction.
The case $t_n^1 \longrightarrow -\infty$ is excluded in the same manner.
Thus, we have $t_n^1 \equiv 0$.
Then,
\begin{gather*}
	0
		< S_{\omega,\gamma}(u^1)
		\leq S_{\omega,\gamma,\text{rad}}^c,\ \ \ 
	0
		\leq S_{\omega,\gamma}(R_n^1)
		\leq S_{\omega,\gamma,\text{rad}}^c, \\
	K_{\gamma}(u^1)
		> 0,\ \ \ 
	K_\gamma(R_n^1)
		\geq 0
\end{gather*}
for sufficiently large $n$.
If $S_{\omega,\gamma}(u^1) < S_{\omega,\gamma,\text{rad}}$ holds, then $\|u_n\|_{L_{t,x}^{q_0}} < \infty$ for sufficiently large $n$.
Therefore, we have $S_{\omega,\gamma}(\widetilde{u}_0^1) = S_{\omega,\gamma,\text{rad}}^c$, $K_\gamma(\widetilde{u}_0^1) > 0$, and $\|\widetilde{u}^1\|_{L_t^{q_0}(-\infty,0;L_x^{q_0})} = \|\widetilde{u}^1\|_{L_t^{q_0}(0,\infty;L_x^{q_0})} = \infty$.
We recall that $\widetilde{u}_0^1 = u^1$ and $\widetilde{u}^1$ is a solution to \eqref{NLS} with initial data $\widetilde{u}_0^1$.
From \eqref{164}, $S_{\omega,\gamma}(u^1) = S_{\omega,\gamma,\text{rad}}^c$, $\lim_{n\rightarrow\infty}S_{\omega,\gamma}(u_{0,n}) = S_{\omega,\gamma,\text{rad}}^c$, and Proposition \ref{Equivalence of H1 and S}, we obtain
\begin{align*}
	\lim_{n\rightarrow\infty}\|u_{0,n}-u^1\|_{H^1}
		= \lim_{n\rightarrow \infty}\|R_n^1\|_{H^1}
		\sim \lim_{n\rightarrow\infty}S_{\omega,\gamma}(R_n^1)
		= 0.
\end{align*}
\end{proof}

\begin{proposition}[Existence of a critical solution]\label{Existence of a critical solution}
Let $d = p = 3$, $\gamma > 0$, and $0 < \mu < 2$.
Assume that \eqref{159} holds.
Then, there exists a solution $u_c \in C_t(\mathbb{R} ; H_\text{rad}^1(\mathbb{R}^3))$ to \eqref{NLS} such that
\begin{align*}
	S_{\omega,\gamma}(u_c)
		= S_{\omega,\gamma,\text{rad}}^c,\ \ \ 
	K_\gamma(u_c)
		> 0,\ \ \ 
	\|u_c\|_{L_t^{q_0}(-\infty,0;L_x^{q_0})}
		= \|u_c\|_{L_t^{q_0}(0,\infty;L_x^{q_0})}
		= \infty,
\end{align*}
and
\begin{align*}
	K
		= \{u(t)\in H_\text{rad}^1(\mathbb{R}^3):t \in \mathbb{R}\}
\end{align*}
is precompact in $H^1(\mathbb{R}^3)$.
\end{proposition}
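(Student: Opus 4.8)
The plan is to carry out the Kenig--Merle extraction of a minimal non-scattering solution, using Lemma~\ref{Palais-Smale condition} (the Palais--Smale condition) as the engine; the remaining work is to set up the hypotheses of that lemma in two places. \emph{Step 1: a near-critical non-scattering sequence.} By \eqref{159} and the definition of $S_{\omega,\gamma,\text{rad}}^c$, for every large $n$ the level $S_{\omega,\gamma,\text{rad}}^c+\tfrac1n$ is not admissible, so there is $\phi_n\in H_\text{rad}^1(\mathbb{R}^3)$ with $K_\gamma(\phi_n)\geq0$, $S_{\omega,\gamma}(\phi_n)<S_{\omega,\gamma,\text{rad}}^c+\tfrac1n$, and such that the solution $u_n$ to \eqref{NLS} with $u_n(0)=\phi_n$ does not scatter. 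Since $u_n\equiv0$ scatters, $\phi_n\neq0$; since any radial datum with $S_{\omega,\gamma}<S_{\omega,\gamma,\text{rad}}^c$ and $K_\gamma\geq0$ yields a scattering solution, $S_{\omega,\gamma}(\phi_n)\geq S_{\omega,\gamma,\text{rad}}^c$, so after passing to a subsequence $S_{\omega,\gamma}(\phi_n)\searrow S_{\omega,\gamma,\text{rad}}^c$; and because $S_{\omega,\gamma}(\phi_n)<r_{\omega,\gamma}$ with $\phi_n\neq0$, the definition of $r_{\omega,\gamma}$ (see the proof of Proposition~\ref{Initial data set}) upgrades this to $K_\gamma(\phi_n)>0$. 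By Theorem~\ref{Global well-posedness} each $u_n$ is global, and each $u_n(t)$ is radial. Non-scattering of $u_n$ means $\|u_n\|_{L_t^{q_0}(J;L_x^{q_0})}=\infty$ on some half-line $J$; replacing $u_n$ by $\overline{u_n(-\,\cdot\,)}$ if necessary (which preserves $S_{\omega,\gamma}$ and $K_\gamma$) we may assume $u_n$ does not scatter forward, so by Proposition~\ref{Persistence of regularity} and the small-data theory $\|u_n\|_{L_t^{q_0}(t,\infty;L_x^{q_0})}=\infty$ for every $t$, hence $\|u_n\|_{L_{t,x}^{q_0}}=\infty$ and we may pick $t_n\in\mathbb{R}$ with $\|u_n\|_{L_t^{q_0}(-\infty,t_n;L_x^{q_0})}\geq n$. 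Then
\[
	\lim_{n\to\infty}\|u_n\|_{L_t^{q_0}(-\infty,t_n;L_x^{q_0})}=\lim_{n\to\infty}\|u_n\|_{L_t^{q_0}(t_n,\infty;L_x^{q_0})}=\infty.
\]

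\emph{Step 2: the critical solution.} Lemma~\ref{Palais-Smale condition} now applies to $\{u_n\}$ with the times $\{t_n\}$ and yields, along a subsequence, $u_n(t_n)\to\psi_c$ in $H^1(\mathbb{R}^3)$, with $\psi_c$ radial. By conservation of $S_{\omega,\gamma}$ and continuity of $S_{\omega,\gamma}$ and $K_\gamma$ we get $S_{\omega,\gamma}(\psi_c)=\lim_nS_{\omega,\gamma}(\phi_n)=S_{\omega,\gamma,\text{rad}}^c$ and $K_\gamma(\psi_c)=\lim_nK_\gamma(u_n(t_n))\geq0$; since $S_{\omega,\gamma}(\psi_c)=S_{\omega,\gamma,\text{rad}}^c>0$ we have $\psi_c\neq0$, and then $S_{\omega,\gamma}(\psi_c)<r_{\omega,\gamma}$ forces $K_\gamma(\psi_c)>0$ as in Step~1. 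Let $u_c$ be the solution to \eqref{NLS} with $u_c(0)=\psi_c$: by Theorem~\ref{Global well-posedness} it is global and $u_c\in C_t(\mathbb{R};H_\text{rad}^1(\mathbb{R}^3))$, and Lemma~\ref{Uniform estimate for K} gives $K_\gamma(u_c(t))>0$ for all $t$. Finally $u_c$ cannot scatter forward: if $\|u_c\|_{L_t^{q_0}(0,\infty;L_x^{q_0})}<\infty$, then Theorem~\ref{Long time perturbation} with approximate solution $u_c$ and data $u_n(t_n)\to\psi_c$ would give $\|u_n\|_{L_t^{q_0}(t_n,\infty;L_x^{q_0})}<\infty$ for large $n$, contradicting Step~1; the same argument on $(-\infty,0)$ gives $\|u_c\|_{L_t^{q_0}(-\infty,0;L_x^{q_0})}=\infty$. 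This delivers all the asserted properties of $u_c$ except precompactness of its orbit.

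\emph{Step 3: precompactness of $K$.} Let $\{\tau_n\}\subset\mathbb{R}$ be arbitrary and apply Lemma~\ref{Palais-Smale condition} to the constant sequence $u_n\equiv u_c$ with times $\tau_n$. The hypotheses hold: $S_{\omega,\gamma}(Q_{\omega,\gamma})>S_{\omega,\gamma}(u_c)=S_{\omega,\gamma,\text{rad}}^c$ by \eqref{159}, with the constant sequence trivially decreasing to $S_{\omega,\gamma,\text{rad}}^c$; $K_\gamma(u_c(\tau_n))>0$ by Step~2; and $\|u_c\|_{L_t^{q_0}(-\infty,\tau_n;L_x^{q_0})}=\|u_c\|_{L_t^{q_0}(\tau_n,\infty;L_x^{q_0})}=\infty$ for every $\tau_n$, since $u_c$ scatters in neither time direction. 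Hence a subsequence of $\{u_c(\tau_n)\}$ converges in $H^1(\mathbb{R}^3)$, and as $\{\tau_n\}$ was arbitrary, $K=\{u_c(t):t\in\mathbb{R}\}$ is precompact in $H^1(\mathbb{R}^3)$. The genuine difficulty of the whole construction is packaged inside Lemma~\ref{Palais-Smale condition} (linear profile decomposition plus the stability theory); here the only points that need care are the use of Theorem~\ref{Long time perturbation} in the contrapositive to force non-scattering of $u_c$ in Step~2, and checking that the constant-sequence application of Lemma~\ref{Palais-Smale condition} in Step~3 actually meets its monotonicity and non-scattering requirements.
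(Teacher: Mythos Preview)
Your proof is correct and follows essentially the same Kenig--Merle strategy as the paper: extract a near-critical non-scattering sequence, invoke Lemma~\ref{Palais-Smale condition}, and then reapply it to the constant sequence $u_c$ for precompactness. The only presentational difference is that the paper reopens the profile decomposition from the proof of Lemma~\ref{Palais-Smale condition} to identify $u_c$ directly as the single nonzero profile $\widetilde{u}^1$, whereas you treat Lemma~\ref{Palais-Smale condition} as a black box yielding a limit $\psi_c$ and then use Theorem~\ref{Long time perturbation} in the contrapositive to force $\|u_c\|_{L_t^{q_0}(\pm)}=\infty$; both routes are standard and equivalent, and your version is arguably cleaner since it avoids re-entering the proof of the Palais--Smale lemma.
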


\begin{proof}
From \eqref{159}, there exists a sequence of initial data $\{u_{0,n}\} \subset H_\text{rad}^1(\mathbb{R}^3)$ such that
\begin{align*}
	S_{\omega,\gamma}(Q_{\omega,\gamma})
		> S_{\omega,\gamma}(u_{0,n})
		\searrow S_{\omega,\gamma,\text{rad}},\ \ \ 
	K_\gamma(u_{0,n})
		> 0,\ \ \ 
	\|u_n\|_{L_t^{q_0}L_x^{q_0}}
		= \infty
\end{align*}
for each $n \in \mathbb{N}$, where $u_n$ denotes a solution to \eqref{NLS} with initial data $u_{0,n}$.
We note that $u_n$ exists globally in time by Theorem \ref{Global well-posedness}.
There exists $\{\tau_n\} \subset \mathbb{R}$ such that
\begin{align*}
	\lim_{n\rightarrow \infty}\|u_n\|_{L_t^{q_0}(-\infty,\tau_n;L_x^{q_0})}
		= \infty\ \ \text{ and }\ \ 
	\lim_{n\rightarrow \infty}\|u_n\|_{L_t^{q_0}(\tau_n,\infty;L_x^{q_0})}
		= \infty.
\end{align*}
Proposition \ref{Equivalence of H1 and S} implies that $\{u_n\}$ is bounded in $H^1(\mathbb{R}^3)$, so we may apply Theorem \ref{Linear profile decomposition} to $\{u_n\}$, that is, there exist $J^\ast \in \{0,1,\ldots,\infty\}$, profiles $\{u^j\} \subset H_\text{rad}^1(\mathbb{R}^3)$, remainders $\{R_n^J\}$, and parameter $\{t_n^j\}$ such that
\begin{align*}
	u_{0,n}
		= \sum_{j=0}^J e^{-it_n^j\Delta_\gamma}u^j + R_n^J
\end{align*}
for each $0 \leq J \leq \infty$.
By the proof of Lemma \ref{Palais-Smale condition}, we have $J^\ast = 1$.
If we set $u_{c,0} = \~{u}_0^1$ and $u_c = \~{u}^1$, then $u_{c,0}$ and $u_c$ satisfy
\begin{align*}
	S_{\omega,\gamma}(u_c)
		= S_{\omega,\gamma,\text{rad}}^c,\ \ \ 
	K_\gamma(u_c)
		> 0,\ \ \ 
	\|u_c\|_{L_t^{q_0}(-\infty,0;L_x^{q_0})}
		= \|u_c\|_{L_t^{q_0}(0,\infty;L_x^{q_0})}
		= \infty,
\end{align*}
where $\~{u}_0^1$ and $\~{u}^1$ are constructed in the proof of Lemma \ref{Palais-Smale condition}.
We take any time sequence $\{t_n\} \subset \mathbb{R}$.
Then, $\{u_c(t_n)\} \subset H_\text{rad}^1(\mathbb{R}^3)$ satisfies
\begin{align*}
	S_{\omega,\gamma}(u_c(t_n))
		= S_{\omega,\gamma,\text{rad}}^c,\ \ \ 
	K_\gamma(u_c(t_n))
		> 0,\ \ \ 
	\|u_c\|_{L_t^{q_0}(-\infty,t_n;L_x^{q_0})}
		= \|u_c\|_{L_t^{q_0}(t_n,\infty;L_x^{q_0})}
		= \infty.
\end{align*}
Therefore, passing to a subsequence of $\{u_c(t_n)\}$ if necessary, $u_c(t_n)$ converges in $H^1(\mathbb{R}^3)$.
\end{proof}

\begin{lemma}\label{Lower boundedness of H1}
Let $d = p = 3$, $\gamma > 0$, and $0 < \mu < 2$.
Let $u_0 \in H_\text{rad}^1(\mathbb{R}^3)$.
If a solution $u$ to \eqref{NLS} with \eqref{IC} satisfies that
\begin{align*}
	K
		= \{u(t) \in H_\text{rad}^1(\mathbb{R}^3) : t \in \mathbb{R}\}
\end{align*}
is precompact in $H^1(\mathbb{R}^3)$, then there exists $c > 0$ such that
\begin{align*}
	\|\nabla u(t)\|_{L_x^2}
		\geq c\,M[u_0]
\end{align*}
for any $t \in \mathbb{R}$.
\end{lemma}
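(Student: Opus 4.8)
The plan is to argue by contradiction, using the precompactness hypothesis to extract a strong $H^1$-limit along a time sequence on which the gradient would vanish, and then deriving a contradiction with conservation of mass.

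First I would dispose of the trivial case $M[u_0]=0$: then $u_0\equiv 0$, so $u\equiv 0$, and the asserted inequality holds with, say, $c=1$. Hence from now on I assume $M[u_0]>0$; by conservation of mass $M[u(t)]=M[u_0]>0$ for all $t\in\mathbb{R}$. Suppose the conclusion fails. Since $M[u_0]$ is a fixed positive constant, this means $\inf_{t\in\mathbb{R}}\|\nabla u(t)\|_{L_x^2}=0$ (otherwise $c:=\big(\inf_{t\in\mathbb{R}}\|\nabla u(t)\|_{L_x^2}\big)/M[u_0]>0$ would do). So I pick $\{t_n\}\subset\mathbb{R}$ with $\|\nabla u(t_n)\|_{L_x^2}\to 0$ as $n\to\infty$.

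Next I would invoke the hypothesis: since $K=\{u(t):t\in\mathbb{R}\}$ is precompact in $H^1(\mathbb{R}^3)$, passing to a subsequence (still denoted $\{t_n\}$) there is $\phi\in H_\text{rad}^1(\mathbb{R}^3)$ with $u(t_n)\to\phi$ strongly in $H^1(\mathbb{R}^3)$. Passing to the limit, $\|\nabla\phi\|_{L_x^2}=\lim_n\|\nabla u(t_n)\|_{L_x^2}=0$, so $\nabla\phi=0$ almost everywhere, i.e.\ $\phi$ is a.e.\ constant; the only constant function lying in $L^2(\mathbb{R}^3)$ is $0$, hence $\phi\equiv 0$. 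On the other hand, the strong $H^1$ convergence gives $\|u(t_n)\|_{L_x^2}\to\|\phi\|_{L_x^2}=0$, whence by mass conservation $M[u_0]=\lim_n\|u(t_n)\|_{L_x^2}^2=0$, contradicting $M[u_0]>0$. Therefore $\inf_{t\in\mathbb{R}}\|\nabla u(t)\|_{L_x^2}>0$, and taking $c:=\big(\inf_{t\in\mathbb{R}}\|\nabla u(t)\|_{L_x^2}\big)/M[u_0]$ proves the lemma.

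There is no serious obstacle here: the argument rests on just two ingredients, the precompactness hypothesis (which supplies the convergent subsequence) and conservation of mass. The only slightly delicate point is the elementary observation that a constant function belonging to $L^2(\mathbb{R}^3)$ must vanish, which, combined with mass conservation, is precisely what closes the contradiction.
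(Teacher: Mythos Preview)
Your proof is correct and follows essentially the same approach as the paper: argue by contradiction, extract a strongly $H^1$-convergent subsequence from the precompactness hypothesis, identify the limit as zero from the vanishing gradient, and derive a contradiction with mass conservation. The only differences are cosmetic (you explicitly identify the constant $c$ and spell out why $\nabla\phi=0$ forces $\phi\equiv 0$ in $L^2$).
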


\begin{proof}
When $u_0 \equiv 0$, this lemma holds.
We assume $u_0 \neq 0$.
If this lemma does not hold in this case, then there exists $\{t_n\} \subset \mathbb{R}$ such that
\begin{align*}
	\|\nabla u(t_n)\|_{L^2}
		< \frac{1}{n}M[u_0].
\end{align*}
Thus, we have $u(t_n) \longrightarrow 0$ in $\dot{H}^1(\mathbb{R}^3)$ as $n \rightarrow \infty$.
On the other hand, there exists a subsequence of $\{u(t_n)\}$ such that $u(t_n)$ converges in $H^1(\mathbb{R}^3)$ since the set $K$ is precompact in $H^1(\mathbb{R}^3)$.
Combining these facts, $u(t_n) \longrightarrow 0$ in $H^1(\mathbb{R}^3)$ as $n \rightarrow \infty$ and hence, we have
\begin{align*}
	0
		= \lim_{n \rightarrow \infty}\|u(t_n)\|_{L^2}
		= \lim_{n \rightarrow \infty}\|u_0\|_{L^2}
		= \|u_0\|_{L^2}.
\end{align*}
However, this is contradiction.
\end{proof}

\begin{proposition}\label{Concentration on the origin}
Let $d = p = 3$, $\gamma > 0$, and $0 < \mu < 2$.
If a solution $u$ to \eqref{NLS} satisfies that
\begin{align*}
	K
		= \{u(t) \in H_\text{rad}^1(\mathbb{R}^3) : t \in \mathbb{R}\}
\end{align*}
is precompact in $H^1(\mathbb{R}^3)$, then for any $\varepsilon > 0$, there exists $R > 0$ such that
\begin{align*}
	\|u(t)\|_{L^2(|x| \geq R)} + \|\nabla u(t)\|_{L^2(|x| \geq R)} + \|u(t)\|_{L^4(|x| \geq R)}
		\leq \varepsilon
\end{align*}
for any $t \in \mathbb{R}$.
\end{proposition}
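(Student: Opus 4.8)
The plan is to extract from precompactness of $K$ in $H^1(\mathbb{R}^3)$ the property of having \emph{uniformly small tails}, and then to upgrade this to the $L^4$-tail via interpolation and the Sobolev embedding $\dot{H}^1(\mathbb{R}^3)\hookrightarrow L^6(\mathbb{R}^3)$.

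First I would note that a precompact subset of $H^1(\mathbb{R}^3)$ is bounded, so $M:=\sup_{t\in\mathbb{R}}\|u(t)\|_{H^1}<\infty$; in particular $\|u(t)\|_{L^6}\lesssim\|\nabla u(t)\|_{L^2}\le M$ for all $t$. Now fix $\varepsilon>0$. Since $\overline{K}$ is compact in $H^1(\mathbb{R}^3)$, it can be covered by finitely many balls $B_{H^1}(f_i,\varepsilon)$, $1\le i\le N$, with $f_i\in H^1(\mathbb{R}^3)$; for each $i$ choose $R_i>0$ with $\|f_i\|_{L^2(|x|\ge R_i)}+\|\nabla f_i\|_{L^2(|x|\ge R_i)}<\varepsilon$, and set $R:=\max_{1\le i\le N}R_i$. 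Given $t\in\mathbb{R}$, pick $i$ with $\|u(t)-f_i\|_{H^1}<\varepsilon$; then
\begin{align*}
	\|u(t)\|_{L^2(|x|\ge R)}+\|\nabla u(t)\|_{L^2(|x|\ge R)}
		&\le \|u(t)-f_i\|_{H^1}+\|f_i\|_{L^2(|x|\ge R)}+\|\nabla f_i\|_{L^2(|x|\ge R)}
		\lesssim \varepsilon
\end{align*}
uniformly in $t$.

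For the $L^4$-tail I would use H\"older on the exterior region together with the uniform $L^6$-bound above:
\begin{align*}
	\|u(t)\|_{L^4(|x|\ge R)}^4
		\le \|u(t)\|_{L^2(|x|\ge R)}\,\|u(t)\|_{L^6(|x|\ge R)}^3
		\le \|u(t)\|_{L^2(|x|\ge R)}\,\|u(t)\|_{L^6(\mathbb{R}^3)}^3
		\lesssim M^3\,\|u(t)\|_{L^2(|x|\ge R)},
\end{align*}
and the right-hand side is $\lesssim M^3\varepsilon$ uniformly in $t$ by the previous step. Combining the two estimates and replacing $\varepsilon$ by a suitable fixed multiple yields the claimed bound. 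There is no serious obstacle here: the only point requiring care is the passage from precompactness to uniformly small tails, which is the standard finite-cover argument used above, and the observation that boundedness of $K$ in $H^1$ is exactly what allows the $L^6$-factor to be absorbed into a uniform constant when controlling the $L^4$-tail.
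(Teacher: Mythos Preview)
Your proof is correct. Both your argument and the paper's rest on the same underlying fact---that a precompact set in $H^1(\mathbb{R}^3)$ has uniformly small $H^1$-tails---but the mechanics differ. The paper argues by contradiction: assuming the conclusion fails, it extracts a sequence $\{u(t_n)\}$ with persistently large tails on $\{|x|\ge n\}$, passes to an $H^1$-convergent subsequence with limit $u_\infty$, and then combines the smallness of the tails of the fixed function $u_\infty$ with the $H^1$-closeness (hence $L^4$-closeness, via $H^1\hookrightarrow L^4$) of $u(t_n)$ to $u_\infty$ to derive a contradiction. You instead work directly, using total boundedness (a finite $\varepsilon$-net) to obtain the uniform $H^1$-tail smallness, and you handle the $L^4$-tail by the interpolation estimate $\|u\|_{L^4}^4\le\|u\|_{L^2}\|u\|_{L^6}^3$ together with the uniform $L^6$-bound coming from the boundedness of $K$ in $\dot{H}^1$. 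Your route is slightly more quantitative and avoids the contradiction framing; the paper's is marginally shorter since it treats all three norms at once via the embedding. Either is entirely adequate here.
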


\begin{proof}
If not, then there exists $\varepsilon > 0$ such that for any $n \in \mathbb{N}$, there exists $t_n \in \mathbb{R}$ such that
\begin{align*}
	\|u(t_n)\|_{L^2(|x| \geq n)} + \|\nabla u(t_n)\|_{L^2(|x| \geq n)} + \|u(t_n)\|_{L^4(|x| \geq n)}
		> \varepsilon.
\end{align*}
Since the set $K$ is precompact in $H^1(\mathbb{R}^3)$, there exist a subsequence of $\{t_n\} \subset \mathbb{R}$ and $u_\infty \in H^1(\mathbb{R}^3)$ such that $u(t_n) \longrightarrow u_\infty$ in $H^1(\mathbb{R}^3)$ as $n \rightarrow \infty$.
From $u_\infty \in H^1(\mathbb{R}^3)$, we have
\begin{align*}
	\|u_\infty\|_{L^2(|x| \geq n)} + \|\nabla u_\infty\|_{L^2(|x| \geq n)} + \|u_\infty\|_{L^4(|x| \geq n)}
		< \frac{1}{2}\varepsilon
\end{align*}
for sufficiently large $n \in \mathbb{N}$.
By $u(t_n) \longrightarrow u_\infty$ in $H^1(\mathbb{R}^3)$ as $n \rightarrow \infty$, it follows that
\begin{align*}
	\|u(t_n) - u_\infty\|_{L^2} + \|\nabla u(t_n) - \nabla u_\infty\|_{L^2} + \|u(t_n) - u_\infty\|_{L^4}
		< \frac{1}{2}\varepsilon
\end{align*}
for sufficiently large $n \in \mathbb{N}$.
Therefore, we obtain
\begin{align*}
	\|u(t_n)\|_{L^2(|x| \geq n)} + \|\nabla u(t_n)\|_{L^2(|x| \geq n)} + \|u(t_n)\|_{L^4(|x| \geq n)}
		< \frac{1}{2}\varepsilon + \frac{1}{2}\varepsilon
		= \varepsilon.
\end{align*}
This is contradiction.
\end{proof}

\begin{theorem}[Rigidity]\label{Rigidity}
Let $d = p = 3$, $\gamma > 0$, $0 < \mu < 2$, and $u_0 \in H_\text{rad}^1(\mathbb{R}^3)$.
If there exists $\omega > 0$ such that
\begin{align*}
	S_{\omega,\gamma}(u_0)
		< S_{\omega,\gamma}(Q_{\omega,\gamma}),\ \ 
	K_\gamma(u_0)
		\geq 0,
\end{align*}
and
\begin{align*}
	K
		= \{u(t) \in H_\text{rad}^1(\mathbb{R}^3) : t \in \mathbb{R}\}
\end{align*}
is precompact in $H^1(\mathbb{R}^3)$, then $u_0 \equiv 0$.
\end{theorem}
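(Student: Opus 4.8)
\emph{Plan of proof.} The plan is to argue by contradiction using a localized virial (Morawetz‐type) identity, in the spirit of the Kenig--Merle rigidity argument. Assume $u_0 \not\equiv 0$. First I would reduce to the strict sign $K_\gamma(u_0) > 0$: if instead $K_\gamma(u_0) = 0$, then since $r_{\omega,\gamma}$ is, by the definition of $r_{\omega,\gamma}^{\alpha,\beta}$ with $(\alpha,\beta) = (3,2)$, the infimum of $S_{\omega,\gamma}$ over nonzero radial functions with $K_\gamma = 0$, we would get $S_{\omega,\gamma}(u_0) \geq r_{\omega,\gamma} = S_{\omega,\gamma}(Q_{\omega,\gamma})$, contradicting the hypothesis; hence $u_0 \equiv 0$ in that case and we are done. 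So from now on $K_\gamma(u_0) > 0$, and by Lemma~\ref{Uniform estimate for K} (with $(\alpha,\beta) = (3,2)$) the solution satisfies $K_\gamma(u(t)) > 0$ for all $t$.

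Next I would produce a uniform positive lower bound $\delta := \inf_{t\in\mathbb{R}} K_\gamma(u(t)) > 0$. By Lemma~\ref{Estimate of K from below},
\[
	K_\gamma(u(t))
		\geq \min\Bigl\{ r_{\omega,\gamma} - S_{\omega,\gamma}(u_0),\ \tfrac{2\mu}{7}\|(-\Delta_\gamma)^\frac{1}{2}u(t)\|_{L^2}^2 \Bigr\};
\]
the first quantity is a fixed positive number because $S_{\omega,\gamma}(u_0) < S_{\omega,\gamma}(Q_{\omega,\gamma}) = r_{\omega,\gamma}$, while the second is bounded below uniformly in $t$ by Lemma~\ref{Lower boundedness of H1} (which gives $\|\nabla u(t)\|_{L^2} \geq c\,M[u_0] > 0$ since $u_0 \not\equiv 0$) together with the norm equivalence of Lemma~\ref{Norm equivalence}. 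Hence such a $\delta > 0$ exists.

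Now fix $\varepsilon > 0$ and let $R = R(\varepsilon) > 0$ be furnished by Proposition~\ref{Concentration on the origin}, so that $\|u(t)\|_{L^2(|x|\geq R)} + \|\nabla u(t)\|_{L^2(|x|\geq R)} + \|u(t)\|_{L^4(|x|\geq R)} \leq \varepsilon$ for all $t$. Choose a radial $w = w_R \in C^\infty(\mathbb{R}^3)$ with $w(x) = |x|^2$ for $|x|\leq R$, $w$ constant for $|x|\geq 2R$, $w' \geq 0$, $0\leq w'(r)/r \leq 2$, and $|w''(r)| + |r\,w^{(3)}(r)| + |r^2 w^{(4)}(r)| \lesssim 1$ uniformly in $R$, and set $I(t) := \int_{\mathbb{R}^3} w(x)|u(t,x)|^2\,dx$. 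Then $0 \leq I(t) \lesssim R^2 M[u_0]$ and, from the formula for $I'$ in Proposition~\ref{Virial identity}, $|I'(t)| \lesssim R\,\|u(t)\|_{L^2}\|\nabla u(t)\|_{L^2} \lesssim_R 1$, both uniformly in $t$. Using the formula for $I''$ in Proposition~\ref{Virial identity}: on $\{|x|\leq R\}$ the choice $w = |x|^2$ gives $F_1 \equiv 0 \equiv F_2$, $w'(r)/r \equiv 2$, $F_3 \equiv 6$, and $2\mu w'(r)\gamma r^{-\mu-1} = 4\mu\gamma r^{-\mu}$, so the contribution of this region to $I''(t)$ is exactly $4K_\gamma(u(t))$ up to terms supported in $\{|x|\geq R\}$. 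All remaining terms are supported on the annulus $\{R\leq|x|\leq 2R\}$ or on $\{|x|\geq R\}$; estimating them with the bounds on the derivatives of $w$, the radial Sobolev inequality (Lemma~\ref{Radial Sobolev inequality}) for the quartic term, and the tail smallness above, one obtains
\[
	I''(t)
		\geq 4K_\gamma(u(t)) - C\Bigl( \tfrac{1}{R^{\min\{2,\mu\}}} + \|\nabla u(t)\|_{L^2(|x|\geq R)}^2 + \|u(t)\|_{L^4(|x|\geq R)}^4 \Bigr)
\]
for all $t$, with $C$ independent of $R$ and $t$. Choosing $\varepsilon$ small enough (hence $R$ large enough) that the error term is $< 2\delta$ uniformly in $t$, we get $I''(t) \geq 2\delta > 0$ for every $t\in\mathbb{R}$. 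Integrating over $[-T,T]$ yields $I'(T) - I'(-T) \geq 4\delta T$, contradicting $|I'(t)| \lesssim_R 1$ as $T\to\infty$. Therefore $u_0 \equiv 0$.

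The step I expect to be the main obstacle is making the error term in the localized virial identity uniformly small in $t$ and checking that each of its pieces genuinely decays as $R\to\infty$: this is precisely where precompactness of the trajectory (Proposition~\ref{Concentration on the origin}) is indispensable for the gradient term $\|\nabla u(t)\|_{L^2(|x|\geq R)}^2$ (which carries no intrinsic $R$‐gain), and where one must use $\mu>0$ to control the long‐range potential contribution $2\mu\int w'(r)\gamma r^{-\mu-1}|u|^2$ on the annulus, which is of size $\sim R^{-\mu}$. A secondary, more bookkeeping‐type point is verifying that the interior contribution of $I''$ equals exactly $4K_\gamma(u(t))$ with the constants dictated by $d=p=3$.
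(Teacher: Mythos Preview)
Your proposal is correct and follows essentially the same approach as the paper: a contradiction via the localized virial identity (Proposition~\ref{Virial identity}) with a truncated quadratic weight, using Lemma~\ref{Estimate of K from below} together with Lemma~\ref{Lower boundedness of H1} for the uniform lower bound on $K_\gamma(u(t))$, and Proposition~\ref{Concentration on the origin} to absorb the error terms for $R$ large. The paper's write-up is nearly identical in structure; the only cosmetic differences are that the paper does not invoke Lemma~\ref{Radial Sobolev inequality} separately (the $L^4$ tail is controlled directly by Proposition~\ref{Concentration on the origin}) and it phrases the final contradiction as $I'(t)\to\infty$ versus $|I'(t)|\lesssim R$ rather than integrating over $[-T,T]$.
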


\begin{proof}
We assume $u_0 \not\equiv 0$ for contradiction.
Then, $K_\gamma(u_0) > 0$ holds by the definition of $r_{\omega,\gamma}$ and $S_{\omega,\gamma}(u_0) < r_{\omega,\gamma} = S_{\omega,\gamma}(Q_{\omega,\gamma})$.
From Lemmas \ref{Estimate of K from below} and \ref{Lower boundedness of H1}, we have
\begin{align}
	K_\gamma(u(t))
		& \geq \min\left\{S_{\omega,\gamma}(Q_{\omega,\gamma}) - S_{\omega,\gamma}(u_0), \frac{2\mu}{7}\|(-\Delta_\gamma)^\frac{1}{2}u(t)\|_{L^2}^2\right\} \notag \\
		& \geq \min\left\{S_{\omega,\gamma}(Q_{\omega,\gamma}) - S_{\omega,\gamma}(u_0), c\,\|\nabla u(t)\|_{L_x^2}^2\right\} \notag \\
		& \geq \min\left\{S_{\omega,\gamma}(Q_{\omega,\gamma}) - S_{\omega,\gamma}(u_0), c\,M[u_0]\right\}
		=: \delta_0
		> 0. \label{126}
\end{align}
We note that $\delta_0$ is independent of $t$.
We define the following functions for $R > 0$.
A cut-off function $\mathscr{X}_R \in C_c^\infty(\mathbb{R}^3)$ is radially symmetric and satisfies
\begin{equation*}
\mathscr{X}_R(r) := R^2\mathscr{X}\left(\frac{r}{R}\right),\ \text{ where }\ \mathscr{X}(r) :=
\begin{cases}
\hspace{-0.4cm}&\displaystyle{
		\hspace{0.45cm}r^2\hspace{0.45cm} \quad (0 \leq r \leq 1),
	} \\
\hspace{-0.4cm}&\displaystyle{
		smooth \quad (1 \leq r \leq 3),
	} \\
\hspace{-0.4cm}&\displaystyle{
		\hspace{0.53cm}0\hspace{0.53cm} \quad (3 \leq r),
	}
\end{cases}
\end{equation*}
$\mathscr{X}''(r) \leq 2$ $(r \geq 0)$, and $r=|x|$.
For $\mathscr{X}_R(x)$, we define a functional
\begin{align*}
	I(t)
		:= \int_{\mathbb{R}^3}\mathscr{X}_R(x)|u(t,x)|^2dx.
\end{align*}
Then, it follows from Proposition \ref{Virial identity} that
\begin{align*}
	I'(t)
		= 2\text{Im}\int_{\mathbb{R}^3}\frac{R}{r}\mathscr{X}'\left(\frac{r}{R}\right)\overline{u(t,x)}x\cdot\nabla u(t,x)dx
\end{align*}
and
\begin{align}
	I''(t)
		= 4K_\gamma(u(t)) + \mathcal{R}_1 + \mathcal{R}_2 + \mathcal{R}_3 + \mathcal{R}_4, \label{127}
\end{align}
where $\mathcal{R}_k = \mathcal{R}_k(t)$ $(k = 1, 2, 3, 4)$ are defined as
\begin{align*}
	\mathcal{R}_1
		& := 4\int_{\mathbb{R}^3}\left\{\mathscr{X}''\left(\frac{r}{R}\right) - 2\right\}|\nabla u(t,x)|^2dx, \\
	\mathcal{R}_2
		& := - \int_{\mathbb{R}^3}\left\{\mathscr{X}''\left(\frac{r}{R}\right) + \frac{2R}{r}\mathscr{X}'\left(\frac{r}{R}\right) - 6 \right\}|u(t,x)|^4dx, \\
	\mathcal{R}_3
		& := - \int_{\mathbb{R}^3}\left\{\frac{1}{R^2}\mathscr{X}^{(4)}\left(\frac{r}{R}\right) + \frac{4}{Rr}\mathscr{X}^{(3)}\left(\frac{r}{R}\right)\right\}|u(t,x)|^2dx, \\
	\mathcal{R}_4
		& := 2\mu\int_{\mathbb{R}^3}\left\{\frac{R}{r}\mathscr{X}'\left(\frac{r}{R}\right) - 2\right\}\frac{\gamma}{|x|^\mu}|u(t,x)|^2dx.
\end{align*}
Applying Proposition \ref{Concentration on the origin}, there exists $R > 1$ such that
\begin{align}
	|R_1 + R_2 + R_3 + R_4|
		\lesssim \int_{|x| \geq R}\left(|\nabla u(t,x)|^2 + |u(t,x)|^4 + \frac{1}{R^\mu}|u(t,x)|^2\right)dx
		< \frac{1}{2}\delta_0 \label{128}
\end{align}
for any $t \in \mathbb{R}$.
From \eqref{126}, \eqref{127}, and \eqref{128}, there exists $R > 0$ such that
\begin{align}
	I''(t)
		\geq \frac{1}{2}\delta_0
		> 0 \label{129}
\end{align}
for any $t \in \mathbb{R}$.
We fix such $R > 0$.
\eqref{129} implies $\lim_{t \rightarrow \infty}I'(t) = \infty$.
On the other hand, we have
\begin{align*}
	|I'(t)|
		\lesssim R\,\|u(t)\|_{L_x^2}\|\nabla u(t)\|_{L_x^2}
		\leq R\,\|u(t)\|_{H_x^1}^2
		\sim R\,S_{\omega,\gamma}(u(t))
		\lesssim R
\end{align*}
by Proposition \ref{Equivalence of H1 and S}.
Therefore, we obtain contradiction.
\end{proof}

\begin{proof}[Proof of Theorem \ref{Scattering}]
Let $u_c$ be the critical solution given in Theorem \ref{Existence of a critical solution}.
From Theorem \ref{Existence of a critical solution}, $u_c$ satisfies the all assumptions in Theorem \ref{Rigidity}.
Therefore, $u_c \equiv 0$ holds.
However, this is contradiction with $\|u_c\|_{L_{t,x}^{q_0}} = \infty$.
\end{proof}

\subsection*{Acknowledgements}
M.H. is supported by JSPS KAKENHI Grant Number JP19J13300.
M.I. is supported by JSPS KAKENHI Grant Number JP18H01132, JP19K14581, and JST CREST Grant Number JPMJCR1913.

\end{document}